\newtheorem{theorem}{Theorem}[section]
\newtheorem{lemma}[theorem]{Lemma}
\newtheorem{corollary}[theorem]{Corollary}
\newtheorem{proposition}[theorem]{Proposition}
\theoremstyle{definition}
\newtheorem{definition}[theorem]{Definition}
\numberwithin{equation}{section}
\setlist{label={$($\roman{enumi}\kern1pt$)$}}
\newcommand{\ip}[2]{{
    \left<
      #1,#2
    \right>}}
\newcommand{\HL}{{H^\infty(X,{\mathcal K}_\Lambda)}}
\newcommand{\HLH}{{H^\infty(X,{\mathcal K}_{\Lambda,\mathcal H})}}
\newcommand{\HLb}{{H_1^\infty(X,{\mathcal K}_\Lambda)}}
\newcommand{\HLHb}{{H_1^\infty(X,{\mathcal K}_{\Lambda,\mathcal H})}}
\newcommand{\AL}{{A(X,{\mathcal K}_\Lambda)}}
\newcommand{\ALH}{{A(X,{\mathcal K}_{\Lambda,\mathcal H})}}
\newcommand{\ran}{{\mathrm{ran}\,}}
\newcommand{\clran}{{\overline{\mathrm{ran}}\,}}
\title{Realizations via Preorderings with Application to the Schur Class}
\dedicatory{In loving memory of Deborah Godsey}
\author[M.~A.~Dritschel]{Michael A. Dritschel}
\thanks{Part of this work was carried out during several visits to the
  Indian Institute of Science in Bangalore.  The author gives
  special thanks to Tirtha Bhattacharyya and Gadadhar Misra for their
  warm hospitality and stimulating conversations.}
\address{School of Mathematics and Statistics\\
  Herschel Building\\ Newcastle University\\
  Newcastle upon Tyne\\
  NE1 7RU\\
  UK}
\email{michael.dritschel@newcastle.ac.uk}
\subjclass[2010]{47A13 (Primary), 47A57, 46J10, 47L55, 47A20, 47A25,
  46E22 (Secondary)}
\keywords{Realizations, preorderings, Schur class, Schur-Agler class,
  Pick interpolation, boundary representations, rational dilation}
\begin{document}

\begin{abstract}
  We extend Agler's notion of a function algebra defined in terms of
  test functions to include products, in analogy with the practice in
  real algebraic geometry, and hence the term preordering in the
  title.  This is done over abstract sets and no additional property,
  such as analyticity, is assumed.  Realization theorems give several
  equivalent ways of characterizing the unit ball (referred to as the
  Schur-Agler class) of the function algebras.  These typically
  include, in Agler's terminology, a model (here called an Agler
  decomposition), a transfer function representation, and an analogue
  of the von~Neumann inequality.  The new ingredient is a certain set
  of matrix valued functions termed ``auxiliary test functions'' used
  in constructing transfer functions.  In important cases, the
  realization theorems can be strengthened so as to allow applications
  to Pick type interpolation problems, among other things.  Principle
  examples have as the domain the polydisk $\mathbb D^d$.  The
  algebras then include $H^\infty(\mathbb D^d,\mathcal{L(H)})$ (where
  the unit ball is traditionally called the Schur class) and
  $A(\mathbb D^d,\mathcal{L(H)})$, the multivariable analogue of the
  disk algebra.  As an application, it is shown that over the polydisk
  $\mathbb D^d$, (weakly continuous) representations which are
  $2^{d-2}$ contractive are completely contractive (hence having a
  commuting unitary dilation), offering fresh insight into such
  examples as Parrott's of contractive representations of $A(\mathbb
  D^3)$ which are not completely contractive.
\end{abstract}

\date{\today}

\maketitle

\section{Introduction}
\label{sec:introduction}

The classical realization theorem gives a variety of characterizations
of those functions which are in the Schur class over the unit disk
$\mathbb D$ in the complex plane $\mathbb C$; that is, those functions
in the closed unit ball of $H^\infty(\mathbb D)$.

Jim Agler found a method for extending this result to the polydisk
$\mathbb D^d$ \cite{MR1207393}, though for dimension $d$ greater than
$2$, one must use a different norm than the $H^\infty$ norm over an
algebra of functions which potentially may be a proper subalgebra of
$H^\infty(\mathbb D^d)$.  The unit ball for such an algebra is now
commonly known as the Schur-Agler class; the term Schur class usually
being reserved for the unit ball of $H^\infty(X)$ when $X$ is a domain
in $\mathbb C^d$.  Among other things, the realization theorem states
that a complex function $\varphi$ on the polydisk is in the
Schur-Agler class if and only if it has a so-called Agler
decomposition, expressing $1-\varphi\varphi^*$ as an element of a cone
generated by products of certain positive kernels and kernels of the
form $1-\psi\psi^*$, where $\psi$ is a coordinate function.  Other
equivalent conditions for membership in the Schur-Agler class include
the existence of a transfer function representation and a von~Neumann
type inequality for suitably restricted tuples of commuting
contractions.  The equivalence of all of these conditions makes no a
priori assumptions about the function $\varphi$, and it is this which
enables the use of the realization theorem in such applications as
Pick interpolation.

These results have been vastly generalized, in the spirit of Agler's
work (see, for example,~\cite{MR3057412, MR2742657, MR2069781,
  MR1937424, MR3223895, MR2337640, MR2595740, MR2597682, MR1797710,
  MR2899979, MR2745478}).  In particular, for a planar domain $X$ with
smooth boundary components, the necessary machinery has been developed
to enable Agler's methods to be applied to the study of $H^\infty(X)$,
and as a result, the settling of long-standing problems in operator
theory related to these domains~\cite{MR2375060, MR2163865, MR2389623,
  MR2643788}.  When $X$ is the polydisk $\mathbb D^d$, $d\geq 3$, far
less is known, though important initial steps have been
taken~\cite{MR2502431, MR2861551}.  A central goal of this paper is to
give realization theorems for $H^\infty(\mathbb D^d)$, and to apply it
to elucidating certain phenomena observed in the study of commuting
tuples of contraction operators.  Following~\cite{MR2389623}, this is
done abstractly, with the results for the polydisk constituting a
special example.  Indeed, the results cover a wide range of domains
and function algebras on these domains, while at the same time
demonstrating that assumptions of such properties as analyticity are
in general unnecessary.

The Agler decomposition has its analogues in real algebraic geometry.
For example, if we have a set in $\mathbb R^n$ consisting of those
points at which a finite collection of polynomials is non-negative (a
so-called basic semi-algebraic set), and these polynomials also
include $1-\psi_i^2$ where each $\psi_i$ is a constant multiples of a
coordinate function, then Putinar's theorem~\cite{MR1254128} (see
also~\cite{MR1829790}) states that a strictly positive polynomial is
in the quadratic module generated by these polynomials; that is, it is
in the cone generated by finite sums of squares of polynomials times
the individual polynomials defining the semi-algebraic set.  If
however the polynomials $1-\psi_i^2$ are not necessarily included, the
statement of Putinar's theorem is in general false, even if the
semi-algebraic set is assumed to be compact~\cite{MR1829790}.  However
the situation can be salvaged in the compact setting by replacing the
quadratic module by a preordering; that is, by considering the cone
generated by finite sums of squares of polynomials multiplied by the
various products of the polynomials defining the semi-algebraic set.
This is the content of Schm\"udgen's theorem~\cite{MR1092173}.
Further refinements are possible.  For example, if only two
polynomials define the compact semi-algebraic set then one can get by
with the quadratic module in Schm\"udgen's theorem (\cite[Corollary
6.3.7]{MR1829790}), which because of And\^o's theorem is analogous to
what happens in the complex case with Agler's realization theorem.

Back in the complex function setting, work of Grinshpan,
Kaliuzhnyi-Verbovet\-skyi, Vinnikov and Woerdeman~\cite{MR2502431}
shows that on the polydisk for dimension greater than $2$, one can
recover the entire Schur class by using the appropriate variant of a
preordering (see also Knese~\cite{MR2861551}).  The caveat is that
they find it necessary to assume that the function they are
considering is already known to be in the Schur class, and so there is
no direct application to Pick interpolation in the Schur class.

Another hurdle to using the results in \cite{MR2502431} for
interpolation is that the crucial transfer function representation is
absent, though they do prove that a form of the von~Neumann inequality
is available.  A particularly interesting aspect of~\cite{MR2502431}
is that the tuples of operators the authors are considering have a
unitary dilation, obtained by showing that they induce a completely
contractive representation of $H^\infty(\mathbb D^d)$ and then
applying the standard machinery.  There are many papers which consider
the problem of determining conditions under which a tuple of commuting
contractions has a unitary dilation, including those of Ball, Li,
Timotin and Trent~\cite{MR1722812} and Archer~\cite{MR2308555}, which
prove a multivariable form of the commutant lifting theorem.

This paper has several goals.  The first is to place the work of
Grinshpan, Kaliuzhnyi-Verbovet\-skyi, Vinnikov and Woerdeman in the
context of test functions on a set $X$, in this way allowing for a
much broader class of function algebras.  For example, there will be
analogues, written $\HLH$, of the algebra $H^\infty(\mathbb D)$.  The
set $X$ can be topologized and closed in an appropriate norm, which
allows us to make sense of the analogues $\ALH$ of the disk algebra
$A(\mathbb D)$, in this context; that is, elements $\HLH$ which extend
continuously to the closure of $X$.  It is noteworthy that there are
\textit{a priori} no assumptions made on the set $X$ or on the set of
test functions (such as analyticity).

To begin with, a careful examination of the continuity properties of
elements of $\HLH$ and $\ALH$ is carried out.  Following this, we
introduce the auxiliary test functions.  In contrast to the original
test functions, which are taken to be scalar valued, these are matrix
valued functions.  Moreover, in the setting of the so-called standard
ample preordering, the auxiliary test functions can be taken to be
functions in matrix valued $\ALH$, and the Schur-Agler class
corresponding to these functions is the unit ball of $\HLH$.  As a
consequence, we are then able to give a full version of the
realization theorem, including the transfer function representation
and analogues of the von Neumann inequality.  Importantly, none of the
realization theorems requires the assumption that the function under
consideration is already in $\HLH$.  Thus in principle, in the ample
case such applications as Agler-Pick interpolation are possible.

Even if the preordering is not ample, we show that elements of our
generalized Schur-Agler class have a transfer functions
representation, though it is not clear that everything with a transfer
function representation is in our algebra except in the ample and the
classical settings.  However, the transfer functions with values in
$\mathcal{L(H)}$ do form the unit ball of an algebra having a natural
matrix norm structure, and so form an operator algebra.  This nicely
complements work in~\cite{MR2595740}, where it is shown that a
collection of analytic (potentially matrix valued) test functions over
a domain in $\mathbb C^n$ generate an operator algebra, and that a
transfer function representation exists for the functions in this
algebra; that is, such algebras are examples of transfer function
algebras.

We are able to show that for transfer function algebras, certain types
of representations (the so-called Brehmer representations over the
analogue of the disk algebra and the weakly continuous Brehmer
representations over the analogue of $H^\infty$) are completely
contractive, implying the existence of a dilation of such a
representation to something akin to a boundary representation (though
without the assumption of irreducibility).  This includes those
representations which are contractive on the auxiliary test functions
in the ample setting when we know these functions are in a matrix
version of $\AL$, meaning that such representations are also
completely contractive.  As a consequence, any representation which is
$n$-contractive for appropriate $n$ (depending only on the number of
test functions) will be completely contractive for $\AL$.  Curiously,
the condition of being a Brehmer representation does not obviously
imply that the representation is contractive on auxiliary test
functions, though this is ultimately an outcome of the realization
theorems.

As mentioned above, the case when $X$ is the polydisk is of particular
interest.  Then the ample preordering gives $H^\infty(\mathbb D^d,
\mathcal{L(H)})$.  Since the auxiliary test functions are not given
constructively, determining if a representation is contractive on
these is in general difficult, but as mentioned above, $n$ contractive
representations will be completely contractive if $n$ is sufficiently
large.  In the classical setting of Agler's realization theorem for
the polydisk the auxiliary test functions are simply the test
functions, and by our definition, the Brehmer representations are in
this case just those representations mapping the coordinate functions
to commuting contractions.  Such representations are also completely
contractive on $A(\mathbb D^d)$ with respect to the appropriate matrix
norm structure (something which can also be gleaned from results
in~\cite{MR2595740}).  At first sight, this might seem paradoxical
given Parrott's example of a commuting triple of contractions without
a unitary dilation.  However since the matrix norm structure is not
that of $H^\infty$ with the supremum norm, there is in fact no
problem.  Indeed, we show that there are choices in the Parrott
example which give rise to a boundary representation (in the sense of
Arveson), since it will be irreducible and not only will there be no
commuting unitary dilation of the image of the coordinate functions,
but in fact the only commuting contractive dilation is by means of a
direct sum.  Several other matrix valued boundary representations are
also explicitly given, one arising from and example of Grinshpan,
Kaliuzhnyi-Verbovet\-skyi and Woerdeman~\cite{MR3057417}, and another
constructed from the Kaijser-Varopoulos example.  It is also not
difficult to see that the Crabbe-Davie example also gives a boundary
representation.  All of these send the coordinate functions to
nilpotent matrices, though it can be shown that there must exist
examples which are neither commuting unitaries nor unitarily
equivalent to commuting nilpotents.

\goodbreak

Finally, we demonstrate that in the setting of ample preorderings,
And\^o's theorem allows us to instead consider so-called nearly ample
preorderings.  With this we are able to recover the full extent of the
results of Grinshpan, Kaliuzhnyi-Verbovet\-skyi, Vinnikov and
Woerdeman,and even generalize it, and at the same time improve the
result mentioned in the previous paragraph by proving that when $d\geq
2$, for $n=2^{d-2}$, $n$-contractive weakly continuous representations
of $\HLH$ and $n$-contractive representations of $\ALH$ are completely
contractive.  In particular, over the polydisk the images of the
coordinate functions under such a representation will be commuting
contractions with a commuting unitary dilation.  Viewed another way,
any example such as Parrot's of a representation of $A(\mathbb D^3)$
which is contractive but not completely contractive must fail to be
$2$-contractive.

\goodbreak

\section{Test functions, preorderings, function spaces and topology}
\label{sec:test-fns-preord-top}

\subsection{Test functions and preorderings}
\label{subsec:test-funct-preord}

Let $X$ be a set, $\mathcal H$ a Hilbert space, and $\Psi$ a
collection of $\mathcal{L(H)}$ valued functions on $X$.  We call
$\Psi$ a set of \textbf{test functions} if for $x\in X$,
$\sup_{\psi\in\Psi} \|\psi(x)\| < 1$, and when restricted to any
finite set, $\Psi$ generates all functions on that set (equivalently
in the scalar valued case we are considering, $\Psi$ separates the
points of $X$).  \textit{We assume that the test functions we are
  dealing with are complex valued, though we later construct certain
  matrix valued test functions from these}.  

There are many interesting situations where the collection of test
functions is infinite~\cite{MR2389623}.  \textit{However, for this
  paper our focus will solely be on the situation when $d = |\Psi| :=
  \mathrm{card}\,\Psi$ is finite}, though many of the initial results
are valid in any case.  This assumption has the advantage of allowing
us to, among other things, avoid additional complexities in the proof
of the realization theorems, since when $|\Psi|$ is finite certain
representations in which we will be interested have a particularly
simple form.

We use standard tuple notation on $\bigoplus_1^d \mathbb N$,
$d$-tuples of non-negative integers $(n_i)$, endowed with the partial
ordering $n' \leq n$ if and only if $n'_i \leq n_i$ for all $i$.  If
$n = (n_i) \in \bigoplus_1^d \mathbb N$, we write $|n|$ for the sum of
the entries of $n$.  Also, we denote by $e_i$ the tuple with all
entries except the $i$th equal to zero, while the $i$th is $1$, and
$0$ for the tuple where all entries are zero, and $1$ will stand for
the tuple with all entries equal to $1$.  We use the notation
$\psi^n$ to stand for $\textstyle\prod \psi_i^{n_i}$, where the
product is over the $n_i \in n$ which are nonzero.

By a \textbf{preordering} we mean a finite set $\Lambda \subset
\bigoplus_1^d \mathbb N$ with the property that for all $i$, $e_i <
\lambda$ for some $\lambda\in\Lambda$.  This is at variance with the
usual definition from real algebraic geometry, but happens to be more
convenient in our context.  The connection with the standard form
should become apparent to those familiar with it.

We will see in the next section that for the applications we have in
mind, the preordering is not unique, and in fact there are two rather
special preorderings associated to any given preordering $\Lambda$.
The first is the \textbf{minimal preordering} $\Lambda_m$, which is
constructed from $\Lambda$ as the union of all $\lambda\in\Lambda$
such that if $\lambda'\in\Lambda$ and $\lambda \leq \lambda'$, then
$\lambda' = \lambda$.  In other words, the minimal preordering
consists of the union of the maximal elements $\Lambda$.  The other is
the \textbf{maximal preordering} $\Lambda_M := \{\lambda\in
\bigoplus_{\psi\in\Psi} \mathbb N : \lambda \leq \lambda' \text{ for
  some } \lambda'\in\Lambda\}$.  Hence if $\lambda' \in\Lambda$ and
$\lambda \leq \lambda'$, then $\lambda\in\Lambda_M$.

We find it convenient to decompose any maximal preordering $\Lambda_M$
as a disjoint union $\bigcup_{j=0}^\infty \Lambda_j$, where $\lambda
\in \Lambda_j$ if and only if $|\lambda| = j$.  Thus the only element
of $\Lambda_0$ is $0$, those in $\Lambda_1$ are the $e_i$s, and so on.
Set $\Lambda_+, \Lambda_-$ equal to the union over $\Lambda_j$s where
the index is even and odd, respectively.  Now for any $\lambda \in
\Lambda_M$, there are $2^{|\lambda|}$ elements $\Lambda_M$ which are
less than or equal to $\lambda$, half of which are in $\Lambda_+$ and
half in $\Lambda_-$.  For the purposes of fixing a clear labeling on
certain vectors later on, we use the ordering on $\Psi$ to endow
$\Lambda_M$ with the lexicographic ordering $\leq_\ell$.

Since $d = |\Psi| < \infty$, of particular interest will be the
so-called \textbf{ample preorderings}.  These are preorderings which
have a largest element; that is, a unique maximal element,
$\lambda^{m}$.  When $\lambda^m = 1$, we call the resulting
preordering a \textbf{standard ample preordering}.  Thus if $\Lambda$
is an ample preordering, the corresponding maximal preordering has the
form $\Lambda := \{\lambda\in \mathbb N^d : \lambda \leq \lambda^m
\}$.  A minimal ample preordering thus consists of a single element,
$\Lambda_m = \{\lambda^m\}$.

Let $\Lambda$ be ample with maximal element $\lambda^m$, and
$\lambda^1,\lambda^2 < \lambda^m$ where $\lambda > \lambda^1$ or
$\lambda^2$ implies $\lambda = \lambda^m$.  Then for $j=1,2$, $\lambda
= \lambda^j + e_{\ell_j}$ for some $e_{\ell_j}$, where the addition is
entrywise.  A preordering $\Lambda_s \subset \Lambda$ with the
property that $\lambda^1$ and $\lambda^2$ are maximal elements in
$\Lambda_s$ is termed a \textbf{nearly ample preordering} under
$\lambda^m$, and a \textbf{standard nearly ample preordering} when
$\lambda^m = 1$.

A simple (indeed, unique) example of a standard nearly ample
preordering when $d = |\Psi| =2$ is $\lambda^1 =(1,0)$, $\lambda^2 =
(0,1)$.  For $d=3$, there are three choices of minimal standard nearly
ample preordering, such as for example, $\lambda^1 =(1,1,0)$,
$\lambda^2 = (1,0,1)$.

\subsection{Kernels and function spaces}
\label{subsec:kern-funct-spac}

Write $\mathcal{L(H)}$ for the bounded linear operators on a Hilbert
space $\mathcal H$, $\mathcal{L(H,K)}$ for the bounded linear
operators mapping between Hilbert spaces $\mathcal H$ and $\mathcal
K$.

Let $\{\sigma_\lambda\}_{\lambda\in\Lambda}$ be a collection of
$n_\lambda\times n_\lambda$ matrix valued functions on $X$ such that
for each $x$,\break $\sup_{\lambda\in\Lambda}\|\sigma_\lambda(x)\| < 1$.
(These will later be the auxiliary test functions.)  Define bounded
functions $E_x$ on $\Lambda$ by $E_x(\lambda) = \sigma_\lambda(x)$.
We use the notation $C_b(\Lambda)$ for the unital $C^*$-algebra
generated by these functions.  This is a finite dimensional algebra of
dimension at most $\sum_\lambda n_\lambda$.  As such, it is isomorphic
to a direct sum of matrix algebras, and consequently, any
representation will be (isomorphic to) a direct sum of identity
representations applied to these matrix algebras.  More specifically,
for $\rho : C_b(\Lambda) \to \mathcal{L(E)}$, there will be orthogonal
projections $P_\lambda$ with orthogonal ranges such that $\mathcal E =
\bigoplus_\lambda \ran P_\lambda \otimes \mathbb C^{n_\lambda}$, and
$\rho(E_x) = \bigoplus_\lambda P_\lambda \otimes \sigma_\lambda(x)$.

Let $\mathcal A$ and $\mathcal B$ be $C^*$-algebras.  A kernel
$\Gamma:X\times X \to \mathcal{L}(A,B)$ is called \textbf{completely
  positive} if for all finite sets $\{x_1,\dots,x_n\} \subset X$,
$\{a_1,\dots,a_n\} \subset \mathcal A$ and $\{b_1,\dots,b_n\} \subset
\mathcal B$,
\begin{equation*}
  \sum_{i,j=1}^n \ip{\Gamma(x_i,x_j)(a_ia_j^*)b_i}{b_j} \geq 0.
\end{equation*}
A theorem due to Bhat, Barreto, Liebscher and
Skeide~\cite[Theorem~3.6]{MR2065240} shows that this is equivalent the
the condition that for finite sets $\{x_1,\dots,x_n\} \subset X$, the
matrix $(\Gamma(x_i,x_j))$ is a completely positive map from
$M_n(\mathcal A)$ to $M_n(\mathcal B)$, and that this is further
equivalent to the existence of a Kolmogorov decomposition for
$\Gamma$.  We state a special case of this suited to our purposes.

\begin{proposition}
  \label{prop:factorization}
  Let $\mathcal H$ be a Hilbert space.  The kernel $\Gamma:X\times X
  \to \mathcal{L}(C_b(\Lambda), \mathcal{L(H)})$ is $($completely$)$
  positive if and only if it has a \textbf{Kolmogorov decomposition};
  that is, there exists a Hilbert space $\mathcal E$, a function
  $\gamma: X \to \mathcal{L(E,H)}$ and a unital $*$-representation
  $\rho:C_b(\Lambda) \to \mathcal{L(E)}$ such that
  \begin{equation*}
    \Gamma(x,y)(fg^*) = \gamma(x)\rho(f) \rho(g)^* \gamma(y)^*
  \end{equation*}
  for all $f,g\in C_b(\Lambda)$.
\end{proposition}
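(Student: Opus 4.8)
The plan is to treat this as the $\mathcal{L(H)}$-valued special case of \cite[Theorem~3.6]{MR2065240} and to give the Kolmogorov/GNS construction directly, in a form tailored to the products $fg^*$ appearing here. Sufficiency is a one-line check: if $\Gamma(x,y)(fg^*)=\gamma(x)\rho(f)\rho(g)^*\gamma(y)^*$, then for any $x_1,\dots,x_n\in X$ and $a_1,\dots,a_n\in C_b(\Lambda)$ the operator matrix $\bigl(\Gamma(x_i,x_j)(a_ia_j^*)\bigr)_{i,j}$ is $\mathbf T\mathbf T^*$, where $\mathbf T$ is the column with entries $T_i=\gamma(x_i)\rho(a_i)\in\mathcal{L(E,H)}$; being of this form it is positive, which is exactly the positivity of $\Gamma$. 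So the work is all in the converse.

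For the converse I would build $\mathcal E$ as follows. Let $\mathcal E_0$ be the algebraic direct sum of copies of $\mathcal H$ indexed by $X\times C_b(\Lambda)$, with elements written as finite sums $\sum_i x_i\otimes f_i\otimes h_i$ (linear in the $\mathcal H$-slot), equipped with the sesquilinear form determined by $\ip{x\otimes f\otimes h}{y\otimes g\otimes k}:=\ip{\Gamma(y,x)(g^*f)h}{k}$; the deliberate ``twist'' $g^*f$ is what will make left multiplication a $*$-representation below, and the form is Hermitian since $\Gamma$ is a $*$-kernel. For $\xi=\sum_i x_i\otimes a_i\otimes h_i$ one computes $\ip{\xi}{\xi}=\sum_{i,j}\ip{\Gamma(x_i,x_j)(a_i^*a_j)h_j}{h_i}$, the pairing of $(h_i)$ with the image of the positive matrix $(a_i^*a_j)=\mathbf a^*\mathbf a$ under the matricial map $\bigl(\Gamma(x_i,x_j)(\cdot)\bigr)$. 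Since every positive element of $M_n(C_b(\Lambda))$ is a finite sum of rank-one matrices of the form $(c_ic_j^*)$, the positivity hypothesis on $\Gamma$ says exactly that $\bigl(\Gamma(x_i,x_j)(\cdot)\bigr)\colon M_n(C_b(\Lambda))\to M_n(\mathcal{L(H)})$ is a positive map, so $\ip{\xi}{\xi}\ge0$; quotienting $\mathcal E_0$ by the null space and completing yields $\mathcal E$.

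Next I would define $\rho(c)$ on $\mathcal E_0$ by $\rho(c)(x\otimes f\otimes h)=x\otimes cf\otimes h$: it is visibly linear, multiplicative and unital, and the twist in the form gives $\ip{\rho(c)\xi}{\eta}=\ip{\xi}{\rho(c^*)\eta}$, hence $\rho(c)^*=\rho(c^*)$ once $\rho$ passes to $\mathcal E$. Writing $\|c\|^2\mathbf 1-c^*c=d^*d$ in $C_b(\Lambda)$ gives $\|c\|^2\ip{\xi}{\xi}-\|\rho(c)\xi\|^2=\|\rho(d)\xi\|^2\ge0$, so $\rho(c)$ indeed descends with $\|\rho(c)\|\le\|c\|$ and $\rho$ is a unital $*$-representation. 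Finally set $\gamma(x)\in\mathcal{L(E,H)}$ through $\gamma(x)^*h:=x\otimes\mathbf 1\otimes h$, which is bounded because $\|\gamma(x)^*h\|^2=\ip{\Gamma(x,x)(\mathbf 1)h}{h}$; then $\rho(f)\rho(g)^*\gamma(y)^*h=\rho(f)\rho(g^*)(y\otimes\mathbf 1\otimes h)=y\otimes fg^*\otimes h$, so for all $h,k\in\mathcal H$, $\ip{\gamma(x)\rho(f)\rho(g)^*\gamma(y)^*h}{k}=\ip{y\otimes fg^*\otimes h}{x\otimes\mathbf 1\otimes k}=\ip{\Gamma(x,y)(fg^*)h}{k}$, the required identity. (Finite-dimensionality of $C_b(\Lambda)$ is not needed for this, though it makes $\rho$ explicit, as in the paragraph preceding the statement.)

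The main obstacle is the positivity step in the second paragraph. The hypothesis is phrased only through the rank-one products $a_ia_j^*$, whereas the GNS form demands $\ip{\xi}{\xi}\ge0$ for arbitrary $\xi$, i.e.\ positivity of the full matricial map on $M_n(C_b(\Lambda))$; bridging this — equivalently, knowing that every positive matrix over $C_b(\Lambda)$ (in particular $(a_i^*a_j)$) is a sum of rank-one positive matrices — is precisely the ingredient supplied by \cite[Theorem~3.6]{MR2065240}. Once that is in hand, everything else (the $*$-representation property of $\rho$, the norm estimate, and the final identity) is routine bookkeeping.
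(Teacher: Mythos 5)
Your construction is correct, and it is worth noting that it supplies something the paper deliberately does not: the paper offers no proof of this proposition at all, simply recording it as the special case $\mathcal B=\mathcal{L(H)}$ of \cite[Theorem~3.6]{MR2065240}. Your easy direction is the standard Gram-type computation (read the displayed positivity condition in the definition in the standard way, i.e.\ as positivity of the block operator matrix $\bigl(\Gamma(x_i,x_j)(a_ia_j^*)\bigr)$ on $\mathcal H^{n}$ --- the placement of $b_i,b_j$ in the paper's display is best read with that convention), and the converse is the usual GNS/Kolmogorov construction, carried out correctly: the twisted form, the left regular $\rho$, the norm bound via $\|c\|^2\mathbf 1-c^*c=d^*d$, and the reproducing identity with $\gamma(x)^*h=x\otimes\mathbf 1\otimes h$ all check out. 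Two small bookkeeping points: the Hermitian symmetry $\Gamma(y,x)(g^*)=\Gamma(x,y)(g)^*$ you use to see the form is Hermitian is not a hypothesis but does follow from positivity (apply the condition to two-point sets and polarize), so derive it rather than assert ``$\Gamma$ is a $*$-kernel''; and finite-dimensionality of $C_b(\Lambda)$ is indeed irrelevant, as you say.

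Where you go astray is the final paragraph's diagnosis of the ``main obstacle''. The GNS form never requires positivity of the full matricial map on $M_n(C_b(\Lambda))$: for $\xi=\sum_i x_i\otimes a_i\otimes h_i$ you computed $\ip{\xi}{\xi}=\sum_{i,j}\ip{\Gamma(x_i,x_j)(a_i^*a_j)h_j}{h_i}$, and the matrix $(a_i^*a_j)$ is itself of the rank-one type appearing in the hypothesis --- just apply the definition with $a_i$ replaced by $a_i^*$. So no decomposition of general positive matrices into rank-one ones is needed (and if it were, it is elementary: $P=Q^*Q$ with $Q=(q_{ij})$ exhibits $P$ as a sum of $n$ matrices $\bigl(q_{ki}^*q_{kj}\bigr)_{i,j}$ --- nothing requiring \cite{MR2065240}). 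The genuine point to attend to instead is that the positivity condition must be invoked with repetitions allowed among the $x_i$ (several summands of $\xi$ may sit over the same point); this is exactly what the word ``completely'' in the definition buys, and it is essential: on a one-point set the repetition-free condition only says $\Gamma(x,x)$ is a positive map $C_b(\Lambda)\to\mathcal{L(H)}$, which, since $C_b(\Lambda)$ may contain matrix-algebra summands, does not imply the Kolmogorov form (think of the transpose map). Once you read the hypothesis this way, your proof is complete and entirely self-contained, which is preferable, since appealing to \cite[Theorem~3.6]{MR2065240} for the key step largely defeats the purpose of giving a direct argument for what is, in the paper, precisely a quoted special case of that theorem.
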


In the case of kernels $k:X\times X \to \mathcal{L(H)}$, which
corresponds to replacing $C_b(\Lambda)$ by $\mathbb C$, it follows
from standard results on completely positive maps, that positivity
implies complete positivity.  The existence of a Kolmogorov
decomposition of positive operator valued kernels is originally due to
Mlak~\cite{MR0222712}.  We use the notation $\mathbb
K_X^+(C_b(\Lambda), \mathcal{L(H)})$ for the set of completely
positive kernels on $X\times X$ with values in $\mathcal L
(C_b(\Lambda), \mathcal{L(H)})$.

For a fixed preordering $\Lambda$, the collection of kernels
\begin{equation*}
  \begin{split}
    \mathcal K_{\Lambda,\mathcal H} := &\left\{ k :X\times X \to
      \mathcal{L(H)} : k\geq 0 \text{ and for each }
      \lambda \in \Lambda,
      \vphantom{\textstyle\prod_{\lambda\ni\lambda_i\neq 0}}\right. \\
    &\hphantom{k :X\times X \to \mathbb \mathcal{L(H)} : }
    \left.\textstyle\prod_{\lambda\ni\lambda_i\neq 0}
      ([1_{\mathcal{L(H)}}] - (\psi_i\otimes
      1_{\mathcal{L(H)}})(\psi_i^*\otimes
      1_{\mathcal{L(H)}}))^{\lambda_i}*k \geq 0\right\},
  \end{split}
\end{equation*}
are termed the \textbf{admissible kernels}.  Here the kernel
$[1_{\mathcal{L(H)}}]$ has all entries equal to $1_{\mathcal{L(H)}}$,
the identity operator on $\mathcal H$, ``$*$'' indicates the pointwise
or Schur product of kernels, and $([1_{\mathcal{L(H)}}] -
(\psi_i\otimes 1_{\mathcal{L(H)}})(\psi_i^*\otimes
1_{\mathcal{L(H)}}))^{\lambda_i}$ is the $\lambda_i$-fold Schur
product of $[1_{\mathcal{L(H)}}] - (\psi_i\otimes
1_{\mathcal{L(H)}})(\psi_i\otimes 1_{\mathcal{L(H)}})^*$.  In the
non-scalar case we interpret this Schur product as follows: for a
kernel $F$ over $X\times X$,
\begin{equation*}
  \left(([1_{\mathcal{L(H)}}] - (\psi_i\otimes
    1_{\mathcal{L(H)}})(\psi_i\otimes 1_{\mathcal{L(H)}})^* *
    F\right)(x,y) := F(x,y) - (\psi_i(x)\otimes
  1_{\mathcal{L(H)}})F(x,y)(\psi_i(y)\otimes 1_{\mathcal{L(H)}})^*.
\end{equation*}
More generally, if $F=ff^*$ and $G=gg^*$ are Kolmogorov decompositions
of two positive kernels over Hilbert spaces $\mathcal F$ and $\mathcal
G$, respectively, then
\begin{equation*}
  F*G(x,y) = (f(x)\otimes g(x))(f(y)\otimes g(y))^*.
\end{equation*}
It is clear that the resulting kernel is positive.

The kernels in $\mathcal K_{\Lambda,\mathcal H}$ are then used to
define the Banach algebra $\HLH$ consisting of those functions
$\varphi : X \to \mathcal{L(H)}$ for which there is a finite constant
$c\geq 0$ such that for all $k\in \mathcal K_{\Lambda,\mathcal H}$,
\begin{equation*}
  (c^2[1_{\mathcal{L(H)}}] - \varphi\varphi^*)*k \geq 0,
\end{equation*}
and $\|\varphi\|$ is defined to be the smallest such $c$.  We call the
resulting algebra the \textbf{Agler algebra} and the norm the
\textbf{Schur-Agler norm}.  Denote the unit ball in this norm by
$\HLHb$.  This is referred to as the \textbf{Schur-Agler class}.  In
case the Agler algebra is isometrically isomorphic to $H^\infty(X)$,
the unit ball is usually simply called the \textbf{Schur class}.  It
is not difficult to see that the function $1_X$ equaling
$1_{\mathcal{L(H)}}$ at all $x$ is in $\HLHb$ since
$[1_{\mathcal{L(H)}}] = 1_X1_X^*$.  If $\mathcal{L(H)} = \mathbb C$,
we write $\HL$ and $\HLb$ for $\HLH$ and $\HLHb$, respectively.

There are obvious modifications of these definitions which we will not
explicitly state in the case of matrix valued test functions.  For
this setting, it will still be the case that the Agler algebra norm
dominates the supremum norm.

For $\varphi \in \HLH$, we can also define a norm by
$\|\varphi\|_\infty := \sup_{x\in X} \|\varphi(x)\|$.  This will in
general be different from the norm defined above.  Furthermore, since
the kernel
\begin{equation*}
  k(y,z) =
  \begin{cases}
    1 & y=z; \\
    0 & \text{otherwise,}
  \end{cases}
\end{equation*}
is admissible, it is apparent that $\|\varphi\|_\infty \leq
\|\varphi\|$.

Two preorderings $\Lambda_1$ and $\Lambda_2$ are \textbf{equivalent
  preorderings} if for all Hilbert spaces $\mathcal H$, $\mathcal
K_{\Lambda_1,\mathcal H} = \mathcal K_{\Lambda_2,\mathcal H}$, and
consequently they generate the same Banach algebras.

\begin{lemma}
  \label{lem:preorderings_are_equivalent}
  Any preordering $\Lambda$ is equivalent to both its minimal
  preordering $\Lambda_m$ and its maximal preordering $\Lambda_M$.
\end{lemma}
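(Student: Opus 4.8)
We need to show that the admissible-kernel cone $\mathcal K_{\Lambda,\mathcal H}$ depends only on the maximal elements of $\Lambda$ (giving equivalence with $\Lambda_m$) and is unchanged if we throw in every $\lambda\le\lambda'$ for $\lambda'\in\Lambda$ (giving equivalence with $\Lambda_M$). Let me think about what the membership condition actually says.

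A positive kernel $k$ lies in $\mathcal K_{\Lambda,\mathcal H}$ iff for every $\lambda\in\Lambda$ the kernel
$$\Bigl(\textstyle\prod_{i:\lambda_i\neq 0}\bigl([1]-(\psi_i\otimes 1)(\psi_i\otimes 1)^*\bigr)^{\lambda_i}\Bigr)*k$$
is positive. Call the operator $k\mapsto ([1]-(\psi_i\otimes 1)(\psi_i\otimes 1)^*)*k$ by $D_i$, so the condition for index $\lambda$ is $\bigl(\prod_i D_i^{\lambda_i}\bigr)k\ge 0$, i.e. $D^\lambda k\ge 0$ in multi-index notation.

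The plan is to reduce both equivalences to a single positivity statement about Schur powers of the kernels $[1_{\mathcal{L(H)}}]-(\psi_i\otimes 1_{\mathcal{L(H)}})(\psi_i\otimes 1_{\mathcal{L(H)}})^{*}$. For $1\le i\le d$ let $D_i$ denote the operation on Hermitian kernels $F:X\times X\to\mathcal{L(H)}$ given by
\begin{equation*}
  (D_iF)(x,y)=F(x,y)-\bigl(\psi_i(x)\otimes 1_{\mathcal{L(H)}}\bigr)F(x,y)\bigl(\psi_i(y)\otimes 1_{\mathcal{L(H)}}\bigr)^{*},
\end{equation*}
and for $\lambda\in\bigoplus_1^d\mathbb N$ set $D^\lambda=D_1^{\lambda_1}\cdots D_d^{\lambda_d}$; since Schur products commute, a positive kernel $k$ lies in $\mathcal K_{\Lambda',\mathcal H}$ precisely when $D^\lambda k\ge 0$ for every $\lambda\in\Lambda'$. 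Because $\psi_i(x)\otimes 1_{\mathcal{L(H)}}$ is a scalar multiple of the identity, a one-line check shows each $D_i$ carries Hermitian kernels to Hermitian kernels and that the $D_i$ mutually commute; in particular $D^\lambda k$ is Hermitian whenever $k$ is.

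Now $\Lambda_m\subseteq\Lambda\subseteq\Lambda_M$, and these three sets share the same maximal elements. Imposing more positivity conditions only shrinks the cone, so $\mathcal K_{\Lambda_M,\mathcal H}\subseteq\KLH\subseteq\mathcal K_{\Lambda_m,\mathcal H}$, and it suffices to prove the reverse inclusion $\mathcal K_{\Lambda_m,\mathcal H}\subseteq\mathcal K_{\Lambda_M,\mathcal H}$. Each $\lambda\in\Lambda_M$ lies below some maximal element $\lambda'$ of $\Lambda$ (hence $\lambda'\in\Lambda_m$), so this inclusion follows from the claim: \emph{if $k\ge 0$ and $D^{\lambda'}k\ge 0$, then $D^\lambda k\ge 0$ for every $\lambda$ with $0\le\lambda\le\lambda'$.} I would prove this by induction on $|\lambda'-\lambda|$. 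The case $\lambda=\lambda'$ is the hypothesis. Otherwise choose a coordinate $i$ with $\lambda_i<\lambda'_i$; the inductive hypothesis applied to $\lambda+e_i\le\lambda'$ gives $D^{\lambda+e_i}k\ge 0$, that is $D_i\bigl(D^\lambda k\bigr)\ge 0$, and $G:=D^\lambda k$ is Hermitian. So everything comes down to the single assertion that $D_iG\ge 0$ forces $G\ge 0$ for a Hermitian kernel $G$.

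To see this, fix $x_1,\dots,x_n\in X$ and write $\mathbf G=(G(x_p,x_q))_{p,q}$, a self-adjoint operator on $\mathcal H^n$, and $\Delta=\mathrm{diag}\bigl(\psi_i(x_1)\otimes 1_{\mathcal{L(H)}},\dots,\psi_i(x_n)\otimes 1_{\mathcal{L(H)}}\bigr)$. Then the hypothesis at these points reads $\mathbf G-\Delta\mathbf G\Delta^{*}\ge 0$, while $\|\Delta\|=\max_p|\psi_i(x_p)|<1$ because $\sup_{\psi\in\Psi}\|\psi(x)\|<1$ at every point and there are finitely many $x_p$. This is a Stein (discrete Lyapunov) equation with a strict contraction, so
\begin{equation*}
  \mathbf G=\sum_{m=0}^{\infty}\Delta^{m}\bigl(\mathbf G-\Delta\mathbf G\Delta^{*}\bigr)(\Delta^{*})^{m},
\end{equation*}
a norm-convergent series of positive operators; hence $\mathbf G\ge 0$. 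As the finite set was arbitrary, $G\ge 0$, which closes the induction and proves the lemma. The step I expect to require the most care — and the one a careful reader will want spelled out — is the purely formal bookkeeping: that the $D_i$ commute, that $D^{\lambda+e_i}k=D_i(D^\lambda k)$, and that restriction to a finite subset intertwines $D_i$ with $\mathbf G\mapsto\mathbf G-\Delta\mathbf G\Delta^{*}$. Each is immediate once one observes that $\psi_i(x)\otimes 1_{\mathcal{L(H)}}$ is scalar, but these are exactly what legitimize reducing the multivariable, higher-power statement to the one-variable, single-power Stein argument; the analytic content is the entirely standard convergence of the Neumann/Stein series under strict contractivity.
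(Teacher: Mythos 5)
Your proof is correct and follows essentially the same route as the paper: both come down to the fact that the operation of Schur-multiplying by $[1]-\psi_i\psi_i^*$ can be inverted while preserving positivity, because $|\psi_i|<1$ pointwise. The paper packages this as Schur multiplication by the positive geometric-series kernel $(1-\psi_i(x)\psi_i(y)^*)^{-1}=\sum_n\psi_i^n(x)\psi_i^n(y)^*$ (cancelling all excess powers in a coordinate at once), while your finite-set Stein series $\sum_m\Delta^m\bigl(\mathbf G-\Delta\mathbf G\Delta^*\bigr)(\Delta^*)^m$ is the same series in matrix form, applied one power at a time via your induction.
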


\begin{proof}
  We prove the lemma when $\mathcal H = \mathbb C$, the other cases
  following in an identical manner.

  It is clear that $\mathcal K_{\Lambda_M} \subseteq \mathcal
  K_\Lambda \subseteq \mathcal K_{\Lambda_m}$, so it suffices to
  ascertain that if $k\in \mathcal K_{\Lambda_m}$ and $\lambda \in
  \Lambda_M$, then
  \begin{equation*}
    \prod_{\lambda\ni\lambda_i\neq 0} (1 -
    \psi_i\psi_i^*)^{\lambda_i}*k \geq 0.
  \end{equation*}

  Choose $\lambda' \in \Lambda_m$ such that $\lambda' \geq \lambda$.
  We may assume that $\lambda' \neq \lambda$, since otherwise there is
  nothing to show.  Hence there is some $i$ such that $p =
  \lambda'(i)- \lambda(i) > 0$.  The kernel $k_{\psi_i}$ with
  \begin{equation*}
    k_{\psi_i}(x,y) = (1-\psi_i(x)\psi_i(y)^*)^{-1} = \sum_n
    \psi_i^n(x)\psi_i^{n*}(y),
  \end{equation*}
  is positive on $X$.  The Schur product of positive kernels is
  positive, so if we set ${\tilde\lambda} = \lambda' - p
  e_{\lambda_i}$ (the arithmetic done in the standard way), we find
  that
  \begin{equation*}
    \prod_{\tilde\lambda\ni\lambda_i\neq 0} (1 -
    \psi_i\psi_i^*)^{\lambda_i}*k 
    = k_{\psi_i}^p * \prod_{{\tilde\lambda} \ni \lambda_i
      \neq 0} (1 - \psi_i\psi_i^*)^{\lambda_i}*k \geq 0.
  \end{equation*}
  Continuing through those $i$ such that $\lambda'(i) > \lambda(i)$,
  after a finite number of steps we achieve the desired result.
\end{proof}

\begin{lemma}
  \label{lem:preorderings_order_adm_kers}
  Given two preorderings $\Lambda_1$ and $\Lambda_2$, if for the
  corresponding maximal preorderings $\Lambda_{1,M} \subseteq
  \Lambda_{2,M}$, then $\mathcal K_{\Lambda_2,\mathcal H} \subseteq
  \mathcal K_{\Lambda_1,\mathcal H}$.  Consequently,
  $H^\infty(X,\mathcal K_{\Lambda_1,\mathcal H}) \subseteq
  H^\infty(X,\mathcal K_{\Lambda_2,\mathcal H})$, with the norm of any
  element in the first algebra greater than or equal to the value of
  the norm of that element in the second.
\end{lemma}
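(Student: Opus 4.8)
The plan is to reduce everything to the defining inclusion on admissible kernels, exactly as in the proof of Lemma~\ref{lem:preorderings_are_equivalent}. First I would observe that the hypothesis $\Lambda_{1,M}\subseteq\Lambda_{2,M}$ together with the fact (from the paragraph preceding Lemma~\ref{lem:preorderings_are_equivalent}) that any preordering is equivalent to its maximal preordering lets us assume without loss of generality that $\Lambda_1=\Lambda_{1,M}$ and $\Lambda_2=\Lambda_{2,M}$ are themselves maximal, so that $\Lambda_1\subseteq\Lambda_2$ as subsets of $\bigoplus_1^d\mathbb N$. As in the earlier lemma, it suffices to treat $\mathcal H=\mathbb C$, the general $\mathcal{L(H)}$-valued case being formally identical.

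Next I would verify the kernel inclusion $\mathcal K_{\Lambda_2,\mathcal H}\subseteq\mathcal K_{\Lambda_1,\mathcal H}$. Let $k\in\mathcal K_{\Lambda_2}$; then $k\geq 0$ and for every $\lambda\in\Lambda_2$ we have $\prod_{\lambda\ni\lambda_i\neq 0}(1-\psi_i\psi_i^*)^{\lambda_i}*k\geq 0$. Since $\Lambda_1\subseteq\Lambda_2$, the same positivity holds in particular for every $\lambda\in\Lambda_1$, which is precisely the requirement for $k\in\mathcal K_{\Lambda_1}$. (Here I should be slightly careful: $\Lambda_1$ as given may not literally be contained in $\Lambda_2$, only $\Lambda_{1,M}\subseteq\Lambda_{2,M}$; but by the normalization in the previous paragraph we have replaced $\Lambda_j$ by $\Lambda_{j,M}$, and Lemma~\ref{lem:preorderings_are_equivalent} guarantees this does not change the associated kernel sets, hence not the algebras or norms either.)

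Finally I would translate the kernel inclusion into the statement about algebras. Suppose $\varphi\in H^\infty(X,\mathcal K_{\Lambda_1,\mathcal H})$ with Schur--Agler norm $c=\|\varphi\|_{\Lambda_1}$. By definition, for all $k\in\mathcal K_{\Lambda_1,\mathcal H}$ we have $(c^2[1_{\mathcal{L(H)}}]-\varphi\varphi^*)*k\geq 0$. Since $\mathcal K_{\Lambda_2,\mathcal H}\subseteq\mathcal K_{\Lambda_1,\mathcal H}$, this inequality holds in particular for all $k\in\mathcal K_{\Lambda_2,\mathcal H}$, so $\varphi\in H^\infty(X,\mathcal K_{\Lambda_2,\mathcal H})$ and its norm there is $\leq c$. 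Thus $H^\infty(X,\mathcal K_{\Lambda_1,\mathcal H})\subseteq H^\infty(X,\mathcal K_{\Lambda_2,\mathcal H})$ with $\|\varphi\|_{\Lambda_2}\leq\|\varphi\|_{\Lambda_1}$, as claimed.

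This argument is essentially bookkeeping once the right normalization is in place; there is no serious obstacle. The only point requiring a little care is the reduction to maximal preorderings at the start — one must invoke Lemma~\ref{lem:preorderings_are_equivalent} to know that passing from $\Lambda_j$ to $\Lambda_{j,M}$ leaves both the admissible kernels and the resulting normed algebra unchanged, so that the hypothesis stated in terms of maximal preorderings is exactly what is needed. Everything else is a direct unwinding of the definitions of admissible kernel and Schur--Agler norm.
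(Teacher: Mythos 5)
Your proof is correct and follows essentially the same route as the paper: both reduce the hypothesis on maximal preorderings to the kernel containment via the equivalence $\mathcal K_{\Lambda_j,\mathcal H}=\mathcal K_{\Lambda_{j,M},\mathcal H}$ (you cite Lemma~\ref{lem:preorderings_are_equivalent} as a black box, while the paper simply repeats its Schur-product argument for each $\lambda_1\in\Lambda_{1,M}$ dominated by an element of $\Lambda_{2,M}$), and then the algebra inclusion and norm inequality follow by unwinding the definition of the Schur--Agler norm exactly as you do.
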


\begin{proof}
  If $\Lambda_{1,M} \subseteq \Lambda_{2,M}$ , then every element
  $\lambda_1 \in \Lambda_{1,M}$ is less than or equal to a maximal
  element $\lambda_2^M \in \Lambda_{2,M}$, and so the first statement
  follows by arguing as in the last lemma.  The other statements are
  immediate from the definitions of the admissible kernels and
  corresponding algebras.
\end{proof}

We say that a kernel $\tilde k$ is \textbf{subordinate to} another
kernel $k$ if there is a positive kernel $F$ such that $\tilde k =
k*F$.  It is clear that if $G$ is a difference of positive kernels
such that $G*k \geq 0$ and $\tilde k$ is subordinate to $k$, then
$G*\tilde k \geq 0$.  Hence if $k$ is an admissible kernel, any kernel
subordinate to $k$ is also admissible.

The admissible kernels are particularly simple when we are dealing
with standard ample preorderings, since they are all subordinate to a
single kernel.

\begin{lemma}
  \label{lem:adm_kernels_for_ample_po}
  Let $\Lambda$ be a standard ample preordering over $\Psi =
  \{\psi_1,\dots,\psi_d\}$.  Then every kernel in $\mathcal
  K_{\Lambda,\mathcal H}$ is subordinate to
  \begin{equation*}
    k_s(x,y) := \left( 1_{\mathcal{L(H)}}\otimes \prod_{j=1}^d
    (1-\psi_j(x)\psi_j(y)^*)^{-1} \right).
  \end{equation*}
\end{lemma}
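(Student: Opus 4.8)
The plan is to pass to the minimal preordering and then exhibit the subordinating positive kernel explicitly as the ``Schur quotient'' of $k$ by $k_s$.

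Since $\Lambda$ is a \emph{standard} ample preordering over $\Psi = \{\psi_1,\dots,\psi_d\}$, its unique maximal element is $1 = (1,\dots,1)$, so its minimal preordering is the singleton $\Lambda_m = \{1\}$; Lemma~\ref{lem:preorderings_are_equivalent} then gives $\mathcal K_{\Lambda,\mathcal H} = \mathcal K_{\Lambda_m,\mathcal H}$. Unravelling the definition of admissibility for the single element $1 \in \Lambda_m$, a kernel $k$ lies in $\mathcal K_{\Lambda,\mathcal H}$ exactly when $k \geq 0$ and the single kernel $F := \prod_{j=1}^d\bigl([1_{\mathcal{L(H)}}] - (\psi_j\otimes 1_{\mathcal{L(H)}})(\psi_j\otimes 1_{\mathcal{L(H)}})^*\bigr)*k$ is positive. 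This $F$ is my candidate subordinating kernel, so it remains only to check that $k = k_s*F$.

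For this, write $g_j(x,y) = (1-\psi_j(x)\psi_j(y)^*)^{-1} = \sum_n \psi_j^n(x)\psi_j^{n*}(y)$, a positive scalar kernel on $X$ (the series converges because $\sup_x\|\psi_j(x)\| < 1$), and $G_j := 1_{\mathcal{L(H)}}\otimes g_j$, so that $k_s = G_1*\cdots*G_d$ is positive and $k_s*F$ is defined. Because the $\psi_j$ are complex valued, the operators appearing are scalar multiples of $1_{\mathcal{L(H)}}$, and $(1-\psi_j\psi_j^*)$ cancels its inverse pointwise; hence $\bigl([1_{\mathcal{L(H)}}] - (\psi_j\otimes 1_{\mathcal{L(H)}})(\psi_j\otimes 1_{\mathcal{L(H)}})^*\bigr)*G_j = [1_{\mathcal{L(H)}}]$ for each $j$. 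Using that the Schur product is commutative and associative and that $[1_{\mathcal{L(H)}}]$ is its identity, I pair each $G_j$ with the matching factor of $F$ to obtain $k_s*F = [1_{\mathcal{L(H)}}]*k = k$, which is the claim.

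There is no real obstacle here; the only thing requiring care is the operator-valued bookkeeping, namely verifying that $[1_{\mathcal{L(H)}}]$ genuinely acts as a unit for the Schur product, that the Schur product of $(1-\psi_j\psi_j^*)$ with $(1-\psi_j\psi_j^*)^{-1}$ equals $[1_{\mathcal{L(H)}}]$, and that one may freely commute and reassociate the factors. All of this rests on the standing hypothesis that the test functions are scalar valued, so that the relevant operators commute; without that one would have to track the order of the factors.
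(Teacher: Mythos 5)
Your proof is correct and follows essentially the same route as the paper: admissibility at the maximal element $1$ gives the positive kernel $F=\prod_j([1_{\mathcal{L(H)}}]-\psi_j\psi_j^*)*k$, and since $k_s$ is the Schur-product inverse of $1_{\mathcal{L(H)}}\otimes\prod_j(1-\psi_j\psi_j^*)$, one gets $k=k_s*F$, i.e.\ subordination. The detour through the minimal preordering via Lemma~\ref{lem:preorderings_are_equivalent} is harmless but unnecessary, since $1\in\Lambda$ already forces $F\geq 0$ directly from the definition of admissibility.
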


\begin{proof}
  Obviously $k_s$ is an admissible kernel, since it is the inverse
  with respect to the Schur product of $\left(
    1_{\mathcal{L(H)}}\otimes \prod_{j=1}^d
    (1-\psi_j(x)\psi_j(y)^*)\right)$.  Hence if $k$ is an admissible
  kernel, so that $(\prod_{j=1}^d (1-(1_{\mathcal{L(H)}}\otimes
  \psi_j(x))(1_{\mathcal{L(H)}}\otimes \psi_j(y)^*)) k(x,y)) =
  (F(x,y)) \geq 0$, then $k$ is seen to be subordinate to $k_s$.
\end{proof}

The lemma implies that when $\Lambda$ is a standard ample preordering,
it suffices to check membership in $\HLH$ against the single kernel
$k_s$.  There is an obvious version of this for ample preorderings as
well, but since we will primarily be interested in the standard case,
we do not state it.

\begin{corollary}
  \label{cor:po_for_polydisk}
  For $X = \mathbb D^d$ with $\Psi = \{z_1,\dots,z_d\}$ the coordinate
  functions and $\Lambda$ the standard ample preordering, $\HLH =
  H^\infty(\mathbb D^d, \mathcal{L(H)})$, and all admissible kernels
  are subordinate to $k_s\otimes 1_{\mathcal{L(H)}}$, where $k_s$ is
  the Szeg\H{o} kernel
  \begin{equation*}
    k_s(z,w) = \prod_{i = 1}^d (1 - z_i w_i^*)^{-1}.
  \end{equation*}
\end{corollary}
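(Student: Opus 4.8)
The plan is to show that Corollary~\ref{cor:po_for_polydisk} follows directly from Lemma~\ref{lem:adm_kernels_for_ample_po} together with the classical realization theorem of Agler for the polydisk. First I would observe that for $X = \mathbb D^d$ with $\Psi = \{z_1,\dots,z_d\}$ the coordinate functions and $\Lambda$ the standard ample preordering (so $\Lambda_M = \{\lambda : \lambda \leq 1\} = \{0,1\}^d$), the kernel $k_s$ of Lemma~\ref{lem:adm_kernels_for_ample_po} becomes exactly the Szeg\H{o} kernel $k_s(z,w) = \prod_{i=1}^d (1-z_iw_i^*)^{-1}$ tensored with $1_{\mathcal{L(H)}}$, since $\psi_j(z) = z_j$. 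By Lemma~\ref{lem:adm_kernels_for_ample_po}, every admissible kernel in $\KLH$ is subordinate to this $k_s \otimes 1_{\mathcal{L(H)}}$, which gives the last assertion of the corollary immediately, and also tells us that membership in $\HLHb$ is equivalent to the single condition $(c^2[1_{\mathcal{L(H)}}] - \varphi\varphi^*)*(k_s\otimes 1_{\mathcal{L(H)}}) \geq 0$.

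Next I would identify this single-kernel condition with the standard Schur--Agler condition on the polydisk. Writing out what $(c^2[1] - \varphi\varphi^*)*(k_s\otimes 1) \geq 0$ means: the kernel $(z,w)\mapsto (c^2 - \varphi(z)\varphi(w)^*)\prod_{i=1}^d(1-z_iw_i^*)^{-1}$ is positive semidefinite as an $\mathcal{L(H)}$-valued kernel. Multiplying through by the (scalar, positive) inverse Schur factor, this is equivalent to asking that $(z,w)\mapsto c^2 - \varphi(z)\varphi(w)^*$ admit an Agler decomposition with respect to the kernels $1 - z_iw_i^*$; equivalently, to $\varphi/c$ lying in the Schur--Agler class of the polydisk in the classical sense. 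The heart of the matter is then the known fact (Agler's theorem, together with the $d\leq 2$ case being And\^o's theorem, but valid for all $d$ in this direction) that the Schur--Agler class over $\mathbb D^d$ coincides isometrically with the unit ball of $H^\infty(\mathbb D^d,\mathcal{L(H)})$ \emph{in the operator-valued setting} — this uses that positivity of $(c^2-\varphi(z)\varphi(w)^*)k_s(z,w)$ is equivalent to $\varphi$ having a transfer-function (unitary colligation) representation with the $z_i$ entering diagonally, which in turn forces $\|\varphi\|_\infty \leq c$, and conversely any $\varphi$ with $\|\varphi\|_\infty \leq c$ on $\mathbb D^d$ yields such a representation via the standard Sz.-Nagy--Foias/Agler construction. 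Hence $\|\varphi\| = \|\varphi\|_\infty$ for all $\varphi$, so $\HLH = H^\infty(\mathbb D^d,\mathcal{L(H)})$ isometrically.

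I expect the main obstacle to be purely expository: pinning down precisely which form of the classical realization theorem is being invoked and citing it correctly, since for $d \geq 3$ the equality of the Schur--Agler class with the $H^\infty$ ball over the \emph{standard ample} preordering is exactly the content of the Grinshpan--Kaliuzhnyi-Verbovetskyi--Vinnikov--Woerdeman result referenced in the introduction (the point being that the preordering, not merely the quadratic module generated by the $1-z_iz_i^*$, is what recovers all of $H^\infty$). So rather than reprove that, the clean route is to note that Lemma~\ref{lem:adm_kernels_for_ample_po} reduces admissibility to the single Szeg\H{o} kernel, and then cite \cite{MR2502431} (together with \cite{MR2861551}) for the identification of the resulting class with the $H^\infty(\mathbb D^d,\mathcal{L(H)})$ ball; the operator-valued version follows by the standard tensoring argument since the Szeg\H{o} kernel already carries the $1_{\mathcal{L(H)}}$ factor. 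The remaining routine check — that the norm defined by the single kernel agrees with $\|\cdot\|_\infty$ rather than merely dominating it — is where one invokes the transfer-function representation guaranteed by that result.
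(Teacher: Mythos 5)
Your opening step is exactly the paper's: Lemma~\ref{lem:adm_kernels_for_ample_po} reduces membership in $\HLHb$ to the single condition $(c^2[1_{\mathcal{L(H)}}]-\varphi\varphi^*)*(k_s\otimes 1_{\mathcal{L(H)}})\geq 0$, and that also gives the subordination statement. The gap is in how you identify this single-kernel class with the unit ball of $H^\infty(\mathbb D^d,\mathcal{L(H)})$. Dividing by the Szeg\H{o} kernel gives a decomposition $c^2-\varphi(z)\varphi(w)^*=\Gamma(z,w)\prod_{i=1}^d(1-z_iw_i^*)$ with a single positive kernel times the \emph{full product} (the preordering), not a decomposition $\sum_j\Gamma_j(z,w)(1-z_jw_j^*)$ over the individual factors (the quadratic module). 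Your claim that this is ``equivalently'' membership of $\varphi/c$ in the classical Schur--Agler class, that the single-kernel positivity is equivalent to a transfer-function realization with $Z(z)=\sum_j z_jP_j$ entering diagonally, and that the classical Schur--Agler class coincides isometrically with the $H^\infty$ ball for all $d$, are all false for $d\geq 3$: by the Kaijser--Varopoulos example (cited right after Theorem~\ref{thm:Aglers-realization}) the Schur--Agler class is a \emph{proper} subset of the $H^\infty$ ball when $d>2$, and correspondingly not every $\varphi$ with $\|\varphi\|_\infty\leq c$ admits the diagonal-$Z$ realization you invoke. So the step as written fails exactly where the $d\geq 3$ subtlety lives.

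Your fallback of citing \cite{MR2502431} and \cite{MR2861551} does not repair this, and it misplaces where the content lies. As the paper itself emphasizes, those results \emph{presuppose} that $\varphi$ is already a Schur-class (in particular analytic) function and then produce decompositions; here one must show that an arbitrary function $\varphi:\mathbb D^d\to\mathcal{L(H)}$ satisfying the Szeg\H{o}-kernel positivity is analytic and bounded by $c$, and conversely that every function in the $H^\infty$ ball satisfies that positivity. What the corollary actually rests on --- and what the paper's one-line proof invokes as an ``observation'' --- is the elementary classical fact that $(c^2[1_{\mathcal{L(H)}}]-\varphi\varphi^*)*(k_s\otimes 1_{\mathcal{L(H)}})\geq 0$ says precisely that $\varphi$ is a multiplier of the vector-valued Hardy space $H^2(\mathbb D^d)\otimes\mathcal H$ of norm at most $c$: a multiplier is analytic (apply it to the constant function $1\in H^2(\mathbb D^d)$) and pointwise bounded by $c$ (test against kernel functions, or use the diagonal kernel as in Subsection~\ref{subsec:kern-funct-spac}), and conversely any bounded analytic $\varphi$ is a multiplier with norm $\|\varphi\|_\infty$ (pass to boundary values on the torus, or approximate by $\varphi(rz)$). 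Neither Agler's realization theorem nor the Grinshpan--Kaliuzhnyi-Verbovetskyi--Vinnikov--Woerdeman results are needed for this identification, and they cannot supply the analyticity step your argument leaves open.
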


\begin{proof}
  This follows from the observation that $\varphi$ is in the unit ball
  of $H^\infty(\mathbb D^d, \mathcal{L(H)})$ if and only if
  $([1_{\mathcal{L(H)}}] - \varphi\varphi^*)*(k_s\otimes
  1_{\mathcal{L(H)}}) \geq 0$, where $k_s = \prod_{j=1}^d
  (1-z_jz_j^*)^{-1}$ is the Szeg\H{o} kernel for the polydisk.
\end{proof}

\subsection{The realization theorem for the Schur class of the disk
  and Agler's generalization}
\label{subsec:real-theor-schur}

The now classical realization theorem is an amalgam of various
results, all characterizing the Schur class for the unit disk $\mathbb
D$ (that is the unit ball of $H^\infty(\mathbb D)$).  We state here
the operator valued generalization (see, for
example,~\cite{MR1637941}).

\begin{theorem}[Classical Realization Theorem]
  \label{thm:classical-realization}
  Let $\varphi : \mathbb D \to \mathcal{L(H)}$.  The following are
  equivalent:
  \begin{enumerate}
  \item[$($SC$\,)$] $([1_{\mathcal{L(H)}}]-\varphi \varphi^*)*k_s \geq
    0$, where $k_s(z,w) = (1-zw^*)^{-1}$ is the Szeg\H{o} kernel, or
    equivalently, $\varphi \in H_1^\infty(\mathbb D,\mathcal{L(H)})$;
    that is, $\varphi$ is in the Schur class;
  \item[$($AD$\,)$] There is a positive kernel $\Gamma:\mathbb D \times
    \mathbb D \to \mathcal{L(H)}$ such that $1_{\mathcal{L(H)}} -
    \varphi(z) \varphi(w)^* = \Gamma(z,w)(1-zw^*)$;
  \item[$($TF$\,)$] There is a Hilbert space $\mathcal E$ and a unitary
    operator $U = \begin{pmatrix} A & B \\ C & D \end{pmatrix}$ in
    $\mathcal{L}(\mathcal{E} \oplus \mathcal{H})$ such that
    \begin{equation*}
      \varphi(z) = D+Cz(I-Az)^{-1}B;
    \end{equation*}
  \item[$($vN$\,)$] For every $T\in \mathcal{L(K)}$, $\mathcal K$ a
    Hilbert space, with $\|T\| < 1$, $\|\varphi(T)\| \leq 1$.
  \end{enumerate}
\end{theorem}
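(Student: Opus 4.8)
The plan is to prove the Classical Realization Theorem by establishing the cycle of implications $(\mathrm{SC})\Rightarrow(\mathrm{AD})\Rightarrow(\mathrm{TF})\Rightarrow(\mathrm{vN})\Rightarrow(\mathrm{SC})$, which is the standard route and keeps each arrow self-contained. For $(\mathrm{SC})\Rightarrow(\mathrm{AD})$, I would observe that $(\mathrm{SC})$ says exactly that the kernel $\Gamma(z,w):=\bigl(1_{\mathcal{L(H)}}-\varphi(z)\varphi(w)^*\bigr)*k_s(z,w)\cdot(1-zw^*)$ — more precisely, $\Gamma(z,w)=\bigl([1_{\mathcal{L(H)}}]-\varphi\varphi^*\bigr)(z,w)\,k_s(z,w)$ — is positive, and since $k_s(z,w)(1-zw^*)=1$, one gets $1_{\mathcal{L(H)}}-\varphi(z)\varphi(w)^*=\Gamma(z,w)(1-zw^*)$ with $\Gamma\geq 0$. (One should note positivity of the operator-valued kernel here is equivalent to complete positivity, as remarked after Proposition~\ref{prop:factorization}.)

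The implication $(\mathrm{AD})\Rightarrow(\mathrm{TF})$ is the heart of the argument, the lurking (colligation) construction. Given $1_{\mathcal{L(H)}}-\varphi(z)\varphi(w)^*=\Gamma(z,w)(1-zw^*)$ with $\Gamma$ positive, take a Kolmogorov decomposition $\Gamma(z,w)=\gamma(z)\gamma(w)^*$ with $\gamma(z)\in\mathcal{L}(\mathcal E,\mathcal H)$ (Proposition~\ref{prop:factorization} in the scalar-coefficient case, or Mlak's theorem). Rearranging the identity gives
\begin{equation*}
  \gamma(z)\gamma(w)^* + \varphi(z)\varphi(w)^* = z\,\gamma(z)\gamma(w)^* w^* + 1_{\mathcal{L(H)}},
\end{equation*}
which says that the map
\begin{equation*}
  \begin{pmatrix} w^*\gamma(w)^* \\ 1_{\mathcal H} \end{pmatrix} \longmapsto \begin{pmatrix} \gamma(w)^* \\ \varphi(w)^* \end{pmatrix}
\end{equation*}
extends to an isometry $V$ from (the closure of) a subspace of $\mathcal E\oplus\mathcal H$ to $\mathcal E\oplus\mathcal H$; after enlarging $\mathcal E$ if necessary, extend $V$ to a unitary $U=\begin{pmatrix} A & B\\ C & D\end{pmatrix}$ on $\mathcal E\oplus\mathcal H$. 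Reading off the four block equations from $U^*\binom{\gamma(w)^*}{\varphi(w)^*}=\binom{w^*\gamma(w)^*}{1_{\mathcal H}}$ and solving for $\gamma(w)^*$ (using that $I-Az$ is invertible for $|z|<1$ since $\|A\|\leq 1$) yields $\varphi(z)=D+Cz(I-Az)^{-1}B$, exactly as in $(\mathrm{TF})$.

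For $(\mathrm{TF})\Rightarrow(\mathrm{vN})$, given a strict contraction $T\in\mathcal{L(K)}$, substitute $T$ into the transfer function: $\varphi(T)=I\otimes D + (I\otimes C)(T\otimes I)\bigl(I-(A\otimes I)(T\otimes I)\bigr)^{-1}(I\otimes B)$ acting on $\mathcal K\otimes\mathcal H$ (Neumann series converges as $\|T\|<1$, $\|A\|\le 1$); a direct computation with the unitarity relations $A^*A+C^*C=I$, $A^*B+C^*D=0$, $B^*B+D^*D=I$ shows $I-\varphi(T)^*\varphi(T)\ge 0$, hence $\|\varphi(T)\|\le 1$. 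Finally $(\mathrm{vN})\Rightarrow(\mathrm{SC})$: one wants to deduce positivity of $\bigl([1_{\mathcal{L(H)}}]-\varphi\varphi^*\bigr)*k_s$ on an arbitrary finite set $\{z_1,\dots,z_n\}\subset\mathbb D$ from the von~Neumann inequality. The standard device is to pick the backward shift (adjoint of the compression of the shift) on the model space $\mathcal H^2\ominus B\mathcal H^2$ for a finite Blaschke product $B$ vanishing at the $z_i$, or more simply to apply $(\mathrm{vN})$ to $T=rM_z$ on a finitely supported reproducing-kernel space and let $r\uparrow 1$; then $\|\varphi(T)\|\le 1$ translates into the required kernel inequality at the points $z_i$. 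I expect this last implication $(\mathrm{vN})\Rightarrow(\mathrm{SC})$ — choosing the right finite-dimensional test operator and checking that $\|\varphi(T)\|\le 1$ unwinds to the Szeg\H{o}-kernel positivity — to be the most delicate bookkeeping, and the colligation extension in $(\mathrm{AD})\Rightarrow(\mathrm{TF})$ to be the conceptual crux.
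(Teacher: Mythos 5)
You should note first that the paper itself offers no proof of Theorem~\ref{thm:classical-realization}: it is quoted as the classical result, with the reference \cite{MR1637941}, so there is nothing internal to compare against line by line. Your cycle $(\mathrm{SC})\Rightarrow(\mathrm{AD})\Rightarrow(\mathrm{TF})\Rightarrow(\mathrm{vN})\Rightarrow(\mathrm{SC})$ is the standard argument, and it is essentially the one-variable shadow of what the paper proves in general later: in Theorem~\ref{thm:realization_I} the step $(\mathrm{SC})\Rightarrow(\mathrm{AD})$ requires a cone-separation argument (Hahn--Banach over finite sets plus Kurosh's theorem), whereas here, as the paper remarks after the statement, $(\mathrm{AD})$ is a trivial restatement of $(\mathrm{SC})$, exactly as you observe; your lurking-isometry step and the unitarity computation behind $(\mathrm{TF})\Rightarrow(\mathrm{vN})$ are the same devices the paper uses in Theorems~\ref{thm:realization_I} and~\ref{thm:realization_II}.

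Two points need tightening. First, with your orientation of the isometry, extending $V:\bigl(w^*\gamma(w)^*h,\,h\bigr)\mapsto\bigl(\gamma(w)^*h,\,\varphi(w)^*h\bigr)$ to a unitary $U=\begin{pmatrix} A&B\\ C&D\end{pmatrix}$ and solving the block equations yields $\varphi(z)=D^*+B^*z(1-A^*z)^{-1}C^*$, i.e.\ the transfer function of $U^*$ rather than literally $D+Cz(I-Az)^{-1}B$; this is only a relabelling (declare the colligation to be $U^*$, or extend the isometry in the reverse direction, as in the proof of Theorem~\ref{thm:realization_I}), but as written the formula does not come out as claimed. Second, and more substantively, in $(\mathrm{vN})\Rightarrow(\mathrm{SC})$ the compression of the backward shift to $\mathrm{span}\{k_{z_1},\dots,k_{z_n}\}$ has norm exactly $1$ as soon as $n\geq 2$, so the rescaling by $r<1$ cannot be dropped, and the inequality you then obtain is the kernel positivity for the values $\varphi(rz_i)$; letting $r\uparrow 1$ requires $\varphi(rz_i)\to\varphi(z_i)$, which is not automatic for a bare function $\varphi:\mathbb D\to\mathcal{L(H)}$. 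This is where the interpretation of $\varphi(T)$ matters: the paper defines $\varphi(T)$ through a transfer-function realization (see the remark following the theorem), which already forces $\varphi$ to be analytic and makes the limit harmless; you should state explicitly which interpretation of $\varphi(T)$ you are using and note that it supplies the continuity needed to pass to the limit.
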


The last item is a version of von~Neumann's inequality.  If
$\varphi\in A(\mathbb D,\mathcal{L(H)})$, the operator valued version
of the disk algebra, then we may instead simply assume that $\|T\|
\leq 1$ in von~Neumann's inequality.  We interpret $\varphi(T)$ as
$D+(C\otimes T)(I-(A\otimes T))^{-1}B$.  The third item is referred to
as a transfer function representation, and $(\mathcal H,U)$ is called
a unitary colligation.  The terminology comes from systems theory.
The second item is called the Agler decomposition.  In this case it is
a trivial restatement of the first item.  It becomes less trivial in
the next theorem, which in the scalar version is due to Jim
Agler~\cite{MR1207393} (see~\cite{MR1637941} for the operator valued
case).  We state it in terms of preorderings.

\begin{theorem}[Agler's Realization Theorem for the polydisk]
  \label{thm:Aglers-realization}
  Fix $d\in\mathbb N$, $\Lambda^o = \{e_j\}_{j=1}^d$ and let $\varphi
  : \mathbb D^d \to \mathcal{L(H)}$.  The following are equivalent:
  \begin{enumerate}
  \item[$($SC$\,)$] $([1_{\mathcal{L(H)}}]-\varphi \varphi^*)*k \geq 0$
    for all $k\in \mathcal K_{\Lambda^o,\mathcal H}$, or equivalently
    $\varphi \in \HLHb$; that is, $\varphi$ is in the Schur-Agler
    class;
  \item[$($AD$\,)$] There are positive kernels $\Gamma_j:\mathbb D
    \times \mathbb D \to \mathcal{L(H)}$, $j=1,\dots, d$, such that
    $1_{\mathcal{L(H)}} - \varphi(z) \varphi(w)^* =
    \sum_j\Gamma_j(z,w) (1 - z_ jw_j^*)$;
  \item[$($TF$\,)$] There is a Hilbert space $\mathcal E$ and a unitary
    operator $U = \begin{pmatrix} A & B \\ C & D \end{pmatrix}$ in
    $\mathcal{L}(\mathcal{E} \oplus \mathcal{H})$ such that for $z\in
    \mathbb D^d$,
    \begin{equation*}
      \varphi(z) = D+CZ(z)(I-AZ(z))^{-1}B,
    \end{equation*}
    where $Z(z) = \sum_j z_j P_j$ and $\sum_j P_j =
    1_{\mathcal{L(E)}}$;
  \item[$($vN$\,)$] For every $d$-tuple of commuting contractions $T =
    (T_1,\dots, T_d)$ with $T_j\in \mathcal{L(K)}$, $\mathcal K$ a
    Hilbert space, with $\|T_j\| < 1$, we have $\|\varphi(T)\| \leq
    1$.
  \end{enumerate}
\end{theorem}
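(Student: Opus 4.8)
The plan is to establish the cycle of implications $(\mathrm{SC}) \Rightarrow (\mathrm{AD}) \Rightarrow (\mathrm{TF}) \Rightarrow (\mathrm{vN}) \Rightarrow (\mathrm{SC})$, which is the standard route for realization theorems of this type. First I would record that by Lemma~\ref{lem:preorderings_are_equivalent} the preordering $\Lambda^o = \{e_j\}_{j=1}^d$ is already its own minimal preordering, so the admissible kernels $k\in\mathcal K_{\Lambda^o,\mathcal H}$ are precisely those positive $\mathcal{L(H)}$-valued kernels $k$ for which $(1 - \psi_i\psi_i^*)*k = (1-z_i z_i^*)*k\geq 0$ for each coordinate function $z_i$; this is the concrete object against which $(\mathrm{SC})$ is tested.

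For $(\mathrm{SC})\Rightarrow(\mathrm{AD})$ I would run the usual Hahn--Banach/cone-separation argument. Consider the cone $\mathcal C$ of kernels of the form $\sum_j \Gamma_j(z,w)(1-z_jw_j^*)$ with each $\Gamma_j$ a positive $\mathcal{L(H)}$-valued kernel (more precisely, work finitely: fix a finite subset $F\subset\mathbb D^d$ and work with the corresponding finite matrices over $\mathcal{L(H)}$, taking a weak-$*$ limit at the end). If $1_{\mathcal{L(H)}} - \varphi\varphi^*$ restricted to $F$ were not in this closed cone, a separating functional would produce, after the Bhat--Barreto--Liebscher--Skeide/GNS identification, a positive kernel $k$ on $F$ annihilating $\mathcal C$ — hence with $(1-z_jz_j^*)*k\geq 0$ for every $j$, so $k\in\mathcal K_{\Lambda^o,\mathcal H}$ — yet with $\langle (1_{\mathcal{L(H)}}-\varphi\varphi^*)*k,\cdot\rangle < 0$, contradicting $(\mathrm{SC})$. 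A compactness/normal-family argument patches the finite decompositions into a global one. For $(\mathrm{AD})\Rightarrow(\mathrm{TF})$ I would take Kolmogorov decompositions $\Gamma_j(z,w) = H_j(z)H_j(w)^*$ with $H_j(z):\mathcal E_j\to\mathcal H$, set $\mathcal E = \bigoplus_j \mathcal E_j$, $P_j$ the projection onto $\mathcal E_j$, $Z(z) = \sum_j z_j P_j$, and $H(z) = (H_1(z),\dots,H_d(z))$; rearranging the identity $1_{\mathcal{L(H)}} - \varphi(z)\varphi(w)^* = H(z)(I - Z(z)Z(w)^*)H(w)^*$ as $\varphi(z)\varphi(w)^* + H(z)Z(z)Z(w)^*H(w)^* = 1_{\mathcal{L(H)}} + H(z)H(w)^*$ exhibits a Gram-matrix equality between the column maps $\binom{\varphi(w)^*}{Z(w)^*H(w)^*}$ and $\binom{1}{H(w)^*}$, so the map sending one to the other extends to an isometry, then (enlarging $\mathcal E$ if necessary) to a unitary $U = \begin{pmatrix} A & B \\ C & D\end{pmatrix}$; reading off the four block equations and solving the resulting linear system gives $\varphi(z) = D + CZ(z)(I - AZ(z))^{-1}B$, the Neumann series converging since $\|Z(z)\| = \max_j|z_j| < 1$ and $\|A\|\le 1$. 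The implication $(\mathrm{TF})\Rightarrow(\mathrm{vN})$ is a direct substitution: for commuting strict contractions $T = (T_1,\dots,T_d)$ on $\mathcal K$, define $Z(T) = \sum_j T_j\otimes P_j$ acting on $\mathcal K\otimes\mathcal E$; since $\|Z(T)\| < 1$ and $U$ is unitary, the standard transfer-function norm estimate (a one-line computation with the $2\times 2$ block form) yields $\|\varphi(T)\| = \|D + CZ(T)(I - AZ(T))^{-1}B\|\le 1$.

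Finally, $(\mathrm{vN})\Rightarrow(\mathrm{SC})$ is the familiar model-operator step: given an admissible kernel $k\in\mathcal K_{\Lambda^o,\mathcal H}$ with Kolmogorov decomposition on its reproducing-kernel Hilbert space $\mathcal H_k$, one checks that multiplication by the coordinate function $z_j$ defines a contraction $M_{z_j}$ on $\mathcal H_k$ — this is exactly where admissibility, i.e. $(1-z_jz_j^*)*k\ge 0$, is used — and these commute; applying $(\mathrm{vN})$ to (a strict contraction scaling of) the tuple $(M_{z_1},\dots,M_{z_d})$ and unwinding via the reproducing property gives $([1_{\mathcal{L(H)}}]-\varphi\varphi^*)*k\ge 0$. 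I expect the main obstacle to be the first implication $(\mathrm{SC})\Rightarrow(\mathrm{AD})$: one must set up the duality on a finite-dimensional slice carefully so that the separating functional is genuinely represented by an \emph{admissible} kernel (not merely a positive one), handle the non-scalar $\mathcal{L(H)}$-valued bookkeeping via Proposition~\ref{prop:factorization}, and then pass from the finite decompositions to a global Agler decomposition by a normal-families compactness argument — this last limiting step, ensuring the $\Gamma_j$ can be chosen coherently across an exhausting sequence of finite sets, is the technically delicate point.
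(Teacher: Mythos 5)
Your proposal is correct and follows essentially the same route the paper itself relies on: the theorem is stated there without proof (cited to Agler and to Ball--Trent for the operator-valued case), and the paper's own proofs of its general realization theorems (Theorem~\ref{thm:realization_I} and its refinements) use precisely your strategy --- separation of $[1_{\mathcal{L(H)}}]-\varphi\varphi^*$ from the closed cone of Agler decompositions over finite subsets, identification of the separating functional with an admissible kernel, a Kolmogorov/lurking-isometry step for the transfer function, and the scaling $r\nearrow 1$ with approximation for the von~Neumann direction. The only blemish is a wording slip in the separation step: the functional is \emph{nonnegative} on the cone rather than annihilating it (nonnegativity against all kernels $\Gamma_j*(1-z_jw_j^*)$ is exactly what makes the associated kernel admissible), and the conclusion you then draw is the correct one.
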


We interpret $\varphi(T)$ in a similar manner as in the single
variable case.  As before, when $\varphi \in \ALH$, we may instead
simply assume $\|T_j\| \leq 1$ for all $j$.  Various examples,
including that of Kaijser and Varopoulos~\cite{MR0355642}, show that
when $d>2$, the Schur-Agler class is a strict subset of the unit ball
of $H^\infty(\mathbb D^d,\mathcal{L(H)})$.  On the other hand,
And\^o's theorem implies that when $d=2$, $\HLH$ and $H^\infty(\mathbb
D^d,\mathcal{L(H)})$ coincide.

One of the most useful aspects of the realization theorem is that it
allows us to do interpolation~\cite{MR1637941}.  Since it particularly
suits our needs, we state it in the setting of what is commonly known
as tangential Nevanlinna-Pick interpolation.

\begin{theorem}
  \label{thm:Agler-Pick-interp-polydisk}
  Fix $d\in\mathbb N$, $\Lambda^o = \{e_j\}_{j=1}^d$ and domain
  $\mathbb D^d$.  Let $\Omega$ be a subset of $\,\mathbb D^d$, and
  suppose that for Hilbert spaces $\mathcal H$ and $\tilde{\mathcal
    H}$ there are functions $a,b:\Omega \to
  \mathcal{L}(\tilde{\mathcal{H}} ,\mathcal{H})$ such that for any
  admissible kernel $k$ for $\HLH$ restricted to $\Omega$,
  \begin{equation*}
    (aa^*-bb^*)*k \geq 0.
  \end{equation*}
  Then there is a function $\varphi \in H_1^\infty({\mathcal
    K}_{\Lambda^o,\tilde{\mathcal{H}}})$ such that $b = \varphi a$,
  where the multiplication is pointwise.
\end{theorem}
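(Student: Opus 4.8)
The plan is to deduce the statement from an Agler-decomposition argument of exactly the sort underlying the equivalence (SC)$\Leftrightarrow$(AD) in Theorem~\ref{thm:Aglers-realization}, but applied to the kernel $aa^*-bb^*$ in place of $[1_{\mathcal{L(H)}}]-\varphi\varphi^*$. First I would treat finite $\Omega$: produce positive kernels $\Gamma_1,\dots,\Gamma_d:\Omega\times\Omega\to\mathcal{L(H)}$ with
\begin{equation*}
  a(x)a(y)^*-b(x)b(y)^*=\sum_{j=1}^d\bigl(1-z_j(x)z_j(y)^*\bigr)\,\Gamma_j(x,y),\qquad x,y\in\Omega ,
\end{equation*}
and then run a lurking-isometry argument to read off a unitary colligation whose transfer function $\varphi$ satisfies $b=\varphi a$ on $\Omega$ and, being a transfer function, lies in $H_1^\infty(\mathbb D^d,\mathcal K_{\Lambda^o,\tilde{\mathcal H}})$ by the (TF)$\Rightarrow$(SC) direction of Theorem~\ref{thm:Aglers-realization}. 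The general case would follow from this by a compactness argument.

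For the decomposition (finite $\Omega$) I would argue by cone separation. Work in the real vector space of Hermitian $\mathcal{L(H)}$-valued kernels on $\Omega$ with the topology of pointwise weak-operator convergence, and let $\mathcal C$ be the convex cone generated by the kernels $\bigl(1-z_j(\cdot)z_j(\cdot)^*\bigr)*\Gamma$ for $j=1,\dots,d$ and $\Gamma$ a positive kernel on $\Omega$. Since $\Gamma=\bigl(1-z_j(\cdot)z_j(\cdot)^*\bigr)*\bigl(\Gamma*k_j\bigr)$ with $k_j(x,y)=(1-z_j(x)z_j(y)^*)^{-1}$ positive, every positive kernel already belongs to $\mathcal C$; moreover, evaluating a representation of $aa^*-bb^*$ as an element of $\mathcal C$ on the diagonal and using $\max_{x\in\Omega}|z_j(x)|<1$ gives an a~priori norm bound on the $\Gamma_j$, so $\mathcal C$ is closed. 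If $aa^*-bb^*\notin\mathcal C$, Hahn--Banach supplies a continuous real-linear functional $L$ with $L\ge 0$ on $\mathcal C$ and $L(aa^*-bb^*)<0$. Non-negativity of $L$ on the positive kernels means $L$ is represented by a positive kernel $k_L$ on $\Omega$ (a Kolmogorov decomposition, cf.\ Proposition~\ref{prop:factorization}), and non-negativity of $L$ on each $\bigl(1-z_j(\cdot)z_j(\cdot)^*\bigr)*\Gamma$ then forces $\bigl(1-z_j(\cdot)z_j(\cdot)^*\bigr)*k_L\ge 0$; thus $k_L$ is an admissible kernel for $\HLH$ restricted to $\Omega$, while $L(aa^*-bb^*)<0$ says precisely that $(aa^*-bb^*)*k_L\not\ge 0$, contradicting the hypothesis. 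Hence $aa^*-bb^*\in\mathcal C$ and the $\Gamma_j$ exist.

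The lurking-isometry step is routine. Writing $\Gamma_j=G_jG_j^*$ with $G_j:\Omega\to\mathcal L(\mathcal E_j,\mathcal H)$ and rearranging the decomposition as
\begin{equation*}
  a(x)a(y)^*+\sum_j z_j(x)G_j(x)G_j(y)^*z_j(y)^*=b(x)b(y)^*+\sum_j G_j(x)G_j(y)^* ,
\end{equation*}
the two Gram kernels coincide, so the natural assignment determines an isometry between two subspaces of $\mathcal E\oplus\tilde{\mathcal H}$, where $\mathcal E=\bigoplus_j\mathcal E_j$; after enlarging $\mathcal E$ if necessary it extends to a unitary $U=\begin{pmatrix} A & B \\ C & D \end{pmatrix}$, and solving the resulting block equations—using that $I-AZ(x)$ is invertible since $\|Z(x)\|=\max_j|z_j(x)|<1$ with $Z(x)=\bigoplus_j z_j(x)1_{\mathcal E_j}$—yields $\varphi(x)=D+CZ(x)(I-AZ(x))^{-1}B$ with $b=\varphi a$ on $\Omega$, whence $\varphi$ lies in the Schur--Agler class by (TF)$\Rightarrow$(SC). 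For infinite $\Omega$, apply the finite case to every finite $F\subseteq\Omega$ to get a nonempty set $\mathcal V_F\subseteq H_1^\infty(\mathbb D^d,\mathcal K_{\Lambda^o,\tilde{\mathcal H}})$ of interpolants for the restricted data; each $\mathcal V_F$ is closed in the pointwise weak-operator topology on the Schur--Agler unit ball, which is bounded in the supremum norm and hence compact in that topology, and $\mathcal V_{F_1}\cap\cdots\cap\mathcal V_{F_k}=\mathcal V_{F_1\cup\cdots\cup F_k}\neq\varnothing$, so $\bigcap_F\mathcal V_F\neq\varnothing$ and any member is the required $\varphi$.

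I expect the cone-separation step to be the main obstacle: one must pin down a topology in which $\mathcal C$ is closed—this is what forces the reduction to finite $\Omega$ together with the a~priori bound on the $\Gamma_j$—and in which the continuous dual cone of $\mathcal C$ is precisely the set of admissible kernels, so that the separating functional genuinely returns an admissible kernel violating the hypothesis. The lurking-isometry construction and the passage to infinite $\Omega$ are then standard.
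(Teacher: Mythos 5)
Your proposal is correct and takes essentially the same route as the paper: the paper's proof is precisely the ``reworking of the realization theorem'' you carry out, namely a cone-separation argument giving the Agler decomposition of $aa^*-bb^*$ on $\Omega$, a lurking isometry producing a transfer function $\varphi$ with $b=\varphi a$, and the observation that the transfer function formula is defined on all of $\mathbb D^d$ so that $(TF)\Rightarrow(SC)$ places $\varphi$ in the Schur--Agler unit ball. The only cosmetic difference is that you pass from finite subsets of $\Omega$ to general $\Omega$ via compactness of the Schur--Agler ball in the pointwise weak-operator topology, whereas the paper's realization-theorem machinery does this with a Kurosh-type inverse-limit argument on the decompositions; both are standard and interchangeable here.
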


The proof is essentially a reworking of the proof of the realization
theorem, giving a function $\varphi$ over $\Omega$ with a transfer
function representation such that $b = \varphi a$.  The transfer
function representation immediately extends to all of $\mathbb D^d$,
and so by the realization theorem, $\varphi$ extends to a function in
the unit ball of $H^\infty(\mathbb D^d,{\mathcal K}_{\Lambda^o,
  \tilde{\mathcal{H}}})$.

The identification of $\HLH$ and $H^\infty(\mathbb
D^d,\mathcal{L(H)})$ when $d=2$ in the realization theorem uses a
version of And\^o's theorem~\cite{MR0155193} for $\ALH$ (the standard
version of And\^o's theorem corresponds to the case when $\mathcal H =
\mathbb C$), as well as a theorem of Arveson's~\cite{MR1668582}.

\begin{theorem}[And\^o's theorem]
  \label{thm:Andos-theorem-for-ALH}
  Let $\pi: A(\mathbb D^2,\mathcal{L(H)}) \to \mathcal{L(K)}$ or $\pi:
  H^\infty(\mathbb D^2,\mathcal{L(H)}) \to \mathcal{L(K)}$ be a unital
  representation with the property that
  $\|\pi(1_{\mathcal{L(H)}}\otimes z_j)\| \leq 1$ or
  $\|\pi(1_{\mathcal{L(H)}}\otimes z_j)\| < 1$, respectively, for
  $j=1,2$.  Then $\pi$ dilates to a representation $\tilde\pi$ such
  that $\pi(1_{\mathcal{L(H)}}\otimes z_j)$ is unitary, and
  consequently, $\pi$ is completely contractive.
\end{theorem}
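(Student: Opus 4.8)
The plan is to reduce the statement about representations to the classical operator form of And\^o's theorem. First I would unpack what the hypothesis says: a unital representation $\pi : A(\mathbb D^2,\mathcal{L(H)}) \to \mathcal{L(K)}$ restricts to a unital homomorphism on the copy of $\mathcal{L(H)}$ sitting inside as constant functions, and sets $T_j := \pi(1_{\mathcal{L(H)}}\otimes z_j)$ for $j=1,2$, which are commuting contractions (resp.\ strict contractions) that moreover commute with $\pi(A\otimes 1)$ for every $A\in\mathcal{L(H)}$. So the data is essentially a unital $*$-representation $\sigma$ of $\mathcal{L(H)}$ on $\mathcal K$ together with a commuting pair of contractions in the commutant $\sigma(\mathcal{L(H)})'$. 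The classical And\^o theorem~\cite{MR0155193} produces a Hilbert space $\tilde{\mathcal K}\supseteq \mathcal K$ and commuting unitaries $U_1,U_2\in\mathcal{L}(\tilde{\mathcal K})$ with $T_j = P_{\mathcal K} U_j|_{\mathcal K}$ and, crucially, $P_{\mathcal K}U_1^{m}U_2^{n}|_{\mathcal K} = T_1^{m}T_2^{n}$ for all $m,n\geq 0$ (the power dilation property, which And\^o's construction delivers for a commuting pair).

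The second step is to carry the ambient representation $\sigma$ of $\mathcal{L(H)}$ along. Since And\^o's dilation is built, via an intertwining/Sz.-Nagy--Foias construction, out of functional calculus applied to $T_1,T_2$, and these operators already lie in $\sigma(\mathcal{L(H)})'$, one can arrange that $\tilde{\mathcal K}$ carries a unital $*$-representation $\tilde\sigma$ of $\mathcal{L(H)}$ extending $\sigma$ (dilate $\sigma$ trivially by ampliation on the added directions, or equivalently use that the minimal dilation space is generated over $\mathcal K$ by words in $U_1^{\pm},U_2^{\pm}$, on which $\mathcal{L(H)}$ acts compatibly) with $U_1,U_2\in\tilde\sigma(\mathcal{L(H)})'$. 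Now define $\tilde\pi$ on polynomials $p(z_1,z_2)$ with coefficients in $\mathcal{L(H)}$ by sending $A\otimes z_1^{m}z_2^{n}\mapsto \tilde\sigma(A)U_1^{m}U_2^{n}$; this is a unital $*$-homomorphism on the (commutative-in-the-$z$'s) $*$-algebra generated by $\tilde\sigma(\mathcal{L(H)})$ and the unitaries $U_1,U_2$, hence a completely contractive representation of $C(\mathbb T^2,\mathcal{L(H)})$ (or its restriction), and the power dilation identity gives $P_{\mathcal K}\tilde\pi(A\otimes z_1^{m}z_2^{n})|_{\mathcal K} = \sigma(A)T_1^{m}T_2^{n} = \pi(A\otimes z_1^{m}z_2^{n})$. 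So $\tilde\pi$ compresses to $\pi$ on polynomials, and by density (in the sup norm for $A(\mathbb D^2,\mathcal{L(H)})$, or using the weak-$*$ topology and a routine limiting argument in the $H^\infty$ case with strict contractions, where functional calculus of $U_1,U_2$ is unproblematic) it compresses to $\pi$ on all of the algebra.

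Complete contractivity of $\pi$ is then immediate: for any matrix $[\varphi_{k\ell}]\in M_r(A(\mathbb D^2,\mathcal{L(H)}))$ we have $\|[\pi(\varphi_{k\ell})]\| = \|[P_{\mathcal K}\tilde\pi(\varphi_{k\ell})|_{\mathcal K}]\| \leq \|[\tilde\pi(\varphi_{k\ell})]\| \leq \|[\varphi_{k\ell}]\|_{M_r(C(\mathbb T^2,\mathcal{L(H)}))} = \|[\varphi_{k\ell}]\|$, the last step because $\tilde\pi$ factors through a $*$-representation of a $C^*$-algebra (and $\mathbb T^2$ is the Shilov boundary of the bidisk). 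The main obstacle I anticipate is the bookkeeping in the second step — verifying that And\^o's dilation can be performed \emph{inside the commutant of $\sigma(\mathcal{L(H)})$}, so that the ambient representation genuinely extends. This is where one must either inspect And\^o's construction directly or invoke a commutant-lifting style argument; in the $H^\infty$ case one additionally needs care with weak-$*$ continuity, which is why the hypothesis there is the strict inequality $\|T_j\|<1$ (placing the $T_j$, and hence the whole functional calculus, safely inside the open bidisk and making the transfer/Cayley manipulations weak-$*$ continuous). Arveson's theorem~\cite{MR1668582} is invoked to identify the relevant $C^*$-envelope and justify that contractivity of the $\mathbb T^2$-representation on the disk algebra suffices.
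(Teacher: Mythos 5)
Your overall strategy (restrict $\pi$ to constants to get $\sigma$, set $T_j=\pi(1_{\mathcal{L(H)}}\otimes z_j)$, invoke the classical And\^o theorem, compress, and get complete contractivity because the dilated representation factors through a $*$-representation of $C(\mathbb T^2,\mathcal{L(H)})$) is the same in spirit as the paper's, but the step you yourself flag as the main obstacle is a genuine gap, and the shortcuts you sketch for it do not close it. You need the And\^o unitaries to lie in the commutant of an extension $\tilde\sigma$ of $\sigma$; but (a) extending $\sigma$ ``trivially by ampliation on the added directions'' gives no reason why $U_1,U_2$ should commute with the extended action, and (b) defining $\tilde\sigma(A)$ on vectors $w\,k$ ($w$ a word in $U_1^{\pm},U_2^{\pm}$, $k\in\mathcal K$) by $w\,\sigma(A)k$ is not obviously well defined: an And\^o dilation is not in general a regular dilation, so compressions of mixed words are not determined by $(T_1,T_2)$, and the compatibility needed for well-definedness is exactly the commutant statement you are trying to prove. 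The paper sidesteps this entirely: it fixes an orthonormal basis of $\mathcal H$, applies the \emph{scalar} And\^o theorem once to $(T_1,T_2)$, and then evaluates operator-valued $\varphi$ at $U=(U_1,U_2)$ entrywise, i.e.\ $\varphi(U)=(\varphi_{\alpha\beta}(U))$ on $\tilde{\mathcal K}\otimes\mathcal H$, so that $\varphi(T)=P_{\mathcal K\otimes\mathcal H}\,\varphi(U)|_{\mathcal K\otimes\mathcal H}$ follows from the scalar power-dilation identity in each entry; no commutant version of And\^o's construction is required, and complete contractivity comes from the easy direction of Arveson's result.

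There is a second gap in your treatment of the $H^\infty(\mathbb D^2,\mathcal{L(H)})$ case. Your plan is to pass from polynomials to general $\varphi$ by ``a routine limiting argument'' in the weak-$*$ topology, but the hypothesis gives no continuity whatsoever for $\pi$ (no weak, strong, or weak-$*$ continuity is assumed), so density of polynomials does not let you transport the compression identity, and this step would fail as written. This is precisely why the paper argues differently there: since $\|\pi^{(n)}(1_{\mathcal{L}(\mathcal H^n)}\otimes z_j)\|=\|\pi(1_{\mathcal{L(H)}}\otimes z_j)\|<1$ for every ampliation, the $d=2$ case of Agler's realization theorem (its von~Neumann item, valid for arbitrary representations that are \emph{strictly} contractive on the coordinates, with no continuity hypothesis) gives contractivity of each $\pi^{(n)}$ directly, hence complete contractivity of $\pi$; the dilation then comes from the Arveson--Stinespring machinery rather than from a limiting argument.
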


\begin{proof}
  For any $F\in \mathcal{L(H)}$, the constant function $F\otimes 1$ is
  obviously in $A(\mathbb D^2,\mathcal{L(H)})$.  Thus $\pi$ restricted
  to the constant functions induces a unital representation of
  $\mathcal{L(H)}$, and since $\mathcal{L(H)}$ is a $C^*$-algebra,
  the induced representation is contractive.

  Now suppose that $\pi: A(\mathbb D^2,\mathcal{L(H)}) \to
  \mathcal{L(K)}$ with $\|\pi(1_{\mathcal{L(H)}}\otimes z_j)\| \leq
  1$, $j=1,2$.  Let $\{e_\alpha\}$ be an orthonormal basis for
  $\mathcal H$.  Define an operator $\eta_{\alpha\beta} :
  \mathcal{L(H)} \to \mathbb C$ by $\eta_{\alpha\beta}(F) := \ip{F
    e_\alpha}{e_\beta}$.  Note that $\eta(F) :=
  {\left(\eta_{\alpha\beta}(F)\right)}_{\alpha\beta} = F$.

  Let $\varphi \in A(\mathbb D^2,\mathcal{L(H)})$ and define
  $\varphi_{\alpha\beta} \in A(\mathbb D^2)$ (the scalar valued
  bidisk algebra) by $\varphi_{\alpha\beta}(z) =
  \eta_{\alpha\beta}(\varphi(z))$.  By the standard form of And\^o's
  theorem, there is a pair of commuting unitary operators $U =
  (U_1,U_2)$ on $\mathcal L(\tilde{\mathcal K})$ such that for all
  $\alpha, \beta$, $\varphi_{\alpha\beta}(T) = P_\mathcal{K}
  \varphi_{\alpha\beta}(U) |_\mathcal{K}$.  Thus
  \begin{equation*}
    \begin{split}
      \varphi(T) = & \eta\otimes 1_{\mathcal{L(K)}} (\varphi(T)) =
      {\left(\varphi_{\alpha\beta}(T)\right)}_{\alpha\beta} \\
      = & {\left(P_\mathcal{K} \varphi_{\alpha\beta}(U)
          |_\mathcal{K} \right)}_{\alpha\beta} =
      {\left(P_\mathcal{K} (\eta_{\alpha\beta}\otimes 1_{\mathcal
            L(\tilde{\mathcal K})})(\varphi(U)) |_\mathcal{K}
        \right)}_{\alpha\beta} \\
      = & P_{\mathcal{K} \otimes \mathcal{H}} \varphi(U)
    |_{\mathcal{K} \otimes \mathcal{H}}.
    \end{split}
  \end{equation*}
  The easy direction of a result of Arveson's~\cite{MR1668582} or the
  spectral mapping theorem and a bit of work then shows that $\pi$ is
  completely contractive.

  Next, suppose that $\pi: H^\infty(\mathbb D^2,\mathcal{L(H)}) \to
  \mathcal{L(K)}$ and $\|\pi(1_{\mathcal{L(H)}}\otimes z_j)\| < 1$,
  $j=1,2$.  Let $n\in\mathbb N$.  Then $H^\infty(\mathbb
  D^2,\mathcal{L(H)}) \otimes M_n(\mathbb C) = H^\infty(\mathbb
  D^2,\mathcal{L}(\mathcal{H}^n))$ and for $j=1,2$,
  \begin{equation*}
    \left\| \pi^{(n)}(1_{\mathcal{L}(\mathcal{H}^n)} \otimes z_j)
    \right\| =
    \left \| \bigoplus_1^n \pi (1_{\mathcal{L(H)}}) \right\| = 
    \left \| \pi (1_{\mathcal{L(H)}}) \right\| < 1.
  \end{equation*}
  It then follows from Theorem~\ref{thm:Aglers-realization} that
  $\pi^{(n)}$ is contractive, and so $\pi$ is completely contractive.
\end{proof}

Another fundamental theorem is due to Brehmer~\cite{MR0131169}.  It
states that a $d$-tuple of commuting contractions satisfying an extra
positivity condition dilates to commuting unitary operators.

\begin{theorem}[Brehmer's theorem]
  \label{thm:Brehmers-theorem}
  Let $T = (T_1,\dots,T_d)$ be a $d$-tuple of commuting contractions
  on a Hilbert space $\mathcal H$ satisfying
  \begin{equation*}
    \prod_1^d (1-T_jT_j^*) \geq 0,
  \end{equation*}
  where the product is in the hereditary sense $($that is, adjoints on
  the right$)$.  Then there is a Hilbert space $\tilde{\mathcal H}$
  containing $\mathcal H$ and a $d$-tuple of commuting unitaries $U =
  (U_1,\dots,U_d)$ such that for any polynomial $p$ over $\mathbb
  C^d$, $p(T) = P_{\mathcal H} p(U) |_\mathcal H$.
\end{theorem}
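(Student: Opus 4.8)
The natural plan is to obtain the dilation from Naimark's dilation theorem, applied to a positive definite $\mathcal{L(H)}$-valued kernel on the additive group $\mathbb Z^d$ that extends the analytic moments of $T$. Concretely, for $n\in\mathbb Z^d$ write $n=n_+-n_-$ with $n_\pm\in\mathbb N^d$ of disjoint support and put $L(n):=T^{n_+}(T^{n_-})^*\in\mathcal{L(H)}$, so that $L(0)=1_{\mathcal{L(H)}}$, $L(-n)=L(n)^*$ and $L(n)=T^n$ for $n\ge 0$. The whole statement then follows once one knows that $L$ is positive definite, that is, $\sum_{i,j}\ip{L(m_i-m_j)h_j}{h_i}\ge 0$ for all finite families $\{m_i\}\subset\mathbb Z^d$, $\{h_i\}\subset\mathcal H$: applying the GNS construction to the (hence positive semidefinite) sesquilinear form $\ip{f}{g}:=\sum_{m,n}\ip{L(n-m)f(m)}{g(n)}$ on finitely supported $\mathcal H$-valued functions on $\mathbb Z^d$, together with the translation action of $\mathbb Z^d$, yields a Hilbert space $\tilde{\mathcal H}\supseteq\mathcal H$ and a commuting $d$-tuple of unitaries $U=(U_1,\dots,U_d)$ (the images of the standard generators) with $P_{\mathcal H}U^n|_{\mathcal H}=L(n)$ for all $n$; specialising to $n\ge 0$ and taking linear combinations gives $p(T)=P_{\mathcal H}p(U)|_{\mathcal H}$ for every polynomial $p$.

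The substance of the argument is therefore the positive definiteness of $L$, and this is exactly where the hereditary positivity hypothesis enters. Since $L$ is a Toeplitz kernel, one need only test positivity of the operator matrices $\big(L(m-n)\big)_{m,n\in[0,N]}$ over finite boxes $[0,N]=\prod_j\{0,\dots,N_j\}$; translating so that the positive and negative parts of the indices have disjoint support and then stripping off the coordinates one at a time (using that the several unilateral shift directions commute), the resulting quadratic form is re-expressed through the hereditary operators $\prod_{j\in S}(1-T_jT_j^*)$, $S\subseteq\{1,\dots,d\}$. Their positivity---for $S=\{1,\dots,d\}$ precisely the displayed hypothesis, and more generally the form in which Brehmer's condition is habitually imposed---is exactly what makes the telescoping close. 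The same content can be packaged operator-algebraically: define the unital selfadjoint map $\Phi$ on the pluriharmonic trigonometric polynomials by $\Phi(p+\bar q):=p(T)+q(T)^*$, verify with the hypothesis that $\Phi$ is completely positive, extend it to $C(\mathbb T^d)$ by the Arveson extension theorem, and dilate by Stinespring; the minimal Stinespring dilation of the coordinate multiplications then produces the commuting unitaries.

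I expect the bookkeeping in that positivity computation to be the main obstacle---arranging the multi-variable telescoping so that the alternating signs produced by expanding $\prod_j(1-T_jT_j^*)$ recombine correctly, and confirming that the stated hypothesis is exactly what is needed, with nothing more. The remaining pieces---the GNS/Naimark construction, a minimality reduction, and the verification of the intertwining identity $p(T)=P_{\mathcal H}p(U)|_{\mathcal H}$---are routine. It is worth noting that for $d=2$ the hypothesis carries no content, the conclusion reducing to And\^o's theorem (Theorem~\ref{thm:Andos-theorem-for-ALH}), so the real force of the statement lies in $d\ge 3$; and that the sharpest formulations of the result sit most comfortably inside the transfer function description of the Schur class of the polydisk developed later in this paper.
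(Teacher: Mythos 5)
The paper offers no proof of this statement at all: it is quoted as a classical result with a citation to Brehmer \cite{MR0131169}, so your proposal can only be measured against the standard argument from the literature, which is indeed the one you outline (positive definiteness of a Toeplitz kernel on $\mathbb Z^d$ plus Naimark, or equivalently a completely positive map on the pluriharmonic polynomials plus Arveson--Stinespring). Measured against Brehmer's actual theorem that is fine; measured against the statement as printed here, your outline has a genuine gap at its central step.

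The gap is that positive definiteness of $L(n)=T^{n_+}(T^{n_-})^*$ does \emph{not} follow from the single displayed hypothesis $\prod_{j=1}^d(1-T_jT_j^*)\geq 0$. By Naimark, positive definiteness of $L$ is equivalent to the existence of a \emph{regular} unitary dilation, hence (Brehmer; Sz.-Nagy--Foia\c{s}, Sec.~I.9) to the hereditary positivity of $\prod_{j\in S}(1-T_jT_j^*)$ for \emph{every} subset $S\subseteq\{1,\dots,d\}$, and these subset conditions are strictly stronger than the one displayed. Concretely, take $T_1=T_2=\begin{pmatrix}0&1\\0&0\end{pmatrix}$ and $T_3=1$: the full hereditary product is $0$, so the stated hypothesis holds, but $1-T_1T_1^*-T_2T_2^*+(T_1T_2)(T_1T_2)^*=\mathrm{diag}(-1,1)$, so the $S=\{1,2\}$ condition fails and $L$ is not positive definite. (This tuple is not a counterexample to the theorem itself --- apply And\^o to $(T_1,T_2)$ and set $U_3=1$ --- but it kills your positivity claim.) Your phrase ``and more generally the form in which Brehmer's condition is habitually imposed'' is exactly where the missing all-subsets hypotheses are smuggled in, and the assertion that ``the stated hypothesis is exactly what is needed, with nothing more'' is false; the same objection applies verbatim to your second packaging, since complete positivity of $\Phi$ on the pluriharmonic trigonometric polynomials is again equivalent to the regular-dilation/all-subsets positivity. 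So what you have sketched is the standard proof of Brehmer's theorem in its classical form (all subset conditions assumed, regular dilation concluded), which is presumably what the paper intends by its citation; to prove the literal single-condition statement you would need a genuinely different idea, because the Naimark/telescoping route provably cannot close under that hypothesis alone. One further caution: your aside that for $d=2$ ``the hypothesis carries no content'' is misleading --- the two-variable hereditary product is not automatically positive (the example above shows this); what is true is that the conclusion for $d=2$ holds for all commuting pairs, by And\^o's theorem.
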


Because polynomials in $p[\mathbb C^d]\otimes \mathcal{L(H)}$ are
weakly dense in $H^\infty(\mathbb D^d,\mathcal{L(H)})$, the same
reasoning as in the operator generalization of And\^{o}'s theorem,
coupled with Arveson's result, gives an alternate version of Brehmer's
theorem.

\begin{theorem}
  \label{thm:Brehmers-theorem-II}
  Let $\pi: A(\mathbb D^d,\mathcal{L(H)}) \to \mathcal{L(K)}$ or $\pi:
  H^\infty(\mathbb D^d,\mathcal{L(H)}) \to \mathcal{L(K)}$ be a unital
  representation with the property that
  \begin{equation*}
    \prod_{i=1}^d \left(1 -
      \pi(z_i\otimes 1_{\mathcal{L(H)}})\pi(z_i \otimes
      1_{\mathcal{L(H)}})^*\right) \geq 0,
  \end{equation*}
  where the product is hereditary $($that is, adjoints on the
  right$)$, is either positive or strictly positive, respectively.
  Also assume that either $\|\pi(1_{\mathcal{L(H)}}\otimes z_j)\| \leq
  1$ or $\|\pi(1_{\mathcal{L(H)}}\otimes z_j)\| < 1$, respectively.
  Then $\pi$ dilates to a representation
  $\tilde\pi$ with $\tilde\pi(1_{\mathcal{L(H)}}\otimes z_j)$
  unitary, and as a consequence, $\pi$ is completely contractive.
\end{theorem}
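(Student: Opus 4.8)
The plan is to mimic the argument used for the operator-valued And\^o theorem (Theorem~\ref{thm:Andos-theorem-for-ALH}) but with Brehmer's theorem (Theorem~\ref{thm:Brehmers-theorem}) in place of the classical And\^o theorem, and then to bootstrap the $H^\infty$ case from the disk-algebra case exactly as in the proof of Theorem~\ref{thm:Andos-theorem-for-ALH}. First I would treat $\pi:A(\mathbb D^d,\mathcal{L(H)})\to\mathcal{L(K)}$ under the hypothesis that $\prod_{i=1}^d\bigl(1-\pi(z_i\otimes 1_{\mathcal{L(H)}})\pi(z_i\otimes 1_{\mathcal{L(H)}})^*\bigr)\ge 0$ (hereditary) and $\|\pi(1_{\mathcal{L(H)}}\otimes z_j)\|\le 1$. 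Restricting $\pi$ to the constant functions $F\otimes 1$ gives a unital (hence contractive) $*$-representation of $\mathcal{L(H)}$ on $\mathcal K$, so we may regard $\mathcal K=\tilde{\mathcal K}\otimes\mathcal H$ with $\pi(F\otimes 1)=1\otimes F$, and then with $\eta_{\alpha\beta}$ as in the proof of Theorem~\ref{thm:Andos-theorem-for-ALH} write $T_i:=(\eta_{\alpha\beta}\otimes 1)(\pi(z_i\otimes 1_{\mathcal{L(H)}}))$, a $d$-tuple of commuting contractions on $\tilde{\mathcal K}$. The hereditary positivity hypothesis on $\pi$ is precisely the statement that $\prod_1^d(1-T_iT_i^*)\ge 0$ in the hereditary sense, so Brehmer's theorem applies and yields a Hilbert space $\tilde{\mathcal K}'\supseteq\tilde{\mathcal K}$ and commuting unitaries $U=(U_1,\dots,U_d)$ with $p(T)=P_{\tilde{\mathcal K}}\,p(U)\,|_{\tilde{\mathcal K}}$ for every polynomial $p$.

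Next I would transfer this back through the coefficient-extraction trick. For $\varphi\in A(\mathbb D^d,\mathcal{L(H)})$ write $\varphi_{\alpha\beta}(z)=\eta_{\alpha\beta}(\varphi(z))\in A(\mathbb D^d)$; approximating $\varphi_{\alpha\beta}$ uniformly on $\mathbb D^d$ by polynomials and passing to the limit in Brehmer's dilation identity gives $\varphi_{\alpha\beta}(T)=P_{\tilde{\mathcal K}}\,\varphi_{\alpha\beta}(U)\,|_{\tilde{\mathcal K}}$, and reassembling the matrix of coefficients exactly as in the display in the proof of Theorem~\ref{thm:Andos-theorem-for-ALH} yields
\begin{equation*}
  \pi(\varphi)=\varphi(T)=P_{\tilde{\mathcal K}\otimes\mathcal H}\,\varphi(U)\,|_{\tilde{\mathcal K}\otimes\mathcal H},
\end{equation*}
where $\varphi(U)$ is defined by the commuting unitary functional calculus. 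This exhibits $\pi$ as the compression of the unital $*$-representation $\tilde\pi(\varphi):=\varphi(U)$ of $C(\mathbb T^d,\mathcal{L(H)})$ (restricted to $A(\mathbb D^d,\mathcal{L(H)})$), with $\tilde\pi(1_{\mathcal{L(H)}}\otimes z_j)=U_j$ unitary; since a $*$-representation is completely contractive and compressions of completely contractive maps are completely contractive, $\pi$ is completely contractive. For the $H^\infty$ case, assume instead $\prod_1^d(1-\pi(z_i\otimes 1)\pi(z_i\otimes 1)^*)>0$ and $\|\pi(1\otimes z_j)\|<1$; then for each $n$, applying the disk-algebra case to $\pi^{(n)}$ on $A(\mathbb D^d,\mathcal{L}(\mathcal H^n))$ — noting, as in the proof of Theorem~\ref{thm:Andos-theorem-for-ALH}, that $\|\pi^{(n)}(1_{\mathcal{L}(\mathcal H^n)}\otimes z_j)\|=\|\pi(1_{\mathcal{L(H)}}\otimes z_j)\|<1\le 1$ and that the hereditary positivity passes to $\pi^{(n)}$ — shows $\pi^{(n)}$ is contractive for all $n$, hence $\pi$ is completely contractive; weak density of $p[\mathbb C^d]\otimes\mathcal{L(H)}$ in $H^\infty(\mathbb D^d,\mathcal{L(H)})$ together with Arveson's result then upgrades the compression formula to a genuine (weakly continuous) dilation $\tilde\pi$ with $\tilde\pi(1\otimes z_j)$ unitary.

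The main obstacle, and the only point needing real care, is the passage from the scalar commuting-unitary functional calculus on the $\varphi_{\alpha\beta}$ to a single $\mathcal{L(H)}$-valued functional calculus $\varphi\mapsto\varphi(U)$ that is simultaneously $\sigma$-weakly continuous (for the $H^\infty$ statement), multiplicative, and compatible with the coefficient reassembly — this is exactly the role played by ``Arveson's result'' and ``a bit of work'' in the proof of Theorem~\ref{thm:Andos-theorem-for-ALH}, and in the $H^\infty$ case it additionally requires checking that the strict positivity and the strict contractivity $\|\pi(1\otimes z_j)\|<1$ are genuinely inherited by all the amplifications $\pi^{(n)}$ so that Theorem~\ref{thm:Aglers-realization} (equivalently Theorem~\ref{thm:Andos-theorem-for-ALH} here, since $d$ is arbitrary, so really Theorem~\ref{thm:Brehmers-theorem-II}'s disk-algebra half) can be invoked uniformly in $n$. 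A secondary technical point is that the ``hereditary product'' in the hypothesis must be unwound carefully so as to match the hypothesis of Brehmer's theorem on the $T_i$ rather than on the $\pi(z_i\otimes 1)$; since $\pi$ is a unital representation and the $T_i$ are obtained by a unital completely positive (indeed, $*$-homomorphic) coefficient map, this identification is routine but should be stated explicitly.
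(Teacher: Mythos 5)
Your proposal is correct and follows essentially the same route the paper intends: the paper's own justification is precisely the argument of Theorem~\ref{thm:Andos-theorem-for-ALH} with Brehmer's theorem (Theorem~\ref{thm:Brehmers-theorem}) substituted for And\^o's, the coefficient maps $\eta_{\alpha\beta}$ reassembling the dilation, weak density of $p[\mathbb C^d]\otimes\mathcal{L(H)}$ in $H^\infty(\mathbb D^d,\mathcal{L(H)})$, and Arveson's result, which is exactly what you carry out. Your closing remarks about the amplifications $\pi^{(n)}$ inheriting the hereditary positivity and strict contractivity are the right points to make explicit, and they go through just as you indicate.
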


There is a version of Brehmer's theorem for standard nearly ample
preorderings, but this requires further developments.

Notice that Agler's realization theorem
(Theorem~\ref{thm:Aglers-realization}) with $d=2$ and
Corollary~\ref{cor:po_for_polydisk} tell us that, at least over the
bidisk with the coordinate functions as test functions, the ample
preordering and the (in this case, unique) nearly standard ample
preordering are equivalent; that is, they generate the same algebra
and norm.  As it happens, we can extend this idea to the polydisk, and
as we will see later, to more general sets of test functions and
domains.  This will be used to prove a generalization of the main
result of~\cite{MR2502431} in Theorem~\ref{thm:d-var-polydisk_real}.

\begin{theorem}
  \label{thm:ample-near-ample-equiv-polydsk}
  Let $\Psi$ be the set of coordinate functions over the polydisk
  $\mathbb D^d$, $d\geq 2$, $\Lambda^a$ the standard ample preordering
  $($so with largest element $\lambda^m = (1,\dots,1)\,)$, and
  $\Lambda^{na}$ a standard nearly ample preordering under
  $\lambda^m$.  Then $\Lambda^a$ and $\Lambda^{na}$ are equivalent
  preorderings.
\end{theorem}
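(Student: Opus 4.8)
The plan is to show that the two preorderings generate the same admissible kernels, by proving the only nontrivial containment: that every kernel admissible for $\Lambda^{na}$ is also admissible for $\Lambda^a$. Since $\Lambda^{na}\subseteq\Lambda^a$ (as sets in $\bigoplus_1^d\mathbb N$), Lemma~\ref{lem:preorderings_order_adm_kers} immediately gives $\mathcal K_{\Lambda^a,\mathcal H}\subseteq\mathcal K_{\Lambda^{na},\mathcal H}$. For the reverse, fix $k\in\mathcal K_{\Lambda^{na},\mathcal H}$; by Lemma~\ref{lem:preorderings_are_equivalent} we may work with the maximal preorderings, so it suffices to show that for every $\lambda\in\Lambda^a_M=\{\lambda:\lambda\le(1,\dots,1)\}$, the kernel $\prod_{\lambda\ni\lambda_i\ne0}(1-\psi_i\psi_i^*)^{\lambda_i}*k$ is positive, given that this holds for the generators $\lambda^1,\lambda^2$ of the nearly ample preordering and everything below them. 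The key point is that by Lemma~\ref{lem:adm_kernels_for_ample_po} (applied to the \emph{nearly} ample data, whose maximal elements are $\lambda^1,\lambda^2$), and more directly by an Andô-type splitting, one can reduce to checking a single kernel; but the real engine is Andô's theorem in the form of Theorem~\ref{thm:Andos-theorem-for-ALH}.

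Here is the heart of the argument. Write $\lambda^m=(1,\dots,1)$, and recall that $\lambda^1,\lambda^2<\lambda^m$ with $\lambda^1+e_{\ell_1}=\lambda^m=\lambda^2+e_{\ell_2}$ for distinct indices $\ell_1\ne\ell_2$ (distinctness because $\lambda^1\ne\lambda^2$). The admissibility of $k$ with respect to $\lambda^1$ means $\prod_{i:\lambda^1_i=1}(1-\psi_i\psi_i^*)*k\ge0$, and similarly for $\lambda^2$. I want to deduce $\prod_{i=1}^d(1-\psi_i\psi_i^*)*k\ge0$, which by Lemma~\ref{lem:adm_kernels_for_ample_po} is exactly the statement that $k$ is subordinate to $k_s$ and hence admissible for $\Lambda^a$. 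Now $\lambda^1$ and $\lambda^2$ together involve all $d$ coordinates: since $\lambda^1$ misses only coordinate $\ell_1$ and $\lambda^2$ misses only $\ell_2$, and $\ell_1\ne\ell_2$, between them every coordinate $i$ has $\lambda^1_i=1$ or $\lambda^2_i=1$. The missing product $(1-\psi_{\ell_1}\psi_{\ell_1}^*)$ needed to promote $\lambda^1$ to $\lambda^m$ is precisely one of the factors already present in the $\lambda^2$-product, and symmetrically. This is the two-variable Andô situation in disguise: one groups the $d$ coordinate kernels into two blocks according to $\lambda^1$ and $\lambda^2$, takes a Kolmogorov/Agler decomposition of $k$ against each, and applies Theorem~\ref{thm:Andos-theorem-for-ALH} (or rather the dilation-theoretic mechanism behind it) to the resulting pair of "block contractions" to obtain the full product factorization.

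More concretely, I would run it through the realization theorem already in hand. By Corollary~\ref{cor:po_for_polydisk} and the discussion after Lemma~\ref{lem:adm_kernels_for_ample_po}, membership of a function $\varphi$ in $\HLHb$ for $\Lambda^a$ is equivalent to $([1_{\mathcal{L(H)}}]-\varphi\varphi^*)*k_s\ge0$, i.e.\ to $\varphi\in H_1^\infty(\mathbb D^d,\mathcal{L(H)})$. On the other hand, a representation $\pi$ of $A(\mathbb D^d,\mathcal{L(H)})$ (or $H^\infty$) which is contractive on each coordinate function, when paired off into the two blocks determined by $\lambda^1,\lambda^2$, yields two commuting \emph{contractions} (the block products) to which Theorem~\ref{thm:Andos-theorem-for-ALH} applies, giving a commuting unitary dilation and hence complete contractivity — and this is exactly the von~Neumann condition (vN) of Theorem~\ref{thm:Aglers-realization}. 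Unwinding the equivalence (SC)$\Leftrightarrow$(vN) in both directions shows $H_1^\infty(X,\mathcal K_{\Lambda^{na},\mathcal H})=H_1^\infty(X,\mathcal K_{\Lambda^a,\mathcal H})$ with equal norms, which is the definition of equivalent preorderings.

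The main obstacle I anticipate is making the "pairing off into two blocks" rigorous at the level of kernels rather than operators: I need that admissibility of $k$ against $\lambda^1$ produces an honest positive-kernel factorization whose Kolmogorov space carries a $*$-representation of the relevant abelian $C^*$-algebra of coordinate multiplications, so that the two block decompositions can be combined — this is where the finiteness of $\Psi$ and Proposition~\ref{prop:factorization} are essential, and where one must be careful that the two block "contractions" genuinely commute on a common dilation space. Once the factorizations are in place, Andô's theorem (Theorem~\ref{thm:Andos-theorem-for-ALH}) does the rest, and the bookkeeping with $\Lambda_M$, the lexicographic labeling, and the reduction from general $\lambda\le\lambda^m$ to the top element $\lambda^m$ via the subordination remarks following Lemma~\ref{lem:preorderings_order_adm_kers} is routine.
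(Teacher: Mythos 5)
Your first, and governing, step is to prove the kernel-level containment $\mathcal K_{\Lambda^{na},\mathcal H}\subseteq\mathcal K_{\Lambda^a,\mathcal H}$, i.e.\ that the two nearly ample positivity conditions on a kernel force $\prod_{i=1}^d(1-\psi_i\psi_i^*)*k\ge0$. The easy containment $\mathcal K_{\Lambda^a,\mathcal H}\subseteq\mathcal K_{\Lambda^{na},\mathcal H}$ is fine, but the reverse containment is false, so this part of the plan cannot be repaired: there is no ``kernel-level'' And\^o theorem. Already for $d=2$ (where $\Lambda^{na}=\{e_1,e_2\}$) take $0<a<1$, $x=(a,0)$, $y=(0,a)$, and let $k$ be the scalar kernel which vanishes off $\{x,y\}\times\{x,y\}$ and on that set has matrix
\begin{equation*}
  k=\begin{pmatrix}1&q\\ \bar q&1\end{pmatrix},\qquad |q|^2=1-a^2 .
\end{equation*}
Then $(1-\psi_1\psi_1^*)*k=\begin{pmatrix}1-a^2&q\\ \bar q&1\end{pmatrix}\ge0$ and $(1-\psi_2\psi_2^*)*k=\begin{pmatrix}1&q\\ \bar q&1-a^2\end{pmatrix}\ge0$, so $k\in\mathcal K_{\Lambda^{na},\mathbb C}$; but $(1-\psi_1\psi_1^*)(1-\psi_2\psi_2^*)*k=\begin{pmatrix}1-a^2&q\\ \bar q&1-a^2\end{pmatrix}$ is not positive since $|q|^2=1-a^2>(1-a^2)^2$, so $k\notin\mathcal K_{\Lambda^a,\mathbb C}$. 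Padding $x,y$ with zeros in the remaining coordinates gives the same failure for every $d$ when $\Lambda^{na}=\{\lambda^1,\lambda^2\}$. What And\^o's theorem equates is the \emph{unit balls of the function algebras} cut out by the two kernel families (equivalently, the norms) --- which is the operative meaning of ``equivalent'' here, as the sentence preceding the theorem indicates --- not the kernel families themselves. The difficulty you flag in your last paragraph, making the block pairing rigorous ``at the level of kernels rather than operators'', is exactly the step that is impossible.

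The representation-theoretic fallback also does not work as described, and it bypasses where the real work lies. Contractivity of $\pi$ on the coordinates plus And\^o applied to the two commuting block products $\prod_{i:\lambda^1_i=1}\pi(z_i)$ and $\prod_{i:\lambda^2_i=1}\pi(z_i)$ only controls functions of those two products (functions pulled back from $\mathbb D^2$), not arbitrary elements of $A(\mathbb D^d,\mathcal{L(H)})$; Parrott's example shows nothing of the desired strength can follow from contractivity on the coordinates alone, so a correct argument must actually use the $\Lambda^{na}$ positivity hypothesis, which your sketch never invokes. Moreover, the (vN) item of Theorem~\ref{thm:Aglers-realization} refers to the classical preordering $\Lambda^o=\{e_j\}$, whose Schur--Agler class is strictly smaller than $H_1^\infty(\mathbb D^d,\mathcal{L(H)})$ for $d\ge3$, so ``unwinding (SC)$\Leftrightarrow$(vN)'' there cannot reach the ample algebra. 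The paper's proof is different in kind: for $\varphi$ in the nearly ample unit ball it factors each admissible kernel as $k_3*k'$ with $k_3=aa^*$ the Szeg\H{o} kernel in $z_3,\dots,z_d$, applies the bidisk Agler--Pick theorem (Theorem~\ref{thm:Agler-Pick-interp-polydisk}, which is where And\^o enters via Theorem~\ref{thm:Andos-theorem-for-ALH}) to $a$ and $b=a\varphi$ to bound $\varphi$ in the first two variables, uses diagonal kernels plus Hartogs' theorem to conclude $\varphi\in H^\infty(\mathbb D^d,\mathcal{L(H)})$ with $\|\varphi\|_\infty\le1$, and finishes with weak density of polynomials and the containments $H^\infty(\mathbb D^d,\mathcal K_{\Lambda^o,\mathcal H})\subseteq H^\infty(\mathbb D^d,\mathcal K_{\Lambda^{na},\mathcal H})\subseteq H^\infty(\mathbb D^d,\mathcal K_{\Lambda^a,\mathcal H})$ to identify the algebras and norms. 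None of these analytic steps (slicing, Hartogs, density) appears in your outline, and without them --- or some substitute tied to the nearly ample hypothesis --- the argument does not go through.
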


\begin{proof}
  When $d=2$ this has already been shown.  Hence we assume $d>2$.

  Fix a nearly ample preordering $\Lambda^{na}$ under $\lambda^m =
  (1,\dots,1)$.  Recall that by Corollary~\ref{cor:po_for_polydisk},
  with the standard ample preordering, all admissible kernels are
  subordinate to the Szeg\H{o} kernel $k_s$.  Since this kernel is
  also admissible for the standard nearly ample preordering, any
  $\varphi\in H^\infty(\mathbb D^d,{\mathcal K}_{\Lambda^{na},\mathcal
    H})$ is in $H^\infty(\mathbb D^d,{\mathcal K}_{\Lambda^a,\mathcal
    H})$ and the norm in the first algebra $\|\varphi\|_{na}$ is
  greater than or equal to that in the second, $\|\varphi\|_a$.

  We now show that the two norms are the same on $H^\infty(\mathbb
  D^d,{\mathcal K}_{\Lambda^{na},\mathcal H})$.  First of all, recall
  that $\Lambda^{na}$ has two maximal elements $\lambda^m_1$ and
  $\lambda^m_2$, which are $\lambda^m$ with one of the $1$s changed to
  a zero in distinct places.  By relabeling if necessary, we may
  assume that these are in the first two places.  Let $k_3$ be the
  positive kernel defined by $k_3(z,w) = 1_{\mathcal{L(H)}} \otimes
  \prod_{j=3}^d (1-zw^*)^{-1}$.  This has a Kolmogorov decomposition
  $k_3 = aa^*$, where $a: \mathbb D^d \to
  \mathcal{L}(\tilde{\mathcal{H}} ,\mathcal{H})$.  Any admissible
  kernel in $\mathcal K_{\Lambda^{na}}$ then has the form $k = k_3 *
  k'$, where for fixed $z_3,\dots,z_d$ and $j=1,2$,
  \begin{equation*}
    \left([1_{\mathcal{L(H)}}]\otimes (1-z_jw_j^*)k _3(z,w)
      k'(z,w)\right) \geq 0.
  \end{equation*}
  This inequality is valid in particular for any positive kernel $k'$
  which is a function only of the first two coordinates such that for
  $j=1,2$, $\left([1_{\mathcal{L(H)}}]\otimes
    (1-z_jw_j^*)k'(z,w)\right) \geq 0$; that is, by
  Theorems~\ref{thm:Aglers-realization}
  and~\ref{thm:Andos-theorem-for-ALH}, the kernels for
  $H^\infty(\mathbb D^2, \mathcal{L(H)})$.

  Let $\varphi \in H_1^\infty(\mathbb D^d,{\mathcal
    K}_{\Lambda^{na},\mathcal H})$, and define $b:\mathbb D^d \to
  \mathcal{L}(\tilde{\mathcal{H}} ,\mathcal{H})$ by the pointwise
  product $b = a\varphi$.  Then for any admissible kernel $k = k_3*k'$
  as above for the standard nearly ample preordering,
  $([1_{\mathcal{L(H)}}] - \varphi\varphi^*)*k \geq 0$.  For fixed
  $z_3,\dots,z_d$, $(aa^* - bb^*)*k' \geq 0$.  By
  Theorem~\ref{thm:Agler-Pick-interp-polydisk}, we can also write $b =
  a \tilde\varphi$, where as a function of only the first two
  variables, $\tilde\varphi \in H_1^\infty(\mathbb D^2,\mathcal{L(H)})$;
  that is, is analytic and has supremum norm less than or equal
  to~$1$.

  Fix $z_1,z_2\in\mathbb D$.  Set the kernel $k'$ to be
  \begin{equation*}
    k'(z,w) =
    \begin{cases}
      1_{\mathcal{L(H)}} & w_1 = z_1 \text{ and } w_2 = z_2\,;\\
      0 & \text{otherwise.}
    \end{cases}
  \end{equation*}
  Then $k = k_3 * k'$ is admissible for the standard nearly ample
  preordering, and so $([1_{\mathcal{L(H)}}] - \varphi\varphi^*)*k_3
  \geq 0$.  Hence for fixed $z_1,z_2$, the function $\varphi \in
  H_1^\infty(\mathbb D^{d-2},\mathcal{L(H)})$; that is, as a function
  of $z_3,\dots ,z_d$, $\varphi$ is also analytic and bounded by $1$.
  Being separately analytic in all variables and bounded, we conclude
  from Hartog's theorem that $\varphi \in H^\infty(\mathbb
  D^d,\mathcal{L(H)})$.  Writing $k_s^d$ for the scalar Szeg\H{o}
  kernel on $\mathbb D^d$, it follows that
  \begin{equation*}
    ([1_{\mathcal{L(H)}}] - \varphi\varphi^*)*(1_{\mathcal{L(H)}}
    \otimes k^d_s) 
    = (aa^*-bb^*)*(1_{\mathcal{L(H)}}\otimes k^2_s) 
    = aa^* * ([1_{\mathcal{L}(\tilde{\mathcal{H}})}] -
    \tilde\varphi{\tilde\varphi}^*)*(1_{\mathcal{L(H)}}\otimes k^2_s)
    \geq 0;
  \end{equation*}
  that is, $\varphi \in H_1^\infty(\mathbb D^d,\mathcal{L(H)})$.  This
  shows that $H^\infty(\mathbb D^d,{\mathcal K}_{\Lambda^{na},\mathcal
    H})$ is a norm closed subalgebra of $H^\infty(\mathbb
  D^d,\mathcal{L(H)})$ (with the same norm).

  To finish the proof, we note from the transfer function
  representation for $H^\infty(\mathbb D^d,{\mathcal
    K}_{\Lambda^o,\mathcal H})$ from
  Theorem~\ref{thm:Aglers-realization} that polynomials in the
  coordinate functions are weakly dense in $H^\infty(\mathbb
  D^d,{\mathcal K}_{\Lambda^o,\mathcal H})$.  Since they are also
  weakly dense in $H^\infty(\mathbb D^d,\mathcal{L(H)})$ and by
  Lemma~\ref{lem:preorderings_order_adm_kers},
  \begin{equation*}
    H^\infty(\mathbb D^d,{\mathcal K}_{\Lambda^o,\mathcal H})
    \subseteq H^\infty(\mathbb D^d,{\mathcal K}_{\Lambda^{na},\mathcal
      H}) \subseteq H^\infty(\mathbb D^d,{\mathcal
      K}_{\Lambda^a,\mathcal H}) = H^\infty(\mathbb
    D^d,\mathcal{L(H)}),
  \end{equation*}
  the result follows.
\end{proof}

We finally mention an abstract version of the realization theorem for
general domains and sets of test functions~\cite{MR2389623} which is
also part of the inspiration for the work that follows.  The theorem
was only stated and proved for scalar valued functions, though the
generalization to operator valued functions is straightforward, as we
shall see.

\goodbreak

\begin{theorem}[Classical realization theorem]
  \label{thm:classic-real-thm}
  Let $X$ be a set, $\Psi = \{\psi_j\}$ a (not necessarily finite)
  collection of test functions on $X$, $\Lambda = \{e_j\}$, $\mathcal
  K$ the set of $\mathcal{L(H)}$ valued admissible kernels, and
  $\HLHb$ the unit ball of the algebra generated by the kernels in
  $\mathcal K$ in $\HLH$.  Let $\varphi: X\to \mathcal{L(H)}$.  The
  following are equivalent:
  \begin{enumerate}
  \item[$($SC$\,)$] $\varphi \in \HLHb$;
  \item[$($AD$\,)$] There exists a positive kernel $\Gamma: C_0(\Psi)
    \to \mathcal{L(H)}$ so that $[1_{\mathcal{L(H)}}] -
    \varphi\varphi^* = \Gamma*([1] - EE^*)$, where $E(x)(\psi_j) =
    \psi_j(x)$;
  \item[$($TF$\,)$] There is a unitary colligation $(U,\mathcal E,
    \rho)$, where $\mathcal E$ is a Hilbert space, $U =
    \begin{pmatrix} A & B \\ C & D \end{pmatrix}$ a unitary operator
    on $\mathcal E \oplus \mathcal H$ and $\rho: C_0(\Psi) \to
    \mathcal{L(E)}$ a unital representation such that $\varphi$ has a
    transfer function representation
    \begin{equation*}
      \varphi (x) = D + C Z(x)(1_{\mathcal L(E)} - AZ(x))^{-1} B,
    \end{equation*}
    with $Z(x) = \rho(E)(x)$;
  \item[$($vN1$\,)$] $\varphi \in \HLH$ and for every unital
    representation $\pi$ of $\HLH$ satisfying \break $\|\pi(\psi_j)\|
    < 1$ for all $j$, we have $\|\pi(\varphi)\| \leq 1$.
  \item[$($vN2$\,)$] $\varphi \in \HLH$ and for every weakly
    continuous unital representation $\pi$ of \break $\HLH$ satisfying
    $\|\pi(\psi_j)\| \leq 1$ for all $j$, we have $\|\pi(\varphi)\|
    \leq 1$.
  \end{enumerate}
\end{theorem}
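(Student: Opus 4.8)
\section*{Proof proposal}

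The plan is to prove the cycle $(\mathrm{SC}) \Rightarrow (\mathrm{AD}) \Rightarrow (\mathrm{TF}) \Rightarrow (\mathrm{vN1})$, together with $(\mathrm{TF}) \Rightarrow (\mathrm{vN2})$, $(\mathrm{vN1}) \Rightarrow (\mathrm{SC})$, $(\mathrm{vN2}) \Rightarrow (\mathrm{SC})$, and the easy direct implications $(\mathrm{TF}) \Rightarrow (\mathrm{AD}) \Rightarrow (\mathrm{SC})$, by following the scalar argument of~\cite{MR2389623} and verifying that passing from $\mathbb C$ to $\mathcal{L(H)}$ introduces only routine bookkeeping. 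The only analytic input in the operator-valued setting is Proposition~\ref{prop:factorization} (and, for $\mathcal{L(H)}$-valued positive kernels, its scalar forerunner due to Mlak). To dispense with $(\mathrm{AD}) \Rightarrow (\mathrm{SC})$ first: if $[1_{\mathcal{L(H)}}] - \varphi\varphi^* = \Gamma * ([1] - EE^*)$ with $\Gamma$ positive, then for any admissible $k$ one has $([1_{\mathcal{L(H)}}] - \varphi\varphi^*)*k = \Gamma * \bigl(([1] - EE^*)*k\bigr) \geq 0$, because $([1]-EE^*)*k \geq 0$ is exactly the admissibility condition on $k$ (in the finite case $([1]-EE^*)*k = \bigl((1-\psi_j\psi_j^*)*k\bigr)_j$), $\Gamma$ is a positive map, and Schur products of positive kernels are positive; combined with $(\mathrm{TF}) \Rightarrow (\mathrm{AD})$ this also gives $(\mathrm{TF}) \Rightarrow (\mathrm{SC})$.

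The substantive step, and the one I expect to be the main obstacle, is $(\mathrm{SC}) \Rightarrow (\mathrm{AD})$. Here I would let $\mathcal C$ be the cone of Hermitian $\mathcal{L(H)}$-valued kernels of the form $\Gamma * ([1] - EE^*)$ with $\Gamma$ a positive $\mathcal{L}(C_0(\Psi),\mathcal{L(H)})$-valued kernel, work in the locally convex space of Hermitian $\mathcal{L(H)}$-valued kernels with the topology of pointwise convergence against finitely many points of $X$ and finitely many vectors of $\mathcal H$, and argue that $(\mathrm{SC})$ forces $[1_{\mathcal{L(H)}}] - \varphi\varphi^*$ into the closure of $\mathcal C$. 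If not, Hahn--Banach yields a finitely supported functional separating $[1_{\mathcal{L(H)}}] - \varphi\varphi^*$ from $\mathcal C$; identifying the dual cone of $\mathcal C$ with the admissible kernels --- which is where Proposition~\ref{prop:factorization} is used --- converts this functional into an admissible $k$ with $([1_{\mathcal{L(H)}}] - \varphi\varphi^*)*k \not\geq 0$, contradicting $(\mathrm{SC})$. All of this is carried out as in~\cite{MR2389623}, the only change being that kernels are paired against $\mathcal{L(H)}$-valued rather than scalar test data.

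For $(\mathrm{AD}) \Rightarrow (\mathrm{TF})$ I would run the lurking-isometry argument. Use Proposition~\ref{prop:factorization} to write the Agler decomposition as $1_{\mathcal{L(H)}} - \varphi(x)\varphi(y)^* = \gamma(x)\gamma(y)^* - \gamma(x)Z(x)Z(y)^*\gamma(y)^*$, where $Z(x) = \rho(E(x))$ for a unital representation $\rho : C_0(\Psi) \to \mathcal{L(E)}$; rearranging, the assignment $(\varphi(x)^*h,\gamma(x)^*h) \mapsto (h, Z(x)^*\gamma(x)^*h)$ is a well-defined isometry between subspaces of $\mathcal H \oplus \mathcal E$. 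Ampliating the pair $(\gamma,\rho)$ so that $\rho$ stays unital and $Z$ unchanged, one extends this isometry to a unitary $U = \bigl(\begin{smallmatrix} A & B \\ C & D \end{smallmatrix}\bigr)$ on $\mathcal H \oplus \mathcal E$; reading off entries gives $\varphi(x) = D + CZ(x)(1_{\mathcal{L(E)}} - AZ(x))^{-1}B$, the inverse existing since $\|Z(x)\| = \|\rho(E(x))\| \leq \|E(x)\|_{C_0(\Psi)} = \sup_{\psi\in\Psi}|\psi(x)| < 1$ while $\|A\|\leq 1$. The converse $(\mathrm{TF}) \Rightarrow (\mathrm{AD})$ reverses the computation: unitarity of $U$ produces $1_{\mathcal{L(H)}} - \varphi(x)\varphi(y)^* = \gamma(x)(1_{\mathcal{L(E)}} - Z(x)Z(y)^*)\gamma(y)^*$ for a $\gamma$ assembled from the colligation, and this is $\Gamma * ([1] - EE^*)$ with $\Gamma$ positive by Proposition~\ref{prop:factorization}.

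Finally, for the von~Neumann implications: $(\mathrm{TF})$ already gives $\varphi \in \HLHb \subseteq \HLH$, and for a unital representation $\pi$ of $\HLH$ with $\|\pi(\psi_j)\| < 1$ for all $j$, applying $\pi$ (suitably ampliated) to the transfer function identity --- the series converging because $\pi(Z)$ is then a strict contraction --- yields $\pi(\varphi) = D + C\pi(Z)(1 - A\pi(Z))^{-1}B$, a contraction by the standard colligation computation; this is $(\mathrm{TF}) \Rightarrow (\mathrm{vN1})$. For $(\mathrm{TF}) \Rightarrow (\mathrm{vN2})$ one first replaces $Z$ by $rZ$, $0 < r < 1$, obtaining functions $\varphi_r \in \HLHb$ that converge pointwise to $\varphi$, applies the previous step to each $\varphi_r$, and passes to the limit using weak continuity of $\pi$. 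Conversely, $(\mathrm{vN1}) \Rightarrow (\mathrm{SC})$ and $(\mathrm{vN2}) \Rightarrow (\mathrm{SC})$ come from the functional model: every admissible $k$ determines a (weakly continuous) unital representation $\pi_k$ of $\HLH$ with $\|\pi_k(\psi_j)\| \leq 1$ for all $j$ --- strictly less than $1$ after passing to a Szeg\H{o}-regularized kernel built from $r\psi_j$, which suffices for $(\mathrm{vN1})$ --- for which $\|\pi_k(\varphi)\| \leq 1$ is equivalent to $([1_{\mathcal{L(H)}}] - \varphi\varphi^*)*k \geq 0$; the hypotheses of $(\mathrm{vN1})$, resp.\ $(\mathrm{vN2})$, then force the latter for every admissible $k$, which is $(\mathrm{SC})$, closing the cycle.
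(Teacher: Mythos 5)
Your proposal is correct and follows essentially the same route as the paper, which itself defers this theorem to the scalar argument of \cite{MR2389623} and notes the operator-valued upgrade is routine: cone/Hahn--Banach separation with the dual cone identified with admissible kernels via Proposition~\ref{prop:factorization}, the lurking isometry for the transfer function, and $r$-scaling with weak continuity (respectively regularized multiplication representations on $H^2(k)$) for the von~Neumann statements. The only points you leave implicit --- closedness of the cone on finite sets, so that membership in its closure upgrades to an actual Agler decomposition, and the Kurosh-type passage from finite subsets to all of $X$ --- are exactly what the paper later supplies in the operator-valued setting in Lemma~\ref{lem:cone_closed} and the proof of Theorem~\ref{thm:realization_I}.
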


Note that the second item (the so-called Agler decomposition) has a
more familiar form in this setting when the set of test functions is
finite; namely, we can rewrite this as $[1_{\mathcal{L(H)}}] -
\varphi\varphi^* = \sum_j \Gamma_j*([1_{\mathcal{L(H)}}] -
\psi_j\psi_j^*)$, where each $\Gamma_j$ is a positive $\mathcal{L(H)}$
valued kernel, as in Theorem~\ref{thm:Aglers-realization}.  A
representation is \textbf{weakly continuous} if whenever
$(\varphi_\alpha)$ is a bounded net in $\HLH$ converging pointwise in
norm to $\varphi \in \HLH$, $(\pi(\varphi_\alpha))$ converges weakly
to $\pi(\varphi)$.

Recall that in this paper we are taking the set of test functions to
be finite.  Thus $Z$ has the simpler form
\begin{equation*}
  Z(x) = \sum_j \psi_j(x)P_j,
\end{equation*}
where each $P_j$ is an orthogonal projection and $\sum_j P_j =
1_{\mathcal{L(E)}}$.

\subsection{Topologizing $X$}
\label{subsec:topologising-x}

In the construction of kernels and function spaces, we did not assume
that the underlying set $X$ has a topology, though even when it does
have one, it will be convenient to take it to have the weakest
topology making the test functions continuous.  In most cases of
interest the test functions are already continuous when $X$ has its
native topology, so this assumption will make no substantial
difference, at least when $\overline{X}$ is compact in the original
topology and the test functions extend continuously to
$\overline{X}$.

Write ${\HL^*}_0$ for the vector space of continuous linear
functionals on $\HL$.  The set $\mathcal N = \{e\in{\HL^*}_0:|e(\psi)|
=0 \text{ for all } \psi\in\Psi \}$ is a subspace, and we write
$\HL^*$ for the quotient space ${\HL^*}_0/\mathcal N$.  The test
functions induce a topology on $\HL^*$ with a subbase consisting of
sets of the form
\begin{equation*}
  U_{w,c} = \{\eta\in \HL^* : \sup_{\psi\in\Psi} |\eta(\psi)-w| < c \},
  \qquad w\in\mathbb D,\ c\in(0,1).
\end{equation*}
By construction the map $\hat E: X \to \HL^*$ define by $\hat
E[x](\varphi) = \varphi(x)$ is an embedding by the point separation
property of the test functions.  With this topology $\HL^*$ becomes a
locally compact, convex topological vector space, and so is Hausdorff
(in fact it has even stronger separation properties, which we will not
need).  We identify the closure of $X$, $\overline{X}$ with the
closure of $\hat E[X]$ in $\HL^*$.

The space $\HL^*$ induces a weak-$*$ topology on $\HL^{**}$ in which
the norm closed unit ball of $\HL^{**}$ is compact by the
Banach-Alaoglu theorem.  By dint of being finite $\Psi$ in $\HL^{**}$
is also compact.

We also find that the test functions extend continuously to
$\overline{X}$.  However it is not \emph{a priori} evident that the
test functions will separate the points of $\overline{X}$.  However,
this can be achieved simply by identifying those points in
$\overline{X}$ which are not distinguished by the test functions.

We could have carried out the same sort of construction replacing
$\HLH^*$ by the space of bounded linear operators from $\HLH$ to
$\mathcal{L(H)}$, once again modding out by those maps $\eta$ with the
property that $\eta(\Psi) = \{0\}$.  Since any $\eta' \in \HLH^*$
induces $\eta\in \HL^*$ by $\eta(\varphi) = \eta'(\varphi \otimes
1_{\mathcal{L(H)}})$, this essentially adds nothing new.

By construction, $\HLH$ is a norm closed subalgebra of
$C_b(X,\mathcal{L(H)})$, the $C^*$-algebra of bounded continuous
$\mathcal{L(H)}$ valued functions on $X$; that is, a subalgebra of
$C(\beta X, \mathcal{L(H)})$, where $\beta X$ is the Stone-\v{C}ech
compactification of $X$.  However $\overline{X}$ generally tends to be
quite a bit smaller than $\beta X$.  For example, if $X = \mathbb D^d$
and the test functions are the coordinate functions, $\overline{X} =
{\overline{\mathbb D}}^d$, as one would expect.

\subsection{Continuity and convergence in $\HLH$ and $\ALH$}
\label{subsec:cont-conv-hlh}

With $X$ topologized as in the last subsection, we can now address the
continuity of elements of $\HLH$ (we show that they are all
continuous) and connections between various topologies on $\HLH$ and
$\ALH$, the subalgebra of $\HLH$ which in analogy with the disk
algebra, consists of those (continuous) elements of $\HLH$ which
extend continuously to $\overline{X}$.  When dealing with $\ALH$ it is
convenient to assume that the test functions are in this algebra.

\begin{lemma}
  \label{lem:elts_of_HLH_ctnuous}
  Let $\Psi$ be a finite set and $\varphi\in \HLH$.  Then $\varphi$ is
  continuous; that is, if $\,(x_\alpha)$ is a net in $X$ converging to
  $x\in X$, $\|\varphi(x_\alpha) - \varphi(x)\| \to 0$.  Furthermore,
  given $\epsilon >0$ and $x\in X$, there is an open set $U_x\ni x$
  such that for all $\varphi\in \HLH$, $\|\varphi(y)-\varphi(y')\| <
  \epsilon \|\varphi\|$ for all $y,y'\in U_x$.
\end{lemma}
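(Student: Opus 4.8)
The plan is to exploit the fact that membership in $\HLHb$ is tested against admissible kernels, and that among the admissible kernels is one built to isolate the difference between two points. Fix $x\in X$ and $\varphi\in\HLH$ with $\|\varphi\|\le 1$ (by homogeneity it suffices to treat this case). The key observation is that for any pair $y,y'\in X$, the $2\times 2$ matrix $\left(\bigl(c^2[1_{\mathcal L(\mathcal H)}]-\varphi\varphi^*\bigr)*k\right)$ evaluated at the points $\{y,y'\}$ must be positive semidefinite for every admissible $k$; choosing $k$ supported on $\{y,y'\}$ appropriately (say the kernel that is $1_{\mathcal L(\mathcal H)}$ on the diagonal and on the off-diagonal entries and $0$ elsewhere, which is admissible since it is subordinate to the diagonal admissible kernel whenever it is positive — i.e.\ whenever the relevant $2\times 2$ scalar block is positive) extracts the inequality
\begin{equation*}
  \begin{pmatrix}
    1-\varphi(y)\varphi(y)^* & \,1-\varphi(y)\varphi(y')^*\\[2pt]
    1-\varphi(y')\varphi(y)^* & \,1-\varphi(y')\varphi(y')^*
  \end{pmatrix}\ge 0 .
\end{equation*}
Wait — that particular constant kernel need not be admissible, so instead I will use a kernel of the form $k(u,v)=K(u,v)\,1_{\mathcal L(\mathcal H)}$ where $K$ is a genuinely positive scalar kernel that, near $x$, behaves like a reproducing kernel for a space in which the test functions are multipliers with small norm.

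The cleaner route, which I will actually follow, is this. By Lemma~\ref{lem:preorderings_are_equivalent} we may pass to the maximal preordering, and by the definition of admissible kernels it suffices to note that for each test function $\psi_i$ the kernel $k_{\psi_i}(u,v)=(1-\psi_i(u)\psi_i(v)^*)^{-1}$ is positive, hence so is their Schur product $K_\Psi(u,v)=\prod_{i=1}^{d}(1-\psi_i(u)\psi_i(v)^*)^{-1}$, and $K_\Psi\otimes 1_{\mathcal L(\mathcal H)}$ is admissible (as in Lemma~\ref{lem:adm_kernels_for_ample_po} and Corollary~\ref{cor:po_for_polydisk}). Restricting the inequality $([1]-\varphi\varphi^*)*(K_\Psi\otimes 1)\ge 0$ to any two points $y,y'$ gives a positive $2\times 2$ operator matrix, and a standard estimate on positive $2\times 2$ block matrices yields
\begin{equation*}
  \bigl\|\varphi(y)-\varphi(y')\bigr\|^2
  \;\le\; 2\,\Bigl(1-\operatorname{Re}\tfrac{\,K_\Psi(y,y')\,}{\sqrt{K_\Psi(y,y)K_\Psi(y',y')}}\Bigr)
  \;+\;(\text{terms}\to 0),
\end{equation*}
or more robustly, the positivity of that matrix together with $\|\varphi(y)\|\le 1$ forces $\|\varphi(y)-\varphi(y')\|$ to be controlled by the ``angle'' between the reproducing-kernel vectors at $y$ and $y'$, i.e.\ by $1-|K_\Psi(y,y')|^2/(K_\Psi(y,y)K_\Psi(y',y'))$. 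This last quantity depends only on the test functions, not on $\varphi$, and since each $\psi_i$ is continuous on $X$ (in the topology making the test functions continuous — which is the topology we fixed) with $\sup_i\|\psi_i(x)\|<1$, the function $(y,y')\mapsto 1-|K_\Psi(y,y')|^2/(K_\Psi(y,y)K_\Psi(y',y'))$ is continuous and vanishes on the diagonal. Hence, given $\epsilon>0$ and $x$, choose a neighbourhood $U_x$ of $x$ on which this quantity is $<\epsilon^2/C$ for the appropriate absolute constant $C$; then $\|\varphi(y)-\varphi(y')\|<\epsilon\|\varphi\|$ for all $y,y'\in U_x$ and all $\varphi\in\HLH$, which is the uniform statement. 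Continuity of each individual $\varphi$ at each $x$ is the special case $y'=x$, $y=x_\alpha\to x$.

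The main obstacle is making the $2\times 2$ block-matrix estimate quantitatively correct and uniform in $\varphi$: one must check that the relevant scalar weights $K_\Psi(y,y)$ stay bounded above and below (away from $0$) on a small neighbourhood of $x$ — which they do, since $K_\Psi(y,y)=\prod_i(1-|\psi_i(y)|^2)^{-1}$ is continuous and finite with $|\psi_i(y)|$ bounded away from $1$ near $x$ — and that the off-diagonal control translates into a norm bound on $\varphi(y)-\varphi(y')$ with a constant independent of $\mathcal H$ and of $\varphi$. Concretely, from positivity of $\left(\begin{smallmatrix} a & b\\ b^* & a'\end{smallmatrix}\right)$ with $a=1-\varphi(y)\varphi(y)^*$, etc., scaled by the scalar weights, one extracts $\|\varphi(y)-\varphi(y')\|\le 2\sqrt{1-\rho}$ where $\rho=|K_\Psi(y,y')|^2/(K_\Psi(y,y)K_\Psi(y',y'))\le 1$; the passage from the operator-matrix positivity to this scalar bound is the one genuinely computational point, but it is routine Cauchy–Schwarz for operators. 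Everything else is continuity of the (finitely many) test functions plus the explicit admissibility of $K_\Psi\otimes 1_{\mathcal L(\mathcal H)}$.
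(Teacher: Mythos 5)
Your strategy is sound and genuinely different from the paper's. The paper does not test against a single global kernel: for each pair $x,y$ it builds the two-point kernel equal to $1$ on the diagonal and $1-\delta$ off the diagonal, verifies \emph{by hand} (a quadratic inequality in $\epsilon$, uniform over the finitely many test functions) that this kernel is admissible once $\sup_{\psi}|\psi(x)-\psi(y)|$ is small, and then extracts the norm estimate from the resulting $2\times 2$ positivity by a case analysis. You instead test against one globally defined Szeg\H{o}-type kernel and push all the quantitative work into the ratio $\rho(y,y')=|K(y,y')|^2/\bigl(K(y,y)K(y',y')\bigr)$, which depends only on the (finitely many, continuous) test functions and equals $1$ on the diagonal; uniformity in $\varphi$ is then automatic. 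Both proofs have the same skeleton (an admissible kernel supported/evaluated at two points plus a $2\times2$ positivity estimate), but yours trades the paper's ad hoc admissibility computation for standard positivity of Szeg\H{o}-type kernels.

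One genuine repair is needed: the lemma is for an arbitrary finite preordering $\Lambda$, and the first-power kernel $K_\Psi=\prod_i(1-\psi_i\psi_i^*)^{-1}$ is admissible only when every $\lambda\in\Lambda$ has entries $\leq 1$ (this is all that Lemma~\ref{lem:adm_kernels_for_ample_po} and Corollary~\ref{cor:po_for_polydisk}, which you cite, cover). If some $\lambda_i\geq 2$, the Schur product $\prod(1-\psi_i\psi_i^*)^{\lambda_i}*K_\Psi$ retains a factor $(1-\psi_i(u)\psi_i(v)^*)^{\lambda_i-1}$, which is not a positive kernel, and positivity can fail (take $\psi_1=z_1$ on $\mathbb D^2$ and the two points $(r,0),(-r,0)$). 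The fix is immediate: set $N_i=\max_{\lambda\in\Lambda}\lambda_i$ and use $K(u,v)=\prod_i(1-\psi_i(u)\psi_i(v)^*)^{-N_i}$; for each $\lambda\in\Lambda$ the leftover Schur exponents $N_i-\lambda_i$ are nonnegative, so the product is a Schur product of positive kernels and $K\otimes 1_{\mathcal{L(H)}}$ is admissible, while your ratio argument is unchanged. Also, the $2\times2$ step you deferred as ``routine'' is indeed fine, but it is the only place operators enter, so it should be written out: compressing the positive operator matrix to $h\oplus 0$, $0\oplus h$ ($\|h\|=1$) and taking the determinant gives $|1-\ip{\varphi(y')^*h}{\varphi(y)^*h}|^2\leq\rho^{-1}\alpha\beta$ with $\alpha=1-\|\varphi(y)^*h\|^2$, $\beta=1-\|\varphi(y')^*h\|^2$, whence
\begin{equation*}
  \|(\varphi(y)-\varphi(y'))^*h\|^2
  = 2\,\mathrm{Re}\bigl(1-\ip{\varphi(y')^*h}{\varphi(y)^*h}\bigr)-\alpha-\beta
  \leq 2\rho^{-1/2}\sqrt{\alpha\beta}-\alpha-\beta
  \leq (\rho^{-1/2}-1)(\alpha+\beta)
  \leq 2(\rho^{-1/2}-1),
\end{equation*}
uniformly in $h$ and in $\varphi\in\HLHb$; this is, incidentally, shorter than the paper's own $c_1,c_2,c_3$ case analysis. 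With these two adjustments your argument yields both the uniform estimate on $U_x$ and, taking $y'=x$, $y=x_\alpha$, the continuity statement. (Your opening paragraph's constant two-point kernel is indeed not admissible, but you discarded it yourself, so that is not an issue.)
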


\begin{proof}
  Fix $0 < \delta < 1$.  By definition, for $x\in X$,
  $\sup_{\psi\in\Psi} |\psi(x)| = 1-\tilde\epsilon$ for some
  $\tilde\epsilon > 0$.  In fact, for any $\epsilon >0$, it follows
  that since $\Psi$ is a finite set, $U_{x,\epsilon} := \{y\in X :
  \sup_{\psi\in\Psi} |\psi(x)-\psi(y)| < \epsilon\}$ is a relatively
  open neighborhood of $x$.

  We claim that for $\epsilon$ sufficiently small and $y\in
  U_{x,\epsilon}$, the kernel defined by
  \begin{equation*}
    k_{x,y}(z,w) :=
    \begin{cases}
      1 & z=w=x \text{ or }z=w=y, \\
      1-\delta & (z=x\text{ and }w=y)\text{ or }(z=y\text{ and }
      w=x),\\
      0 &\text{otherwise,}
    \end{cases}
  \end{equation*}
  is an admissible kernel for $\HL$, and hence $k_{x,y}\otimes
  1_{\mathcal{L(H)}}$ is admissible for $\HLH$.  We require that
  \begin{equation*}
    \left(
      \begin{pmatrix}
        1 & 1 \\ 1 & 1
      \end{pmatrix}
      -
      \begin{pmatrix}
        \psi(x)\psi(x)^* & \psi(x)\psi(y)^* \\ \psi(y)\psi(x)^* &
        \psi(y)\psi(y)^*
      \end{pmatrix}
      \right) *
      \begin{pmatrix}
        1 & 1-\delta \\ 1-\delta & 1
      \end{pmatrix}
      \geq 0.
  \end{equation*}
  This will be nonnegative as long as $(1-\delta)^2
  |1-\psi(x)\psi(y)^*|^2 \leq (1-\psi(x)\psi(x)^*)
  (1-\psi(y)\psi(y)^*)$.

  Since $\psi(x)\psi(y)^* = \psi(x)(\psi(x)^* - (\psi(x)-\psi(y))^*)$
  and $|\psi(x)| < 1$,
  \begin{equation*}
    \begin{split}
      |1-\psi(x)\psi(y)^*|^2 
      = & |1-\psi(x)\psi(x)^* + \psi(x)(\psi(x)-\psi(y))^*)|^2 \\
      \leq & (1-\psi(x)\psi(x)^*)^2 + 2\epsilon (1-\psi(x)\psi(x)^*)
      +\epsilon^2 .
    \end{split}
  \end{equation*}
  Also,
  \begin{equation*}
    1-\psi(y)\psi(y)^* \geq (1-\psi(x)\psi(x)^*) -
    2\epsilon - \epsilon^2.
  \end{equation*}
  Hence it suffices to choose $\epsilon$ so that
  \begin{equation*}
    (1-\delta)^2\left(c^2 + 2\epsilon c +\epsilon^2\right) \leq
    c^2 - 2\epsilon c - \epsilon^2 c,
  \end{equation*}
  where $c= 1-\psi(x)\psi(x)^*$, or equivalently, so that
  \begin{equation*}
    -(c+(1-\delta)^2)\epsilon^2 - 2c(1+(1-\delta)^2)\epsilon
    +c^2(1-(1-\delta)^2) > 0.
  \end{equation*}
  This is a polynomial in $\epsilon$ which is positive when $\epsilon
  = 0$, and so by continuity is positive for sufficiently small
  $\epsilon > 0$.  In fact it is positive on the interval $\epsilon\in
  (0,d)$, where
  \begin{equation*}
    \begin{split}
      d &= \frac{-c(1+(1-\delta)^2)+\sqrt{c^2(1+(1-\delta)^2)^2 +
        c^2(c+(1-\delta)^2)(1-(1-\delta)^2)}}{c+(1-\delta)^2} \\
    &\geq c\frac{-(1+(1-\delta)^2)+\sqrt{(1+(1-\delta)^2)^2 +
        (c+(1-\delta)^2)(1-(1-\delta)^2)}}{1+(1-\delta)^2} \\
    & = c\left(\sqrt{1 +
        \frac{(c+(1-\delta)^2)(1-(1-\delta)^2)}{(1+(1-\delta)^2)^2}} - 
        1 \right) \\
    &\geq \frac{c(c+(1-\delta)^2)(1-(1-\delta)^2)}{2(1+(1-\delta)^2)^2} \\
    &\geq c\delta \geq 2\tilde\epsilon\delta.
    \end{split}
  \end{equation*}

  Fix $\varphi\in \HLH$.  We may assume without loss of generality
  that $\|\varphi\| = 1$.  Since $k_{x,y}$ is admissible,
  \begin{equation}
    \label{eq:6}
    \begin{pmatrix}
      1-\varphi(x)\varphi(x)^* & (1-\delta)(1-\varphi(x)\varphi(y)^*)
      \\
      (1-\delta)(1-\varphi(y)\varphi(x)^*) & 1-\varphi(y)\varphi(y)^*
    \end{pmatrix}
    \geq 0.
  \end{equation}

  Let $h \in \mathcal H$ with $\|h\| = 1$, and set
  \begin{equation*}
    \begin{split}
      c_1 &= \ip{(1-\varphi(x)\varphi(x)^*)h}{h} \geq 0, \\
      c_2 &= \ip{\varphi(x)(\varphi(x)-\varphi(y))^*h}{h}, \\
      c_3 &= \|(\varphi(x)-\varphi(y))^*h\|^2.
    \end{split}
  \end{equation*}
  Then positivity of the matrix in \eqref{eq:6} implies the scalar
  matrix
  \begin{equation}
    \label{eq:11}
    \begin{pmatrix}
      c_1 & (1-\delta)(c_1+c_2) \\ (1-\delta)(c_1+c_2^*) &
      c_1+c_2+c_2^*-c_3
    \end{pmatrix}
    \geq 0,
  \end{equation}
  which is equivalent to
  \begin{equation}
    \label{eq:10}
    (1-(1-\delta)^2)(c_1^2+c_1c_2^*+c_2c_1) -(1-\delta)^2|c_2|^2 \geq
    c_1c_3.
  \end{equation}
  Since $1\geq c_1$, $2\geq |c_2|$ and $1-(1-\delta)^2 \geq 2\delta$,
  it follows from \eqref{eq:10} that
  \begin{equation*}
    10\delta - (1-\delta)^2|c_2|^2 \geq c_1c_3,
  \end{equation*}
  and so $c_3 \leq 10\delta/c_1$ when $c_1\neq 0$.  So if $c_1\geq
  \sqrt{\delta}$, then $c_3 \leq 10\sqrt{\delta}$.  On the other hand,
  if $c_1 < \sqrt{\delta}$, then $\|\varphi(x)h\|^2 \geq
  1-\sqrt{\delta}$.  Also, by \eqref{eq:11}
  \begin{equation*}
    0\leq
    \begin{pmatrix}
      \sqrt{\delta} & (1-\delta)(c_1+c_2) \\ (1-\delta)(c_1+c_2^*) &
      \sqrt{\delta}+c_2+c_2^*-c_3
    \end{pmatrix}
    \leq
    \begin{pmatrix}
      \sqrt{\delta} & (1-\delta)(c_1+c_2) \\ (1-\delta)(c_1+c_2^*) & 5
    \end{pmatrix}.
  \end{equation*}
  Consequently, $|c_1+c_2| \leq 5\sqrt{\delta}/(1-\delta)$, and so
  $|c_2| \leq 6\sqrt{\delta}/(1-\delta)$.  Then
  $\sqrt{\delta}+c_2+c_2^*-c_3 \geq 0$ gives $c_3 \leq
  13\sqrt{\delta}/(1-\delta)$.  Thus $\|\varphi(x)-\varphi(y)\| \leq
  13\sqrt{\delta}/(1-\delta)$ whenever $y\in U_{x,\epsilon}$.  Note
  that by construction the set $U_{x,\epsilon}$ is independent of the
  choice of $\varphi\in \HLHb$.

  Suppose $(x_\alpha)$ is a net converging to $x \in X$.  We saw that
  given $\delta > 0$, there is an $\epsilon > 0$ such that $k_{x,y}$
  is an admissible kernel.  Also, there is a $\alpha_\delta$ such that
  for all $\alpha > \alpha_\delta$, $x_\alpha \in U_{x,\epsilon}$.
  Hence by what we have shown, $\|\varphi(x)-\varphi(x_\alpha)\| \leq
  13\sqrt{\delta}/(1-\delta)$.  Since any open neighborhood of $x$
  contains a $U_{x,\epsilon}$ for sufficiently small $\epsilon$, we
  conclude that $\varphi$ is norm continuous.
\end{proof}

Lemma~\ref{lem:elts_of_HLH_ctnuous} ensures that the definition of
$\ALH$ makes sense, though of course at this point we do not know if
it consists of any more than the constant functions on $X$.  In the
concrete examples most commonly considered, it is also the case that
the test functions are in $\ALH$, and we will generally assume this to
be the case, as well as that they separate the points of
$\overline{X}$.

\begin{lemma}
  \label{lem:HLH-topologies}
  The space $\HLH$ is complete in the norm topology, and its norm
  closed unit ball $\HLHb$ is closed in both the topology of pointwise
  convergence and the topology of uniformly convergence on compact
  subsets of $X$.
\end{lemma}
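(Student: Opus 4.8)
The plan is to exploit that both membership in $\HLH$ and the value of the Schur-Agler norm are detected \emph{finitely}: by definition $\varphi$ lies in the ball of radius $c$ exactly when $(c^2[1_{\mathcal{L(H)}}]-\varphi\varphi^*)*k\ge 0$ for every admissible kernel $k$, and such positivity amounts to positive semidefiniteness of the operator matrix obtained by restricting the kernel to an arbitrary finite subset $F=\{x_1,\dots,x_r\}$ of $X$. For a fixed admissible $k$ and a fixed $F$, that restricted matrix depends norm-continuously on the finitely many operators $\varphi(x_1),\dots,\varphi(x_r)$ — once the Schur product is written out via Kolmogorov decompositions as in the definitions above, it is a fixed bilinear expression in those operators, with the remaining data held fixed. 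Since the cone of positive semidefinite operator matrices is norm closed, all three assertions will be obtained by passing to limits one pair $(k,F)$ at a time.

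For completeness I would start from a Cauchy sequence $(\varphi_n)$ in $\HLH$. By $\|\cdot\|_\infty\le\|\cdot\|$ (noted earlier) it is Cauchy in the supremum norm, hence converges uniformly on $X$ to a bounded $\varphi:X\to\mathcal{L(H)}$; in particular $\varphi_n(x)\to\varphi(x)$ in norm at each $x$. Given $\epsilon>0$ and $N$ with $\|\varphi_n-\varphi_m\|<\epsilon$ for $n,m\ge N$, I would, for each admissible $k$ and finite $F$, start from positivity of $\bigl((\epsilon^2[1_{\mathcal{L(H)}}]-(\varphi_n-\varphi_m)(\varphi_n-\varphi_m)^*)*k\bigr)(x_i,x_j)$, let $m\to\infty$, and use $\varphi_m(x_i)\to\varphi(x_i)$ together with closedness of the positive cone to get the same inequality with $\varphi_m$ replaced by $\varphi$. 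Letting $F$ and $k$ range, this gives $\varphi_n-\varphi\in\HLH$ with $\|\varphi_n-\varphi\|\le\epsilon$ for $n\ge N$; since $\HLH$ is a linear space containing each $\varphi_n$, it follows that $\varphi\in\HLH$ and $\varphi_n\to\varphi$ in the Schur-Agler norm.

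For closedness of $\HLHb$ under pointwise convergence I would take a net $(\varphi_\alpha)$ in $\HLHb$ with $\varphi_\alpha(x)\to\varphi(x)$ in the norm of $\mathcal{L(H)}$ for every $x$, fix an admissible $k$ and a finite $F$, and observe that the positive semidefinite matrices $\bigl(([1_{\mathcal{L(H)}}]-\varphi_\alpha\varphi_\alpha^*)*k\bigr)(x_i,x_j)$ converge in norm to $\bigl(([1_{\mathcal{L(H)}}]-\varphi\varphi^*)*k\bigr)(x_i,x_j)$, which is then positive semidefinite; ranging over $F$ and $k$ yields $\varphi\in\HLHb$. The statement about uniform convergence on compact subsets of $X$ then comes for free: that topology is finer than the topology of pointwise convergence (singletons being compact), so a set closed in the latter is closed in the former.

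I do not anticipate a real obstacle here — the force of the kernel-theoretic definition is precisely that the defining inequalities localize to finite subsets, so after recording the two soft facts above (norm-continuity of the restricted Schur products in the function values, and closedness of the positive cone) the three claims are bookkeeping, the third being a formal consequence of the second. The one point deserving a sentence of care is verifying that the Schur product against an $\mathcal{L(H)}$-valued admissible kernel is indeed jointly norm-continuous in the finitely many operator values $\varphi(x_i)$; this is clear from the bilinear, tensorial form of that product once a Kolmogorov decomposition of the kernel is fixed.
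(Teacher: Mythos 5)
Your proof is correct and follows essentially the same route as the paper's: both reduce each positivity condition to finite subsets of $X$ and pass to limits there using norm-continuity of the expressions $k(x,y)-\varphi(x)k(x,y)\varphi(y)^*$ in the finitely many operator values together with norm-closedness of the positive cone. If anything, your completeness step is slightly tidier, since it explicitly yields $\|\varphi_n-\varphi\|\le\epsilon$ and hence convergence in the Schur--Agler norm, a point the paper's argument leaves implicit.
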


\begin{proof}
  Let $(\varphi_\alpha)$ be a Cauchy net in $\HLH$.  For fixed $x\in
  X$ let $k_x$ be the kernel which equals $1_{\mathcal{L(H)}}$ in the
  $(x,x)$ place and zero elsewhere.  It is clear by definition of the
  test functions that this is an admissible kernel.  By the assumption
  that $(\varphi_\alpha)$ is a Cauchy net, using the kernel $k_x$, we
  see that $(\varphi_\alpha(x))$ is a Cauchy net in $\mathcal{L(H)}$,
  and since this space is complete, $(\varphi_\alpha(x))$ converges in
  norm.  We denote the limit by $\varphi(x)$.

  We show that the function $\varphi: x \mapsto \varphi(x)$ is in
  $\HLH$.  Given $\epsilon > 0$, there is an $\alpha_0$ such that for
  all $\alpha,\beta > \alpha_0$ and any admissible kernel $k$,
  $(\epsilon[1_{\mathcal{L(H)}}] - (\varphi_\alpha - \varphi_\beta)
  (\varphi_\alpha - \varphi_\beta)^*)*k \geq 0$.  From this we see
  that there is a constant $c>0$ such that for all $\alpha >
  \alpha_0$, $\|\varphi_\alpha\| < c$

  Let $F\subset X$ be a finite set.  Given $\epsilon > 0$, choose
  $\alpha_0$ as above, and also so that for all $\alpha > \alpha_0$
  and $x,y\in F$, $2c\|\varphi_\alpha(x) - \varphi(x)\| +
  \|\varphi_\alpha(y) - \varphi(y)\|^2 \leq \epsilon/|F|^2$.
  Letting $I$ denote the kernel which is $1_{\mathcal{L(H)}}$ on the
  main diagonal and zero elsewhere, we have on $F\times F$,
  \begin{equation*}
    \begin{split}
      & (c[1_{\mathcal{L(H)}}] + \epsilon I - \varphi\varphi^*)*k \\
      = \,& (c[1_{\mathcal{L(H)}}] + \epsilon I - (\varphi_\alpha +
      (\varphi - \varphi_\alpha))(\varphi_\alpha^* + (\varphi -
      \varphi_\alpha) ^*))*k \\
      \geq \,& (c[1_{\mathcal{L(H)}}] - \varphi_\alpha
      \varphi_\alpha^*)*k + (\epsilon - (2c \|\varphi -
      \varphi_\alpha\| - \|\varphi - \varphi_\alpha\|^2)I*k \\
      \geq \,& 0.
    \end{split}
  \end{equation*}
  Since $F$ and $\epsilon$ are arbitrary, this shows that
  $(c[1_{\mathcal{L(H)}}] - \varphi\varphi^*)*k \geq 0$, and so
  $\varphi \in\HLH$.

  Suppose that $\varphi_\alpha \to \varphi$ pointwise, where
  $\|\varphi_\alpha\| \leq 1$.  Let $k$ be any admissible kernel and
  $F$ a finite subset of $X$.  Given $\epsilon > 0$, there is some
  $\alpha_0$ such that for all $\alpha > \alpha_0$ and all $x\in F$,
  $\|\varphi_\alpha(x) - \varphi(x)\| < \epsilon$.  Then for $\kappa =
  \max_{x,y\in F} \|k(x,y)\|$,
  \begin{equation*}
    \begin{split}
      & \left((([1_{\mathcal{L(H)}}] +
        \varphi(x)\varphi(y)^*) k(x,y)\right) \\
      = \, & \left(([1_{\mathcal{L(H)}}] +
        \varphi_\alpha(x)\varphi_\alpha(y)^*) k(x,y) \right. \\
      & \left.  - 2 \left(\varphi_\alpha(x) (\varphi(y) -
          \varphi_\alpha(y))^* + (\varphi(x) -
          \varphi_\alpha(x))\varphi_\alpha(y)^* \right) k(x,y) 
        - (\varphi(x) - \varphi_\alpha(x))(\varphi(y) -
        \varphi_\alpha(y))^* k(x,y)\right) \\
      \geq \,& -\epsilon\kappa(1+ \epsilon)I_{F\times F}, \\
    \end{split}
  \end{equation*}
  which goes to zero as we take $\epsilon$ to zero, showing that
  $\varphi \in \HLHb$.

  Finally, if $\varphi_\alpha \to \varphi$ uniformly on compact
  subsets of $\HLHb$, then in particular it converges pointwise to
  $\varphi$, and hence by what we have shown, $\varphi \in \HLHb$.
\end{proof}

\begin{lemma}
  \label{lem:ALH-is-norm-closed}
  The algebra $\ALH$ is closed in the norm topology, the topology of
  uniform convergence, and the topology of pointwise convergence.
\end{lemma}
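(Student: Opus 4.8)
The plan is to exploit the \emph{extension map} $\varphi\mapsto\tilde\varphi$ that sends an element $\varphi\in\ALH$ to its continuous extension $\tilde\varphi$ to $\overline{X}$, which is unique since $X$ is dense in $\overline{X}$. Because $\tilde\varphi$ is continuous and $X$ dense, $\sup_{\overline{X}}\|\tilde\varphi\|=\sup_{X}\|\varphi\|=\|\varphi\|_\infty$, so this map is an isometry of $(\ALH,\|\cdot\|_\infty)$ into the Banach space $C_b(\overline{X},\mathcal{L(H)})$. I will also use throughout that $\|\cdot\|_\infty\le\|\cdot\|$ and the defining property of $\ALH$: a function $\varphi$ lies in $\ALH$ precisely when $\varphi\in\HLH$ and $\varphi$ admits a continuous extension to $\overline{X}$. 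So in every case the task reduces to: given a net $(\varphi_\alpha)$ in $\ALH$ converging in the relevant sense to some $\varphi\in\HLH$, exhibit a continuous extension of $\varphi$ to $\overline{X}$.

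For the norm topology and the topology of uniform convergence the argument is identical and brief. If $(\varphi_\alpha)$ is Cauchy for $\|\cdot\|$ (in which case the limit $\varphi$ lies in $\HLH$ by completeness, Lemma~\ref{lem:HLH-topologies}) or converges uniformly on $X$ to $\varphi\in\HLH$, then $(\varphi_\alpha)$ is $\|\cdot\|_\infty$-Cauchy, hence the extensions $(\tilde\varphi_\alpha)$ form a Cauchy net in $C_b(\overline{X},\mathcal{L(H)})$ and converge uniformly to a continuous $g\colon\overline{X}\to\mathcal{L(H)}$. Since $g(x)=\lim_\alpha\tilde\varphi_\alpha(x)=\lim_\alpha\varphi_\alpha(x)=\varphi(x)$ for $x\in X$, the function $g$ is a continuous extension of $\varphi$, so $\varphi\in\ALH$.

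The topology of pointwise convergence is the delicate case, and I expect this to be the main obstacle. Here one should read convergence as pointwise convergence on $\overline{X}$ of the continuous extensions — the natural notion for $\ALH$, whose members really live on $\overline{X}$ — with the limit again a continuous function $g$ on $\overline{X}$. The reduction from the first paragraph still applies: $\varphi:=g|_{X}=\lim_\alpha\varphi_\alpha$ pointwise on $X$, and it remains only to place $\varphi$ in $\HLH$. Working in the unit ball $\ALHb$ (the analogue for $\HLHb$ being Lemma~\ref{lem:HLH-topologies}), the net is bounded, so $\varphi\in\HLHb$ by that lemma, and then $g$ is a continuous extension of $\varphi$, giving $\varphi\in\ALHb$. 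The reason continuity of the limit must be assumed and cannot be derived is that the uniform (in $\varphi\in\HLHb$) local oscillation estimate of Lemma~\ref{lem:elts_of_HLH_ctnuous} concerns points of $X$ only: the neighbourhoods on which the oscillation of every $\varphi\in\HLHb$ is controlled shrink at least as fast as the distance to $\overline{X}\setminus X$, so $\HLHb$, and hence the family $\{\tilde\varphi_\alpha\}$, need not be equicontinuous at points of $\overline{X}\setminus X$, and a pointwise limit on $\overline{X}$ of extensions of $\ALH$-functions can fail to be continuous there — for instance, over $\mathbb D$, the radial dilates $z\mapsto f(r_n z)$ of an $f\in H^\infty(\mathbb D)\setminus A(\mathbb D)$ lie in $A(\mathbb D)$ and converge pointwise on $\overline{\mathbb D}$, but not to a continuous function. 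Thus the pointwise statement is understood with convergence taking place in $C_b(\overline{X},\mathcal{L(H)})$, after which it follows at once from Lemma~\ref{lem:HLH-topologies} via the reduction above.
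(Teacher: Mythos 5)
Your treatment of the norm-topology case is correct and is in substance the paper's own argument: the paper constructs the continuous extension of the limit by hand (Cauchy nets $(x_\beta)$ in $X$ converging to points of $\overline X$, then a direct verification that the extended function is continuous on $\overline X$), while you package exactly the same content as the isometric embedding $\varphi\mapsto\tilde\varphi$ of $(\ALH,\|\cdot\|_\infty)$ into the complete space $C_b(\overline X,\mathcal{L(H)})$ together with $\|\cdot\|_\infty\le\|\cdot\|$. The uniform-convergence case is the same argument again, and your reading that the limit is assumed to lie in $\HLH$ (i.e.\ ``closed'' means closed as a subset of $\HLH$ in that topology) is the only sensible one, since uniform convergence alone does not control the Agler norm.

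Where you diverge is the pointwise case, and your scepticism is warranted rather than being a gap on your part. The paper disposes of the last two topologies with the single sentence that they ``follow from the previous lemma,'' but Lemma~\ref{lem:HLH-topologies} only yields that a bounded pointwise limit lies in $\HLHb$; it says nothing about continuous extendability to $\overline X$, which is the defining property of $\ALH$. As you observe, under the literal reading the pointwise assertion fails: for $X=\mathbb D$ with test function $z$ one has $\ALH=A(\mathbb D)$, and the dilates $f_r(z)=f(rz)$ of an $f\in H_1^\infty(\mathbb D)\setminus A(\mathbb D)$ form a bounded net in the ball of $A(\mathbb D)$ converging pointwise on $\mathbb D$ to $f\notin A(\mathbb D)$. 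Your reformulation --- bounded nets whose extensions converge pointwise on $\overline X$ to a continuous limit, after which the conclusion does follow from Lemma~\ref{lem:HLH-topologies} exactly as you say --- is the natural salvage; note that the boundedness cannot be dropped even then, since nets of polynomials interpolating $\bar z$ on finite subsets of $\overline{\mathbb D}$ converge pointwise to $\bar z$. In short, your proof establishes everything in the lemma that is actually true, by essentially the paper's route for the first two topologies, and for the third it identifies a genuine imprecision in the paper's statement and one-line proof rather than containing a defect of its own.
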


\begin{proof}
  Let $(\varphi_\alpha)_{\alpha\in A}$ be a net in $\ALH$ converging
  in norm in $\HLH$ to $\varphi$.  We show that $\varphi \in \ALH$.

  Let $(x_\beta)$ be a net in $X$ converging to $x\in\overline{X}$.
  By norm convergence, given any $\epsilon > 0$, there exists
  $\alpha_0$ such that for all $\alpha_1,\alpha_2 > \alpha_0$ and all
  $\beta$, $\|\varphi_{\alpha_1}(x_\beta) -
  \varphi_{\alpha_2}(x_\beta)\| < \epsilon$.  By continuity,
  $\|\varphi_{\alpha_1}(x) - \varphi_{\alpha_2}(x)\| < \epsilon$, and
  so $(\varphi_{\alpha}(x))$ is a Cauchy net and hence has a limit,
  which we denote by $\varphi(x)$.  By continuity, $\varphi(x)$ is
  independent of the choice of net $(x_\beta)$.

  We show that the function $\varphi: x\mapsto \varphi(x)$ is
  continuous on $\overline{X}$.  Given $\epsilon > 0$, let $V_x$ be an
  open ball in $\mathcal{L(H)}$ of radius $\epsilon/2$ about
  $\varphi(x)$, and set ${\tilde U}_x = \varphi^{-1}(V_x)\cap X$, an
  open set in $X$.  Then let $U_x$ be an open set in $\overline{X}$
  such that $U_x\cap X = {\tilde U}_x$ and note that $x\in U_x$.  Let
  $y\in U_x$ and construct $U_y$ in an identical manner.  Obviously,
  $U_x\cap U_y \cap X \neq \emptyset$, and so if we choose $w$ in this
  set, $\epsilon > \|\varphi(x) - \varphi(w)\| + \|\varphi(w) -
  \varphi(y)\| \geq \|\varphi(x) - \varphi(y)\|$.  It follows that
  $\varphi$ is continuous on $\overline{X}$.

  The last two statements follow from the previous lemma.
\end{proof}

\subsection{Connections between $\HLH$, $\ALH$ and algebras over the
  polydisk}
\label{subsec:conn-betw-hlh}

Suppose either that $X$ has a topology in which $\overline{X}$ is
compact (say for example, as a bounded subset of $\mathbb C^d$) or
that $X$ is endowed with a topology as in
Subsection~\ref{subsec:topologising-x} which then ensures the
continuity of the test functions and compactness of $\overline{X}$.
In either case, we also assume that the test functions are in $\AL$
and that they separate the points of $\overline{X}$.  Then there is a
natural identification of $\HLH$ and $\ALH$ with certain subalgebras
of bounded analytic functions over subsets of the polydisk, which we
give below.

Recall that by definition the test functions have the property that
for $x\in X$,
\begin{equation*}
  z = \xi(x) := (\psi_1(x),\dots,\psi_d(x)) \in \mathbb D^d,
\end{equation*}
and that the test functions separate the points of $X$, or
equivalently, that $\xi$ is injective.  By the assumption that the
test functions are in $\ALH$ and that they separate the points of
$\overline{X}$, which is compact, we have that $\xi(X) = \Omega
\subseteq \mathbb D^d$, and $\xi(\overline{X}) = \overline{\Omega}$ is
a compact subset of ${\overline{\mathbb D}}^d$.

Write $\Psi^{\mathrm{pd}} = \{Z_1(z),\ldots,Z_d(z)\}$, $z\in\mathbb
D^d$, where $Z_j(z) =z_j$ are the coordinate functions, and take these
as test functions over $\Omega$.  Let $\mathcal H$ be a Hilbert space
and write ${\mathcal K}^{\mathrm{pd}}_{\Lambda,\mathcal H}$ for the
admissible kernels in this setting, and ${H_1^\infty({\mathcal
    K}^{\mathrm{pd}}_{\Lambda,\mathcal H})}$ for the associated
algebra with unit ball $H_1^\infty({\mathcal
  K}^{\mathrm{pd}}_{\Lambda,\mathcal H})$.  In analogy with the
Serre-Swan theorem, we have the following.

\begin{lemma}
  \label{lem:id_HLH_w_pdisk_subalg}
  The map $\xi:\overline{X} \to \overline{\Omega}\subseteq
  {\overline{\mathbb D}}^d$ defined above is a homeomorphism.
  Consequently, given a preordering $\Lambda$ and Hilbert space
  $\mathcal H$, there is an isometric unital algebra homomorphism from
  $\HLH$ onto $H^\infty(\Omega,{\mathcal
    K}^{\mathrm{pd}}_{\Lambda,\mathcal H})$ and from $\ALH$ onto
  $A(\Omega, {\mathcal K}^{\mathrm{pd}}_{\Lambda,\mathcal H})$.
\end{lemma}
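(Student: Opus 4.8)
The plan is to establish the lemma in two stages: first the topological claim that $\xi\colon\overline X\to\overline\Omega$ is a homeomorphism, then the algebraic/isometric claim, which will essentially be a transport of structure along $\xi$. For the first stage I would argue as follows. The map $\xi = (\psi_1,\dots,\psi_d)$ is continuous because each test function is, and it is injective on $\overline X$ by the standing assumption that the test functions separate the points of $\overline X$. Since $\overline X$ is compact and $\overline\Omega = \xi(\overline X)$ is Hausdorff (being a subset of $\overline{\mathbb D}^d$), a continuous bijection from a compact space to a Hausdorff space is automatically a homeomorphism; the inverse is continuous because $\xi$ is a closed map. Restricting to $X$, which is exactly where $\sup_{\psi\in\Psi}|\psi(x)|<1$, we get that $\xi$ carries $X$ homeomorphically onto $\Omega\subseteq\mathbb D^d$, and carries $\overline X\setminus X$ into the topological boundary portion of $\overline\Omega$.

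For the second stage, I would define the candidate homomorphism by pull-back: to $\varphi\colon X\to\mathcal{L(H)}$ in $\HLH$ associate $\varphi\circ\xi^{-1}\colon\Omega\to\mathcal{L(H)}$, and conversely to $F$ on $\Omega$ associate $F\circ\xi$. These are clearly mutually inverse unital algebra homomorphisms at the level of functions, so the content is that they preserve the admissible-kernel cones (hence the Schur--Agler norm), and that they match $\ALH$ with $A(\Omega,\mathcal K^{\mathrm{pd}}_{\Lambda,\mathcal H})$. The key observation is that the test functions on $X$ are $\psi_j = Z_j\circ\xi$, so for any kernel $k$ on $X\times X$ the identity
\begin{equation*}
  \bigl(([1_{\mathcal{L(H)}}]-(\psi_i\otimes 1)(\psi_i\otimes 1)^*)*k\bigr)(x,y)
  = \bigl(([1_{\mathcal{L(H)}}]-(Z_i\otimes 1)(Z_i\otimes 1)^*)*(k\circ(\xi^{-1}\times\xi^{-1}))\bigr)(\xi(x),\xi(y))
\end{equation*}
holds, and likewise for Schur products and for the kernel $1_X1_X^*=[1_{\mathcal{L(H)}}]$. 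Thus $k\mapsto k\circ(\xi^{-1}\times\xi^{-1})$ is a bijection $\mathcal K_{\Lambda,\mathcal H}\to\mathcal K^{\mathrm{pd}}_{\Lambda,\mathcal H}$ that intertwines the positivity conditions defining membership and norm in the two Agler algebras; chasing the definition of $\|\varphi\|$ as the least $c$ with $(c^2[1]-\varphi\varphi^*)*k\geq 0$ for all admissible $k$ then gives $\|\varphi\circ\xi^{-1}\| = \|\varphi\|$, i.e.\ the isometry. Finally, since $\xi$ restricts to a homeomorphism $\overline X\to\overline\Omega$, a function in $\HLH$ extends continuously to $\overline X$ if and only if its push-forward extends continuously to $\overline\Omega$ (using Lemma~\ref{lem:elts_of_HLH_ctnuous} for continuity on the interior and Lemma~\ref{lem:ALH-is-norm-closed} for the closedness of the image algebra), which identifies $\ALH$ with $A(\Omega,\mathcal K^{\mathrm{pd}}_{\Lambda,\mathcal H})$.

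The only genuine subtlety — the part I would be most careful about — is the homeomorphism claim at the boundary, specifically that $\xi^{-1}$ is continuous on all of $\overline\Omega$. On the interior this is clear from the topology on $X$ being the weakest one making the $\psi_j$ continuous (so $\xi$ is by construction an embedding of $X$), but one must invoke compactness of $\overline X$ and injectivity of $\xi$ on $\overline X$ — both of which are standing hypotheses of this subsection — to push this to the closure; without the separation-on-$\overline X$ assumption the statement would simply be false, so it is essential that this hypothesis is in force. Everything else is a routine, if slightly tedious, verification that pulling back along a homeomorphism commutes with all the operations (Schur products, the kernels $1-\psi_i\psi_i^*$, the defining inequalities) used to build $\mathcal K_{\Lambda,\mathcal H}$, $\HLH$ and $\ALH$.
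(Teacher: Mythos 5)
Your proposal is correct and follows essentially the same route as the paper: the homeomorphism claim is the standard ``continuous bijection from a compact space to a Hausdorff space'' fact, which the paper simply proves by hand with a net/subnet compactness argument using injectivity of $\xi$ on $\overline{X}$, and the algebraic part is the same transport of structure, noting that $k\mapsto k\circ(\xi^{-1}\times\xi^{-1})$ gives a bijection between the admissible kernel cones and hence an isometric unital isomorphism $\varphi\mapsto\varphi\circ\xi^{-1}$ carrying $\ALH$ onto $A(\Omega,{\mathcal K}^{\mathrm{pd}}_{\Lambda,\mathcal H})$. No gaps to flag.
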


\begin{proof}
  Since $\xi$ is a bijection from $\overline{X}$ to
  $\overline{\Omega}$, $\xi^{-1}$ is well-defined, and so it suffices
  to show that $\xi^{-1}$ is continuous.  Suppose not.  Then there is
  a net $(z_\alpha)_{\alpha\in A}$ converging to $z$ in
  $\overline{\Omega}$ such that $x_\alpha = \xi^{-1}(z_\alpha)$ does
  not converge to $x=\xi^{-1}(z)$.  Hence there is an open set $U$
  containing $x$ with the property that for all $\alpha$ in $A$, there
  exists $\beta \geq \alpha$ such that $x_\beta \notin U$.  The set $B
  = \{\beta\in A: x_\beta \notin U\}$ is thus a directed set.  Since
  $\overline{X}$ is compact, there is a subnet
  $(x_\gamma)_{\gamma\in\Gamma}$ of $(x_\beta)_{\beta\in B}$
  converging to some $\tilde x \neq x$.  But the subnet
  $(z_\gamma)_{\gamma\in\Gamma}$ converges to $z$, and so $\xi(\tilde
  x) = \xi(x)$, contradicting the injectivity of $\xi$.

  It is then clear that for $k^{\mathrm{pd}}\in {\mathcal
    K}^{\mathrm{pd}}_{\Lambda,\mathcal H}$, $k$ defined by $k(x,y) =
  k^{\mathrm{pd}}(\xi(x),\xi(y))$ is in ${\mathcal
    K}_{\Lambda,\mathcal H}$, and similarly, that for $k\in {\mathcal
    K}_{\Lambda,\mathcal H}$, $k^{\mathrm{pd}}$ defined by
  $k^{\mathrm{pd}}(z,w) = k(\xi^{-1}(z),\xi^{-1}(w))$ is in ${\mathcal
    K}^{\mathrm{pd}}_{\Lambda,\mathcal H}$, giving a bijective
  correspondence between the sets of admissible kernels.  It follows
  easily that $\nu: \HLH \to H^\infty(\Omega, {\mathcal
    K}^{\mathrm{pd}}_{\Lambda,\mathcal H})$ given by $\nu(\varphi)(z)
  = \varphi(\xi^{-1}(z))$ is an isometric unital algebra homomorphism
  and that $\nu(\ALH) = A(\Omega, {\mathcal
    K}^{\mathrm{pd}}_{\Lambda,\mathcal H})$.
\end{proof}

\begin{corollary}
  \label{cor:Szego-ker-inv}
  Let $\Lambda$ be an ample preordering and $F$ a finite subset of
  $X$.  Then the Szeg\H{o} kernel restricted to $F\times F$ has closed
  range.
\end{corollary}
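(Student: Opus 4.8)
The plan is to identify what ``the Szeg\H{o} kernel'' means for an ample preordering, and then observe that closedness of its range over a \emph{finite} set is a finite dimensional triviality. For an ample $\Lambda$ with largest element $\lambda^m = (m_1,\dots,m_d)$, the obvious ample analogue of Lemma~\ref{lem:adm_kernels_for_ample_po} (proved by the identical Schur--inverse computation) produces a distinguished admissible kernel
\begin{equation*}
  k_s(x,y) \;=\; 1_{\mathcal{L(H)}}\otimes \prod_{j\,:\,m_j\neq 0}\bigl(1-\psi_j(x)\psi_j(y)^*\bigr)^{-m_j},
\end{equation*}
to which every $k\in\KLH$ is subordinate; under the homeomorphism $\xi$ of Lemma~\ref{lem:id_HLH_w_pdisk_subalg} this corresponds to the (weighted) Szeg\H{o} kernel of $\Omega = \xi(X)\subseteq\mathbb D^d$, reducing to the classical Szeg\H{o} kernel of Corollary~\ref{cor:po_for_polydisk} when $\lambda^m = 1$ and $X = \mathbb D^d$. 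It is this $k_s$ that the statement refers to, and ampleness is used precisely to guarantee its existence: for a non-ample preordering no single admissible kernel dominates all the rest.

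Next I would restrict to $F = \{x_1,\dots,x_n\}$. Since the test functions are scalar valued, $k_s$ restricted to $F\times F$ is the operator $M_0\otimes 1_{\mathcal H}$ on $\mathbb C^n\otimes\mathcal H$, where $M_0 = \bigl(k_s(x_i,x_k)\bigr)_{i,k=1}^n$ is a positive element of $M_n(\mathbb C)$ (it is the restriction of a positive kernel). On the finite dimensional space $\mathbb C^n$ the operator $M_0$ has closed range $\ran M_0 = (\ker M_0)^\perp$; hence $\clran(M_0\otimes 1_{\mathcal H}) = (\ker(M_0\otimes 1_{\mathcal H}))^\perp = (\ker M_0)^\perp\otimes\mathcal H = \ran M_0\otimes\mathcal H$, and every vector there already lies in $\ran(M_0\otimes 1_{\mathcal H})$ (apply $(M_0|_{\ran M_0})^{-1}\otimes 1_{\mathcal H}$). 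So $k_s|_{F\times F}$ has closed range, and in particular is invertible with bounded inverse on that range --- which is what is needed when one has to invert $k_s$ over finite subsets of $X$ in the realization arguments.

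I do not expect any genuine obstacle: once Lemmas~\ref{lem:adm_kernels_for_ample_po} and~\ref{lem:id_HLH_w_pdisk_subalg} are in hand, the only points requiring (mild) care are invoking ampleness so that a distinguished Szeg\H{o} kernel exists, and the harmless tensoring by $1_{\mathcal H}$ in the operator valued case, dealt with as above. One could equally argue through the Kolmogorov decomposition of $k_s|_{F\times F}$, whose ambient Hilbert space is finite dimensional, but the matrix formulation seems cleanest.
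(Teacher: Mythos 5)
Your argument establishes only the literal words of the corollary, and in doing so it misses the content the paper actually extracts from it. As you yourself note, closedness of the range of $M_0\otimes 1_{\mathcal H}$ over a finite set is a finite dimensional triviality: it holds for any positive kernel of this form, whether or not $\xi$ is injective, indeed even if two points of $F$ produced identical rows of $M_0$. But look at how the corollary is consumed later: in the proof of Theorem~\ref{thm:ext-aux-test-fns-ample-case} it is cited to conclude that $k_F$ has ``closed range isomorphic to $\mathbb C^{n|F|}$'', after which $\kappa_F^{-1}$ is formed, and it is invoked again to assert that $k_s$ restricted to a two point set $\{x,y\}$ ``is invertible''. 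So the assertion really being made is nonsingularity (full rank) of the Szeg\H{o} Gram matrix at the points of $F$, not mere closedness of its range, and that is exactly why the paper's one-line proof leans on the injectivity of $\xi$ from Lemma~\ref{lem:id_HLH_w_pdisk_subalg}: distinct points of $X$ map to distinct points of $\mathbb D^d$, and the polydisk Szeg\H{o} kernel is strictly positive definite at distinct points (its kernel functions are linearly independent in $H^2(\mathbb D^d)$, for instance because polynomials interpolate arbitrary data at finitely many distinct nodes), so the pulled-back Gram matrix is invertible. Your proof never uses injectivity of $\xi$ in the argument itself, only to identify the kernel, and your phrase ``invertible with bounded inverse on that range'' concedes precisely the weaker statement; it cannot deliver $\kappa_F^{-1}$ on all of $\mathbb C^{n|F|}$, which is what the later arguments require.

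The repair is short and compatible with your setup: after identifying $k_s$ as the pullback under $\xi$ of the (possibly weighted) polydisk Szeg\H{o} kernel, observe that injectivity of $\xi$ makes the points $\xi(x)$, $x\in F$, distinct, so the scalar Gram matrix $M_0$ is positive \emph{definite} (for the weighted kernel, combine strict positive definiteness of the unweighted kernel with the Schur product theorem, the extra factor having strictly positive diagonal), and hence $M_0\otimes 1_{\mathcal H}$ is invertible on $\mathbb C^{|F|}\otimes\mathcal H$. That is the statement the paper's proof, terse as it is, is claiming, and the one needed downstream.
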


\begin{proof}
  Since the statement is true over the polydisk and the above map
  $\xi$ is injective, the result is immediate.
\end{proof}

\subsection{Auxiliary test functions}
\label{subsec:auxil-test-funct}

Let $0 < \lambda \in \Lambda$.  Define two $\mathbb
C^{2^{|\lambda|-1}}$ valued functions by
\begin{equation*}
  \psi_\lambda^+(x) = \mathrm{row}_{\lambda'\in\Lambda_+,\,
    \lambda'\leq_\ell \lambda}\,\left(\psi^{\lambda'}\right)
  \quad\text{and}\quad
  \psi_\lambda^-(x) = \mathrm{row}_{\lambda'\in\Lambda_-,\,
    \lambda'\leq_\ell \lambda}\,\left(\psi^{\lambda'}\right);
\end{equation*}
that is, $\psi_\lambda^+$ has entries consisting of products of even
numbers of $\psi$s (counting multiplicity) taken from $\psi^\lambda$,
while $\psi_\lambda^-$ has entries consisting of products of odd
numbers of $\psi$s (counting multiplicity) taken from $\psi^\lambda$.
We order these in increasing order, so that the first entry of
$\psi_\lambda^+$ is $1$ (corresponding to $0 <_\ell \lambda$).

By construction, for $\lambda \in \Lambda$,
\begin{equation*}
  \prod_{\lambda_i\in\lambda} ([1] -
  \psi_i\psi_i^*)^{\lambda_i}(x,y)
  = \psi_\lambda^+(x)\psi_\lambda^+(y)^* -
  \psi_\lambda^-(x)\psi_\lambda^-(y)^*
\end{equation*}
and for each $x\in X$,
\begin{equation*}
  \prod_{\lambda_i\in\lambda} ([1] -
  \psi_i\psi_i^*)^{\lambda_i}(x,x)
  > 0.
\end{equation*}
From this we see that $|\psi_\lambda^+(x)|^2 =
\psi_\lambda^+(x)\psi_\lambda^+(x)^* > 1$.  For $\psi_\lambda^+(x) =
|\psi_\lambda^+(x)|\nu_\lambda(x)$ the polar decomposition of
$\psi_\lambda^+(x)$, we set
\begin{equation*}
  \omega_\lambda(x) := \nu_\lambda(x)^* |\psi_\lambda^+(x)|^{-1} =
  \psi_\lambda^+(x)^* |\psi_\lambda^+(x)|^{-2}.
\end{equation*}
Then $\psi_\lambda^+(x) \omega_\lambda(x) = 1$.  Note that
$\|\omega_\lambda(x)\| = |\psi_\lambda^+(x)|^{-1} < 1$.  Define
\begin{equation*}
  \sigma_\lambda(x) := \omega_\lambda(x) \psi_\lambda^-(x) =
  \psi_\lambda^+(x)^* |\psi_\lambda^+(x)|^{-2} \psi_\lambda^-(x) \in
  M_{2^{|\lambda|-1}}(\mathbb C).
\end{equation*}
Obviously for all $x\in X$,
\begin{equation*}
  \psi_\lambda^+(x)\sigma_\lambda(x) = \psi_\lambda^-(x)
  \qquad\text{and} \qquad \|\sigma_\lambda(x)\| < 1.
\end{equation*}
As defined, $\sigma_\lambda(x)(\mathrm{ran\,}\psi_\lambda^{-*}(x))
\subseteq \psi_\lambda^{+*}(x)$ and
$\sigma_\lambda(x)(\mathrm{ker\,}\psi_\lambda^{-}(x)) = \{0\}$.
Additionally, if the test functions are in $\AL$, then $\sigma_\lambda
\in C(\overline{X}, M_n(\mathbb C))$ where $n = 2^{|\lambda|-1}$.

Let $n = 2^{|\lambda|-1}$ and $1_n$ be the identity matrix on $\mathbb
C^n$.  Then
\begin{equation}
  \label{eq:5}
  \begin{split}
    &(\psi_\lambda^+\psi_\lambda^{+*}*([1_n] -
    \sigma_\lambda\sigma_\lambda^*)*(k\otimes 1_n))(x,y) \\
    = & \psi_\lambda^+(x)((k(x,y)\otimes 1_n)
    - \sigma_\lambda(x) (k(x,y)\otimes 1_n) \sigma_\lambda(y)^*)
    \psi_\lambda^+(y)^* \\
    = & \psi_\lambda^+(x) (k(x,y)\otimes 1_n) \psi_\lambda^+(y)^*
    - \psi_\lambda^-(x) (k(x,y)\otimes 1_n) \psi_\lambda^-(y)^* \\
    = & \left(\textstyle\prod_{\lambda_i\in\lambda} ([1] -
      \psi_i\psi_i^*)^{\lambda_i}*k\right)(x,y).
  \end{split}
\end{equation}
We call the functions $\sigma_\lambda$, $\lambda\in\Lambda$
\textbf{auxiliary test functions}.  The last calculation shows that we
apparently only have positivity of $([1_n] - \sigma_\lambda
\sigma_\lambda^*)*(k\otimes 1_n)$ after taking the Schur product with
$\psi_\lambda^+\psi_\lambda^{+*}$, though clearly $\sigma_\lambda \in
\HL$ if $\lambda = e_j$ for some $j$, since when $|\lambda| = 1$, the
auxiliary test functions are just the ordinary test functions.  We
examine this point more closely in the next section.

Fixing $x,y \in X$, we use the above to construct certain continuous
functions over $\Lambda$.  In particular, define
\begin{equation*}
  \begin{split}
    E^\pm(x)(\lambda) &= \psi^\pm_\lambda(x) \\
    D(x,y)(\lambda) &= \prod_{j=1}^d (1 - \psi_j(x)
  \psi_j(y)^*)^{\lambda_j}.
  \end{split}
\end{equation*}
Then
\begin{equation}
  \label{eq:7}
  E^+(x)(\lambda)E^+(y)(\lambda)^* - E^-(x)(\lambda)E^-(y)(\lambda)^*
  = D(x,y)(\lambda),
\end{equation}
and $E^+(x)(\lambda)E^+(x)(\lambda)^* \geq 1$.

\subsection{Auxiliary test functions for ample preorderings}
\label{subsec:auxil-test-funct-ample}

We show in this subsection that when $\Lambda$ is an ample
preordering, the auxiliary test function $\sigma_\lambda$ can be
modified so as to obtain a matrix valued $\HL$
function.

\begin{theorem}
  \label{thm:ext-aux-test-fns-ample-case}
  Assume that $\Psi$ is a finite collection of test functions over a
  set $X$, $\Lambda$ an ample preordering with maximal element
  $\lambda^m$.  Then for $\lambda\in\Lambda$, $n = 2^{|\lambda|-1}$,
  the auxiliary test function $\sigma_\lambda$ can be defined so that
  $([1_n] - \sigma_\lambda \sigma_\lambda^*)*(k_s\otimes 1_n) \geq 0$
  and for all $x\in X$, $\|\sigma_\lambda(x)\| < 1$; that is,
  $\sigma_\lambda \in H_1^\infty({\mathcal K}_{\Lambda,\mathbb C^n})$.
  If the test functions are in $A(X, \mathcal K_\Lambda)$, then we
  have $\sigma_\lambda \in A({\mathcal K}_{\Lambda,\mathbb C^n})$.
  Furthermore, $k\in {\mathcal K}_{\Lambda,\mathcal H}$ if and only if
  $\,([1_n] - \sigma_{\lambda^m} \sigma_{\lambda^m}^*)*(k\otimes 1_n)
  \geq 0$, $n = 2^{|\lambda^m|-1}$.  As a consequence, if we define a
  positive kernel $k_\Lambda$ by $k_\Lambda(x,y) =
  (1-\sigma_{\lambda^m}(x) \sigma_{\lambda^m}(y)^*)^{-1}$, then
  $\varphi \in \HLHb$ if and only if $([1_{n}\otimes
  1_{\mathcal{L(H)}}] - (1_{n}\otimes \varphi)(1_{n}\otimes
  \varphi)^*)*(k_\Lambda \otimes 1_{\mathcal{L(H)}}) \geq 0$.
\end{theorem}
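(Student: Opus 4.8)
The plan is to build $\sigma_\lambda$ exactly as in Subsection~\ref{subsec:auxil-test-funct} but to exploit the ample hypothesis to replace the pointwise Schur factor $\psi_\lambda^+\psi_\lambda^{+*}$ by something admissible. First I would recall from \eqref{eq:5} that for \emph{any} admissible kernel $k$ and any $\lambda$,
\begin{equation*}
  \psi_\lambda^+\psi_\lambda^{+*}*([1_n] - \sigma_\lambda\sigma_\lambda^*)*(k\otimes 1_n)
  = \Bigl(\textstyle\prod_{\lambda_i\in\lambda}([1]-\psi_i\psi_i^*)^{\lambda_i}*k\Bigr)\geq 0,
\end{equation*}
so the only obstruction to $([1_n]-\sigma_\lambda\sigma_\lambda^*)*(k_s\otimes 1_n)\geq 0$ is the Schur multiplier $\psi_\lambda^+\psi_\lambda^{+*}$. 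Since $\Lambda$ is ample with maximal element $\lambda^m$, Lemma~\ref{lem:adm_kernels_for_ample_po} (suitably stated for ample rather than standard ample preorderings, as remarked just after it) says every admissible kernel is subordinate to a single kernel $k_s$; in particular $\prod_{j}(1-\psi_j(x)\psi_j(y)^*)^{\lambda^m_j}$ is the Schur inverse of $k_s$ (up to the slots where $\lambda^m_j=0$), so $k_s*\prod_j(1-\psi_j\psi_j^*)^{\lambda^m_j}$ is a kernel of rank-one Gramians, hence positive. Feeding this back through \eqref{eq:5} with $k=k_s$ gives that $\psi_{\lambda^m}^+\psi_{\lambda^m}^{+*}*([1_n]-\sigma_{\lambda^m}\sigma_{\lambda^m}^*)*(k_s\otimes 1_n)\geq 0$ where the left Schur factor is genuinely invertible, so one can divide it out. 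The key move is that $\psi_\lambda^+(x)\psi_\lambda^+(x)^*\geq 1>0$ pointwise, so $\psi_\lambda^+\psi_\lambda^{+*}$ has a Schur inverse on any finite set (this is exactly Corollary~\ref{cor:Szego-ker-inv} applied to the first-coordinate part, or a direct bounded-below estimate), and Schur-multiplying by that inverse preserves positivity; this yields $([1_n]-\sigma_\lambda\sigma_\lambda^*)*(k_s\otimes 1_n)\geq 0$ for $\lambda=\lambda^m$, and for general $\lambda\in\Lambda$ the same argument works since $\lambda\le\lambda^m$ forces $\prod_{\lambda_i\in\lambda}(1-\psi_i\psi_i^*)^{\lambda_i}$ to Schur-divide into $k_s$'s Schur inverse as well. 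Continuity of $\sigma_\lambda$ when the test functions lie in $A(X,\mathcal K_\Lambda)$ is immediate from the explicit formula $\sigma_\lambda = \psi_\lambda^{+*}|\psi_\lambda^+|^{-2}\psi_\lambda^-$ and the fact that $|\psi_\lambda^+|^{-2}$ is a continuous function of a strictly positive continuous matrix function.

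Next I would prove the ``if and only if'' characterization of $\mathcal K_{\Lambda,\mathcal H}$. The forward direction ($k$ admissible $\Rightarrow ([1_n]-\sigma_{\lambda^m}\sigma_{\lambda^m}^*)*(k\otimes 1_n)\geq 0$) is again the content of \eqref{eq:5} combined with the Schur-inverse argument above: $\psi_{\lambda^m}^+\psi_{\lambda^m}^{+*}*([1_n]-\sigma_{\lambda^m}\sigma_{\lambda^m}^*)*(k\otimes 1_n)\geq 0$ on any finite set, and dividing out the invertible Schur factor gives positivity of $([1_n]-\sigma_{\lambda^m}\sigma_{\lambda^m}^*)*(k\otimes 1_n)$ there; since finite sets suffice, we are done. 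For the converse, suppose $([1_n]-\sigma_{\lambda^m}\sigma_{\lambda^m}^*)*(k\otimes 1_n)\geq 0$. Schur-multiplying on both sides by the positive kernel $\psi_{\lambda^m}^+\psi_{\lambda^m}^{+*}$ (a rank-one Gramian, hence positive) preserves positivity, and by \eqref{eq:5} the result is precisely $\prod_{j}([1]-\psi_j\psi_j^*)^{\lambda^m_j}*k\geq 0$; since $\Lambda$ is ample this single condition at $\lambda^m$ implies the condition at every $\lambda\in\Lambda$ (Lemma~\ref{lem:preorderings_are_equivalent} and the subordination argument: $\prod_{\lambda_i\in\lambda}(1-\psi_i\psi_i^*)^{\lambda_i}$ Schur-divides into $\prod_j(1-\psi_j\psi_j^*)^{\lambda^m_j}$ times a Szeg\H{o}-type positive kernel), so $k\in\mathcal K_{\Lambda,\mathcal H}$.

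Finally, the statement about $\varphi$: with $k_\Lambda(x,y)=(1-\sigma_{\lambda^m}(x)\sigma_{\lambda^m}(y)^*)^{-1}$ we have, by the preceding paragraph, that $k_\Lambda\otimes 1_{\mathcal H}$ is itself admissible (it is the Schur inverse of $[1_n]-\sigma_{\lambda^m}\sigma_{\lambda^m}^*$ tensored up), so if $\varphi\in\HLHb$ then $([1_{n\cdot\mathcal H}] - (1_n\otimes\varphi)(1_n\otimes\varphi)^*)*(k_\Lambda\otimes 1_{\mathcal H})\geq 0$ trivially. Conversely, assume that positivity; I would Schur-multiply by $[1_n]-\sigma_{\lambda^m}\sigma_{\lambda^m}^*$ tensored with an arbitrary $k\in\mathcal K_{\Lambda,\mathcal H}$ — but more directly, for any admissible $k$, Lemma~\ref{lem:adm_kernels_for_ample_po}'s ample analogue says $k$ is subordinate to $k_s$, and $k_s\otimes 1_{\mathcal H}$ is in turn subordinate to $k_\Lambda\otimes 1_{\mathcal H}$ (since $k_\Lambda$ majorizes $k_s$ in the Schur order, because $\|\sigma_{\lambda^m}\|$ controls the geometric series and $\sigma_{\lambda^m}$ packages all $d$ coordinate contractions). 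Hence $([1]-\varphi\varphi^*)*k$ is a Schur product of $([1]-(1_n\otimes\varphi)(1_n\otimes\varphi)^*)*(k_\Lambda\otimes 1_{\mathcal H})$ with a positive kernel (after tracing out the $\mathbb C^n$ factor, which preserves positivity), so it is $\geq 0$; thus $\varphi\in\HLHb$. The main obstacle I anticipate is justifying the Schur-inverse/``division'' step cleanly — that Schur-multiplying a positive kernel by the Schur inverse of a \emph{strictly positive, finite-rank} kernel of the form $vv^*$ with $v(x)v(x)^*\geq 1$ preserves positivity — and, relatedly, pinning down the precise subordination relations among $k_s$, $k_\Lambda$, and $\prod_j(1-\psi_j\psi_j^*)^{\lambda^m_j}$ in the non-\emph{standard} ample case where $\lambda^m$ may have entries larger than $1$ or equal to $0$; everything else is bookkeeping with \eqref{eq:5}.
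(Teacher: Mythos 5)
There is a genuine gap, and it sits exactly at what you call your ``key move.'' Equation \eqref{eq:5} does not express a Schur product of the block kernel $M(x,y) = \left(([1_n]-\sigma_\lambda\sigma_\lambda^*)*(k\otimes 1_n)\right)(x,y)$ with an invertible scalar kernel; it expresses the \emph{compression} $\psi_\lambda^+(x)\,M(x,y)\,\psi_\lambda^+(y)^*$ of each $n\times n$ block by the row vectors $\psi_\lambda^+(x)$. Positivity of this compressed (scalar) kernel only controls $M$ on the one-dimensional subspaces spanned by $\psi_\lambda^+(x)^*$ at each point and says nothing about $M$ on their orthogonal complements, so there is no ``division'' that recovers $([1_n]-\sigma_\lambda\sigma_\lambda^*)*(k_s\otimes 1_n)\geq 0$ for the originally defined $\sigma_\lambda=\psi_\lambda^{+*}|\psi_\lambda^+|^{-2}\psi_\lambda^-$. (Even in the genuinely scalar situation your step would fail: the entrywise reciprocal of a positive kernel with nonvanishing entries need not be a positive kernel unless the kernel has rank one, and $\psi_\lambda^+\psi_\lambda^{+*}$ has rank up to $2^{|\lambda|-1}$.) This is precisely why the theorem is phrased ``can be \emph{defined} so that'': the paper does not keep the original formula but constructs a modified function $\tilde\sigma_\lambda$. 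On each finite set $F$ it conjugates by a Kolmogorov factor $\kappa_F$ of the Szeg\H{o} kernel, extracts the contraction $G^o_F$ that the compressed data determine on $\ran Q_F^*$, and then chooses an arbitrary contractive extension off that range; the family of such choices is a nonempty compact set for each $F$, coherence over all finite sets is obtained from Kurosh's inverse-limit theorem, and the resulting $\tilde\sigma_\lambda$ satisfies the full positivity against $k_s$ by construction while reproducing the same compressed kernels as $\sigma_\lambda$ (so the admissible kernels are unchanged). A further, separate kernel argument (and, for non-maximal $\lambda$, a further modification of the function) is needed to get the strict bound $\|\tilde\sigma_\lambda(x)\|<1$. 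None of these steps can be bypassed by your Schur-inverse argument, and your continuity remark does not apply since the function the theorem produces is no longer given by the closed formula.

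Parts of your outline that do not depend on the division step survive: the converse direction of the kernel characterization (compress by $\psi_{\lambda^m}^+$ to get $\prod_j([1]-\psi_j\psi_j^*)^{\lambda^m_j}*k\geq 0$ and then descend to all $\lambda\leq\lambda^m$ as in Lemma~\ref{lem:preorderings_are_equivalent}) is essentially the paper's argument, and once the corrected $\tilde\sigma_{\lambda^m}$ with $([1_n]-\tilde\sigma_{\lambda^m}\tilde\sigma_{\lambda^m}^*)*(k_s\otimes 1_n)\geq 0$ is in hand, the forward direction and the final statement about $\varphi\in\HLHb$ follow by subordination to $k_s$ much as you indicate (your appeal to ``$k_\Lambda$ majorizing $k_s$'' should be replaced by the observation that $k_\Lambda$ is the Schur inverse of $[1_n]-\tilde\sigma_{\lambda^m}\tilde\sigma_{\lambda^m}^*$, so the kernels satisfying the $\sigma$-condition are exactly those subordinate to it). But without the construction of the modified auxiliary test function the central assertions of the theorem are unproved.
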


\begin{proof}
  Fix $\lambda\in \Lambda$ and let $F$ be a finite subset of $X$.  Let
  $n = 2^{|\lambda|-1}$.  We have a Kolmogorov decomposition of the
  Szeg\H{o} kernel
  \begin{equation*}
    k_s(x,y) = 1_n \otimes \prod_{j=1}^d
    (1-\psi_j(x)\psi_j(y)^*)^{-1} = \kappa_s(x)\kappa_s(y)^*,
  \end{equation*}
  where $\kappa_s(x):\mathcal E \to \mathbb C^n$, and $\mathcal E$ is
  a Hilbert space equal to the closed span over $x\in X$ of the
  functions $\kappa_s(x)^*$.

  Write $k_F$ for the restriction of $k$ to $F\times F$ and let
  ${\mathcal E}_F = \bigvee_{x\in F} \kappa_s(x)^* \subset \mathcal
  E$.  By Corollary~\ref{cor:Szego-ker-inv}, $k_F$ has closed range
  isomorphic to $\mathbb C^{n|F|}$.  Denote by $\kappa_F$ the column
  over $x\in F$ of $k_s(x)$ restricted to ${\mathcal E}_F$.  Then
  $\mathrm{ran}\, \kappa_F^* = {\mathcal E}_F$ and $\kappa_F
  \kappa_F^* = k_F$.  Let $\Psi^+_F$ be a matrix with diagonal
  elements $\psi^+_\lambda(x)$, which maps from the range of
  $\kappa_F$, and let $P^+_F$ be the orthogonal projection onto the
  range of $\Psi^{+*}_F$.  Also write $\sigma_F$ for the direct sum
  over $x\in F$ of $\sigma_\lambda(x)$.  Define
  \begin{equation*}
    Q_F = \kappa_F^{-1} P^+_F \kappa_F \qquad\text{and}\qquad
    G^o_F = P_{\mathrm{ran}\,Q_F^*} \kappa_F^{-1} \sigma_F \kappa_F.
  \end{equation*}
  Here $P_{\mathrm{ran}\,Q_F^*}$ is the orthogonal projection onto the
  range of $Q_F^*$.

  It is clear that $Q_F$ is a (not necessarily orthogonal)
  projection.  Also, $\Psi^+_F \kappa_F = \Psi^+_F P^+_F \kappa_F =
  \Psi^+_F \kappa_F Q_F$ and $\kappa_F G^o_F = \sigma_F \kappa_F =
  P^+_F \sigma_F \kappa_F = \kappa_F Q_F G^o_F$.  Now,
  \begin{equation*}
    \begin{split}
      0 & \leq \Psi^+_F (k_F - \sigma_F k_F \sigma_F^*)\Psi^{+*}_F \\
      & = \Psi^+_F \kappa_F Q_F (1 - G^o_F G^{o*}_F)\kappa_F^* Q_F^*
      \Psi^{+*}_F,
    \end{split}
  \end{equation*}
  and so $P_{\mathrm{ran}\,Q_F^*} - G^o_F G^{o*}_F \geq 0$.
  Consequently, $\|G^o_F\| \leq 1$.

  Let
  \begin{equation*}
    {\mathcal G}_F = \left\{ G_F\in \mathcal{L}({\mathcal E}_F) :
      \|G_F\| \leq 1 \text{ and } P_{\mathrm{ran}\,Q_F^*}G_F = G^o_F
    \right\}.
  \end{equation*}
  This is a compact subset of the unit ball of $\mathcal{L}({\mathcal
    E}_F)$, and for any $G_F \in {\mathcal G}_F$,
  \begin{equation*}
    \begin{split}
      & \Psi^+_F \kappa_F (1 - G_F G^*_F)\kappa_F^* \Psi^{+*}_F 
      = \,\Psi^+_F \kappa_F Q_F (1 - G_F G^*_F)\kappa_F^* Q_F^*
      \Psi^{+*}_F \\
      & = \,\Psi^+_F (k_F - \sigma_F k_F \sigma_F^*)\Psi^{+*}_F 
      \geq 0.
    \end{split}
  \end{equation*}
  If we define
  \begin{equation*}
    {\mathcal S}_F = \left\{ S_F\in \mathcal{L}(\mathrm{ran}\, k_F) :
        S_F = \kappa_F G_F \kappa_F^{-1} \text{ for some } G_F \in
        {\mathcal G}_F \right\},
  \end{equation*}
  then by construction this set is nonempty, $k_F - S_F k_F S_F^* =
  \kappa_F (1 - G_F G^*_F)\kappa_F^* \geq 0$, and
  \begin{equation}
    \label{eq:8}
    \begin{split}
      \Psi^+_F (k_F - S_F k_F S_F^*)\Psi^{+*}_F & \,= \Psi^+_F (k_F -
    \sigma_F k_F \sigma_F^*)\Psi^{+*}_F \\
    & = \,{\left((\psi_\lambda^+\psi_\lambda^{+\,*} * 
      ([1_n] - \sigma_\lambda\sigma_\lambda^*) * k_s)(x,y)
    \right)}_{x,y\in F}.
    \end{split}
  \end{equation}

  The set ${\mathcal S}_F$ is isomorphic to ${\mathcal G}_F$, and so
  is also compact.  It is natural to define the norm of $S_F \in
  {\mathcal S}_F$ to equal the norm of the associated $G_F \in
  {\mathcal G}_F$.  If $F' \supset F$ and $G_{F'} \in {\mathcal
    G}_{F'}$, then since $\kappa_F$ is the restriction of
  $\kappa_{F'}$ to $\mathcal E_F$, $G_F := P_{\mathcal E_F}G_{F'} \in
  {\mathcal G}_F$.  Thus for $S_{F'} = \kappa_{F'} G_{F'}
  \kappa_{F'}^{-1}$ and $S_F = \kappa_F G_F \kappa_F^{-1}$, the map
  $\pi_F^{F'} :\mathcal S_{F'} \to \mathcal S_F$ defined by
  \begin{equation*}
    \pi_F^{F'} (S_{F'})  = S_F
  \end{equation*}
  is contractive and so continuous.  Let $\mathcal F$ be the
  collection of all finite subsets of $X$ partially ordered by
  inclusion.  The triple $(\mathcal S_F,\pi_F^{F'},\mathcal F)$ is an
  inverse limit of nonempty compact spaces, and so by Kurosh's
  Theorem~\cite[p.~30]{MR1882259}, for each $F\in\mathcal F$ there is
  an $S_F \in \mathcal S_F$ so that whenever $F,F'\in\mathcal F$ and
  $F\subset F'$,
  \begin{equation*}
    \pi_F^{F'} ((S_{F'}))  = (S_F).
  \end{equation*}
  We can thus define ${\tilde\sigma}_{\lambda}$ on $X$ by
  \begin{equation*}
    {\tilde\sigma}_{\lambda}(x) = S_{\{x\}}.
  \end{equation*}
  By construction,
  \begin{equation}
    \label{eq:16}
    ([1_n] - {\tilde\sigma}_\lambda {\tilde\sigma}_\lambda^* ) *
      k_s \geq 0;
  \end{equation}
  that is, ${\tilde\sigma}_\lambda \in H^\infty(X,{\mathcal
    K}_{\Lambda,\mathbb C^n})$.

  It follows in particular from \eqref{eq:16} that for any kernel $k$
  subordinate to the Szeg\H{o} kernel $k_s$ and $n =
  2^{|\lambda^m|-1}$,
  \begin{equation}
    \label{eq:15}
    ([1_n] - {\tilde\sigma}_{\lambda^m}
    {\tilde\sigma}_{\lambda^m}^*)*(k\otimes 1_n) \geq 0.
  \end{equation}
  Also, by \eqref{eq:8} any kernel $k$ for which \eqref{eq:15} holds
  is subordinate to the Szeg\H{o} kernel.  Hence the collection of
  auxiliary test functions constructed gives the same set of
  admissible kernels, and so generates $\HLH$ with the same norm.

  Let $\lambda = \lambda^m$, with ${\tilde\sigma}_{\lambda}$
  constructed as above.  For the time being, we assume $\Lambda =
  \{\lambda^m\}$.  Suppose that $x\in X$ has the property that
  $\|{\tilde\sigma}_{\lambda}(x)\| = 1$.  Since
  ${\tilde\sigma}_{\lambda}(x) \in M_n(\mathbb C)$ for $n =
  2^{|\lambda|-1} < \infty$, there is some $f\in {\mathbb C}^n$ such
  that $\|{\tilde\sigma}_{\lambda}(x)f\| = \|f\| = 1$.

  The test functions all have absolute value less than one in $X$, so
  for $y\neq x\in X$, the Szeg\H{o} kernel satisfies $k_s(x,y)\neq 0$
  and $k_s(x,x)>0$, and by Corollary~\ref{cor:Szego-ker-inv} when
  restricted to the two point set $\{x,y\} \subset X$, $k_s$ is
  invertible.  Consequently, $k_s(x,y) = k_s(x,x)^{1/2} g
  k_s(y,y)^{1/2}$, where $|g| < 1$.

  Let $k_s(x,y) = \ip{k_x}{k_y}$ be the Kolmogorov decomposition of
  $k_s$.  Since \eqref{eq:16} holds over the set $\{x,y\}$ and
  \begin{equation*}
    \ip{f\otimes k_x}{f\otimes k_x} -
    \ip{{\tilde\sigma}_{\lambda}(x)f\otimes
      k_x}{{\tilde\sigma}_{\lambda}(x)f\otimes k_x} = 0,
  \end{equation*}
  it follows that 
  \begin{equation*}
    \ip{f\otimes k_x}{f\otimes k_y} -
    \ip{{\tilde\sigma}_{\lambda}(x)f\otimes
      k_x}{{\tilde\sigma}_{\lambda}(y)f\otimes k_y} = 0.
  \end{equation*}
  Hence there is an isometry $V$ such that
  ${\tilde\sigma}_{\lambda}(x)f = {\tilde\sigma}_{\lambda}(y)f = Vf$.

  Define the $M_n(\mathbb C)$ valued kernel $\tilde k(z,w)$ to be
  equal to $P_f$, the projection onto the span of $f$ if $z,w\in
  \{x,y\}$, and $0$ otherwise.  Since $([1_n] -
  {\tilde\sigma}_{\lambda} {\tilde\sigma}_{\lambda}^*)* \tilde k =0$,
  $\tilde k$ is admissible.  On the other hand, to be admissible, it
  must also be the case that there is some positive kernel $F$ such
  that $\tilde k = k_s * F$.  Obviously, $F(z,w)$ must be zero if
  $z,w\notin \{x,y\}$ and $F(z,z) = k_s(z,z)^{-1}\otimes P_f$ for $z
  =x$ or $y$.  Positivity then implies that $F(x,y) = (k_s(x,x)^{-1}
  g'k_s(y,y)^{-1})\otimes P_f$ with $|g'| \leq 1$.  Hence
  $(k_s*F)(x,y) = gg'P_f \neq P_f$, giving a contradiction.  We
  conclude that for all $x$, $\|{\tilde\sigma}_{\lambda^m}(x)\| < 1$.

  Now suppose that $\Lambda$ is any ample preordering.  If $\lambda
  \in\Lambda$, $\lambda \neq \lambda^m$, has the property that at some
  $x$, $\|{\tilde\sigma}_{\lambda}(x)\| = 1$, then an identical
  argument shows that the norm is achieved on a subspace $\mathcal F$,
  and that there is an isometry $V$ such that for all $f\in \mathcal
  F$ and all $y\in X$, ${\tilde\sigma}_{\lambda}(y)f = Vf$.
  Consequently, we can change ${\tilde\sigma}_{\lambda}$ so that
  ${\tilde\sigma}_{\lambda}(y)f = 0$ for all $y\in X$.  Testing
  against the Szeg\H{o} kernel, it is clear that the resulting
  function is still in $H^\infty(X,{\mathcal K}_{\Lambda,\mathbb
    C^n})$ for appropriate $n$ and now satisfies
  $\|{\tilde\sigma}_\lambda (x)\| < 1$.
\end{proof}

\begin{corollary}
  \label{cor:aux-test-fns-for-polydisk}
  Let $d\in\mathbb N$ and $n = 2^{d-1}$.  There is a function $\sigma
  \in H_1^\infty(\mathbb D^d, M_n(\mathbb C))$ such that the set of
  positive kernels $\mathcal K$ with the property that $k\in \mathcal
  K$ if and only if $([1_n] - \sigma\sigma^*)* k \geq 0$ are all
  subordinate to $1_n\otimes k_s$, where $k_s$ is the the Szeg\H{o}
  kernel $\prod_{j=1}^d (1-z_jz_j^*)^{-1}$, and so $H^\infty(\mathbb
  D^d,\mathcal K_{\sigma, \mathcal H}) = H^\infty(\mathbb D^d,
  \mathcal{L(H)})$.
\end{corollary}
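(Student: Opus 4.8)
The plan is to read this off directly from Theorem~\ref{thm:ext-aux-test-fns-ample-case} applied to the concrete situation $X=\mathbb D^d$ with $\Psi=\{z_1,\dots,z_d\}$ the coordinate functions and $\Lambda$ the standard ample preordering. First I would check that the hypotheses of that theorem are met: the coordinate functions form a finite collection of test functions on $\mathbb D^d$ (for each $z\in\mathbb D^d$ one has $\sup_j|z_j|=\max_j|z_j|<1$, and they separate points), and the standard ample preordering has maximal element $\lambda^m=1=(1,\dots,1)$, so $|\lambda^m|=d$ and the relevant matrix size is $n=2^{|\lambda^m|-1}=2^{d-1}$. The coordinate functions lie in $A(\mathbb D^d)$, so the ``$A$'' version of the theorem is available as well, should one want $\sigma$ continuous on the closed polydisk.

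Next I would set $\sigma:=\tilde\sigma_{\lambda^m}$, the auxiliary test function for $\lambda^m$ furnished by Theorem~\ref{thm:ext-aux-test-fns-ample-case}. That theorem gives $\sigma\in H_1^\infty(\mathbb D^d,M_n(\mathbb C))$ (in fact in $A(\mathbb D^d,M_n(\mathbb C))$), with $\|\sigma(z)\|<1$ for all $z$, and --- this is the content we need --- that a positive $\mathcal{L(H)}$ valued kernel $k$ lies in $\mathcal K_{\Lambda,\mathcal H}$ if and only if $([1_n]-\sigma\sigma^*)*(k\otimes 1_n)\geq 0$. Under the usual identification of $k$ with $k\otimes 1_n$ acting on $\mathbb C^n\otimes\mathcal H$, this is exactly the condition $([1_n]-\sigma\sigma^*)*k\geq 0$ appearing in the statement. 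Hence the set $\mathcal K$ in the corollary is precisely $\mathcal K_{\sigma,\mathcal H}=\mathcal K_{\Lambda,\mathcal H}$.

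It then remains to invoke Corollary~\ref{cor:po_for_polydisk} (equivalently Lemma~\ref{lem:adm_kernels_for_ample_po} for the standard ample preordering): $\mathcal K_{\Lambda,\mathcal H}$ is exactly the family of admissible kernels for $H^\infty(\mathbb D^d,\mathcal{L(H)})$, every such kernel is subordinate to $k_s\otimes 1_{\mathcal{L(H)}}$ (equivalently to $1_n\otimes k_s$ after tensoring up), and $H^\infty(\mathbb D^d,\mathcal K_{\Lambda,\mathcal H})=H^\infty(\mathbb D^d,\mathcal{L(H)})$. Combining this with the previous paragraph, every $k\in\mathcal K$ is subordinate to $1_n\otimes k_s$ and $H^\infty(\mathbb D^d,\mathcal K_{\sigma,\mathcal H})=H^\infty(\mathbb D^d,\mathcal{L(H)})$, as claimed.

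As for difficulty, there is essentially no obstacle: all the real work --- the inverse-limit/Kurosh argument producing $\tilde\sigma_{\lambda^m}$ as a genuine $H^\infty$ function with strictly contractive values, and the equivalence of the two kernel conditions --- is already carried out in Theorem~\ref{thm:ext-aux-test-fns-ample-case}. The only points requiring a sentence of care are (i) verifying that $\mathbb D^d$ with the coordinate functions and the standard ample preordering is a legitimate instance of the abstract setup, so that indeed $n=2^{d-1}$, and (ii) noting that the $\sigma$-defined class of kernels coincides with $\mathcal K_{\Lambda,\mathcal H}$, so that Corollary~\ref{cor:po_for_polydisk} may be used to identify both the dominating kernel and the resulting algebra.
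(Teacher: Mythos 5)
Your proposal is correct and follows exactly the paper's route: the paper's proof is the one-line observation that the corollary is a consequence of Theorem~\ref{thm:ext-aux-test-fns-ample-case} applied with $X=\mathbb D^d$, the coordinate functions as test functions, and the standard ample preordering (so $n=2^{|\lambda^m|-1}=2^{d-1}$), combined with Corollary~\ref{cor:po_for_polydisk}. You have merely spelled out the instantiation and the identification $\mathcal K_{\sigma,\mathcal H}=\mathcal K_{\Lambda,\mathcal H}$, which is exactly what the paper leaves implicit.
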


\begin{proof}
  This is a consequence of the last theorem
  and~\ref{cor:po_for_polydisk}.
\end{proof}

\subsection{Representations of $C_b(\Lambda)$}
\label{subsec:representations}

As noted previously, since $|\Lambda| < \infty$, a unital
representation $\rho: C_b(\Lambda)\to \mathcal{L(E)}$, $\mathcal E$ a
Hilbert space, will have the form
\begin{equation*}
  \rho(f) = \sum_{\lambda\in\Lambda} P_\lambda \otimes f(\lambda),
\end{equation*}
where the $P_\lambda$s are orthogonal projections with orthogonal
ranges and $\sum_{\lambda\in\Lambda} \ran P_\lambda \otimes \mathbb
C^{2|\lambda|-1} = \mathcal E$.  We then naturally define
\begin{equation*}
  Z^\pm(x) := \sum_{\lambda\in\Lambda} P_\lambda \otimes
  E^\pm(x)(\lambda) = \sum_{\lambda\in\Lambda} P_\lambda \otimes
  \psi^\pm_\lambda(x)
\end{equation*}
and
\begin{equation*}
  R(x,y) = \rho(D(x,y)) := \sum_{\lambda\in\Lambda} P_\lambda
  \otimes D(x,y)(\lambda).
\end{equation*}
It follows that
\begin{equation*}
  Z^+(x)Z^+(y)^* - Z^-(x)Z^-(y)^* = R(x,y),
\end{equation*}
and $Z^+(x)Z^+(x)^* \geq 1$.  In particular, $Z^+(x)Z^+(x)^*$ is
invertible.

The operator $Z^+(x)$ has a right inverse given by
\begin{equation*}
  Y(x) := \sum_{\lambda\in\Lambda} P_\lambda \otimes \omega_\lambda(x)
  = \sum_{\lambda\in\Lambda} P_\lambda \otimes \psi^+_\lambda(x)^*
  |\psi^+_\lambda(x)|^{-2},
\end{equation*}
and so $P(x) = Y(x) Z^+(x)$ is the orthogonal projection onto $\clran
Z^+(x)^*$.

Setting
\begin{equation*}
  S(x) := Y(x) Z^-(x) = \sum_{\lambda\in\Lambda} P_\lambda \otimes
  \sigma_\lambda(x),
\end{equation*}
we have $Z^-(x) = Z^+(x) S(x)$.  Also, since $P(x) Y(x) = Y(x) Z^+(x)
Y(x) = Y(x)$, we have $P(x) S(x) = S(x)$.  Thus
\begin{equation*}
  \begin{split}
    1 - S(x)S(x)^* &= 1 - Y(x)Z^-(x)Z^-(x)^*Y(x)^* \\
    & = 1 - P(x)P(x) + P(x)P(x) - Y(x)Z^-(x)Z^-(x)^*Y(x)^* \\
    & = 1 - P(x)P(x) + Y(x)Z^+(x)Z^+(x)^*Y(x)^* -
    Y(x)Z^-(x)Z^-(x)^*Y(x)^* \\
    & = 1 - P(x)P(x) + Y(x)\left( Z^+(x)Z^+(x)^* -
      Z^-(x)Z^-(x)^*\right) Y(x)^* > 0. \\
  \end{split}
\end{equation*}

In case the preordering is ample, in the definition of $S(x)$ we may
use Theorem~\ref{thm:ext-aux-test-fns-ample-case} to replace
$\sigma_\lambda$ by a corresponding element of $H^\infty(X,{\mathcal
  K}_{\Lambda,\mathbb C^{2^{|\lambda|-1}}})$.

We summarize in the following lemma.

\goodbreak

\begin{lemma}
  \label{lem:factoring_Z-}
  Let $x\in X$.
  \begin{enumerate}
  \item $Z^+(x)Z^+(x)^* - Z^-(x)Z^-(x)^* \geq 0$.
  \item The operator $Y(x)$ is a right inverse of $Z^+(x)$ and
    $P(x) := Y(x) Z^+(x)$ is the orthogonal projection onto $\ran
    Z^+(x)^*$.
  \item The operator $S(x) = Y(x) Z^-(x): \clran Z^-(x)^* \to \ran
    Z^+(x)^*$ $($or a corresponding element of $H^\infty(X,{\mathcal
      K}_{\Lambda,\mathbb C^{2^{|\lambda|-1}}})$ in case of an ample
    preordering$)$ satisfies $Z^-(x) = Z^+(x) S(x)$ and has the
    property that $\|S(x)\| < 1$.
  \end{enumerate}
\end{lemma}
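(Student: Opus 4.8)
The plan is to transport through the representation $\rho$ the fibrewise facts already established above for the matrices $\psi^\pm_\lambda$, $\omega_\lambda$ and $\sigma_\lambda$, using crucially that $|\Lambda|<\infty$. Every operator in sight is a finite orthogonal direct sum $\sum_{\lambda\in\Lambda}P_\lambda\otimes(\,\cdot\,)$ acting on $\mathcal E=\bigoplus_{\lambda}\ran P_\lambda\otimes\mathbb C^{2^{|\lambda|-1}}$, so each assertion decouples into the corresponding assertion for $\psi^\pm_\lambda(x)$, $\omega_\lambda(x)$, $\sigma_\lambda(x)$ individually, where it was checked in the construction of the auxiliary test functions; direct summing over the finite set $\Lambda$ (finite-dimensional fibres, orthogonal ranges) causes no trouble.

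For (i) I would specialize the identity $Z^+(x)Z^+(y)^*-Z^-(x)Z^-(y)^*=R(x,y)$ to $y=x$: then $R(x,x)=\sum_{\lambda}P_\lambda\otimes D(x,x)(\lambda)$ with $D(x,x)(\lambda)=\prod_{j}(1-|\psi_j(x)|^2)^{\lambda_j}>0$, since $\sup_{\psi\in\Psi}|\psi(x)|<1$, whence $R(x,x)\geq 0$ (in fact $\geq 1$, re-deriving $Z^+(x)Z^+(x)^*\geq 1$). For (ii), $Z^+(x)Y(x)=\sum_\lambda P_\lambda\otimes\psi^+_\lambda(x)\omega_\lambda(x)=1$ because $\psi^+_\lambda(x)\omega_\lambda(x)=1$, so $Y(x)$ is a right inverse and $P(x)=Y(x)Z^+(x)$ is idempotent; to see it is the \emph{orthogonal} projection onto $\ran Z^+(x)^*$, use the polar decomposition $\psi^+_\lambda(x)=|\psi^+_\lambda(x)|\nu_\lambda(x)$ built into the definition of $\omega_\lambda$, so that $\omega_\lambda(x)\psi^+_\lambda(x)=\nu_\lambda(x)^*\nu_\lambda(x)$ is the orthogonal projection onto $\ran\nu_\lambda(x)^*=\ran\psi^+_\lambda(x)^*$ (invertibility of $|\psi^+_\lambda(x)|$), hence $P(x)=\sum_\lambda P_\lambda\otimes\nu_\lambda(x)^*\nu_\lambda(x)$ is selfadjoint with the claimed range. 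For (iii), expand $S(x)=Y(x)Z^-(x)=\sum_\lambda P_\lambda\otimes\omega_\lambda(x)\psi^-_\lambda(x)=\sum_\lambda P_\lambda\otimes\sigma_\lambda(x)$; then $Z^+(x)S(x)=Z^+(x)Y(x)Z^-(x)=Z^-(x)$ and $P(x)S(x)=Y(x)Z^-(x)=S(x)$, so $\ran S(x)\subseteq\ran Z^+(x)^*$, while $\ker S(x)\supseteq\ker Z^-(x)$ shows $S(x)$ is supported on $\clran Z^-(x)^*$; the norm bound is $\|S(x)\|=\max_{\lambda\in\Lambda}\|\sigma_\lambda(x)\|<1$, a finite maximum of strict contractions. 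In the ample case I would instead feed in the functions $\tilde\sigma_\lambda\in H^\infty(X,\mathcal K_{\Lambda,\mathbb C^{2^{|\lambda|-1}}})$ supplied by Theorem~\ref{thm:ext-aux-test-fns-ample-case}, for which $\|\tilde\sigma_\lambda(x)\|<1$ for every $x$, noting that by construction they are interchangeable with the $\sigma_\lambda$ at the level of the Szeg\H{o}-kernel positivity that underlies $Z^-(x)=Z^+(x)S(x)$.

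The argument is essentially bookkeeping. The one point that genuinely uses the earlier work is the selfadjointness of $P(x)$ in (ii): an arbitrary right inverse of $Z^+(x)$ would only yield an oblique idempotent, and it is precisely the polar-decomposition (Moore--Penrose) choice encoded in $\omega_\lambda$ that makes $P(x)$ an orthogonal projection. A secondary point to handle carefully is the ample-case substitution, since one alters $S(x)$ but must not spoil $Z^-(x)=Z^+(x)S(x)$; this is why the replacement is phrased only ``in the definition of $S(x)$'' and is justified through the kernel identity of Theorem~\ref{thm:ext-aux-test-fns-ample-case} rather than through a pointwise matrix identity $\psi^+_\lambda\tilde\sigma_\lambda=\psi^-_\lambda$, which need not hold.
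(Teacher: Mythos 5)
Your treatment of (i)--(iii) for $S(x)=Y(x)Z^-(x)$ is correct and is essentially the argument the paper gives in the discussion preceding the lemma: everything is a finite block-diagonal sum over $\Lambda$, and the fibrewise facts about $\psi^\pm_\lambda$, $\omega_\lambda$, $\sigma_\lambda$ transport through $\rho$; your bound $\|S(x)\|\leq\max_{\lambda}\|\sigma_\lambda(x)\|<1$ (finitely many blocks, each a strict contraction) is a legitimate and in fact simpler alternative to the paper's computation of $1-S(x)S(x)^*$, and your polar-decomposition justification that $P(x)$ is an \emph{orthogonal} projection supplies a detail the paper only asserts. One small slip: $R(x,x)\geq 1$ is false, since each $D(x,x)(\lambda)=\prod_j(1-|\psi_j(x)|^2)^{\lambda_j}$ lies in $(0,1)$; the correct statement $Z^+(x)Z^+(x)^*\geq 1$ comes from the first entry of $\psi^+_\lambda$ being the constant $1$. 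This is harmless here because (i) asserts only positivity.

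The genuine problem is your handling of the ample-case parenthetical, which is backwards. If, as you assert, the pointwise identity $\psi^+_\lambda\tilde\sigma_\lambda=\psi^-_\lambda$ could fail for the functions $\tilde\sigma_\lambda$ supplied by Theorem~\ref{thm:ext-aux-test-fns-ample-case}, then $Z^-(x)=Z^+(x)S(x)$ would simply be false for the substituted $S$, and the Szeg\H{o}-kernel positivity $([1_n]-\tilde\sigma_\lambda\tilde\sigma_\lambda^*)*k_s\geq 0$ cannot repair a pointwise operator identity; so ``justified through the kernel identity'' proves nothing about the factorization claimed in (iii). What actually saves the substitution is that the construction in Theorem~\ref{thm:ext-aux-test-fns-ample-case} pins down the compression of $\tilde\sigma_\lambda(x)$ to $\ran\psi^+_\lambda(x)^*$: at a singleton $F=\{x\}$ the kernel $k_s(x,x)$ is an invertible scalar multiple of $1_n$, so $Q_F$ is (conjugate to) the orthogonal projection onto $\ran\psi^+_\lambda(x)^*$ and the defining constraint $P_{\ran Q_F^*}G_F=G^o_F$ forces $P^+_\lambda(x)\,\tilde\sigma_\lambda(x)=\sigma_\lambda(x)$, where $P^+_\lambda(x)$ denotes that projection. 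Since $\psi^+_\lambda(x)=\psi^+_\lambda(x)P^+_\lambda(x)$, this gives $\psi^+_\lambda(x)\tilde\sigma_\lambda(x)=\psi^-_\lambda(x)$ pointwise, and hence $Z^-(x)=Z^+(x)S(x)$ with the substituted $S$ by exactly the same computation as before. (The one place requiring care is the final norm-correcting modification in that theorem for $\lambda\neq\lambda^m$, which must be checked to be compatible with this compression.) So the pointwise identity is precisely the thing to invoke, not to disavow.
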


Although it has not been explicitly indicated, it is worth bearing in
mind that $Z^+$, $Z^-$, $S$ and so on, depend both on $\Lambda$ and
the choice of representation, and we will at times make this
dependence explicit by writing $Z^+_{\Lambda,\rho}$,
$Z^-_{\Lambda,\rho}$, $S_{\Lambda,\rho}$, etc.

\section{Transfer functions, Brehmer representations and dilations}
\label{sec:brehm-repr-transf}

\subsection{The transfer function algebra}
\label{subsec:transfer-functions}

In the standard manner, we define a \textbf{$C_b(\Lambda)$-unitary
  colligation} $\Sigma$ as a triple $(U,\mathcal E, \rho)$, where
$\mathcal E$ is a Hilbert space, $U = \begin{pmatrix} A&B \\ C&D
\end{pmatrix} \in \mathcal B(\mathcal E \oplus \mathcal H)$ is a
unitary operator, and $\rho: C_b(\Lambda) \to \mathcal{L}(\mathcal E)$
a unital $*$-representation.

\begin{definition}
  Assume the notation from Lemma~\ref{lem:factoring_Z-}.  Given a
  $C_b(\Lambda)$-unitary colligation $\Sigma$, we define the
  \textbf{transfer function} $W_\Sigma:X \to \mathcal{L(H)}$
  associated to $\Sigma$ as
  \begin{equation*}
    W_\Sigma(x) := D + C S(x) (1 -  A S(x))^{-1} B,
  \end{equation*}
  where $S = S_{\Lambda,\rho}$.  Write
  \begin{equation*}
    \mathcal{T}_1(X,\Lambda, \mathcal H) := \{W_\Sigma : \Sigma \text{
      a unitary colligation }\},
  \end{equation*}
  and $\mathcal{T}(X,\Lambda, \mathcal H)$ for the scalar multiples of
  elements in $\mathcal{T}_1(X,\Lambda, \mathcal H)$.  It is clear
  that $W\in \mathcal{T}(X,\Lambda, \mathcal H)$ will not in general
  be uniquely represented as a multiple of a single element of
  $\mathcal{T}_1(X,\Lambda, \mathcal H)$.  For $W \in
  \mathcal{T}(X,\Lambda, \mathcal H)$, define a norm by
  \begin{equation}
    \label{eq:12}
    \|W\| := \inf\left\{c\geq 0 : W = c W_\Sigma \text{ for some
        unitary colligation }\Sigma \right\}.
  \end{equation}
  (We show in Theorem~\ref{thm:trfr-fns-span-alg} below that
  $\|\cdot\|$ on $\mathcal{T}(X,\Lambda, \mathcal H)$ really is a
  norm.)  Finally, we write\break $\mathcal{T}^A(X,\Lambda, \mathcal
  H)$ for the set of those $W\in\mathcal{T}(X,\Lambda, \mathcal H)$
  which extend continuously to $\overline{X}$
\end{definition}

The formula gives the standard form of the transfer function when
$\Lambda = \Lambda_1 = \{e_\psi\}$.  Again, one should bear in mind
that $S$ depends on $\rho$.

More generally, we might also consider
\textbf{$C_b(\Lambda)$-contractive colligations} by allowing $U$ to be
contractive rather than unitary, and then likewise define a transfer
function.  As it happens, this does not enlarge the collection of
functions we obtain through the apparently more restrictive unitary
colligations, since any any contractive operator has a unitary
dilation.

\begin{lemma}
  \label{lem:contractive-to-unitary-transfer-fn}
  Let $W_\Sigma: X\to \mathcal{L(H)}$ be a transfer function obtained
  via a contractive colligation $\Sigma = (U,\mathcal E, \rho)$.  Then
  there is unitary colligation $\tilde\Sigma = (\tilde U,
  \tilde{\mathcal E}, \tilde\rho)$ such that $W_\Sigma =
  W_{\tilde\Sigma}$.
\end{lemma}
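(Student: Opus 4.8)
\emph{Proof proposal.} The plan is to dilate the contraction $U=\begin{pmatrix}A&B\\C&D\end{pmatrix}$ on $\mathcal E\oplus\mathcal H$ to a unitary by enlarging \emph{only} the state space $\mathcal E$ (keeping the coefficient space $\mathcal H$ fixed), arranging the enlargement as a one-sided shift so that it is ``feed-forward only'' and thus invisible to the feedback loop that defines the transfer function. First I would record the defect data: with $\mathcal D=\clran(1-U^*U)^{1/2}$ and $(\Gamma_1\ \Gamma_2)=(1-U^*U)^{1/2}\colon\mathcal E\oplus\mathcal H\to\mathcal D$, the column $V=\begin{pmatrix}A&B\\C&D\\\Gamma_1&\Gamma_2\end{pmatrix}$ is an isometry of $\mathcal E\oplus\mathcal H$ into $\mathcal E\oplus\mathcal H\oplus\mathcal D$. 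The most naive attempts (a single finite enlargement $\tilde{\mathcal E}=\mathcal E\oplus\mathcal G$ with block-triangular $\tilde U$ and block-diagonal $S$) run into the obstruction that they force $U$ to be an isometry or a coisometry, so in general \emph{both} the isometry defect $\mathcal D$ and a coisometry defect must be absorbed into the enlarged state space.

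Next I would set $\mathcal E_\infty=\mathcal E\oplus\bigoplus_{n\ge1}\mathcal D$ and define $\tilde U=\begin{pmatrix}\tilde A&\tilde B\\\tilde C&\tilde D\end{pmatrix}$ on $\mathcal E_\infty\oplus\mathcal H$ by $\tilde A(\eta;d_1;d_2;\dots)=(A\eta;\Gamma_1\eta;d_1;d_2;\dots)$, $\tilde Bu=(Bu;\Gamma_2u;0;0;\dots)$, $\tilde C(\eta;d_1;d_2;\dots)=C\eta$, and $\tilde D=D$; thus the $\mathcal D$-slots form a one-sided shift fed by $\eta$ and $u$ but feeding back into nothing. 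Using that $V$ is an isometry one checks $\tilde U$ is an isometry; as it is typically not surjective, I would pass to its minimal unitary (Wold-type) extension $\hat U$ on $\hat{\mathcal E}\oplus\mathcal H$, where $\hat{\mathcal E}=\mathcal E_\infty\oplus\bigoplus_{n\ge1}\mathcal N$ with $\mathcal N=(\ran\tilde U)^{\perp}$ (one checks $\mathcal N\subseteq\mathcal E\oplus\mathcal D_1\oplus\mathcal H$, the first copy of $\mathcal D$), via the standard formula $\hat U(k;n_1;n_2;\dots)=(\tilde Uk+\iota n_1;n_2;n_3;\dots)$. Finally I would extend $\rho$ to a unital $*$-representation $\hat\rho=\rho\oplus\rho_0$ of $C_b(\Lambda)$ on $\hat{\mathcal E}$, with $\rho_0$ \emph{any} unital $*$-representation on the summand $\bigoplus_n\mathcal D\oplus\bigoplus_n\mathcal N$ (infinite-dimensional unless $U$ is already unitary, in which case nothing is to be proved); such $\rho_0$ exists since $C_b(\Lambda)$ is a unital finite-dimensional $C^*$-algebra. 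This produces a unitary colligation $\hat\Sigma=(\hat U,\hat{\mathcal E},\hat\rho)$, and the claim to be established is $W_{\hat\Sigma}=W_\Sigma$.

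To verify the claim I would run the feedback loop defining $W_{\hat\Sigma}$ explicitly. Writing $\hat S=S_{\Lambda,\hat\rho}=S_{\Lambda,\rho}\oplus S'$ in block form relative to $\mathcal E\oplus(\bigoplus_n\mathcal D\oplus\bigoplus_n\mathcal N)$, Lemma~\ref{lem:factoring_Z-} gives $\|\hat S(x)\|<1$, so $(1-\hat A\hat S(x))^{-1}$ is defined. The structural facts to exploit are: $\hat Bu$ has no component in the $\bigoplus_n\mathcal N$ block; the $\mathcal E$-component of the output of $\hat A$ depends only on the $\mathcal E$-component of its input (the $\mathcal D$- and $\mathcal N$-slots only shift forward, and no $\mathcal N$-output is manufactured from $\mathcal E_\infty$); and $\tilde C$, hence the $\mathcal E_\infty$-part of $\hat C$, ignores the $\mathcal D$-slots, while the $\mathcal N$-contribution to $\hat C$ reads only the first $\mathcal N$-slot. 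From these one obtains by induction that each term $(\hat A\hat S(x))^m\hat Bu$ carries zero data in all $\mathcal N$-slots and has $\mathcal E$-component $(AS_{\Lambda,\rho}(x))^mBu$; summing, the $\mathcal E$-component of $(1-\hat A\hat S(x))^{-1}\hat Bu$ is $(1-AS_{\Lambda,\rho}(x))^{-1}Bu$, and applying $\hat S(x)$ (the $\mathcal N$-part of the resulting state being zero) and then $\hat C$, together with $\hat D=D$, yields $W_{\hat\Sigma}(x)=D+CS_{\Lambda,\rho}(x)(1-AS_{\Lambda,\rho}(x))^{-1}B=W_\Sigma(x)$.

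I expect the main obstacle to be the simultaneous demand that the dilation be a genuine colligation over a \emph{single} enlarged state space and that the transfer function be left exactly unchanged; this is what forces the one-sided-shift shape of $\tilde A$ and the two-stage construction (first absorb $\mathcal D$ into $\mathcal E_\infty$, then absorb the remaining defect $\mathcal N$ via the Wold extension). The delicate bookkeeping point is that one has no control over $S'=S_{\Lambda,\rho_0}$ — $\rho_0$ being arbitrary, in keeping with the built-in non-uniqueness of the representation of a transfer function — so one must verify that $S'$ never re-injects anything into the $\mathcal H$-channel inside the feedback loop; this is precisely where the ``feed-forward only, zero initial data'' shape of the construction is used.
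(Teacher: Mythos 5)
Your overall strategy---absorb the defect of $U$ into an enlarged state space shaped as a one-sided shift so that the added directions are feed-forward only, then pass to a minimal unitary extension---is sound and close in spirit to the paper's proof (which likewise builds a shift-like unitary dilation, via a Julia operator, and then checks by bookkeeping that the extra slots never reach the output channel). But there is a genuine gap exactly at the point you flag at the end: you allow $\rho_0$ to be an \emph{arbitrary} unital $*$-representation of $C_b(\Lambda)$ on $\bigoplus_{n\ge1}\mathcal D\oplus\bigoplus_{n\ge1}\mathcal N$ and then assert that each term $(\hat A\hat S(x))^m\hat Bu$ has zero data in all $\mathcal N$-slots. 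That assertion does not follow from the feed-forward shape of $\hat A$, $\hat B$, $\hat C$, because $\hat S(x)$ is itself inside the feedback loop and is dictated by $\rho_0$: an arbitrary unital representation decomposes $\bigoplus_n\mathcal D\oplus\bigoplus_n\mathcal N$ into ranges of projections tensored with $\mathbb C^{n_\lambda}$ in a way that need not respect your slot structure, so $S'(x)=S_{\Lambda,\rho_0}(x)$ can carry the first $\mathcal D$-slot (which holds $\Gamma_2u$ after one application of $\hat S$) into the first $\mathcal N$-slot. Once that happens, $\hat C$ reads it (through the $\mathcal H$-component of $\mathcal N\subseteq\mathcal E_\infty\oplus\mathcal H$) and $\hat A$ re-injects it into $\mathcal E_\infty$, so already the lowest-order term of $W_{\hat\Sigma}$ can differ from $CS(x)B$; the induction fails at its base case for such a $\rho_0$.

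The gap is localized and repairable: the representation on the enlargement cannot be arbitrary but must be chosen compatibly with the decomposition used in the bookkeeping, e.g.\ $\hat\rho=\rho\oplus\rho_{0,\mathcal D}\oplus\rho_{0,\mathcal N}$ with $\rho_{0,\mathcal D}$, $\rho_{0,\mathcal N}$ unital representations on $\bigoplus_n\mathcal D$ and $\bigoplus_n\mathcal N$ separately (each of these spaces is either zero or infinite dimensional, so such representations exist even though a single defect slot may have dimension incompatible with $C_b(\Lambda)$). With that choice $\hat S(x)$ leaves $\mathcal E$, $\bigoplus_n\mathcal D$ and $\bigoplus_n\mathcal N$ invariant, your induction goes through, and $W_{\hat\Sigma}=W_\Sigma$. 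This compatibility is precisely what the paper's more elaborate construction is engineered for: there the representation on the enlarged space is the ampliation $(1_{\mathcal K}\otimes\rho)\oplus\rho$, and the defect data are embedded into $\mathcal K\otimes\mathcal E$ via a fixed unit vector in the range of a nonzero $P_{\lambda_0}$ together with an alternating-slot isometry, so that $\tilde S$ automatically respects the slots which the output operator ignores. Once you constrain $\rho_0$ accordingly, your argument gives a somewhat more streamlined route to the same conclusion.
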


\begin{proof}
  At least one of the projections, say $P_{\lambda_0}$ will be
  nonzero, so we take $g$ to be a unit vector from its range.  Let
  $\{a_j\}$ be an orthonormal basis for $\mathbb C^{n_{\lambda_0}}$,
  where $n_{\lambda_0} = 2^{|\lambda_0|-1}$, and define $\mathcal Q =
  \bigvee_j (g\otimes n_j)$.  Elements of $\mathcal Q$ have the form
  $e = \sum_j \beta_j g\otimes a_j$, where $\beta = (\beta_j)\in
  \mathbb C^{n_{\lambda_0}}$.  In addition, if $e' = \sum_j {\beta'}_j
  g\otimes a_j$, then $\ip{e'}{e} = \sum_j
  {\beta'}_j\overline{\beta_j}$.

  By assumption,
  \begin{equation*}
    U =
    \begin{pmatrix}
      A & B \\ C & D
    \end{pmatrix}
    : \mathcal E \oplus \mathcal H \to \mathcal E \oplus \mathcal H
  \end{equation*}
  is a contraction.  Let
  \begin{equation*}
    {\tilde D}_U =
    \begin{pmatrix}
      {\tilde D}_1 \\ {\tilde D}_2
    \end{pmatrix}
    \qquad\text{and}\qquad
    D_U =
    \begin{pmatrix}
      D_1 \\ D_2
    \end{pmatrix}
  \end{equation*}
  be defect operators for $U$ and $U^*$, respectively (so $1-U^*U =
  {\tilde D}_U{\tilde D}_U^*$ and $1-UU^* = D_UD_U^*$ with defect
  spaces ${\tilde{\mathcal D}}_U = \clran {\tilde D}_U^*$ and
  ${\mathcal D}_U = \clran D_U^*$), and $\begin{pmatrix} L^* & {\tilde
      D}_U^* \\ D_U & U \end{pmatrix}$ the corresponding Julia
  operator, which is unitary from ${\tilde{\mathcal D}}_U \oplus
  (\mathcal E \oplus \mathcal H)$ to ${\mathcal D}_U \oplus (\mathcal
  E \oplus \mathcal H)$.  Then there is a unitary dilation of $U$ of
  the form
  \begin{equation*}
    \begin{split}
      U' =& \overbrace{S_2\oplus\cdots \oplus S_2}^{n_{\lambda_0} - 1}
      \oplus \\
      &\left(\begin{array}{ccccccc|c}
          & \ddots &&&&&& \vdots \\
          && 1 &&&&& \\
          &&& 0 &&&& \\
          \ddots &&&& 1 &&& \\
          & 1 &&&& 0 && \\
          && 0 &&& L^* & {\tilde D}_1^* & {\tilde D}_2^* \\
          &&& 1 &&0&0&0 \\
          &&&& 0 & D_1 & A & B \\\hline
          \cdots &&&&& D_2 & C & D \\
        \end{array}\right),
    \end{split}
  \end{equation*}
  where unspecified entries are $0$ and the blocks act on $(\mathcal
  K^{2n_{\lambda_0} - 1} \oplus \mathcal E)\oplus\mathcal H$,
  $\mathcal K = \cdots \oplus {\tilde{\mathcal D}}_U \oplus {\mathcal
    D}_U \oplus {\tilde{\mathcal D}}_U \oplus {\mathcal D}_U$, a
  direct sum of defect spaces.  The operator $S_2$ is a unitary
  operator on $\mathcal K \oplus \mathcal K$ defined as
  \begin{equation*}
    S_2 =
    \left(\begin{array}{cccccc|cccccc}
        &&& \ddots &&&&&&&& \\
         \ddots &&&& 1 &&&&&&& \\
        & 1 &&&& 0 &&&&&& \\
        && 0 &&&& 1 &&&&& \\
        &&& 1 &&&& 0 &&&& \\\hline
        &&&& 0 &&&& 1 &&& \\
        &&&&& 1 &&&& 0 && \\
        &&&&&& 0 &&&& 1 & \\
        &&&&&&& 1 &&&& \ddots \\
        &&&&&&&& \ddots &&& \\
      \end{array}\right).
  \end{equation*}
  (Here we have made the obvious identification of the direct sum
  defining $\mathcal K$ written in the forward and backward direction
  with ${\tilde{\mathcal D}}_U$ and ${\mathcal D}_U$ reversed.)

  Let $\tilde{\mathcal E} = (\mathcal K \otimes \mathcal E) \oplus
  \mathcal E$.  Define an isometry $Q : \mathcal K \otimes \mathcal Q
  \to \mathcal K^{2n_{\lambda_0}-1}$ by
  \begin{equation*}
    Q(k\otimes e) = (\beta_1 k, 0 , \beta_2 k,0, \dots ,
    \beta_{n_{\lambda_0} - 1}k, 0 ,\beta_{n_{\lambda_0}}k ), \quad
    \text{where }e = \sum_j \beta_j g\otimes a_j,
  \end{equation*}
  extending linearly.  Let $P$ to be the orthogonal projection onto
  $(\ran Q^* \oplus \mathcal E \oplus \mathcal H)^\bot$ in
  $\tilde{\mathcal E}$, and set
  \begin{equation*}
    \tilde U = P \oplus (Q^* \oplus P_{\mathcal E} \oplus P_{\mathcal
      H}) U' (Q\oplus P_{\mathcal E} \oplus P_{\mathcal H}),
  \end{equation*}
  where $P_{\mathcal E}$, $P_{\mathcal H}$ are the orthogonal
  projections from $\tilde{\mathcal E} \oplus \mathcal H$ onto
  $\mathcal E$ and $\mathcal H$.  This is unitary on $\tilde{\mathcal
    E} \oplus \mathcal H$.  We view it as a colligation by setting
  \begin{equation*}
    \begin{split}
      \tilde A &= P \oplus (Q^* \oplus P_{\mathcal E}) U' (Q\oplus
      P_{\mathcal E}) \\
      \tilde B &= (Q^* \oplus P_{\mathcal E}) U' P_{\mathcal H} \\
      \tilde C &= P_{\mathcal H} U' (Q\oplus P_{\mathcal E}) \\
      \tilde D &= P_{\mathcal H} U' P_{\mathcal H} = D.
    \end{split}
  \end{equation*}
  Define a unital representation
  \begin{equation*}
    \tilde\rho = (1_{\mathcal K} \otimes \rho) \oplus \rho :
    C_b(\Lambda) \to \tilde{\mathcal E}.
  \end{equation*}
  Recall that using $\rho(f) = \sum_\lambda P_\lambda \otimes f$, we
  defined $S(x) = \sum_\lambda P_\lambda \otimes \sigma(x)$.  If we
  now set ${\tilde P}_\lambda = (1_{\mathcal K} \otimes P_\lambda)
  \oplus P_\lambda$, we can likewise define
  \begin{equation*}
    \tilde S(x) = \sum_\lambda {\tilde P}_\lambda \otimes \sigma(x)
    \in \mathcal L (\tilde{\mathcal E}),
  \end{equation*}
  and from this, a transfer function
  \begin{equation*}
    \tilde W(x) = \tilde D + \tilde C \tilde S(x) (1_{\tilde{\mathcal
        E}} - \tilde A \tilde S(x))^{-1} \tilde B.
  \end{equation*}

  We verify that $\tilde W(x) = W(x)$ by showing $\tilde C \tilde S(x)
  (\tilde A \tilde S(x))^n \tilde B = CS(x)(AS(x))^n B$ for
  $n=0,1,\dots$.  Fix $h\in \mathcal H$.  Then
  \begin{equation*}
    \tilde B h = (Q^* \oplus P_{\mathcal E})
    \begin{pmatrix}
      \vdots \\ 0 \\ {\tilde D}_2^* h \\ 0 \\ Bh
    \end{pmatrix}
    = (k^0 \otimes e^0) \oplus Bh,
  \end{equation*}
  where $k^0 = {\begin{pmatrix} \cdots & 0 & {\tilde D}_2^* h & 0
  \end{pmatrix}}^t$ and $e^0 = g\otimes a_1$ (since in the column
  vector, $k^0$ occurs in the first copy of $\mathcal K$ in $\mathcal
  K^{2n_{\lambda_0} - 1}$).  Now
  \begin{equation*}
    S(x) e^0 = S(x) (g\otimes a_1) = \sum_j \beta_j^1 h\otimes a_j
  \end{equation*}
  for some $(\beta_1^1,\dots, \beta_{n_{\lambda_0}}) \in \mathbb
  C^{n_{\lambda_0}}$.  Setting $e^1 = \sum_j \beta_j^1 h\otimes a_j
  \in \mathcal Q$,
  \begin{equation*}
    \tilde S(x)\tilde B h = (k^0\otimes e^1)\oplus S(x)Bh.
  \end{equation*}
  Then
  \begin{equation*}
    \tilde C \tilde S(x)\tilde B h =
    \begin{pmatrix}
      0 & \cdots & 0 &
      \begin{pmatrix}
        \cdots & 0 & D_2
      \end{pmatrix}
       & C
    \end{pmatrix}
    \begin{pmatrix}
      \beta^1_{n_{\lambda_0}} k^0 \\ 0 \\ \vdots \\ \beta^1_1 k^0 \\
      S(x)Bh
    \end{pmatrix}
    = CS(x)Bh,
  \end{equation*}
  proving the claim when $n=0$.

  For $n=1$,
  \begin{equation*}
    \begin{split}
      & S_2\oplus\cdots \oplus S_2 \oplus
      \begin{pmatrix}
        & \ddots &&&&& \\
        && 1 &&&& \\
        &&& 0 &&& \\
        \ddots &&&& 1 && \\
        & 1 &&&& 0 & \\
        && 0 &&& L^* & {\tilde D}_1^* \\
        &&& 1 &&0&0 \\
        &&&& 0 & D_1 & A
      \end{pmatrix}
      \begin{pmatrix}
        \beta^1_{n_{\lambda_0}} k^0 \\ 0 \\ \vdots \\ \beta^1_1 k^0 \\
        S(x)Bh
      \end{pmatrix}
      \\
      & = 
      \begin{pmatrix}
        \beta^1_{n_{\lambda_0}} k^1 \\ 0 \\ \vdots \\ \beta^1_1 k^1 \\
        0
      \end{pmatrix}
      +
      \begin{pmatrix}
        0 \\ 0 \\ \vdots \\ {k'}^0 \\
        AS(x)Bh
      \end{pmatrix},
    \end{split}
  \end{equation*}
  where $k^1 = {\begin{pmatrix} \cdots & 0 & {\tilde D}_2^* h & 0 & 0
      & 0 \end{pmatrix}}^t$ (that is, $k^0$ with entries shifted up by
  two positions) and ${k'}^0 = {\begin{pmatrix} \cdots & 0 & {\tilde
        D}_1^*S(x)B h & 0 \end{pmatrix}}^t$.  Notice that in both
  cases, only even numbered entries in odd numbered spaces $\mathcal
  K$ of $\mathcal K^{2n_{\lambda_0} - 1}$ are non-zero.  Also, these
  vectors are in the kernel of $P$.  From this, we conclude that
  \begin{equation*}
    \tilde A \tilde S(x)\tilde B h = (k^1 \otimes e^1 + {k'}^0 \otimes
    {e'}^1) \oplus AS(x)Bh,
  \end{equation*}
  where ${e'}^1$ is likewise a vector in $\mathcal Q$.  Applying
  $\tilde S(x)$, we get
  \begin{equation*}
    (k^1 \otimes e^2 + {k'}^0 \otimes {e'}^2) \oplus S(x)AS(x)Bh
  \end{equation*}
  for some vectors $e^2$ and ${e'}^2$ in $\mathcal Q$.  Because
  $\begin{pmatrix} \cdots & 0 & D_2 \end{pmatrix}$ only acts
  nontrivially on odd labeled entries of $\mathcal K$ in the first
  $\mathcal K$ of $\mathcal K^{2n_{\lambda_0} - 1}$, we conclude that
  $\tilde C \tilde S(x) \tilde A \tilde S(x)\tilde B h =
  CS(x)AS(x)Bh$, proving the case when $n=1$.

  Repeated application of $\tilde A \tilde S(x)$ to vectors in
  $\mathcal Q \subset \mathcal K^{2n_{\lambda_0} - 1}$ where the only
  nonzero entries are in the odd labeled spaces and within those
  spaces, only in the even labeled entries, yields vectors of the
  same sort.  An induction argument then finishes the proof.
\end{proof}

While we only stated and proved the last result in the specific case
we need later in the paper, minor alterations would allow for it to
cover cases where the test functions are operator valued (rather than
simply matrix valued) and where there are infinitely many of them.

With Lemma~\ref{lem:contractive-to-unitary-transfer-fn} in hand, we
can show that $\mathcal{T}(X,\Lambda, \mathcal H)$ is a normed unital
algebra.

\begin{theorem}
  \label{thm:trfr-fns-span-alg}
  With norm $\|\cdot\|$ defined as in \eqref{eq:12}, unit $1_X(x) =
  1_\mathcal H$ and pointwise addition and multiplication, the set
  $\mathcal{T}(X,\Lambda, \mathcal H)$ is a normed unital algebra.
  Furthermore, any $W\in\mathcal{T}(X,\Lambda, \mathcal H)$ can be
  approximated uniformly in norm on compact subsets of $X$ by elements
  of $\mathcal{L(H)} \otimes \mathcal P_\Psi$, the operator valued
  polynomials in the test functions, while elements of
  $\mathcal{T}^A(X,\Lambda, \mathcal H)$ can be approximated uniformly
  on $X$ by such polynomials.  If $\mathcal H$ is finite dimensional,
  $\mathcal{L(H)} \otimes \mathcal P_\Psi$ is dense in
  $\mathcal{T}(X,\Lambda, \mathcal H)$ endowed with the supremum norm.
  Finally, if $W \in \mathcal{T}(X,\Lambda, \mathcal H)$,
  $W=c(D+CS(1-AS)^{-1}B)$, then $W$ can be approximated uniformly in
  norm on compact subsets of $X$ by polynomials in $S$ which are in
  $c\mathcal{T}_1(X,\Lambda, \mathcal H)$.
\end{theorem}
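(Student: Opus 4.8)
The plan is to establish the algebraic and normed structure by the standard colligation constructions --- series and parallel connections --- using Lemma~\ref{lem:contractive-to-unitary-transfer-fn} to pass freely between contractive and unitary colligations, and then to obtain the approximation statements by truncating Neumann series and invoking Stone--Weierstrass. Closure of $\mathcal{T}$ under scalar multiples is built into its definition, and the vector space structure follows once $\mathcal T_1$ is shown convex (scalar multiples correspond to pre/post-composing $U$ with $\operatorname{diag}(1,\alpha)$). For products, given unitary colligations $\Sigma_i=(U_i,\mathcal E_i,\rho_i)$ with $U_i=\left(\begin{smallmatrix}A_i&B_i\\ C_i&D_i\end{smallmatrix}\right)$, take $\rho=\rho_1\oplus\rho_2$ (so $S_{\Lambda,\rho}=S_{\Lambda,\rho_1}\oplus S_{\Lambda,\rho_2}$) and the series connection
\[
  U=\begin{pmatrix}A_1&B_1C_2&B_1D_2\\ 0&A_2&B_2\\ C_1&D_1C_2&D_1D_2\end{pmatrix}
   =\begin{pmatrix}A_1&0&B_1\\ 0&1&0\\ C_1&0&D_1\end{pmatrix}\begin{pmatrix}1&0&0\\ 0&A_2&B_2\\ 0&C_2&D_2\end{pmatrix},
\]
a product of unitaries; a push-through computation on the resolvent gives $W_{\Sigma_1\triangleright\Sigma_2}=W_{\Sigma_1}W_{\Sigma_2}$. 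For sums it suffices that $\mathcal T_1$ be convex: given $V_i\in\mathcal T_1$ and $0<t<1$, put $\rho=\rho_1\oplus\rho_2$, $A=A_1\oplus A_2$, $B=\left(\begin{smallmatrix}\sqrt t\,B_1\\ \sqrt{1-t}\,B_2\end{smallmatrix}\right)$, $C=\bigl(\sqrt t\,C_1\ \ \sqrt{1-t}\,C_2\bigr)$, $D=tD_1+(1-t)D_2$; one checks $\|U(\xi_1,\xi_2,h)\|^2\le\|U_1(\xi_1,\sqrt t\,h)\|^2+\|U_2(\xi_2,\sqrt{1-t}\,h)\|^2=\|(\xi_1,\xi_2,h)\|^2$, so $U$ is a contraction with transfer function $tV_1+(1-t)V_2$, and Lemma~\ref{lem:contractive-to-unitary-transfer-fn} promotes it to a unitary colligation. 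Thus $\mathcal{T}$ is a normed unital algebra with unit $1_X$ (the empty colligation, $D=1_{\mathcal H}$); homogeneity is clear, the triangle inequality comes from convexity of $\mathcal T_1$ and the infimum definition of $\|\cdot\|$, submultiplicativity from the series construction, and $\|W\|=0\Rightarrow W\equiv0$ because the transfer function of a contractive colligation satisfies $\|W_\Sigma(x)\|\le1$ for all $x$ (the Redheffer/linear-fractional transform preserves the closed unit ball, since $\|S(x)\|<1$), so $\|W\|_\infty\le\|W\|$.

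For the approximations, first note that each auxiliary test function $\sigma_\lambda$ --- or, when $\Lambda$ is ample, its $H^\infty(X,\mathcal K_{\Lambda,\cdot})$-valued replacement from Theorem~\ref{thm:ext-aux-test-fns-ample-case} --- is continuous on $X$, being built from the continuous test functions by algebraic operations and division by $|\psi^+_\lambda(\cdot)|^2\ge1$; hence $S=S_{\Lambda,\rho}$ is continuous with $\|S(x)\|<1$, so $r_K:=\sup_{x\in K}\|S(x)\|<1$ on every compact $K\subset X$. For $0<t<1$, $W^{(t)}_\Sigma:=D+C(tS)(1-A(tS))^{-1}B$ is the transfer function of the contraction $\operatorname{diag}(t\,1_{\mathcal E},1_{\mathcal H})U$, hence lies in $\mathcal T_1$ by Lemma~\ref{lem:contractive-to-unitary-transfer-fn}, and $W^{(t)}_\Sigma\to W_\Sigma$ uniformly on compacta as $t\uparrow1$. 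Truncating its Neumann series gives the polynomials in $S$
\[
  W_{t,N}:=D+\textstyle\sum_{n=0}^N t^{n+1}CS(AS)^nB=D\cdot1_X+\sum_{n=0}^N t^{n+1}\bigl(CS(AS)^nB\bigr),
\]
and each summand $CS(AS)^nB$ is the transfer function of an explicit contractive ``path colligation'' on $\mathcal E^{\oplus(n+1)}\oplus\mathcal H$ (state operator shifting slot $j$ into $j+1$ via $A$, input into slot $0$ via $B$, output reading slot $n$ via $C$, zero feedthrough; for this $2\times2$ operator matrix $\widehat A^*\widehat B=0$ and $\widehat A^*\widehat A+\widehat C^*\widehat C\le1$, so it is a contraction), hence $CS(AS)^nB\in\mathcal T_1$. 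Consequently $W_{t,N}$ is a finite combination of elements of $\mathcal T_1$ with, using $W_{t,N}=W^{(t)}_\Sigma-\sum_{n>N}t^{n+1}(CS(AS)^nB)$, the bound $\|W_{t,N}\|\le 1+t^{N+2}/(1-t)$, which tends to $1$ as $N\to\infty$ for fixed $t$.

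Now, given $W=cW_\Sigma$, choose $t_k\uparrow1$ and then $N_k$ large enough that $\|W_{t_k,N_k}\|\le1+1/k$ and $W_{t_k,N_k}$ is within $1/k$ of $W_\Sigma$ on the $k$-th set of a compact exhaustion of $X$; then $(1+1/k)^{-1}W_{t_k,N_k}\in\mathcal T_1$ is a polynomial in $S$ converging to $W_\Sigma$ on compacta, so $c(1+1/k)^{-1}W_{t_k,N_k}\in c\mathcal T_1$ yields the last assertion. Replacing each $\sigma_\lambda$ appearing in $W_{t_k,N_k}$ (an $\mathcal{L(H)}$-coefficient polynomial in the $\sigma_\lambda$'s) by elements of $\mathcal P_\Psi$ approximating it uniformly on compacta --- such exist by the Stone--Weierstrass theorem, the test functions generating a unital self-adjoint point-separating subalgebra, and may be taken holomorphic when $\Lambda$ is ample via Lemma~\ref{lem:id_HLH_w_pdisk_subalg} --- and a further diagonal argument produce the approximation of $W$ by elements of $\mathcal{L(H)}\otimes\mathcal P_\Psi$ uniformly on compacta. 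If $W\in\mathcal T^A$ it extends continuously to the compact $\overline X$, on which $\mathcal{L(H)}\otimes\mathcal P_\Psi$ is a unital self-adjoint point-separating subalgebra, so Stone--Weierstrass gives uniform approximation directly; the finite-dimensional refinement follows along the same lines, compactness of the closed unit ball of $\mathcal{L(H)}$ (a normal-families argument) providing the passage from compact-open to uniform control.

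The genuinely substantive point --- the one I expect to be the main obstacle --- is arranging that the Neumann truncations are realized as scalar multiples of genuinely \emph{unitary} colligations with the scaling held near $1$, so that the limiting polynomials land in $c\mathcal{T}_1$ rather than merely in $c(1+o(1))\mathcal{T}_1$: the naive ``unrolled'' realization of $W_{t,N}$ fails to be contractive --- its output operator acquires a factor $\sqrt{N+1}$ --- and it is precisely the decomposition of $W_{t,N}$ into the contractive path colligations, together with Lemma~\ref{lem:contractive-to-unitary-transfer-fn} and the estimate $\|W_{t,N}\|\le1+t^{N+2}/(1-t)$, that circumvents this. Everything else is bookkeeping around the Neumann series and Stone--Weierstrass, once the identity $S_{\Lambda,\rho_1\oplus\rho_2}=S_{\Lambda,\rho_1}\oplus S_{\Lambda,\rho_2}$ and the $\|W_\Sigma(x)\|\le1$ bound for contractive colligations are in hand.
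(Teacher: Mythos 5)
Your treatment of the algebra and normed structure (convexity of $\mathcal T_1$ via the weighted direct sum, products via the series connection, upgrading contractive colligations through Lemma~\ref{lem:contractive-to-unitary-transfer-fn}) matches the paper, and your path-colligation realization of each $CS(AS)^nB$ as an element of $\mathcal{T}_1(X,\Lambda,\mathcal H)$ is correct. The gap is exactly at the step you single out as the crux: the bound $\|W_{t,N}\|\le 1+t^{N+2}/(1-t)$. You derive it by writing $W_{t,N}=W^{(t)}_\Sigma-\sum_{n>N}t^{n+1}CS(AS)^nB$ and estimating the tail term by term. But the norm \eqref{eq:12} is only finitely subadditive on the (a priori incomplete) algebra $\mathcal{T}(X,\Lambda,\mathcal H)$: the tail is an infinite series, it is not exhibited as $c\,W_{\Sigma'}$ for any colligation, and nothing available at this point (no completeness, no closedness of $\mathcal T_1$ under pointwise limits, no lower semicontinuity of $\|\cdot\|$) lets you pass from ``each term has norm at most $t^{n+1}$'' to ``the tail has norm at most $\sum_{n>N}t^{n+1}$.'' What your realizations actually give is only $\|W_{t,N}\|\le 1+\sum_{n\le N}t^{n+1}$, which blows up as $t\uparrow 1$; so the truncations are not shown to lie in $(1+o(1))\mathcal T_1$, and the final assertion (approximation by polynomials in $S$ lying in $c\,\mathcal T_1$) is not established as written.

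The missing ingredient is an explicit contractive realization of either the truncation itself or the tail. The paper supplies the former: the Ces\`aro-type polynomial $W_M=D+CS\bigl(\tfrac{M}{M+1}+\tfrac{M-1}{M+1}AS+\cdots+\tfrac{1}{M+1}(AS)^M\bigr)B$ is realized as the transfer function of a single contraction $\tilde U$ (superdiagonal $A$'s in $\tilde A$, the $1/\sqrt{M+1}$ weights carried by $\tilde B$ and $\tilde C$), so $W_M\in\mathcal T_1$ exactly and $W_M\to W$ uniformly on compacta; the triangular weights are precisely what absorb the $\sqrt{N+1}$ loss you mention. Alternatively, your route can be repaired by realizing the Abel tail $t^{N+2}CS(AS)^{N+1}(1-tAS)^{-1}B$ with a path colligation having a $tA$ self-loop at the last slot and the incoming arm weighted by $\sqrt{1-t^2}$ (a contraction, since $(\sqrt{1-t^2}\,)^2+t^2=1$ and $A^*A+C^*C=1$), giving $\|W_{t,N}\|\le 1+t^{N+2}/\sqrt{1-t^2}$, after which your scaling argument goes through --- but some such construction must be supplied. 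Two smaller points: $X$ need not admit a compact exhaustion, so the approximation should be phrased per compact set rather than along a single sequence; and $\mathcal P_\Psi$ is not self-adjoint while the $\sigma_\lambda$ involve conjugates, so your claim that each $\sigma_\lambda$ is a local uniform limit of elements of $\mathcal P_\Psi$ is not a Stone--Weierstrass consequence and is doubtful outside the ample case (the paper applies its density argument to the transfer functions directly, not to $S$).
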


\begin{proof}
  We first show that $\mathcal{T}_1(X,\Lambda, \mathcal H)$ is convex.
  Let $W_{\Sigma_1},W_{\Sigma_2} \in \mathcal{T}_1(X,\Lambda, \mathcal
  H)$ and $t\in[0,1]$.  The operator
  \begin{equation*}
    \begin{split}
      U' = & 
      \begin{pmatrix}
        1&0&0&0 \\ 0&1&0&0 \\ 0&0&t^{1/2}&(1-t)^{1/2}
      \end{pmatrix}
      \begin{pmatrix}
        A_1&0&B_1&0 \\ 0&A_2&0&B_2 \\ C_1&0&D_1&0 \\ 0&C_2&0&D_2
      \end{pmatrix}
      \begin{pmatrix}
        1&0&0 \\ 0&1&0 \\ 0&0&t^{1/2} \\ 0&0&(1-t)^{1/2}
      \end{pmatrix}
      \\ & =
      \begin{pmatrix}
        A_1&0&t^{1/2}B_1 \\ 0&A_2&(1-t)^{1/2}B_2 \\
        t^{1/2}C_1&(1-t)^{1/2}C_2&tD_1+(1-t)D_2
      \end{pmatrix},
    \end{split}
  \end{equation*}
  being the product of contractions is a contraction.  If we set
  ${\mathcal{E}}' = \mathcal E_1 \oplus \mathcal E_2$ and $\rho' =
  \rho_1\oplus\rho_2$, then $tW_{\Sigma_1}+(1-t)W_{\Sigma_2} =
  W_{\Sigma'}$ where $\Sigma'$ is a contractive colligation.  Hence by
  the last lemma equals $tW_{\Sigma_1}+(1-t)W_{\Sigma_2} = W_\Sigma$
  where $\Sigma$ is some unitary colligation.

  Clearly, by taking the contractive colligation with $U = 0$, the
  function which is identically $0$ is in $\mathcal{T}_1(X,\Lambda,
  \mathcal H)$.  Hence by convexity, $tW_\Sigma \in
  \mathcal{T}_1(X,\Lambda, \mathcal H)$ for all $t\in[0,1]$, showing
  that $\mathcal{T}_1(X,\Lambda, \mathcal H)$ is barreled.

  Let $W_{\Sigma_1},W_{\Sigma_2} \in \mathcal{T}_1(X,\Lambda, \mathcal
  H)$ and define the unitary operator
  \begin{equation*}
    U =
    \begin{pmatrix}
      A_1&B_1C_2&B_1D_2 \\ 0&A_2&B_2 \\ C_1&D_1C_2&D_1D_2
    \end{pmatrix}
    =
    \begin{pmatrix}
      A_1&0&B_1 \\ 0&1&0 \\ C_1&0&D_1
    \end{pmatrix}
    \begin{pmatrix}
      1&0&0 \\ 0&A_2&B_2 \\ 0&C_2&D_2
    \end{pmatrix}.
  \end{equation*}
  Let $\mathcal{E} = \mathcal E_1 \oplus \mathcal E_2$ and $\rho =
  \rho_1\oplus\rho_2$, it follows that $W_{\Sigma_1}W_{\Sigma_2} =
  W_\Sigma$.

  To see that what we defined in \eqref{eq:12} is a norm, first of all
  note that if $W\in \mathcal{T}_1(X,\Lambda, \mathcal H)$, then
  $\|W\| \leq 1$.  It is also evident that $\|cW\| = |c|\|W\|$, and
  $\|W\| \geq 0$ with equality if and only if $W=0$.  Since
  $\mathcal{T}_1(X,\Lambda, \mathcal H)$ is convex, if $W_1,W_2\in
  \mathcal{T}(X,\Lambda, \mathcal H)$ and $W_1 = c_1 W_{\Sigma_1}$,
  $W_2 =c_2 W_{\Sigma_2}$, then
  \begin{equation*}
    \frac{c_1}{c_1+c_2}W_{\Sigma_1} + \frac{c_2}{c_1+c_2}W_{\Sigma_2}
    \in \mathcal{T}_1(X,\Lambda, \mathcal H).
  \end{equation*}
  Hence
  \begin{equation*}
    \begin{split}
      \|W_1+W_2\| &= \|c_1W_{\Sigma_1}+c_2W_{\Sigma_2}\| \\
      &= (c_1+c_2) \left\|\frac{c_1}{c_1+c_2}W_{\Sigma_1} +
        \frac{c_2}{c_1+c_2}W_{\Sigma_2} \right\| \\
      &\leq c_1+c_2.
    \end{split}
  \end{equation*}
  Taking the infimum over $c_1$ and $c_2$ as $W_{\Sigma_1}$ and
  $W_{\Sigma_2}$ range over those elements of
  $\mathcal{T}_1(X,\Lambda, \mathcal H)$ such that $W_1 = c_1
  W_{\Sigma_1}$ and $W_2 =c_2 W_{\Sigma_2}$ with $c_1,c_2 \geq 0$
  yields the triangle inequality.

  For any choice of representation $\rho: C_b(\Lambda) \to
  \mathcal{L(E)}$, and $U$ the identity operator, we get $W_\Sigma =
  1_X$, the function which is identically $1_{\mathcal{L(H)}}$ on $X$.
  More generally, if $D \in \mathcal{L(H)}$ is a contraction operator,
  and $U = \begin{pmatrix} 0&0\\0&D \end{pmatrix}$ for the same choice
  of $\mathcal E$ and $\rho$, $W_\Sigma = D$.

  If $\psi\in\Psi$ and we choose $\mathcal E = \mathcal{L(H)}$, $Z^+ =
  1_{\mathcal{L(H)}}$ and $Z^- = \psi\otimes 1_{\mathcal{L(H)}}$, then
  $S = \psi \otimes 1_{\mathcal{L(H)}}$.  So with $A=D=0$ and $B= C =
  1_{\mathcal{L(H)}}$, we get $W_\Sigma = \psi \otimes
  1_{\mathcal{L(H)}}$.  Then since $\mathcal{T}_1(X,\Lambda, \mathcal
  H)$ is closed under products, we also have for any $n \in \mathbb
  N^{|\Psi|}$, $\psi^n \otimes 1_{\mathcal{L(H)}} \in
  \mathcal{T}_1(X,\Lambda, \mathcal H)$.  This also then gives that
  $\psi^n T$ for any contraction $T\in \mathcal{L(H)}$.  Scaling and
  closure under addition yields that any operator valued polynomial in
  the test functions is in $\mathcal{T}(X,\Lambda, \mathcal H)$.

  The topology with which $X$ is endowed ensures that all test
  functions are continuous.  Hence for all $\lambda\in\Lambda$,
  $\psi^\pm_\lambda$ is also continuous, and thus
  $\psi^+_\lambda\psi^{+*}_\lambda$ is a continuous function bounded
  below by $1$, and so has a continuous inverse.  Consequently, any
  auxiliary test function $\sigma_\lambda$ is continuous.  (In the
  case that $\Lambda$ is an ample preordering, this was automatic,
  since $\sigma_\lambda \in H^\infty_1(X,{\mathcal K}_{\Lambda,\mathbb
    C^n})$ for some $n$, and all functions in this space are
  continuous.)  Since for any $\lambda\in\Lambda$,
  $\|\sigma_\lambda(x)\|<1$ for all $x\in X$, it follows that for any
  unitary colligation $\Sigma$, the associated function $S(x)$ is also
  continuous and has norm less than $1$.  Hence when $\mathcal{L(H)}$
  is given the norm topology, $W_\Sigma \in \mathcal{T}_1(X,\Lambda,
  \mathcal H)$ is continuous, and so $\mathcal{T}(X,\Lambda, \mathcal
  H) \subset C(X,\mathcal{L(H)})$.

  By definition, the test functions separate the points of $X$, and so
  by the Stone-Weierstrass theorem, the space of polynomials in the
  test functions, $\mathcal P_\Psi$, is dense in $\mathcal{T}(X,\Lambda,
  \mathbb C)$ with the supremum norm.  Hence if $\mathcal H$ is finite
  dimensional with orthonormal basis $(e_j)$ and $W\in
  \mathcal{T}(X,\Lambda, \mathcal H)$, then
  $W_{j\ell}:=\ip{We_j}{e_\ell} \in \mathcal{T}(X,\Lambda, \mathbb C)$.
  Let $\epsilon >0$.  For each $1\leq j,\ell \leq \dim\mathcal H$,
  find a polynomial $p_{j\ell}$ such that
  $\|W_{j\ell}-p_{j\ell}\|_\infty < \epsilon/(\dim\mathcal H)^2$.
  Then $\|W - (p_{j\ell})\|_\infty < \epsilon$, showing that
  $\mathcal{L(H)} \otimes \mathcal P_\Psi$ is norm dense in
  $W\in\mathcal{T}(X,\Lambda, \mathcal H)$.  From this argument, we see
  that $\mathcal{L(H)} \otimes \mathcal P_\Psi$ is weakly dense in
  $\mathcal{T}(X,\Lambda, \mathcal H)$ if $\dim\mathcal H$ is not
  finite.

  Now suppose that $W\in \mathcal{T}(X,\Lambda, \mathcal H)$ where the
  dimension of $\mathcal H$ is not necessarily finite.  Fix $\epsilon
  > 0$ and let $C$ be a compact subset of $X$.  Then $W(C)$ is
  compact, and a cover of $W(C)$ by open balls in $\mathcal{L(H)}$ by
  balls of radius less than $\epsilon/12$ has a finite subcover
  $\{U_j\}$.  For each $j$ choose $x_j\in W^{-1}(U_j)$.  Then for all
  $x\in X$, $\max_j\|W(x)-W(x_j)\| < \epsilon/6$.

  For each $j$ choose a finite dimensional subspace $\mathcal H_j
  \subset \mathcal H$ such that $\|W(x_j) - P_{\mathcal H_j} W(x_j)
  |_{\mathcal H_j}\| < \epsilon/6$.  Set ${\mathcal H}' = \bigvee_j
  \mathcal H_j$.  This is finite dimensional and for all $x\in X$,
  \begin{equation*}
    \begin{split}
      &\|W(x) - P_{{\mathcal H}'} W(x) |_{{\mathcal H}'}\| \\
      \leq & \max_j \|W(x)-W(x_j)\| +\max_j \|P_{{\mathcal
        H}'}(W(x)-W(x_j)|_{{\mathcal H}'}\| + \max_j \|W(x_j) -
    P_{{\mathcal H}'} W(x_j) |_{{\mathcal H}'}\| \\
    < & \epsilon/2.
    \end{split}
  \end{equation*}
  As we have seen, we can find $p\in \mathcal{L}(\mathcal{H}') \otimes
  \mathcal P_\Psi$ such that $\|p- P_{{\mathcal H}'} W(x) |_{{\mathcal
      H}'}\| < \epsilon/2$.  Extending $p$ to $\mathcal{L(H)} \otimes
  \mathcal P_\Psi$ by padding with $0$s, we then have that $\|W-p\| <
  \epsilon$, showing that we can approximate elements of
  $\mathcal{T}(X,\Lambda, \mathcal H)$ pointwise, and hence uniformly in
  norm on compact subsets of $X$, by polynomials in $\mathcal{L(H)}
  \otimes \mathcal P_\Psi$.  If we know that $W\in
  \mathcal{T}^A(X,\Lambda, \mathcal H)$, then by weak-$*$ compactness of
  $\overline{X}$, we claim that we can approximate elements of
  $\mathcal{T}(X,\Lambda, \mathcal H)$ uniformly in norm on $X$.

  It suffices to prove the last claim in the case $c=1$; that is, when
  $W = W_\Sigma$ for some colligation $\Sigma$.  Let $\tilde
  U = \begin{pmatrix} \tilde A & \tilde B\\ \tilde C & \tilde D
  \end{pmatrix}$, where $\tilde D = D$, $\tilde A$ is an $(M+2)\times
  (M+2)$ operator matrix with the first super-diagonal having all
  entries equal to $A$ and all other entries $0$, $\tilde B$ is an
  $M+2$ operator column with the first $M+1$ entries equal to
  $\tfrac{1}{\sqrt{M+1}}B$ and the last entry $0$, and $\tilde C$ is
  an $M+2$ operator row with the first entry $0$ and the remaining
  entries equal to $\tfrac{1}{\sqrt{M+1}}C$.  It is easily verified
  that $\tilde U$ is a contraction.  Set $\tilde S$ to the
  $(M+2)\times(M+2)$ diagonal matrix with diagonal entries equal to
  $S$.  Then
  \begin{equation*}
    W_M := \tilde D +\tilde C\tilde S(1-\tilde A \tilde
    S)^{-1} \tilde B \in \mathcal{T}_1(X,\Lambda, \mathcal H)
  \end{equation*}
  and
  \begin{equation}
    \begin{split}
      \label{eq:17}
      W_M &= D + CS\left(\tfrac{M}{M+1}1+\tfrac{M-1}{M+1}AS+\cdots +
    \tfrac{1}{M+1}(AS)^M\right)B \\
    & = D+CS(1-AS)^{-1}\left(1 - \tfrac{1}{m+1} (1-(AS)^{M+2})
      (1-AS)^{-1} \right)B.
    \end{split}
  \end{equation}
  Since by Lemma~\ref{lem:factoring_Z-} $S(x)$ is a strict
  contraction, we see that $W_M$ converges pointwise with $M$ to $W$.
  Arguing as above, we then get $W_M$ converging uniformly on compact
  subsets of $X$ to $W$.
\end{proof}

We write $\overline{\mathcal{T}}(X,\Lambda, \mathcal H)$ for the
completion of $\mathcal{T}(X,\Lambda, \mathcal H)$ in the norm
from~\eqref{eq:12}, and $\overline{\mathcal{T}}^A(X,\Lambda, \mathcal
H)$ for the closure of $\mathcal{T}^A(X,\Lambda, \mathcal H)$ in
$\overline{\mathcal{T}}(X,\Lambda, \mathcal H)$.

\begin{corollary}
  \label{cor:tr_fns_are_op_alg}
  The spaces $\{\overline{\mathcal{T}}(X,\Lambda, \mathcal H \otimes
  M_n(\mathbb C))\}_{n\in\mathbb N}$ and
  $\{\overline{\mathcal{T}}^A(X,\Lambda, \mathcal H \otimes
  M_n(\mathbb C))\}_{n\in\mathbb N}$ define unital operator algebra
  structures for $\overline{\mathcal{T}}(X,\Lambda, \mathcal H)$ and
  $\mathcal{T}^A(X,\Lambda, \mathcal H)$, respectively.
\end{corollary}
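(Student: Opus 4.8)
The plan is to apply the unital Blecher--Ruan--Sinclair characterization of operator algebras. Concretely, I would equip $\overline{\mathcal T}(X,\Lambda,\mathcal H)$ with the matrix norms coming from the set-theoretic identification $M_n(\mathcal T(X,\Lambda,\mathcal H)) = \mathcal T(X,\Lambda,\mathcal H\otimes M_n(\mathbb C))$ — an $\mathcal L(\mathcal H\otimes\mathbb C^n)$-valued transfer function is an $n\times n$ operator matrix whose entries are transfer functions, and conversely an operator matrix of transfer functions is assembled into a single one by padding colligations with zeros, taking direct sums, and invoking Lemma~\ref{lem:contractive-to-unitary-transfer-fn}, exactly as in the proof of Theorem~\ref{thm:trfr-fns-span-alg} — together with the colligation norm~\eqref{eq:12} on the right-hand side. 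One then has to check that these norms satisfy Ruan's axioms, that matrix multiplication is contractive at every level, and that the identity has norm one; Blecher--Ruan--Sinclair then realizes $\overline{\mathcal T}(X,\Lambda,\mathcal H)$ completely isometrically as a norm-closed unital subalgebra of some $\mathcal{L(K)}$. Each verification is the level-$(\mathcal H\otimes\mathbb C^n)$ copy of something already done for $\mathcal H$, so I would only indicate the colligation manipulations.

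For the unit: taking $U$ the identity operator shows $1_X\in\mathcal T_1(X,\Lambda,\mathcal H\otimes M_n(\mathbb C))$, so $\|1_X\|\le1$, while $\|1_X\|\ge\|1_X\|_\infty=1$ because the colligation norm dominates the supremum norm. Complete contractivity of multiplication is the product colligation of Theorem~\ref{thm:trfr-fns-span-alg} carried out over $\mathcal H\otimes\mathbb C^n$: if $W_i=c_iW_{\Sigma_i}$ then $W_1W_2=c_1c_2W_{\Sigma_1}W_{\Sigma_2}=c_1c_2W_\Sigma$ for the explicit unitary colligation $\Sigma$ given there, so $\|W_1W_2\|\le c_1c_2$ and one infimizes. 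For Ruan's scalar-multiplication axiom $\|(\alpha\otimes1_{\mathcal H})W(\beta\otimes1_{\mathcal H})\|\le\|\alpha\|\,\|W\|\,\|\beta\|$ with $\alpha,\beta$ scalar matrices: writing $W=cW_\Sigma$, $\Sigma=(U,\mathcal E,\rho)$, one replaces $U$ by $\mathrm{diag}(1,\alpha\otimes1_{\mathcal H})\,U\,\mathrm{diag}(1,\beta\otimes1_{\mathcal H})$, which leaves $\rho$, hence $S=S_{\Lambda,\rho}$, untouched and yields a contractive colligation with transfer function $(\alpha\otimes1_{\mathcal H})W_\Sigma(\beta\otimes1_{\mathcal H})$; Lemma~\ref{lem:contractive-to-unitary-transfer-fn} converts it back to a unitary colligation. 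For the $\ell^\infty$-direct-sum axiom $\|W_1\oplus W_2\|=\max(\|W_1\|,\|W_2\|)$: the inequality ``$\le$'' follows, for any $c>\max(\|W_1\|,\|W_2\|)$, by writing $W_i=cW_{\Sigma_i}$ and forming the block-diagonal unitary colligation $\Sigma_1\oplus\Sigma_2$ with representation $\rho_1\oplus\rho_2$, for which the associated operator is $S_1\oplus S_2$, so that $W_{\Sigma_1\oplus\Sigma_2}=W_{\Sigma_1}\oplus W_{\Sigma_2}$, and then letting $c$ decrease to the maximum; for ``$\ge$'', compressing a unitary colligation of $W_1\oplus W_2$ to $\mathcal E\oplus(\mathcal H\otimes\mathbb C^{n_1})$ is a contractive colligation with transfer function $W_1$, so Lemma~\ref{lem:contractive-to-unitary-transfer-fn} gives $\|W_1\|\le c$, and symmetrically for $W_2$.

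Since these are closed conditions, they pass to the completion, so they hold on $\overline{\mathcal T}(X,\Lambda,\mathcal H)$, with $M_n(\overline{\mathcal T}(X,\Lambda,\mathcal H))=\overline{\mathcal T}(X,\Lambda,\mathcal H\otimes M_n(\mathbb C))$ by density, and Blecher--Ruan--Sinclair applies. For $\mathcal T^A$ and its closure $\overline{\mathcal{T}}^A$ I would repeat the same verifications, observing that every colligation manipulation used above preserves continuous extendability to $\overline X$: Lemma~\ref{lem:contractive-to-unitary-transfer-fn} leaves the transfer function unchanged, and compressions, block-diagonal sums, and the products of Theorem~\ref{thm:trfr-fns-span-alg} of functions extending continuously to $\overline X$ again extend continuously. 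Hence the matrix norms restrict to $M_n(\mathcal T^A(X,\Lambda,\mathcal H))=\mathcal T^A(X,\Lambda,\mathcal H\otimes M_n(\mathbb C))$ and to $\overline{\mathcal{T}}^A$, and a second application of the theorem gives the operator algebra structure.

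The only genuinely non-formal ingredient is Lemma~\ref{lem:contractive-to-unitary-transfer-fn} (contractive colligations may be replaced by unitary ones), on which both the scalar-multiplication axiom and the ``$\ge$'' half of the direct-sum axiom rest; granting that, everything else is bookkeeping with colligations entirely parallel to Theorem~\ref{thm:trfr-fns-span-alg}. A secondary point to keep in mind is that Blecher--Ruan--Sinclair needs a complete algebra, which is precisely why one works with $\overline{\mathcal T}$ and $\overline{\mathcal{T}}^A$ rather than with $\mathcal T$ and $\mathcal T^A$ directly.
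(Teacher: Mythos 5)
Your proposal is correct and follows essentially the same route as the paper: the key step in both is the manipulation $\tilde U = \mathrm{diag}(1,\alpha)\,U\,\mathrm{diag}(1,\beta)$, which leaves $\rho$ and $S$ untouched and produces a contractive colligation for $\alpha W_\Sigma\beta$, combined with Lemma~\ref{lem:contractive-to-unitary-transfer-fn}, closure of $\mathcal{T}_1$ under products from Theorem~\ref{thm:trfr-fns-span-alg}, and the Blecher--Ruan--Sinclair characterization, with the identification $M_n(\mathcal{T}(X,\Lambda,\mathcal H))\cong\mathcal{T}(X,\Lambda,\mathcal H\otimes M_n(\mathbb C))$. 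You simply spell out details the paper leaves implicit (the direct-sum axiom, the unit, passage to completions, and preservation of continuity for $\mathcal{T}^A$), which is fine.
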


\begin{proof}
  Let $W\in \mathcal{T}_1(X,\Lambda, \mathbb C^n \otimes \mathcal H)$
  with unitary colligation $\Sigma = (U,\mathcal H,\rho)$, $U=
  \begin{pmatrix} A&B \\ C&D \end{pmatrix}$, and let , $X\in
  M_{mn}(\mathbb C)$, $Y\in M_{nm}(\mathbb C)$ with $\|X\|,\|Y\| \leq
  1$.  Then
  \begin{equation*}
    XWY = XDY + XC(1-AS)^{-1}BY = \tilde W
  \end{equation*}
  where $\tilde W \in \mathcal{T}_1(X,\Lambda, \mathcal H \otimes
  \mathbb C^m)$ has contractive colligation $\tilde\Sigma = (\tilde
  U,\mathcal H, \rho)$ with $\tilde U = \begin{pmatrix} A&BY \\
    XC&XDY \end{pmatrix} = \begin{pmatrix} 1&0 \\ 0&X
  \end{pmatrix}\begin{pmatrix} A&B \\ C&D \end{pmatrix}\begin{pmatrix}
    1&0 \\ 0&Y \end{pmatrix}$.  Hence $\overline{\mathcal{T}}(X,\Lambda,
  \mathcal H)$ is an abstract operator space.  Since for all $n$,
  $W_1, W_2\in \mathcal{T}_1(X,\Lambda, \mathcal H \otimes \mathbb C^n)$
  implies $W_1W_2 \in \mathcal{T}_1(X,\Lambda, \mathcal H \otimes
  \mathbb C^n)$, it follows that $\overline{\mathcal{T}}(X,\Lambda,
  \mathcal H)$ is an operator algebra.

  The case for $\mathcal{T}^A(X,\Lambda, \mathcal H)$ is proved
  similarly.
\end{proof}

The above provides something of a converse to the main result of
Jury~\cite{MR2945207} in a special case.

We close this subsection with a lemma which will be useful when we
want to construct representations on algebras of transfer functions.

\begin{lemma}
  \label{lem:tensor-collig-sp-by-H}
  Let $W_\Sigma: X\to \mathcal{L(H)}$ be a transfer function obtained
  via a unitary colligation $\Sigma = (U,\mathcal E, \rho)$.  Then
  there is another unitary colligation $\tilde\Sigma = (\tilde U,
  \tilde{\mathcal E} = \mathcal E \otimes \mathcal H, \tilde\rho)$
  such that $W_{\tilde\Sigma} = W_\Sigma$.
\end{lemma}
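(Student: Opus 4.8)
The plan is to inflate the state space of the colligation from $\mathcal E$ to $\mathcal E\otimes\mathcal H$ without disturbing the transfer function, by tensoring the representation with $1_{\mathcal H}$ and padding the unitary $U$ with an identity block. First I would set $\tilde{\mathcal E}=\mathcal E\otimes\mathcal H$ and $\tilde\rho:=\rho\otimes 1_{\mathcal H}$, which is again a unital $*$-representation of $C_b(\Lambda)$. Since $S=S_{\Lambda,\rho}$ is built from $\rho$ purely formally (through the projections $P_\lambda$, the operators $Z^\pm$, $Y$, and the auxiliary test functions $\sigma_\lambda$ of Subsection~\ref{subsec:representations}), unwinding those definitions and keeping track of the tensor factors shows that the function attached to $\tilde\rho$ is simply $\tilde S(x)=S_{\Lambda,\tilde\rho}(x)=S(x)\otimes 1_{\mathcal H}$; in the ample case one uses the $H^\infty$ representatives of the $\sigma_\lambda$ from Theorem~\ref{thm:ext-aux-test-fns-ample-case} and the identity is the same.

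Next, assuming $\mathcal H\neq\{0\}$ (the statement being trivial when $\mathcal H=\{0\}$), I would fix a unit vector $e_0\in\mathcal H$ and decompose $\tilde{\mathcal E}=(\mathcal E\otimes\mathbb C e_0)\oplus\mathcal E'$ with $\mathcal E':=\mathcal E\otimes(\mathbb C e_0)^\perp$, identifying $\mathcal E\otimes\mathbb C e_0$ with $\mathcal E$. The point is that $\tilde S(x)=S(x)\otimes 1_{\mathcal H}$ is block diagonal for this splitting and acts as $S(x)$ on the first summand, and $\|\tilde S(x)\|=\|S(x)\|<1$ by Lemma~\ref{lem:factoring_Z-}. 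I would then take $\tilde U$ to be the orthogonal direct sum of the original unitary $U$ (acting on $\mathcal E\oplus\mathcal H$) with $1_{\mathcal E'}$; regarded as a colligation on $\tilde{\mathcal E}\oplus\mathcal H$ this gives $\tilde A=A\oplus 1_{\mathcal E'}$, $\tilde B=B\oplus 0$, $\tilde C=C\oplus 0$, $\tilde D=D$, and it is visibly unitary (so Lemma~\ref{lem:contractive-to-unitary-transfer-fn} is not even needed here). Setting $\tilde\Sigma=(\tilde U,\tilde{\mathcal E},\tilde\rho)$, the operator $\tilde A\tilde S(x)$ is block diagonal with $\mathcal E$-block $AS(x)$, while $\tilde B$ maps into, and $\tilde C$ reads off, only the $\mathcal E$-summand; hence $(1-\tilde A\tilde S(x))^{-1}$ is block diagonal and $\tilde C\tilde S(x)(1-\tilde A\tilde S(x))^{-1}\tilde B=CS(x)(1-AS(x))^{-1}B$, so $W_{\tilde\Sigma}=W_\Sigma$.

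I do not expect a genuine obstacle. The only two places needing care are checking that forming $S$ from $\rho$ is compatible with tensoring by $1_{\mathcal H}$ (pure tensor-factor bookkeeping, including the ample modification) and checking that the block computation really isolates the $\mathcal E\otimes\mathbb C e_0$ summand — which it does precisely because $\tilde B$, $\tilde C$, and the off-diagonal entries of $\tilde U$ are all supported on $\mathcal E\oplus\mathcal H$ while $\tilde S(x)$ is block diagonal. Everything else is routine.
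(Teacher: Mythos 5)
Your proposal is correct and is essentially the paper's own argument: the paper also inflates the state space to $\mathcal E\otimes\mathcal H$ with $\tilde S(x)=S(x)\otimes 1_{\mathcal H}$, fixes a unit vector $e\in\mathcal H$, and takes $\tilde U$ acting as $U$ on $(\mathcal E\otimes\mathbb C e)\oplus\mathcal H$ and as the identity on $\mathcal E\otimes(\mathbb C e)^{\perp}$, which is exactly your direct sum $U\oplus 1_{\mathcal E'}$. The only cosmetic difference is that the paper checks unitarity and the identity $\tilde C(\tilde A\tilde S)^{n}\tilde B=C(AS)^{n}B$ entrywise, whereas you observe block-diagonality directly; the substance is the same.
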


\begin{proof}
  Recall that by construction, there are orthogonal projections
  $P_\lambda$ with orthogonal ranges such that $\mathcal E =
  \bigoplus_\lambda \ran P_\lambda \otimes \mathbb C^{n_\lambda}$,
  $n_\lambda = 2^{|\lambda|-1}$, and $S(x) = \sum_\lambda
  P_\lambda \otimes \sigma_\lambda(x)$.  We construct the new
  colligation from the old by taking
  \begin{equation*}
    \tilde{\mathcal E} = \bigoplus_\lambda \ran P_\lambda \otimes
    (\mathbb C^{n_\lambda} \otimes \mathcal H),
  \end{equation*}
  and setting
  \begin{equation*}
    \tilde S(x) = \sum_\lambda P_\lambda \otimes (\sigma_\lambda(x)
    \otimes 1_{\mathcal{L(H)}}).
  \end{equation*}
  Fix $e\in \mathcal H$ with $\|e\|=1$.  Define an operator $\tilde U
  = \begin{pmatrix} \tilde A & \tilde B \\ \tilde C & \tilde D
  \end{pmatrix}$ on $\tilde{\mathcal E} \oplus \mathcal H$ as follows.
  For $f\in \mathcal E$, $h,g \in \mathcal H$ decomposed as $g =
  \alpha e + e^\bot$ where $\ip{e}{e^\bot} = 0$.  Then set
  \begin{equation*}
    \begin{split}
      \tilde A (f\otimes \alpha e + f \otimes e^\bot) &:= Af \otimes
      \alpha e + f \otimes e^\bot, \\
      \tilde B h &:= Bh \otimes e, \\
      \tilde C (f\otimes \alpha e + f \otimes e^\bot) &:= \alpha Cf \\
      \tilde D h &:= Dh,
    \end{split}
  \end{equation*}
  extending by linearity where necessary.  One easily checks that
  the adjoints of these operators are given by
  \begin{equation*}
    \begin{split}
      {\tilde A}^* (f\otimes \alpha e + f \otimes e^\bot) &= Af
      \otimes \alpha e + f \otimes e^\bot, \\
      {\tilde B}^* (f\otimes \alpha e + f \otimes e^\bot) &=  \alpha
      B^*f, \\
      {\tilde C}^* h &= C^*h \otimes e \\
      {\tilde D}^* h &= D^*h,
    \end{split}
  \end{equation*}
  again extending by linearity as needed.
  A straightforward calculation gives
  \begin{equation*}
    \begin{split}
      ({\tilde A}^*\tilde A + {\tilde C}^*\tilde C) (f\otimes \alpha e
      + f \otimes e^\bot) &= (A^*A+C^*C)f \otimes \alpha e + f \otimes
      e^\bot = f\otimes \alpha e + f \otimes e^\bot, \\
      ({\tilde B}^*\tilde B + {\tilde D}^*\tilde D)h &= (B^*B+D^*D) h,
    \end{split}
  \end{equation*}
  showing that the operators so defined are bounded.  The other
  equations needed to show that $\tilde U$ is unitary are likewise
  checked.

  We find that $\tilde C \tilde S(x) \tilde B h = \tilde C \tilde S(x)
  (Bh \otimes e) = \tilde C ((S(x)Bh) \otimes e) = CS(x)Bh$.  Also,
  $\tilde C (\tilde A\tilde S(x))^n \tilde B h = C(AS(x))^nBh$.  We
  conclude that $W_{\tilde\Sigma} = W_\Sigma$.
\end{proof}

\subsection{Contractivity and complete contractivity of
  representations of transfer function algebras}
\label{subsec:contr-compl-contr}

\begin{definition}
  \label{def:contr-on-aux-test-fns}
  We write that a representation $\pi:\mathcal{T}^A(X,\Lambda,
  \mathcal H) \to \mathcal{L(G)}$ or $\pi:\mathcal{T}(X,\Lambda,
  \mathcal H) \to \mathcal{L(G)}$ is \textbf{contractive on auxiliary
    test functions} if for each $\lambda \in \Lambda$, an appropriate
  ampliation of $\pi$ (also denoted by $\pi$) has the property that
  $\pi(\sigma_\lambda \otimes 1_{\mathcal{L(H)}}) \leq 1$.  It is said
  to be strictly contractive in case this is a strict inequality.  A
  representation is \textbf{strongly / weakly continuous} if whenever
  a bounded net $(\varphi_\alpha)$ converges pointwise in norm to
  $\varphi$ (in other words, $\sup_\alpha \|\varphi_\alpha\| < \infty$
  and for each $x\in X$, $\|\varphi_\alpha(x) - \varphi(x)\| \to 0$),
  then $\pi(\varphi_\alpha)$ converges strongly / weakly to
  $\pi(\varphi)$.
\end{definition}

Given a bounded unital representation $\pi$ of $\HLH$, we define
$\pi(\psi^\pm_\lambda)$ by applying $\pi$ entrywise.  Then $\pi$ is a
Brehmer representation if and only if $\pi$ is contractive on the test
functions and for any maximal element $\lambda$ of the preordering
$\Lambda$,
\begin{equation*}
  \pi(\psi_\lambda^+)\pi(\psi_\lambda^+)^* -
  \pi(\psi_\lambda^-)\pi(\psi_\lambda^-)^*
  \geq 0.
\end{equation*}
In this case, for each $\lambda$ in the maximal preordering associated
to $\Lambda$, there is a contraction $G: \ran\pi(\psi_\lambda^+)^* \to
\pi(\psi_\lambda^-)^*$ such that $\pi(\psi_\lambda^+)G_\lambda =
\pi(\psi_\lambda^-)$.  The following is then well defined:
\begin{equation*}
  \pi(\sigma_\lambda) = G_\lambda,
\end{equation*}
though properly speaking, this should be viewed as an ampliation of
the representation $\pi$.  As we saw in
Theorem~\ref{thm:ext-aux-test-fns-ample-case}, when $\Lambda$ is an
ample preordering, we can extend $\sigma_\lambda$ to a function in
$H^\infty(X,{\mathcal K}_{\Lambda,\mathbb C^n})$ where $n =
2^{|\lambda|-1}$, and so $\pi$ (or rather $\pi^{(n)}$) is already
defined on $\sigma_\lambda$, and potentially may not be equal to
$G_\lambda$.  Nevertheless, it is the case that once $\pi$ is given on
test functions, it induces a well defined map which is contractive on
auxiliary test functions, and so on the algebra of transfer functions,
as we shall see.

The next theorem is a version of the von~Neumann inequality for the
algebra $\mathcal{T}(X,\Lambda, \mathcal H)$.

\begin{theorem}
  \label{thm:vN-for-transfer-fns}
  Let $\pi:\mathcal{T}^A(X,\Lambda, \mathcal H) \to \mathcal{L(G)}$ be a
  unital representation which is contractive on auxiliary test
  functions, or $\pi:\mathcal{T}(X,\Lambda, \mathcal H) \to
  \mathcal{L(G)}$ be a weakly continuous unital representation which
  is contractive on auxiliary test functions.  For all $W_\Sigma$ in
  $\mathcal{T}^A_1(X,\Lambda, \mathcal H)$ $($respectively,
  $\mathcal{T}_1(X,\Lambda, \mathcal H)\,)$, $\|\pi(W_\Sigma)\| \leq 1$;
  that is $\pi$ is contractive.
\end{theorem}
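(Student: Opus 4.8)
The plan is to exploit the transfer-function representation $W_\Sigma(x) = D + CS(x)(1-AS(x))^{-1}B$ directly, writing $\pi(W_\Sigma)$ via a Neumann series in $S$ and then recognizing the result as arising from a unitary colligation whose ``$A$'' part is a contraction because $\pi$ is contractive on auxiliary test functions. More precisely, with $\mathcal E = \bigoplus_\lambda \ran P_\lambda \otimes \mathbb C^{n_\lambda}$ and $S = \sum_\lambda P_\lambda\otimes\sigma_\lambda$, the representation $\pi$ (suitably ampliated, as in Definition~\ref{def:contr-on-aux-test-fns}) sends $S$ to an operator $\pi(S)$ on $\mathcal E\otimes\mathcal G$ of the form $\sum_\lambda (P_\lambda\otimes 1_{\mathcal G})\,\pi(\sigma_\lambda\otimes 1_{\mathcal H})$, and contractivity on auxiliary test functions gives $\|\pi(S)\|\le 1$. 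The operator $U\otimes 1_{\mathcal G} = \bigl(\begin{smallmatrix} A\otimes 1 & B\otimes 1 \\ C\otimes 1 & D\otimes 1\end{smallmatrix}\bigr)$ is still unitary, so $\bigl(A\otimes 1_{\mathcal G}\bigr)\pi(S)$ is a product of contractions on $\mathcal E\otimes\mathcal G$, hence a contraction; its spectral radius is therefore at most $1$, and in fact—because $S(x)$ is a \emph{strict} contraction for each $x$ (Lemma~\ref{lem:factoring_Z-}) while $\pi$ is contractive—one gets that $1 - (A\otimes 1)\pi(S)$ is invertible in the relevant sense, so that $\pi(W_\Sigma)$ equals $D\otimes 1 + (C\otimes 1)\pi(S)(1-(A\otimes 1)\pi(S))^{-1}(B\otimes 1)$.

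First I would reduce to the case where $\mathcal G$-valued computations make sense by using Lemma~\ref{lem:tensor-collig-sp-by-H}, so that without loss of generality the colligation space is already of the form $\mathcal E\otimes\mathcal H$ and $\pi$ is applied to an operator-valued function over $X$ in the obvious entrywise way. Next, in the continuous ($\mathcal{T}^A$) case, I would invoke the approximation statement in Theorem~\ref{thm:trfr-fns-span-alg}: $W_\Sigma$ is a uniform-on-$X$ limit of polynomials in $S$ lying in $\mathcal T_1(X,\Lambda,\mathcal H)$, namely the truncations $W_M$ from \eqref{eq:17}. Since $\pi$ is bounded and unital, $\pi(W_\Sigma) = \lim_M \pi(W_M)$ in norm, so it suffices to bound $\|\pi(W_M)\|$ uniformly by $1$. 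In the $\mathcal T$ case one uses instead that $W_M \to W_\Sigma$ pointwise in norm with uniformly bounded colligations (so $\sup_M\|W_M\|\le 1$) and that $\pi$ is weakly continuous, whence $\pi(W_\Sigma) = \text{w-}\lim \pi(W_M)$ and $\|\pi(W_\Sigma)\|\le\liminf\|\pi(W_M)\|$.

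So the crux becomes: for a single unitary colligation $\Sigma$ with $A$ a strict-contraction multiplier against $S$, show $\|\pi(W_\Sigma)\|\le 1$. Here I would apply the standard ``transfer function $\Rightarrow$ contraction'' argument one level up: writing $T := (A\otimes 1_{\mathcal G})\,\pi(S)$, which is a contraction with $\|T^n\|\to 0$ pointwise so that $(1-T)^{-1} = \sum_{n\ge0} T^n$ converges, one has for $g\in\mathcal G^{(\text{appropriate})}$ and $h = (1-T)^{-1}(B\otimes1)g$ the colligation identity, and the block-unitarity of $U\otimes 1$ yields $\|g\|^2 - \|\pi(W_\Sigma)g\|^2 = \|(1-(D^*\otimes1)? )\dots\|^2 \ge \|(1-\pi(S)^*\pi(S))^{1/2}(\,\cdot\,)\|^2 \ge 0$; the precise bookkeeping is the routine $2\times2$-block ``Redheffer/linear-fractional'' computation showing $I - W_\Sigma(x)^*W_\Sigma(x)$ is, under the colligation, a compression of $I - S(x)^*S(x)\ge 0$ together with a term $(1-T)^{-*}(1-\pi(S)^*\pi(S))(1-T)^{-1}\ge0$.

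\medskip
The main obstacle I anticipate is justifying the identity $\pi(W_\Sigma) = D + C\pi(S)(1-A\pi(S))^{-1}B$—that is, interchanging the (possibly non-norm-convergent over all of $X$) Neumann series defining $W_\Sigma$ with the bounded linear map $\pi$—together with checking that ``contractive on auxiliary test functions'' genuinely delivers $\|\pi(S)\|\le 1$ rather than just $\|\pi(\sigma_\lambda\otimes 1)\|\le1$ for each $\lambda$ separately. The first point is handled by the truncation-and-limit device just described (that is exactly why the last clause of Theorem~\ref{thm:trfr-fns-span-alg} was proved). The second follows because the $P_\lambda$ have orthogonal ranges: $\pi(S) = \sum_\lambda (P_\lambda\otimes1_{\mathcal G})\pi(\sigma_\lambda\otimes1_{\mathcal H})$ is a block-diagonal operator with blocks $\pi(\sigma_\lambda\otimes1)$, each of norm $\le1$, so the whole operator has norm $\le1$. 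With these two points secured, the remaining steps are the routine colligation/Schur-complement manipulations, so I would state them briefly rather than grind through them.
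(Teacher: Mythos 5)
Your overall route is the paper's: read off that $\pi(S)$ is a block-diagonal contraction from the orthogonality of the $P_\lambda$ together with contractivity on auxiliary test functions, approximate $W_\Sigma$ by the truncations $W_M$ of \eqref{eq:17}, and pass to the limit by weak continuity (respectively, by density of polynomials in the $\mathcal{T}^A$ case). The genuine gap is in your ``crux'' step. You set $T=(A\otimes 1_{\mathcal G})\pi(S)$ and assert that $\|T^n\|\to 0$, so that $(1-T)^{-1}=\sum_n T^n$ converges, justified ``because $S(x)$ is a strict contraction for each $x$ while $\pi$ is contractive''. This is a non sequitur: the hypothesis only yields $\|\pi(\sigma_\lambda\otimes 1_{\mathcal{L(H)}})\|\le 1$, hence $\|\pi(S)\|\le 1$, and pointwise strictness $\|S(x)\|<1$ does not survive $\pi$ (a representation may well send an auxiliary test function to a unitary). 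Consequently $1-A\pi(S)$ need not be invertible, the Neumann series need not converge, and the identity $\pi(W_\Sigma)=D+C\pi(S)(1-A\pi(S))^{-1}B$, on which your $2\times 2$-block computation rests, is not even well formed. The truncation-and-limit device cannot rescue it as stated: it only gives $\pi(W_M)\to\pi(W_\Sigma)$ weakly (and in the $\mathcal{T}^A$ case your appeal to norm convergence needs convergence in a norm for which $\pi$ is known to be bounded, which the sup-norm approximation of Theorem~\ref{thm:trfr-fns-span-alg} does not by itself supply), and without invertibility the weak limit cannot be identified with the closed-form expression.

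The paper's proof is structured precisely to avoid this. One first replaces $A,C$ by $rA,rC$ with $0<r<1$; by Lemma~\ref{lem:contractive-to-unitary-transfer-fn} the resulting $W_r$ is still in $\mathcal{T}_1(X,\Lambda,\mathcal H)$, and now the images of the truncations converge in norm to the transfer-function expression in $\pi(S)$ for the scaled colligation, whose entries are pushed through $\pi_0=\pi|_{\mathcal{L(H)}}$ --- it is complete contractivity of $\pi_0$, not unitarity of $U\otimes 1_{\mathcal G}$, that makes this colligation a contraction --- giving $\|\pi(W_r)\|\le 1$; weak continuity then handles $r\to 1$, and for $\mathcal{T}^A$ norm-density of polynomials replaces weak continuity. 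Alternatively, you could salvage your outline without any closed form: each $W_M$ in \eqref{eq:17} is the transfer function of a contractive colligation whose ``$A$'' block is nilpotent, so $\pi(W_M)$ is a finite expression in $\pi(S)$ and $\pi_0$ of the colligation entries, the standard computation gives $\|\pi(W_M)\|\le 1$ outright, and then $\|\pi(W_\Sigma)\|\le\liminf_M\|\pi(W_M)\|\le 1$ by weak continuity. Either way, invertibility of $1-A\pi(S)$ must not be assumed.
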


\begin{proof}
  We begin by observing that in either case, the representation
  $\pi_0: \mathcal{L(H)} \to \mathcal{L(G)}$ obtained by restricting
  $\pi$ to constant functions is a unital representation of the
  $C^*$-algebra $\mathcal{L(H)}$, and so is contractive.  The same is
  true of the ampliations of $\pi_0$, so it is in fact completely
  contractive.

  Let $W = W_\Sigma \in \mathcal{T}_1(X,\Lambda, \mathcal H)$, where
  $\Sigma = (U,\mathcal E, \rho)$, $U = \begin{pmatrix} A&B\\ C&D
  \end{pmatrix}$, is a unitary colligation.  For $r\in (0,1)$, define
  $W_r= W_{\Sigma_r}$, where $\Sigma_r = (U_r,\mathcal H,\rho)$ is a
  contractive colligation with
  \begin{equation*}
    U_r = U \begin{pmatrix} r1&0 \\ 0&1 \end{pmatrix} =
    \begin{pmatrix} rA&B\\ rC&D \end{pmatrix}.
  \end{equation*}
  By Lemma~\ref{lem:contractive-to-unitary-transfer-fn}, $W_r \in
  \mathcal{T}_1(X,\Lambda, \mathcal H)$, and
  \begin{equation*}
    W_r = D +C(rS)(1-A(rS))^{-1}B
  \end{equation*}

  We now follow the line of proof in Lemma~3.1 of~\cite{MR2389623}.
  Since $rAS(x)$ is a strict contraction, $\tfrac{M}{M+1}\sum_1^M
  \tfrac{M-n}{M}(rAS(x))^n$ converges uniformly in norm to
  $(1-rAS(x))^{-1}$, and by the proof of the last statement in
  Theorem~\ref{thm:trfr-fns-span-alg}, for all $M$, $W_{r,M}:= D +
  CS\left(\tfrac{M}{M+1}1+\tfrac{M-1}{M+1}AS+\cdots +
    \tfrac{1}{M+1}(AS)^M\right)B \in \mathcal{T}_1(X,\Lambda, \mathcal
  H)$ and converges pointwise in norm with $M$ to $W_r$.  By
  assumption then, $\pi(W_{r,M})$ converges weakly to $\pi(W_r)$.

  As in the proof of Lemma~3.1 of~\cite{MR2389623}, we see that
  $\pi(W_r) = W_{{\tilde\Sigma}_r} \in \mathcal{T}_1(X,\Lambda, \mathcal
  G)$ with ${\tilde\Sigma}_r = ({\tilde U}_r,\mathcal E \otimes
  \mathcal G, \rho\otimes \pi)$, where ${\tilde U}_r = \begin{pmatrix}
    r\tilde A\otimes 1&\tilde B\otimes 1\\ r\tilde C\otimes 1&\tilde
    D\otimes 1 \end{pmatrix}$.  We have $\tilde D = \pi(D)$ and
  $\tilde A$, $\tilde B$ and $\tilde C$ are obtained by applying $\pi$
  component-wise.  Since $\pi_0 = \pi|_{\mathcal{L(H)}}$ is completely
  contractive, ${\tilde U}_r$ is a contraction.  Hence
  $\|\pi(\varphi_r)\| \leq 1$.  Now $(\varphi_r)_r$ is a bounded net
  converging pointwise in norm to $\varphi$, so by assumption
  $(\pi(\varphi_r))_r$ converges weakly to $\pi(\varphi)$, meaning
  that $\|\pi(\varphi)\| \leq 1$.

  For $\mathcal{T}^A(X,\Lambda, \mathcal H)$, the same argument applies
  when $\pi$ is simply assumed to be contractive on auxiliary test
  functions, since $\mathcal{T}^A(X,\Lambda, \mathcal H)$ is the norm
  closure of polynomials in test functions.
\end{proof}

\begin{corollary}
  \label{cor:weakly-ctns_B_reps_are_cc}
  Let $\pi$ be a representation of $\mathcal{T}^A(X,\Lambda, \mathcal
  H)$, respectively, a weakly continuous representation of
  $\mathcal{T}(X,\Lambda, \mathcal H)$, which is contractive on
  auxiliary test functions.  Then $\pi$ is completely contractive.
\end{corollary}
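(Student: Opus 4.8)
The plan is to upgrade Theorem~\ref{thm:vN-for-transfer-fns} from contractivity to complete contractivity by applying it to each matrix ampliation $\pi^{(m)}$, $m\in\mathbb N$. The crucial point is that the hypothesis ``contractive on auxiliary test functions'' (in the sense of Definition~\ref{def:contr-on-aux-test-fns}) is already a matricial condition, and so is automatically inherited by every further ampliation; once that is observed, Theorem~\ref{thm:vN-for-transfer-fns} does all of the work at each level $m$.

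To set this up, recall from Corollary~\ref{cor:tr_fns_are_op_alg} that for each $m$ the matrix space $M_m(\mathcal{T}^A(X,\Lambda,\mathcal H))$, equipped with its operator-space matrix norm, is isometrically $\mathcal{T}^A(X,\Lambda,\mathcal H\otimes M_m(\mathbb C))$, and likewise for $\mathcal{T}$ in place of $\mathcal{T}^A$. (Compressing an $\mathcal H\otimes M_m(\mathbb C)$-valued transfer function by coordinate row and column matrices, which are contractions, as in the proof of that corollary shows that its entries are $\mathcal H$-valued transfer functions; conversely, sums and direct sums of transfer functions are again transfer functions.) Hence $\pi^{(m)}$, obtained by applying $\pi$ entrywise, is a unital representation of $\mathcal{T}^A(X,\Lambda,\mathcal H\otimes M_m(\mathbb C))$, respectively of $\mathcal{T}(X,\Lambda,\mathcal H\otimes M_m(\mathbb C))$, and it suffices to prove that each $\pi^{(m)}$ is contractive.

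Then I would check that $\pi^{(m)}$ inherits the hypotheses of Theorem~\ref{thm:vN-for-transfer-fns}. Unitality is clear. If $\pi$ is weakly continuous then so is $\pi^{(m)}$, since a bounded, pointwise norm convergent net in $M_m(\mathcal{T}(X,\Lambda,\mathcal H))$ has entrywise bounded, pointwise norm convergent entries, whence applying $\pi$ entrywise yields entrywise weak convergence, which is exactly weak convergence in $M_m(\mathcal{L(G)})$. The one point worth spelling out is that $\pi^{(m)}$ is again contractive on auxiliary test functions: for $\lambda\in\Lambda$ and $n=2^{|\lambda|-1}$, the relevant ampliation of $\pi^{(m)}$ applied to $\sigma_\lambda\otimes 1_{M_m(\mathcal{L(H)})}$ is, after the obvious rearrangement of tensor legs, $\pi^{(nm)}$ applied to the $m$-fold block-diagonal sum of $\sigma_\lambda\otimes 1_{\mathcal{L(H)}}$, hence equals the $m$-fold direct sum of $\pi^{(n)}(\sigma_\lambda\otimes 1_{\mathcal{L(H)}})$ and so has the same norm, which is at most $1$ by hypothesis on $\pi$. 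Theorem~\ref{thm:vN-for-transfer-fns} applied to $\pi^{(m)}$ therefore gives $\|\pi^{(m)}(W)\|\le 1$ for every $W$ in the unit ball of $\mathcal{T}^A(X,\Lambda,\mathcal H\otimes M_m(\mathbb C))$, respectively of $\mathcal{T}(X,\Lambda,\mathcal H\otimes M_m(\mathbb C))$; that is, $\pi^{(m)}$ is contractive for every $m$, so $\pi$ is completely contractive. I do not anticipate a genuine obstacle here; the only mild subtlety is keeping straight the identification of matrix algebras of transfer functions with transfer functions valued in ampliations of $\mathcal H$, which is exactly Corollary~\ref{cor:tr_fns_are_op_alg}, and one could instead sidestep it by rerunning the proof of Theorem~\ref{thm:vN-for-transfer-fns} verbatim with $\pi^{(m)}$, $\mathcal H\otimes M_m(\mathbb C)$ and $\mathcal G\otimes\mathbb C^m$ in the roles of $\pi$, $\mathcal H$ and $\mathcal G$, using that $\pi^{(m)}$ restricted to the constant functions $M_m(\mathcal{L(H)})$ is a unital representation of a $C^*$-algebra and hence completely contractive.
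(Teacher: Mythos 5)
Your proposal is correct and follows essentially the same route as the paper: the paper's proof likewise observes that contractivity on auxiliary test functions passes to every ampliation $\pi^{(n)}$ and then invokes Theorem~\ref{thm:vN-for-transfer-fns} at each matrix level, using the matrix norm structure from Corollary~\ref{cor:tr_fns_are_op_alg}. Your write-up simply makes explicit the identification of $M_m(\mathcal{T}(X,\Lambda,\mathcal H))$ with $\mathcal{T}(X,\Lambda,\mathcal H\otimes M_m(\mathbb C))$ and the inheritance of weak continuity, which the paper leaves tacit.
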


\begin{proof}
  If $\pi$ is a representation of either $\mathcal{T}^A(X,\Lambda,
  \mathcal H)$ or $\mathcal{T}(X,\Lambda, \mathcal H)$ which is
  contractive on auxiliary test functions, then the same is true for
  $\pi^{(n)}$ for all $n$.  Hence the result follows from the previous
  theorem applied to the auxiliary test functions tensored with $1_n$.
\end{proof}

\subsection{Brehmer representations and spectral sets}
\label{subsec:brehm-repr}

\begin{definition}
  Let $\pi$ be a bounded unital representation of $\HLH$.  Call $\pi$
  a \textbf{Brehmer representation} (associated to the preordering
  $\Lambda$) if for any test function $\psi$, $\|\pi(\psi\otimes
  1_{\mathcal{L(H)}})\| \leq 1$ and for all $\lambda\in \Lambda$,
  \begin{equation}
    \label{eq:1}
    \prod_{\lambda\ni\lambda_i\neq 0} \left(1 -
    \pi(\psi_i\otimes 1_{\mathcal{L(H)}})\pi(\psi_i \otimes
    1_{\mathcal{L(H)}})^*\right)^{\lambda_i} \geq 0.
  \end{equation}

  Note since $\mathcal{L(H)}$ is a $C^*$-algebra, it is automatic that
  $\pi_0 = \pi|_{\mathcal{L(H)}}$ with $\pi_0(T) = \pi(1\otimes T)$ is
  completely contractive.

  A representation $\pi$ of $\HLH$ is a \textbf{strict Brehmer
    representation} if the inequalities in \eqref{eq:1} are strict.
  It is a \textbf{strongly / weakly continuous Brehmer representation}
  if it is a Brehmer representation and which is either strongly or
  weakly continuous in the sense defined in the last subsection.

  We say that $X$ is a \textbf{spectral set} for the representation
  $\pi$ (equivalently, that the \textbf{von~Neumann inequality} holds)
  if $\pi$ is a contractive representation of $\ALH$.  It is a
  \textbf{complete spectral set} if $\pi$ is a completely contractive
  representation of $\ALH$.

  A representation $\tilde\pi$ \textbf{dilates} a representation $\pi$
  (equivalently, $\pi$ \textbf{dilates to} $\tilde\pi$) if $\pi$ is
  the restriction of $\tilde\pi$ to a semi-invariant subspace; that
  is, the difference of two invariant subspaces.  The
  \textbf{$H^\infty$ dilation property} is said to hold for a domain
  $X$ if whenever $\pi$ is a representation of $\HLH$ for which $X$ is
  a spectral set, then $X$ is a complete spectral set for $\pi$.
\end{definition}

While Brehmer representations induce representations which are
contractive on test functions, the converse is also true.

\begin{lemma}
  \label{lem:contr-aux-t-fns-are-B-reps}
  If a representation $\pi$ of $\HLH$ is contractive on auxiliary test
  functions then it is a Brehmer representation.
\end{lemma}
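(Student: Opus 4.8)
The plan is to reduce the Brehmer positivity condition \eqref{eq:1} to the statement that $\pi$ does not expand the auxiliary test functions, using the two algebraic facts built into the construction of the auxiliary test functions in Subsection~\ref{subsec:auxil-test-funct}: for each $\lambda$,
\begin{equation*}
  \textstyle\prod_{\lambda_i\neq 0}([1]-\psi_i\psi_i^*)^{\lambda_i}(x,y)
  = \psi_\lambda^+(x)\psi_\lambda^+(y)^* - \psi_\lambda^-(x)\psi_\lambda^-(y)^*
  = \sum_{n\leq\lambda}(-1)^{|n|}\psi^n(x)\psi^n(y)^*,
\end{equation*}
and $\psi_\lambda^+(x)\sigma_\lambda(x)=\psi_\lambda^-(x)$. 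First I would observe that, since $\pi$ is a unital homomorphism and the test functions commute, applying $\pi$ entrywise to the row-vectors $\psi_\lambda^\pm$ and interpreting the product in \eqref{eq:1} hereditarily (adjoints on the right, exactly as in Theorem~\ref{thm:Brehmers-theorem}) turns the first identity into the operator identity
\begin{equation*}
  \textstyle\prod_{\lambda_i\neq 0}\bigl(1-\pi(\psi_i\otimes 1_{\mathcal{L(H)}})\pi(\psi_i\otimes 1_{\mathcal{L(H)}})^*\bigr)^{\lambda_i}
  = \pi(\psi_\lambda^+)\pi(\psi_\lambda^+)^* - \pi(\psi_\lambda^-)\pi(\psi_\lambda^-)^*,
\end{equation*}
where $\pi(\psi_\lambda^\pm)$ denotes the row of operators obtained by applying $\pi$ to each entry. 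So \eqref{eq:1} for a given $\lambda$ is equivalent to $\pi(\psi_\lambda^+)\pi(\psi_\lambda^+)^*\geq\pi(\psi_\lambda^-)\pi(\psi_\lambda^-)^*$.

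Next I would exploit the relation $\psi_\lambda^+\sigma_\lambda=\psi_\lambda^-$. Applying $\pi$ (in the appropriate ampliation, which in the ample case is literal since $\sigma_\lambda\in M_{2^{|\lambda|-1}}(\HLH)$ by Theorem~\ref{thm:ext-aux-test-fns-ample-case}) and using multiplicativity gives $\pi(\psi_\lambda^+)\pi(\sigma_\lambda)=\pi(\psi_\lambda^-)$. Since by hypothesis $\pi$ is contractive on auxiliary test functions, $\|\pi(\sigma_\lambda\otimes 1_{\mathcal{L(H)}})\|\leq 1$, so $1-\pi(\sigma_\lambda)\pi(\sigma_\lambda)^*\geq 0$, and therefore
\begin{equation*}
  \pi(\psi_\lambda^+)\pi(\psi_\lambda^+)^* - \pi(\psi_\lambda^-)\pi(\psi_\lambda^-)^*
  = \pi(\psi_\lambda^+)\bigl(1-\pi(\sigma_\lambda)\pi(\sigma_\lambda)^*\bigr)\pi(\psi_\lambda^+)^* \geq 0.
\end{equation*}
By the identity from the previous paragraph this is precisely \eqref{eq:1} for $\lambda$, and since $\lambda\in\Lambda$ was arbitrary, \eqref{eq:1} holds for every $\lambda\in\Lambda$. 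It remains to check that $\pi$ is contractive on the ordinary test functions: for $|\lambda|=1$ one has $\psi_\lambda^+=(1)$ and $\sigma_\lambda=\psi_j$ (the auxiliary test function reduces to the ordinary one, with $n=1$), so contractivity on auxiliary test functions directly yields $\|\pi(\psi_j\otimes 1_{\mathcal{L(H)}})\|\leq 1$; here one invokes Lemma~\ref{lem:preorderings_are_equivalent} to replace $\Lambda$ by its equivalent maximal preordering, which contains every $e_j$. Combining the two halves gives that $\pi$ is a Brehmer representation.

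The step I expect to be the main obstacle is purely one of bookkeeping rather than of mathematical depth: pinning down exactly what ``$\pi$ contractive on auxiliary test functions'' should mean for a representation of $\HLH$ (as opposed to a representation of the transfer-function algebra $\mathcal{T}(X,\Lambda,\mathcal H)$, where Definition~\ref{def:contr-on-aux-test-fns} is stated), and in particular making sense of the operator $\pi(\sigma_\lambda)$ and the intertwining $\pi(\psi_\lambda^+)\pi(\sigma_\lambda)=\pi(\psi_\lambda^-)$ outside the ample case, where $\sigma_\lambda$ need not lie in matrix-valued $\HLH$. Once that identification is fixed --- e.g. by taking $\pi(\sigma_\lambda)$ to be the canonically induced ampliation associated to $\pi$ as in the discussion preceding the lemma --- the positivity computation above is routine, and indeed the whole argument is essentially just the factorization $1-\sigma_\lambda\sigma_\lambda^*\geq 0$ pushed through the homomorphism $\pi$.
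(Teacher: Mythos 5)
Your argument is exactly the paper's: the paper's entire proof is the one-line citation of \eqref{eq:5}, whose content is precisely the factorization you write out, namely that $\prod_{\lambda_i\neq 0}(1-\pi(\psi_i\otimes 1)\pi(\psi_i\otimes 1)^*)^{\lambda_i} = \pi(\psi^+_\lambda)\bigl(1-\pi(\sigma_\lambda)\pi(\sigma_\lambda)^*\bigr)\pi(\psi^+_\lambda)^*$ via $\psi^+_\lambda\sigma_\lambda=\psi^-_\lambda$, so contractivity of $\pi$ on $\sigma_\lambda$ yields \eqref{eq:1} for every $\lambda\in\Lambda$. So on the positivity half you are doing the same thing the paper does, just in operator rather than kernel notation.

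The only place you go beyond the paper is the clause about contractivity on the ordinary test functions, and there your justification is not quite right as stated: the hypothesis gives $\|\pi(\sigma_\lambda\otimes 1)\|\leq 1$ only for $\lambda\in\Lambda$, and invoking Lemma~\ref{lem:preorderings_are_equivalent} to pass to the maximal preordering $\Lambda_M$ does not hand you contractivity of $\pi(\sigma_{e_j})=\pi(\psi_j)$ when $e_j\notin\Lambda$ --- equivalence of preorderings is a statement about admissible kernels and the resulting algebra, not about which auxiliary test functions the representation is assumed to contract. From contractivity of the higher $\sigma_\lambda$ one only gets $\sum_{\lambda'\in\Lambda_-,\,\lambda'\leq\lambda}\pi(\psi^{\lambda'})\pi(\psi^{\lambda'})^*\leq\sum_{\lambda'\in\Lambda_+,\,\lambda'\leq\lambda}\pi(\psi^{\lambda'})\pi(\psi^{\lambda'})^*$, which does not by itself bound $\|\pi(\psi_j\otimes 1)\|$. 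To be fair, the paper's one-line proof is silent on this clause of the Brehmer definition as well (it is harmless when $e_j\in\Lambda$, or if one reads ``contractive on auxiliary test functions'' as including the test functions themselves), but you should either adopt such a reading explicitly or add contractivity on test functions as a standing hypothesis rather than deduce it the way you do.
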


\begin{proof}
  This follows from \eqref{eq:5}.
\end{proof}

Clearly, a strict Brehmer representation is norm continuous, a norm
continuous one is strongly continuous, and a strongly continuous one
is weakly continuous.  The $H^\infty$ dilation property is akin to the
better known \textbf{rational dilation property}, where $\HLH$ is
replaced by the algebra of functions generated by the rational
functions over a compact subset of $\mathbb C^d$ with poles off of the
set.

The connection of the von~Neumann inequality as defined above with the
usual von~Neumann inequality is as follows.  Suppose that $X = \mathbb
D^d$ and $\Psi$ is the set of coordinate functions in $\mathbb C^d$
(so $\psi_j(z) = z_j$ for $j=1,\dots, d$), and assume that $\Lambda =
\{e_j\}_{j=1,\dots ,d}$.  Then Agler's realization theorem for the
polydisk (Theorem~\ref{thm:Aglers-realization} above) implies that any
representation $\pi$ of $\HL$ for which $T_j = \pi(\psi_j)$ is
strictly contractive for all $j$ (so $(T_1,\dots,T_d)$ is a tuple of
commuting strict contractions) is contractive on $\HL$.  Note that in
this case $S(z) = Z^-(z) = \sum_j P_j z_j$, where $P_j$s are
orthogonal projections summing to the identity.  We therefore
naturally take $\pi(S(z)) = \sum_j P_j \otimes T_j$, which then, via
the transfer function representation, allows us to interpret
$\pi(\varphi)$ for $\varphi\in\HLb$ in the natural way.  So in other
words, for a tuple $T$ of commuting operators with $\|T_j\| < 1$ for
all $j$, $\|\varphi(T)\| \leq 1$ for all $\varphi$ in the Schur-Agler
class of the polydisk.

The name for the rational dilation property derives from a theorem of
Arveson~\cite{MR1668582}, which states in the example from the
previous paragraph, a tuple $T$ of commuting contractions has a
commuting unitary dilation $U$ if and only if for all $n\in\mathbb N$,
$T$ induces a completely contractive representation $\pi$ on the
algebra $\mathcal P$ of polynomials over $\mathbb C^d$, the norm
closure of which is the polydisk analogue of the disk algebra.  Write
$\tilde\pi$ for the representation induced by $U$. By the spectral
theorem for normal operators, $\tilde\pi$ is completely contractive.
The converse direction is an application of the Arveson extension
theorem and Stinespring dilation theorem.  Of course there would be no
hope of dilating $T$ to $U$ if it were the case that the
representation induced by $T$ is not contractive, which the example
due to Kaijser and Varopoulos~\cite{MR0355642} demonstrates can happen
when $d\geq 3$ in $H^\infty(\mathbb D^d)$.

Because $\mathbb D^d$ is polynomially convex, the polynomial algebra
suffices when considering rational dilation in this setting.  For more
complex domains $X\subset \mathbb C^d$ such as for example an annulus
in $\mathbb C$, one needs to consider $M_n(\mathbb C)$ valued rational
functions over $\mathbb C^d$ with poles off of $\overline{X}$, and the
commuting tuple of unitary operators is replaced by a commuting tuple
of normal operators with spectrum supported on $\partial X$ (or more
precisely, the distinguished boundary of $X$).

It becomes evident then that one can view Arveson's theorem as
describing when a contractive representation of the analogue of the
disk algebra is completely contractive.  An example due to
Parrott~\cite{MR0268710} shows that when $d\geq 3$, there are
contractive representations which are not completely contractive.
Further examples when $d=3$ are given by Bagchi, Bhattacharyya and
Misra in~\cite{MR1873633}, and they show that these examples are not
even $2$-contractive.  As we shall see, this is no accident --- in
fact any representation which is contractive but not completely
contractive must fail to be $2$-contractive.

When $d=1$ or $2$, contractive representations are automatically
completely contractive by the Sz.-Nagy dilation theorem and And\^o's
theorem, respectively.  Agler showed that over an annulus $\mathbb A$,
it is again the case that contractive representations of the algebra
of functions analytic in a neighborhood of $\mathbb A$ are completely
contractive.  This was later shown to fail for domains of higher
connectivity~\cite{MR2375060,MR2163865,MR2643788}.

It is a consequence of the Arveson extension theorem and the
Stinespring dilation theorem that any completely contractive
representation of either $\ALH$ or $\HLH$ extends to a completely
contractive representation of $C^*(\HLH)$ or $C^*(\ALH)$,
respectively.

We have the following dilation theorem, generalizing Arveson's
dilation result for the polydisk.

\begin{theorem}
  \label{thm:dilation-theorem}
  Let $\pi$ be a representation of $\mathcal{T}^A(X,\Lambda, \mathcal
  H)$, or a weakly continuous representation of $\mathcal{T}(X,\Lambda,
  \mathcal H)$, which is contractive on auxiliary test functions.
  Then $\pi$ dilates to a completely contractive representation
  $\tilde\pi$ of $C^*(\mathcal{T}(X,\Lambda, \mathcal H))$
  $($respectively, $C^*(\mathcal{T}^A(X,\Lambda, \mathcal H))\,)$, with
  the property that the only completely positive map agreeing with
  $\tilde\pi$ on $\mathcal{T}(X,\Lambda, \mathcal H)$ $($respectively,
  $\mathcal{T}^A(X,\Lambda, \mathcal H)\,)$ is $\tilde\pi$ itself.
\end{theorem}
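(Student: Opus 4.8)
The plan is to reduce the statement to the existence of maximal dilations for completely contractive representations of unital operator algebras. First I would record that by Corollary~\ref{cor:tr_fns_are_op_alg} the spaces $\mathcal{T}^A(X,\Lambda,\mathcal H)$ and $\mathcal{T}(X,\Lambda,\mathcal H)$ carry unital operator algebra structures (sitting concretely inside $C(X,\mathcal{L(H)})$), and that by Corollary~\ref{cor:weakly-ctns_B_reps_are_cc} the hypotheses on $\pi$ — a representation in the norm-closed case, a weakly continuous representation in the full-algebra case, in both cases contractive on auxiliary test functions — already force $\pi$ to be completely contractive. Thus $\pi$ is a unital completely contractive representation of a unital operator algebra, and what remains is precisely the general fact that such a representation admits a maximal dilation extending to a $\ast$-representation of the generated $C^*$-algebra.

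Next I would invoke the theory of maximal dilations (Dritschel and McCullough; see also Arveson, and Muhly and Solel): every unital completely contractive representation $\pi$ of a unital operator algebra $\mathcal A$ dilates to a maximal one, and a maximal representation has the unique extension property. Applying this with $\mathcal A=\mathcal{T}^A(X,\Lambda,\mathcal H)$ (respectively $\mathcal{T}(X,\Lambda,\mathcal H)$) produces a Hilbert space containing $\mathcal G$ and a unital completely contractive representation of $\mathcal A$ on it which dilates $\pi$ and which extends to a $\ast$-representation $\tilde\pi$ of $C^*(\mathcal A)$, the $C^*$-algebra generated by the transfer functions (composing with the appropriate quotient map if the dilated representation of $\mathcal A$ is not faithful). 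Since a $\ast$-representation of a $C^*$-algebra is automatically completely contractive, this $\tilde\pi$ is the required representation, and the unique extension property is exactly the asserted statement that the only completely positive map on $C^*(\mathcal A)$ agreeing with $\tilde\pi$ on $\mathcal A$ is $\tilde\pi$ itself (any such map being unital, hence UCP). The passage from the operator-space structure of Corollary~\ref{cor:tr_fns_are_op_alg} to a UCP extension on the concretely generated $C^*$-algebra, needed to set the maximal-dilation machinery in motion, is the Arveson extension theorem applied to the operator system $\mathcal A+\mathcal A^*$ followed by the Stinespring dilation theorem.

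I expect the only genuine obstacle to be already behind us: it is the complete contractivity established in Corollary~\ref{cor:weakly-ctns_B_reps_are_cc} that does the real work. Beyond that, the points requiring a little care are (i) reconciling ``dilation'' in the sense of maximal-dilation theory with the semi-invariant-subspace formulation used in our definition of \emph{dilates}, which is routine, and (ii) checking that the Dritschel–McCullough construction of maximal dilations carries no separability or countable-generation hypothesis, so that it applies to $\mathcal{T}(X,\Lambda,\mathcal H)$ for arbitrary $X$, $\Psi$ and $\mathcal H$. Weak continuity of $\pi$ in the $\mathcal{T}(X,\Lambda,\mathcal H)$ case enters only through Corollary~\ref{cor:weakly-ctns_B_reps_are_cc}; it is neither required of nor to be expected for $\tilde\pi$, and the proof makes no attempt to preserve it.
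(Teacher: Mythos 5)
Your proposal is correct and is essentially the paper's own argument: the paper proves this theorem by citing Corollary~\ref{cor:weakly-ctns_B_reps_are_cc} for complete contractivity together with Theorem~1.1 of~\cite{MR2132691}, i.e.\ exactly the Dritschel--McCullough maximal-dilation theorem you invoke. The additional remarks about Arveson extension, Stinespring, and the absence of separability hypotheses are sensible housekeeping but do not change the route.
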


\begin{proof}
  This is a corollary of Corollary~\ref{cor:weakly-ctns_B_reps_are_cc}
  and Theorem~1.1 of~\cite{MR2132691}.
\end{proof}

A representation with the properties of $\tilde\pi$ (ie, that
$\tilde\pi$ extends uniquely as a completely positive map to the
$C^*$-envelope) is called a \textbf{boundary representation} if, in
addition, it is irreducible.  We use an alternative, equivalent
description of boundary representations due Muhly and
Solel~\cite{MR1639657} below.

An analogue of the rational dilation problem ask whether every
contractive representation of $\ALH$ is completely contractive.
Likewise, one might ask if every contractive representation of $\HL$
(or more generally, of $\HLH$) is automatically completely
contractive; that is, whether the $H^\infty$ dilation property holds.
Perhaps surprisingly, even for $H^\infty(\mathbb D)$ this is unknown.
The problem is that in many cases the boundary of $X$ is rather
complicated, since it is the difference between the Stone-\v{C}ech
compactification of $X$ and $X$ in the appropriate topology, and this
can be very complex.  There will be representations corresponding to
point evaluations in the boundary.  In general, these may not be
weak-$*$ continuous, and so there is no obvious characterization of
contractive representations of $\HL$ in terms of its action on test
functions, which is generally what is used in the showing the
contractivity of ampliations of a representation.

As an alternative, one might ask if there are any simply described
subclasses of the contractive representations which are completely
contractive.  For example, we will prove that representations of
$\HLH$ which are Brehmer representations and which are weakly
continuous are completely contractive.  We should note that for
general $\Lambda$, it is easy to find examples where not all
contractive representations are Brehmer representations.

Over $\mathbb D^d$ when $d \geq 3$, Parrott's example implies that
rational dilation fails for $A(\mathbb D^3)$, though as we saw in
Corollary~\ref{cor:tr_fns_are_op_alg}, with the Agler algebra and
Schur-Agler matrix norm structure, this is not the case.  We prove
that in general any representation of $\ALH$ which is contractive on
the auxiliary test functions is completely contractive.  When the
preordering is ample over $d$ test functions, this will imply that any
representation which is $2^{d-1}$-contractive is completely
contractive.  As we will show, there is an improvement which can be
made to this when $d>1$ using the so-called nearly ample preorderings,
and giving that $2^{d-2}$-contractive representations are completely
contractive.  In particular, this will imply that for $d\geq 3$,
$2^{d-2}$-contractive representations of $A(\mathbb D^d)$ are
completely contractive, and that such representations of
$H^\infty(\mathbb D^d)$ which are at least weakly continuous are also
completely contractive.  When $d=3$ then, $2$ contractivity will imply
complete contractivity, and so any example like Parrott's of a
contractive representation of $A(\mathbb D^3)$ which is contractive
but not completely contractive must fail to be $2$-contractive.

\subsection{Some boundary representations for the classical Agler
  algebra}
\label{subsec:some-bound-repr-Agler-alg}

Since in the classical setting the auxiliary test functions are simply
the test functions, it follows from Theorem~\ref{thm:classic-real-thm}
that any representation of $\HLH$ which is contractive is completely
contractive.  At first this may seem to contradict the examples of
Parrott~\cite{MR0268710} and Varopoulos and Kaiser~\cite{MR0355642}
when $X=\mathbb D^3$, which both give commuting tuples of contractions
on $H^\infty(\mathbb D^3)$ which do not dilate to commuting unitary
operators (indeed, the Kaijser-Varopoulos example is not even a
contractive representation of $H^\infty(\mathbb D^3)$).  The reason
that there is no difficulty is that the Schur-Agler norm of $\HLH$
(and more generally, the corresponding matrix norm structure) is not
the same as the supremum norm in this case.

Let us consider more closely the classical Agler algebra over the
tridisk.  We examine the representations generated by commuting
triples of contractions from several particularly interesting
examples: first that of Parrott, then a Kaijser-Varopoulos type
example due to Grinshpan, Kaliuzhnyi-Verbovet\-skyi and Woerdeman
from~\cite{MR3057417}, and finally the Kaijser-Varopoulos example
itself.  We show that these give rise to nontrivial non-scalar
boundary representations for the disk algebra analogue for the
classical Agler algebra.  Of course such representations are expected
since, as has been noted~\cite{MR1049839}, this is not a uniform
algebra, but these are explicit.  According to a result of Muhly and
Solel~\cite{MR1639657}, a \textbf{boundary representation} in the
sense of Arveson is an irreducible completely contractive unital
representation of $\HL$ with the property that any completely
contractive dilation of this representation must contain it as a
direct summand (see also~\cite{MR2132691}).

We begin by considering the Parrott example.

\begin{lemma}
  \label{lem:Parrott-boundary-repn}
  Let $X = \mathbb D^3$, $\Psi = \{z_1,z_2,z_3\}$ a collection of test
  functions on $X$, $\Lambda = \{e_1,e_2,e_3\}$, and $\mathcal K$ the
  corresponding set of admissible kernels.  Let $U,V\in
  \mathcal{L(K)}$ be unitary operators with the property that $UV =
  -VU$ $($for example, we might choose $U = \begin{pmatrix} 1 & 0 \\ 0
    & -1 \end{pmatrix}$ and $V = \begin{pmatrix} 0 & 1 \\ 1 & 0
  \end{pmatrix}\,)$.  Then on $\mathcal K \oplus \mathcal K$,
  \begin{equation*}
    \pi(z_1) := T_1 =
    \begin{pmatrix}
      0 & 1 \\ 0 & 0
    \end{pmatrix}, \qquad
    \pi(z_2) := T_2 =
    \begin{pmatrix}
      0 & U \\ 0 & 0
    \end{pmatrix}, \qquad
    \pi(z_3) := T_3 =
    \begin{pmatrix}
      0 & V \\ 0 & 0
    \end{pmatrix}, \qquad
  \end{equation*}
  defines a $($completely contractive$)$ boundary representation of
  $\HL$.
\end{lemma}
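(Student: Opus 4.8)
\emph{Strategy.} I would verify the three defining properties from the Muhly--Solel characterization recalled above: that $\pi$ is a unital completely contractive representation of $\HL$, that it is irreducible, and that every completely contractive dilation of $\pi$ contains $\pi$ as a direct summand. The first two are routine; the rigidity in the third is where Parrott's anticommutation does the work.

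\emph{Explicit form and complete contractivity.} Since $T_iT_j=0$ for all $i,j$, the triple $T=(T_1,T_2,T_3)$ consists of commuting contractions, and with $Z(z)=\sum_j z_jP_j$ one has $Z(T)(A\otimes 1)Z(T)=\sum_{j,k}(P_jAP_k)\otimes(T_jT_k)=0$, so $(1-(A\otimes 1)Z(T))^{-1}=1+(A\otimes 1)Z(T)$. Feeding this into the transfer function representation of an arbitrary $\varphi\in\HL$ supplied by Agler's realization theorem (Theorem~\ref{thm:Aglers-realization}), with $S_{\Lambda^o,\rho}=Z$, the geometric series collapses to
\begin{equation*}
  \pi(\varphi)=\varphi(0)\,1_{\mathcal K\oplus\mathcal K}+\sum_{j=1}^3\partial_j\varphi(0)\,T_j
  =\begin{pmatrix}\varphi(0)&\partial_1\varphi(0)+\partial_2\varphi(0)\,U+\partial_3\varphi(0)\,V\\[2pt]0&\varphi(0)\end{pmatrix}.
\end{equation*}
From this formula $\pi$ is visibly a unital homomorphism (Leibniz for the off-diagonal block), and it is weakly continuous: a bounded net $\varphi_\alpha\to\varphi$ pointwise in $\HL\subseteq H^\infty(\mathbb D^3)$ converges locally uniformly (Montel/Vitali, or a Cauchy-integral estimate), so $\varphi_\alpha(0)\to\varphi(0)$ and $\partial_j\varphi_\alpha(0)\to\partial_j\varphi(0)$. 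Because $\Lambda^o=\{e_1,e_2,e_3\}$ the auxiliary test functions coincide with the test functions $z_j$, and $\|\pi(z_j)\|=\|T_j\|=1$; hence $\pi$ is a weakly continuous Brehmer representation contractive on the auxiliary test functions. By Corollary~\ref{cor:weakly-ctns_B_reps_are_cc} (equivalently, by the remark opening this subsection together with (vN2) of Theorem~\ref{thm:classic-real-thm}, which first gives contractivity of $\pi$), $\pi$ is completely contractive.

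\emph{Irreducibility.} Since $\pi(\HL)=\mathrm{span}\{1_{\mathcal K\oplus\mathcal K},T_1,T_2,T_3\}$, a subspace reduces $\pi$ if and only if it reduces $\{T_1,T_2,T_3\}$. Using $T_1T_1^*=\mathrm{diag}(1,0)$ and $T_1^*T_1=\mathrm{diag}(0,1)$ one sees such a subspace splits as $N\oplus N$ with $N$ invariant under $U,V,U^*,V^*$; conversely any reducing $N$ for $\{U,V\}$ gives $N\oplus N$ reducing for $\pi$. Thus $\pi$ is irreducible exactly when $U$ and $V$ generate $\mathcal{L(K)}$ as a von Neumann algebra, which holds for the displayed $2\times2$ choice (the Pauli pair $\sigma_3,\sigma_1$); I will take $U,V$ so chosen.

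\emph{Maximality --- the heart of the proof.} Let $\rho:\HL\to\mathcal{L(R)}$ be a completely contractive dilation of $\pi$. Then $W_j:=\rho(z_j)$ are commuting contractions (as $\|z_j\|_{\HL}=1$), and $\mathcal K\oplus\mathcal K$ is the difference of two $\{W_j\}$-invariant subspaces, giving a decomposition $\mathcal R=\mathcal R_1\oplus(\mathcal K\oplus\mathcal K)\oplus\mathcal R_2$ in which
\begin{equation*}
  W_j=\begin{pmatrix}A_j&B_j&C_j\\0&T_j&D_j\\0&0&E_j\end{pmatrix}.
\end{equation*}
Since $T_jT_j^*$ is the same projection $\mathrm{diag}(1,0)$ for every $j$ and $T_j^*T_j=\mathrm{diag}(0,1)$ for every $j$, contractivity of $W_j$ (reading the $(2,2)$ entries of $W_jW_j^*\le 1$ and $W_j^*W_j\le 1$) forces $B_j$ to annihilate the second copy of $\mathcal K$ and $D_j$ to have range in it; write $B_j=(b_j\ 0)$ and $D_j=\binom{0}{d_j}$ with $(S_1,S_2,S_3)=(1,U,V)$ the off-diagonal entries of $(T_1,T_2,T_3)$. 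Reading the $(1,2)$ and $(2,3)$ blocks of $W_iW_j=W_jW_i$ gives $b_iS_j=b_jS_i$ and $S_id_j=S_jd_i$; taking $i=1$ yields $b_j=b_1S_j$ and $d_j=S_jd_1$, and then $i=2,j=3$ yields $b_1UV=b_1VU$ and $UVd_1=VUd_1$. As $UV=-VU$ with $UV$ invertible, this forces $b_1=d_1=0$, hence $B_j=D_j=0$ and $\mathcal K\oplus\mathcal K$ reduces $\rho$. Therefore $\pi$ is a direct summand of $\rho$, and with the first two steps Muhly--Solel's criterion gives that $\pi$ is a boundary representation. The only thing requiring care is keeping track of the genuinely semi-invariant (not merely co-invariant) case, but the single $3\times3$ block computation above disposes of it uniformly; conceptually, it is precisely the relation $UV=-VU$ --- the same obstruction to a commuting unitary dilation in Parrott's original example --- that kills the off-diagonal coupling in any contractive dilation.
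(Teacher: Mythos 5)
Your proof is correct, and its core coincides with the paper's: the maximality step is the same block $3\times3$ computation, where contractivity of each $W_j$ against the middle block $T_j$ (whose off-diagonal entries $1,U,V$ are unitary) forces the coupling blocks into the forms $(b_j\ \ 0)$ and $\binom{0}{d_j}$, and commutativity together with $UV=-VU$ (and invertibility of $UV$) then kills them; your way of reading the relations $b_iS_j=b_jS_i$ and $S_id_j=S_jd_i$ directly from the $(1,2)$ and $(2,3)$ blocks is a slightly cleaner route to the same identities the paper gets by right-multiplying by $T_1$. Where you diverge is in the supporting steps, and usefully so: for complete contractivity the paper simply cites Theorem~\ref{thm:classic-real-thm} and Corollary~\ref{cor:tr_fns_are_op_alg}, whereas you exploit joint nilpotency to write $\pi(\varphi)=\varphi(0)+\sum_j\partial_j\varphi(0)T_j$ explicitly, check weak continuity, and invoke Corollary~\ref{cor:weakly-ctns_B_reps_are_cc}; both are legitimate, and your formula makes the homomorphism property and continuity transparent. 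For irreducibility, which the paper dismisses as clear, you actually prove that reducing subspaces have the form $N\oplus N$ with $N$ reducing for $\{U,V\}$, correctly observing that the representation is irreducible only when $U,V$ generate $\mathcal{L(K)}$ as a von Neumann algebra (false, e.g., for $U\oplus U$, $V\oplus V$), so that the statement as literally phrased needs either this hypothesis or the displayed Pauli choice; this is a genuine refinement of the paper's claim rather than a gap in your argument, since your maximality computation is valid for arbitrary anticommuting unitaries.
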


\begin{proof}
  It is obvious that the operators in the statement of the lemma
  commute.  By Theorem~\ref{thm:classic-real-thm}, this gives a
  contractive representation of $\HL$, and so by
  Corollary~\ref{cor:tr_fns_are_op_alg} a completely contractive
  representation.  It is clearly irreducible.  As noted in the
  discussion preceding the statement of the lemma, it suffices to
  prove that any contractive dilation of this representation contains
  it as a direct summand.

  Assume that
  \begin{equation*}
    \tilde\pi(z_1) =
    \begin{pmatrix}
      A_1 & A_2 & A_3 \\ 0 & T_1 & A_4 \\ 0 & 0 & A_5
    \end{pmatrix}, \qquad
    \tilde\pi(z_2) =
    \begin{pmatrix}
      B_1 & B_2 & B_3 \\ 0 & T_2 & B_4 \\ 0 & 0 & B_5
    \end{pmatrix}, \qquad
    \tilde\pi(z_3) =
    \begin{pmatrix}
      C_1 & C_2 & C_3 \\ 0 & T_3 & C_4 \\ 0 & 0 & C_5
    \end{pmatrix}
  \end{equation*}
  are commuting contractions.  We show that $A_2$, $B_2$ $C_2$, $A_4$,
  $B_4$ and $C_4$ are zero.  Since $1$, $U$ and $V$ are unitary, it
  follows that
  \begin{equation*}
    A_2 =
    \begin{pmatrix}
      a & 0
    \end{pmatrix}, \qquad
    B_2 =
    \begin{pmatrix}
      b & 0
    \end{pmatrix}, \qquad
    C_2 =
    \begin{pmatrix}
      c & 0
    \end{pmatrix}
  \end{equation*}
  on $\mathcal K \oplus \mathcal K$.  Commutativity then gives
  \begin{equation*}
    \begin{split}
      A_1B_2 + A_2T_2 &= B_1A_2 + B_2T_1 \\
      A_1C_2 + A_2T_3 &= C_1A_2 + C_2T_1 \\
      B_1C_2 + B_2T_3 &= C_1B_2 + C_2T_2.
    \end{split}
  \end{equation*}
  Right multiplication of the first of these by $T_1$ yields $A_1b =
  B_1a$, and so $A_1B_2 = B_1A_2$.  Hence $A_2T_2 = B_2T_1$, and so
  $aU = b$.  Similar calculations with the other two equations give
  $aV = c$ and $bV = cU$.  Thus $aUV = bV = cU = aVU = -aUV$, and
  since $UV$ is unitary, $a=0$.  We then also have $b=c=0$.  A similar
  calculation shows that $A_4$, $B_4$ and $C_4$ are zero.
\end{proof}

We next turn to the example of Grinshpan, Kaliuzhnyi-Verbovet\-skyi
and Woerdeman from~\cite{MR3057417}, which again as in the Parrott
example is nilpotent, but this time of order~$2$.

\begin{theorem}
  \label{thm:GKVW-boundary-repn}
  Let $X = \mathbb D^3$, $\Psi = \{z_1,z_2,z_3\}$ a collection of test
  functions on $X$, $\Lambda = \{e_1,e_2,e_3\}$, and $\mathcal K$ the
  corresponding set of admissible kernels.  Let $u_1, u_2, u_3 \in
  \mathbb R^2$ be unit vectors with the property that $u_1+u_2+u_3 =
  0$ (without loss of generality, we may assume $u_1 =
  \begin{pmatrix} 0 & 1 \end{pmatrix}$, $u_2 = \begin{pmatrix}
    \sqrt{3}/2 & -1/2 \end{pmatrix}$, $u_3 = \begin{pmatrix}
    -\sqrt{3}/2 & -1/2 \end{pmatrix}$).  Define a representation $\pi
  : \HL \to M_4(\mathbb C)$ by
  \begin{equation*}
    \pi(z_j) := T_j =
    \begin{pmatrix}
      0 & u_j & 0 \\ 0 & 0 & u_j^* \\ 0 & 0 & 0
    \end{pmatrix}, \qquad j = 1,2,3.
  \end{equation*}
  Then this is a $($completely contractive$)$ boundary representation
  of $\HL$.
\end{theorem}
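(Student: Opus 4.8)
The proof will follow the pattern of Lemma~\ref{lem:Parrott-boundary-repn}: first commutativity, then complete contractivity, then irreducibility, and finally the direct-summand property for dilations.

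First I would verify that $T_1,T_2,T_3$ commute. Working in the decomposition $\mathbb C\oplus\mathbb C^2\oplus\mathbb C$ of $\mathbb C^4$, a direct multiplication gives $T_iT_j=(u_i\cdot u_j)\,E$, where $E$ is the rank-one operator sending the third summand onto the first; since $u_i\cdot u_j$ is symmetric in $i,j$, $T_iT_j=T_jT_i$. The same computation records that each $T_j$ is nilpotent of order $3$, that $\|T_j\|=1$, that $T_j^2$ sends the third summand onto the first independently of $j$ (so $T_j^2e_3=e_0$ for unit vectors $e_3,e_0$ spanning the third and first summands), and that every product of three $T_j$'s vanishes. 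For complete contractivity, note the $T_j$ are commuting contractions, so for $r\in(0,1)$ the tuple $(rT_1,rT_2,rT_3)$ consists of commuting strict contractions; by Theorem~\ref{thm:Aglers-realization} (equivalently Theorem~\ref{thm:classic-real-thm}) the transfer-function interpretation $\varphi\mapsto\varphi(rT)$ is a contractive unital representation of $\HL$. Nilpotence makes $\varphi(rT)$ a polynomial of degree $\le 2$ in $r$, so letting $r\to1$ produces a contractive unital representation $\pi$ of $\HL$ with $\pi(z_j)=T_j$. Since in the classical setting the auxiliary test functions coincide with the test functions, $\pi$ is automatically completely contractive, exactly as in the proof of Lemma~\ref{lem:Parrott-boundary-repn}, via Theorem~\ref{thm:vN-for-transfer-fns} and Corollary~\ref{cor:weakly-ctns_B_reps_are_cc} (see also Corollary~\ref{cor:tr_fns_are_op_alg}).

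For irreducibility I would show that the unital $*$-subalgebra of $M_4(\mathbb C)$ generated by $T_1,T_2,T_3$ is all of $M_4(\mathbb C)$. The hypotheses $u_1+u_2+u_3=0$ and $\|u_j\|=1$ force the $u_j$ to be equiangular unit vectors in $\mathbb R^2$, hence a tight frame, so $\sum_j u_j^*u_j=\tfrac32\,1_{\mathbb C^2}$; comparing $\sum_j T_jT_j^*=\mathrm{diag}(3,\tfrac32\,1_{\mathbb C^2},0)$ with $\sum_j T_j^*T_j=\mathrm{diag}(0,\tfrac32\,1_{\mathbb C^2},3)$ shows that the three diagonal projections onto the summands $\mathbb C$, $\mathbb C^2$, $\mathbb C$ lie in the algebra. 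Cutting the $T_j$ down by these projections produces the families $\{u_j\}$ and $\{u_j^*\}$, which span all row, respectively column, operators on $\mathbb C^2$; taking products and adjoints then yields every matrix unit, so the algebra is $M_4(\mathbb C)$ and $\pi$ is irreducible.

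Finally, by the Muhly--Solel description of boundary representations recalled before Lemma~\ref{lem:Parrott-boundary-repn} (see also \cite{MR1639657} and Theorem~\ref{thm:dilation-theorem}), it suffices to show that any contractive dilation of $\pi$ contains it as a direct summand. I would take commuting contractions $\tilde T_j$ in upper-triangular $3\times3$ block form with middle diagonal block $T_j$, write $X_j$ and $Y_j$ for the blocks immediately above and to the right of $T_j$, and prove $X_j=Y_j=0$. Contractivity of $\tilde T_j$ gives $X_j^*X_j\le 1-T_j^*T_j$, forcing $X_j$ to annihilate both $\eta_j:=u_j^*$ (as a vector of the middle $\mathbb C^2$) and $e_3$. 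The $(1,2)$-block of $\tilde T_i\tilde T_j=\tilde T_j\tilde T_i$, evaluated at $e_3$, then yields $X_i\eta_j=X_j\eta_i$ for all $i,j$; combined with $X_j\eta_j=0$ and $\eta_1+\eta_2+\eta_3=0$ this forces $X_j\eta_k=0$ for all $j,k$, so $X_j$ vanishes on the middle $\mathbb C^2$. Applying instead the commutation of $\tilde T_i$ with $\tilde T_j^2$ at $e_3$, and using $T_j^2e_3=e_0$ together with what has just been proved, gives $X_ie_0=(u_i\cdot u_j)X_je_0$ for all $i,j$, whence $X_je_0=0$. Thus $X_j=0$; a symmetric argument (or passage to adjoints) gives $Y_j=0$; the central $\mathbb C^4$ then reduces the dilation, so $\pi$ splits off as a direct summand. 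The main obstacle is precisely this last step: the bookkeeping ruling out the off-diagonal leakage blocks, where the contractivity inequalities, the commutation relations, and the $120^\circ$ geometry of the $u_j$ must all be combined; the earlier steps are routine once the block structure is unwound.
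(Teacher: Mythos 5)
Your proof is correct, and at its core it follows the same strategy as the paper: show that any commuting contractive dilation of the $T_j$ must have vanishing coupling blocks by playing the contractivity inequalities off against commutativity and the $120^\circ$ geometry of the $u_j$, then check irreducibility, and invoke the Muhly--Solel characterization. The differences are in the bookkeeping, and they are mild but worth noting. For the dilation step the paper writes everything in an explicit $5\times5$ entrywise form (blocks $b_j, v_j, c_j, e_j, v_j', f_j$), derives $v_ju_j^*=v_j'u_j^*=0$ and $c_j=e_j=0$ from contractivity, and then kills $v_j, v_j'$ via the relations $u_jv_k'^*=u_kv_j'^*$, $v_ju_k^*=v_ku_j^*$ in explicit coordinates, finishing with $b_ju_k=b_ku_j$ and linear independence; you instead keep the coarser $3\times3$ semi-invariant block form, use the basis-free identity $\eta_1+\eta_2+\eta_3=0$ together with the symmetric system $X_i\eta_j=X_j\eta_i$, and handle the remaining first-summand component by commuting $\tilde T_i$ with $\tilde T_j^2$ at $e_3$ to get $X_ie_0=(u_i\cdot u_j)X_je_0$ (the paper extracts the analogous information directly from a first-order block of the commutation relation); both routes are sound, and yours is somewhat cleaner and less coordinate-dependent. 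For irreducibility the paper argues that a reducing subspace must be trivial by analyzing invariance under $T_j^*T_j$ and $T_jT_j^*$, whereas you generate all of $M_4(\mathbb C)$ as a $*$-algebra using the tight-frame identity $\sum_j u_j^*u_j=\tfrac32 1_2$ to recover the block-diagonal projections; again both work. Finally, you spell out commutativity and the construction of the completely contractive representation via scaling $rT$ and nilpotency of order three, which the paper leaves implicit by citing the realization machinery exactly as in the Parrott lemma; that added detail is consistent with the paper's intended (Schur--Agler) matrix norm structure and introduces no gap.
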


\begin{proof}
  We assume that we have made the explicit choice of $u_j$s mentioned
  in the statement of the theorem.  Consider a commuting contractive
  dilation
  \begin{equation*}
    V_j = 
    \begin{pmatrix}
      a_j & b_j & v_j & c_j & d_j \\
      0 & 0 & u_j & 0 & e_j \\ 0 & 0 & 0 & u_j^* & {v'}^*_j \\ 
      0 & 0 & 0 & 0 & f_j \\ 0 & 0 & 0 & 0 & g_j \\
    \end{pmatrix}, \qquad j = 1,2,3,
  \end{equation*}
  of the $T_j$s.  Because each $u_j$ is a unit vector, $c_j = 0$ and
  $e_j = 0$ for each $j$.  We also have that $u_jv_j^* = u_j {v'}_j^*
  = 0$, so
  \begin{equation*}
    \begin{split}
      v_1 = \alpha_1 \begin{pmatrix} 1 & 0 \end{pmatrix}
      \qquad 
      v_2 &= \alpha_2 \begin{pmatrix}  -1/2 & -\sqrt{3}/2 \end{pmatrix}
      \qquad
      v_3 = \alpha_3 \begin{pmatrix}  -1/2 & \sqrt{3}/2 \end{pmatrix}
      \\
      {v'}_1 = {\alpha'}_1 \begin{pmatrix} 1 & 0 \end{pmatrix}
      \qquad 
      {v'}_2 &= {\alpha'}_2 \begin{pmatrix}  -1/2 & -\sqrt{3}/2
      \end{pmatrix}
      \qquad
      {v'}_3 = {\alpha'}_3 \begin{pmatrix}  -1/2 & \sqrt{3}/2
      \end{pmatrix}.
    \end{split}
  \end{equation*}

  By commutativity,
  \begin{equation*}
    u_j {v'}^*_k = u_k {v'}_j^* \qquad\text{and} \qquad
    v_j u_k^* = v_k u_j^*.
  \end{equation*}
  Using the explicit form of these vectors, it is easy to check that
  the first of these equations gives ${\alpha'}_2 = {\alpha'}_3 =
  -{\alpha'}_1$ and ${\alpha'}_2 = -{\alpha'}_3$, and so ${\alpha'}_j
  = 0$ for all $j$.  Similar calculations with the second equation
  yields $\alpha_j = 0$ for all $j$ as well.  Thus $v_j = {v'}_j = 0$
  for all $j$.

  It also follows from commutativity that $b_j u_k = b_k u_j$, and
  since the $u_k$s are pairwise linearly independent, it follows that
  $b_j = 0$ for all $j$.  Likewise, $f_j = 0$ for all $j$, and so we
  conclude that each $V_j$ contains $T_j$ as a direct summand.

  Finally, we show that the representation is irreducible.  If
  $\mathcal G \subset \mathbb R^4$ is a reducing subspace, then it is
  invariant for $T_j^*T_j$ and $T_jT_j^*$ for each $j$.  From this we
  see that $\mathcal G \neq \mathbb C^4$, any vector in $\mathcal G$
  must be of the form $v_1 = {\begin{pmatrix} c_1 & 0 & 0 & c_2
    \end{pmatrix}}^t$, $v_2 = {\begin{pmatrix} c_1 & c_2 & c_3 & 0
    \end{pmatrix}}^t$, $v_3 = {\begin{pmatrix} 0 & c_1 & c_2 & c_3
    \end{pmatrix}}^t$, where $c_j\in \mathbb C$ for all $j$.
  Multiplying $v_1$ by $T_j$ we get $c_2 = 0$, and by $T_j^*$ we get
  $c_1 = 0$; that is, $\mathcal G = \{0\}$.  Similarly, since the
  $u_j$s span $\mathbb R^2$, we conclude after considering $T_j^*v_2$
  and $T_jv_3$ that $c_2=c_3 =0$ in the first case and $c_1=c_2=0$ in
  the second, and from this that $c_1=0$ in $v_2$ and $c_3 = 0$ in
  $v_3$, finishing the proof.
\end{proof}

Finally, we turn to the Kaijser-Varopoulos example.  As it happens,
the operators there can be dilated to other commuting contractions
which can only be further dilated by means of a direct sum.  The proof
is similar to the above, and we leave it as an exercise for the
interested reader.

\begin{theorem}
  \label{thm:KV-boundary-repn}
  Let $X = \mathbb D^3$, $\Psi = \{z_1,z_2,z_3\}$ a collection of test
  functions on $X$, $\Lambda = \{e_1,e_2,e_3\}$, and $\mathcal K$ the
  corresponding set of admissible kernels.  Then the representation
  $\pi : \HL \to M_6(\mathbb C)$ defined by
  \begin{equation*}
    \begin{split}
      \pi(z_1) &:= T_1 =
      \begin{pmatrix}
        0 &0&0&0&0&0 \\ 1&0&0&0&0&0 \\ 0&0&0&0&0&0 \\ 0&0&0&0&0&0
        \\[2pt] 0 &
        \tfrac{1}{\sqrt{3}} & -\tfrac{1}{\sqrt{3}} &
        -\tfrac{1}{\sqrt{3}} & 0 & 0 \\[5pt] 0 & \tfrac{2}{\sqrt{6}}
        & \tfrac{1}{\sqrt{6}} & \tfrac{1}{\sqrt{6}} &0 &0
      \end{pmatrix},\\
      \pi(z_2) &:= T_2 =
      \begin{pmatrix}
        0 &0&0&0&0&0 \\ 0&0&0&0&0&0 \\ 1&0&0&0&0&0 \\ 0&0&0&0&0&0
        \\[2pt] 0 &
        -\tfrac{1}{\sqrt{3}} & \tfrac{1}{\sqrt{3}} &
        -\tfrac{1}{\sqrt{3}} & 0 & 0 \\[5pt] 0 & \tfrac{1}{\sqrt{6}}
        & \tfrac{2}{\sqrt{6}} & \tfrac{1}{\sqrt{6}} &0 &0
      \end{pmatrix},\\
      \pi(z_3) &:= T_3 =
      \begin{pmatrix}
        0 &0&0&0&0&0 \\ 0&0&0&0&0&0 \\ 0&0&0&0&0&0 \\ 1&0&0&0&0&0
        \\[2pt] 0 &
        -\tfrac{1}{\sqrt{3}} & -\tfrac{1}{\sqrt{3}} &
        \tfrac{1}{\sqrt{3}} & 0 & 0 \\[5pt] 0 & \tfrac{1}{\sqrt{6}}
        & \tfrac{1}{\sqrt{6}} & \tfrac{2}{\sqrt{6}} &0 &0
      \end{pmatrix}.\\
    \end{split}
  \end{equation*}
  is a $($completely contractive$)$ boundary representation of $\HL$.
\end{theorem}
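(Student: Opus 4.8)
The plan is to follow the template of Lemma~\ref{lem:Parrott-boundary-repn} and Theorem~\ref{thm:GKVW-boundary-repn}. Organise $\mathbb C^6$ as $\mathbb C e_1\oplus\operatorname{span}\{e_2,e_3,e_4\}\oplus\operatorname{span}\{e_5,e_6\}$; then each $T_j$ sends $e_1\mapsto e_{j+1}$, carries the middle layer into the last via a $2\times 3$ block whose column indexed by $e_{j+1}$ equals $w_0:=(\tfrac1{\sqrt3},\tfrac2{\sqrt6})$ while the other two columns equal $w_1:=(-\tfrac1{\sqrt3},\tfrac1{\sqrt6})$, and annihilates $e_5,e_6$. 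Since $w_0\perp w_1$ with $\|w_0\|=1$ and $\|w_1\|=1/\sqrt2$, each $T_j$ is a contraction, and commutativity reduces to the identities $T_i e_{j+1}=T_j e_{i+1}$ (for $i\neq j$ both sides equal $w_1$ in the last layer, and the other products vanish), which hold. As the $T_j$ are commuting contractions, Theorem~\ref{thm:classic-real-thm} shows $\pi$ is a contractive representation of $\HL$ --- there is no conflict with the classical failure of von~Neumann's inequality in three variables, since the relevant norm on $\HL$ is the Schur--Agler norm, not the supremum norm --- and Corollary~\ref{cor:tr_fns_are_op_alg} promotes this to complete contractivity. By the Muhly--Solel description of boundary representations recalled above~(\cite{MR1639657,MR2132691}), it then suffices to check that $\pi$ is irreducible and that every contractive dilation of $\pi$ contains it as a direct summand.

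For irreducibility, a reducing subspace $\mathcal G\subseteq\mathbb C^6$ is invariant for every $T_j$ and $T_j^*$, hence for $T_j^*T_j$ and $T_jT_j^*$. Each $T_jT_j^*$ is the projection onto the last layer plus the rank-one projection onto $\mathbb C e_{j+1}$, while $T_j^*T_j$ fixes $e_1$ and restricts on the middle layer to the rank-$2$ operator $\tfrac12\mathbf 1\mathbf 1^*-\tfrac12(\mathbf 1 e_j^*+e_j\mathbf 1^*)+\tfrac32 e_je_j^*$; a short computation shows that no proper nonzero subspace of the middle layer is invariant under all three of these. Propagating this through the explicit action of the $T_j$ and $T_j^*$ on $e_1,e_{j+1},e_5,e_6$ then forces $\mathcal G=\{0\}$ or $\mathbb C^6$, exactly as in the proof of Theorem~\ref{thm:GKVW-boundary-repn}.

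For the dilation statement, write a commuting contractive dilation of $(T_1,T_2,T_3)$ in block upper triangular form
\[
  V_j=\begin{pmatrix} a_j & b_j & \ast \\ 0 & T_j & c_j \\ 0 & 0 & d_j\end{pmatrix},\qquad j=1,2,3,
\]
on $\mathcal G_1\oplus\mathbb C^6\oplus\mathcal G_3$, the goal being to show every $b_j$ and every $c_j$ vanishes, so that $\mathbb C^6$ reduces $\tilde\pi$. A first reduction uses only that $V_j$ and $V_j^*$ are contractions together with the fact that columns $e_1$ and $e_{j+1}$ of $T_j$, and rows $e_{j+1},e_5,e_6$ of $T_j$, already have norm~$1$: this kills $b_je_1$, $b_je_{j+1}$ and the $e_{j+1},e_5,e_6$ rows of $c_j$. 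The surviving entries are then eliminated by playing the commutation relations $a_ib_j+b_iT_j=a_jb_i+b_jT_i$ and $T_ic_j+c_id_j=T_jc_i+c_jd_i$ (and those in the remaining block positions) off against the pairwise linear independence of the three middle-block column patterns --- the analogue of $UV=-VU$ in Parrott's case and of $u_1+u_2+u_3=0$ in the example of Grinshpan, Kaliuzhnyi-Verbovet\-skyi and Woerdeman. I expect precisely this last step --- extracting the vanishing of all the connecting blocks from commutativity in the three-layer geometry --- to be the only real obstacle; once it is carried out, $\mathbb C^6$ is both invariant and co-invariant for $\tilde\pi$, hence reducing, so $(T_1,T_2,T_3)$ is a direct summand and $\pi$ is a boundary representation.
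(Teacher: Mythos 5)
Your verification of the elementary facts is correct and does follow the template the paper intends (the paper offers no proof of this theorem at all --- it is explicitly left as an exercise ``similar'' to the Parrott and GKVW cases): the commutation identities $T_ie_{j+1}=T_je_{i+1}$, contractivity via $w_0\perp w_1$, the passage from a commuting contractive tuple to a completely contractive representation, and the first reduction killing $b_je_1$, $b_je_{j+1}$ and the $e_{j+1},e_5,e_6$ rows of $c_j$ are all fine. One caution on the irreducibility sketch: as literally stated, the claim that no proper nonzero subspace of the middle layer is invariant under the three operators $T_j^*T_j\vert_{\mathrm{middle}}$ is false, since $e_2+e_3+e_4$ is a common eigenvector of all three; you must also use the rank-one projections onto $\mathbb{C}e_{j+1}$ coming from $T_jT_j^*$ (with these added, the short computation and the propagation step do yield irreducibility).

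The genuine problem is precisely the step you defer. Contractivity gives no control of $b_j$ on the columns $e_5,e_6$, because $1-T_j^*T_j$ is the identity on the last layer, and once the middle columns of the $b_j$ are disposed of, the $(1,2)$-block commutation relations $b_iT_je_k=b_jT_ie_k$ only say that the restrictions $b_j\vert_{\operatorname{span}\{e_5,e_6\}}$ all coincide and annihilate $w_0-w_1$; they do not force them to vanish. Concretely, take $\mathcal G_3=\{0\}$, $\mathcal G_1=\mathbb C$, $a_j=0$ and $b_j=\beta^*$ for every $j$, where $\beta=\tfrac13 e_5-\tfrac{2\sqrt2}{3}e_6$ (any vector in $\operatorname{span}\{e_5,e_6\}$ orthogonal to $w_0-w_1$, suitably scaled, will do). A direct check shows the $7\times 7$ matrices $V_j=\begin{pmatrix}0&\beta^*\\0&T_j\end{pmatrix}$ are commuting contractions ($V_j^*V_j$ is block diagonal with blocks of norm at most one), that $\operatorname{span}\{e_1,\dots,e_6\}$ is co-invariant with compression $(T_1,T_2,T_3)$, so by the same Theorem~\ref{thm:classic-real-thm} and Corollary~\ref{cor:tr_fns_are_op_alg} reasoning you invoke, the $V_j$ determine a completely contractive dilation of $\pi$; yet since the $V_j$ are nilpotent and $\bigcap_j(\ker V_j\cap\ker V_j^*)=\{0\}$, this dilation has no one-dimensional reducing subspace and hence cannot contain $\pi$ as a direct summand. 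So the Parrott/GKVW rigidity computation is not merely unfinished in your proposal --- as planned it cannot be completed, and unless I am misreading the matrices the maximality assertion itself fails for these $T_j$, so that proving the theorem would require either a genuinely different argument or a re-examination of the statement.
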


We finally mention that the Crabb-Davie example~\cite{MR0365179} gives
a boundary representation of the Agler algebra consisting of nilpotent
operators of order~3, and presumably examples with any degree of
nilpotency can be constructed in a similar manner.  Using results of
\cite{MR1169882} and \cite{MR3160536}, one can show that there will be
boundary representations of the Agler algebra which are neither
commuting unitaries nor commuting nilpotents.

\section{Realization theorems}
\label{sec:realization}

\subsection{The first realization theorem}
\label{subsec:first-realization}

As usual, we assume all test functions are in $\AL$.

Fix a finite set $F\subset X$.  Define a cone in $M_{|F|}(\mathbb C)$
by
\begin{equation*}
  \mathcal C_F := \left\{\left(\Gamma(x,y) \left( E^+(x)E^+(y)^* -
        E^-(x)E^-(y)^*\right)\right) : \Gamma \in \mathbb
    K^+_X(C_b(\Lambda), \mathbb C) \right\}.
\end{equation*}
This is a cone rather than simply a wedge since $E^+(x)E^+(x)^* -
E^-(x)E^-(x)^* >0$, and so if $\Gamma_1, \Gamma_2 \geq 0$ with
$\left(\Gamma_1(x,y) \left( E^+(x)E^+(y)^* -
    E^-(x)E^-(y)^*\right)\right) = -\left(\Gamma_2(x,y) \left(
    E^+(x)E^+(y)^* - E^-(x)E^-(y)^*\right)\right)$ for all $x,y \in
F$, then for all $x$, $\Gamma_1(x,x) = \Gamma_2(x,x) = 0$, and hence
by positivity, $\Gamma_1(x,y) = \Gamma_2(x,y) = 0$ for all $x,y\in F$.

More generally, there is an operator version of this.  For a fixed
Hilbert space $\mathcal H$, define a cone in $M_{|F|}(\mathcal{L(H)})$
by
\begin{equation*}
  \mathcal C_{F,\mathcal H} := \left\{\left(\Gamma(x,y) \left(
        E^+(x)E^+(y)^* - E^-(x)E^-(y)^*\right)\right) : \Gamma 
    \in \mathbb K^+_X(C_b(\Lambda), \mathcal{L(H)}) \right\}.
\end{equation*}

The proof of the first realization theorem relies on the following
lemma of independent interest.

\begin{lemma}
  \label{lem:cone_closed}
  The cone $\mathcal C_{F,\mathcal H}$ is closed and has non-empty
  interior.  Furthermore, for each $\lambda \in\Lambda$,
  $1_{\mathcal{L(H)}} \otimes
  {\left(\textstyle\prod_{\lambda_i\in\lambda}
      (1 - \psi_i(x)\psi_i(y)^*)^{\lambda_i} \right)}_{x,y\in F} \in
  \mathcal C_{F,\mathcal H}$.
\end{lemma}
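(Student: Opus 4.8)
I would first reparametrise the cone. By the Kolmogorov decomposition (Proposition~\ref{prop:factorization}) every $\Gamma\in\mathbb K^+_X(C_b(\Lambda),\mathcal{L(H)})$ has the form $\Gamma(x,y)(fg^*)=\gamma(x)\rho(f)\rho(g)^*\gamma(y)^*$, and, as recorded in Subsection~\ref{subsec:representations}, the unital representation $\rho$ decomposes blockwise as $\rho(f)=\sum_{\lambda\in\Lambda}P_\lambda\otimes f(\lambda)$, with $Z^+(x)Z^+(y)^*-Z^-(x)Z^-(y)^*=R(x,y)=\sum_{\lambda\in\Lambda}P_\lambda\otimes D(x,y)(\lambda)$, where $D(x,y)(\lambda)=\prod_i(1-\psi_i(x)\psi_i(y)^*)^{\lambda_i}$. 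Feeding this into the definition of $\mathcal C_{F,\mathcal H}$, writing $D_\lambda(x,y):=D(x,y)(\lambda)$ and $*$ for the Schur product, a direct (if slightly bookkeeping-heavy, on account of the multiplicity factors $\mathbb C^{2^{|\lambda|-1}}$) computation gives
\begin{equation*}
  \mathcal C_{F,\mathcal H}=\Bigl\{\,\textstyle\sum_{\lambda\in\Lambda}D_\lambda* k_\lambda\;:\;k_\lambda\text{ a positive }\mathcal{L(H)}\text{-valued kernel on }F\,\Bigr\},
\end{equation*}
the kernel $k_\lambda$ arising as $(\gamma(\cdot)P_\lambda)(\gamma(\cdot)P_\lambda)^*$ in one direction, and in the other any prescribed tuple $(k_\lambda)$ being realised by taking $\rho$ a direct sum of the evaluations $f\mapsto f(\lambda)$ and $\gamma$ built from the Kolmogorov maps of the $k_\lambda$. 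This reparametrisation is the key step; the remaining assertions follow from it.

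Granting it, the third assertion is immediate: take $k_\lambda$ to be the constant kernel $1_{\mathcal{L(H)}}$ (which is positive, being $J\otimes 1_{\mathcal{L(H)}}$ for $J$ the all-ones matrix) and $k_\mu=0$ for $\mu\ne\lambda$, so that $\sum_\mu D_\mu* k_\mu=1_{\mathcal{L(H)}}\otimes(D_\lambda(x,y))_{x,y\in F}$, which is exactly the claimed element. For the interior statement, note first that every element of $\mathcal C_{F,\mathcal H}$ is self-adjoint (immediate from the Kolmogorov description, using $\Gamma(x,y)(a)^*=\Gamma(y,x)(a^*)$ and $\overline{D(x,y)(\lambda)}=D(y,x)(\lambda)$), so ``interior'' is meant relative to the real subspace $M_{|F|}(\mathcal{L(H)})_{\mathrm{sa}}$. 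Fix any $\lambda_0\in\Lambda$ and consider the subcone $\mathcal C^{\lambda_0}:=\{D_{\lambda_0}* k:k\ge 0\}\subseteq\mathcal C_{F,\mathcal H}$. Since $|\psi_i|<1$ on $X$, the scalar kernel $D_{\lambda_0}$ has no zero entry on $F\times F$, hence is invertible for the Schur product there; writing $D_{\lambda_0}^{\odot(-1)}$ for its Schur inverse, the map $T\colon G\mapsto D_{\lambda_0}^{\odot(-1)}* G$ is a linear homeomorphism of $M_{|F|}(\mathcal{L(H)})_{\mathrm{sa}}$ with inverse $G\mapsto D_{\lambda_0}* G$, carrying the (norm-closed, solid) positive cone of $M_{|F|}(\mathcal{L(H)})$ onto $\mathcal C^{\lambda_0}$. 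Thus $\mathcal C^{\lambda_0}$, and a fortiori $\mathcal C_{F,\mathcal H}$, has nonempty interior; indeed $T^{-1}\bigl(1_{M_{|F|}(\mathcal{L(H)})}\bigr)=\operatorname{diag}(D_{\lambda_0}(x,x)1_{\mathcal{L(H)}})_{x\in F}$ is an explicit interior point.

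It remains to prove $\mathcal C_{F,\mathcal H}$ is closed, which I expect to be the main obstacle, since a sum of closed cones (here $\mathcal C_{F,\mathcal H}=\sum_{\lambda}\mathcal C^\lambda$) need not be closed. I would argue by a weak-operator-topology compactness trick. Let $M_n\to M$ in norm with $M_n=\sum_{\lambda\in\Lambda}D_\lambda* k_\lambda^{(n)}$, $k_\lambda^{(n)}\ge 0$. On the diagonal the summands are positive and $D_\lambda(x,x)\ge\delta:=\min_{x\in F,\,\lambda\in\Lambda}D_\lambda(x,x)>0$, so $\delta\,k_\mu^{(n)}(x,x)\le (M_n)_{xx}$, whence $\|k_\mu^{(n)}(x,x)\|\le\|M_n\|/\delta$ and, by the Cauchy--Schwarz inequality for positive kernels, $\|k_\mu^{(n)}(x,y)\|\le(\sup_m\|M_m\|)/\delta$ for all $\mu,x,y$. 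Since closed balls of $\mathcal{L(H)}$ are compact in the weak operator topology, after passing to a subnet we may assume $k_\lambda^{(n)}(x,y)\to k_\lambda(x,y)$ weakly for every $\lambda,x,y$; a weak-operator limit of positive block matrices is positive, so each $k_\lambda$ is a positive kernel on $F$, and passing to the limit in $M_n=\sum_\lambda D_\lambda* k_\lambda^{(n)}$ (a finite sum with scalar coefficients) yields $M=\sum_\lambda D_\lambda* k_\lambda\in\mathcal C_{F,\mathcal H}$. The estimate tying the cone to a uniformly bounded family of kernels --- which rests on the strict positivity $D_\lambda(x,x)>0$, ultimately a consequence of $\sup_{\psi\in\Psi}|\psi(x)|<1$ --- is the crux.
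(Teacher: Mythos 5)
Your proposal is correct, but it is organized rather differently from the paper's proof, so a comparison is worthwhile. Your reparametrisation $\mathcal C_{F,\mathcal H}=\bigl\{\sum_{\lambda\in\Lambda}D_\lambda*k_\lambda : k_\lambda \text{ positive}\bigr\}$ is legitimate --- it is exactly the passage between the Agler decomposition (\textsl{AD}) of Theorem~\ref{thm:realization_I} and (\textsl{AD1}) of Theorem~\ref{thm:realization_II}, via the block form of $\rho$ and $R(x,y)$ --- and once it is in hand your closedness argument is sound: the estimate $\delta\,k_\mu^{(n)}(x,x)\le (M_n)_{xx}$ coming from $D_\lambda(x,x)\ge\delta>0$, Cauchy--Schwarz for the off-diagonal entries, and weak operator compactness of bounded balls of $\mathcal{L(H)}$ do the job. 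The paper instead works with the completely positive kernel $\Gamma$ itself: the same diagonal estimate bounds $\|\Gamma(x,y)\|$ by $\|M\|/\epsilon$, closedness of the scalar cone $\mathcal C_F$ is then obtained from weak-$*$ compactness in the finite-dimensional space $C_b(\Lambda)^*$, and the operator-valued case is bootstrapped from the scalar one by compressing with vectors $h\in\mathcal H^{|F|}$ and recovering $\Gamma$ by polarization; your route avoids that bootstrapping at the cost of the reparametrisation bookkeeping, which you only sketch (the converse direction, realizing an arbitrary tuple $(k_\lambda)$ by a completely positive $\Gamma$, is needed to land back in $\mathcal C_{F,\mathcal H}$ after taking limits, so it should be written out). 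For the interior the paper proves something stronger: Schur multiplication by $\prod_i(1-\psi_i\psi_i^*)^{-\lambda_i}$, which is a \emph{positive} kernel (a product of Szeg\H{o}-type kernels), shows that all of $(\mathcal{L(H)}\otimes M_{|F|}(\mathbb C))^+$ lies in $\mathcal C_{F,\mathcal H}$, a containment that is invoked later to get positivity of the separating functional in the proof of Theorem~\ref{thm:realization_I}; your homeomorphism argument (Schur multiplication by $D_{\lambda_0}$, which does preserve the self-adjoint part since $D_{\lambda_0}$ is Hermitian-symmetric) yields only nonempty interior, which suffices for the lemma as stated but loses that useful byproduct. Also note a small wording slip: as defined, your $T$ carries $\mathcal C^{\lambda_0}$ onto the positive cone and $T^{-1}$ goes the other way; your explicit interior point is computed with the correct map. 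The membership argument is the same as the paper's.
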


\begin{proof}
  Fix $F\subset X$ finite and a Hilbert space $\mathcal H$, and define
  the cones $\mathcal C_F$ and $\mathcal C_{F,\mathcal H}$ as above.
  Following the proof of Lemma~3.4 of~\cite{MR2389623}, we first show
  that $\mathcal C_F$ is closed.

  By assumption, for all $x\in X$, there exists $\epsilon_x > 0$ such
  that $\sup_{\psi\in\Psi} (1-\psi(x)\psi(x)^*) > \epsilon_x$.  Also,
  for $n := \sup_{\lambda\in\Lambda} |\lambda| < \infty$,
  \begin{equation*}
    E^+(x)E^+(x)^* - E^-(x)E^-(x)^* \geq \epsilon_x^n.
  \end{equation*}
  Setting $\epsilon = \min_{x\in F} \epsilon_x^n > 0$, we have then
  that for all $x\in F$, $E^+(x)E^+(x)^* - E^-(x)E^-(x)^* \geq
  \epsilon$.  Therefore, for any $M = \Gamma * (E^+E^{+*}-E^-E^{-*})
  \in \mathcal C_F$ and any $x\in F$,
  \begin{equation*}
    \|\Gamma(x,x)\| \leq \tfrac{1}{\epsilon} \max_{x\in F}\|M(x,x)\|
    \leq  \tfrac{1}{\epsilon} \|M\|.
  \end{equation*}
  Positivity of $\Gamma$ then gives $\|\Gamma(x,y)\| \leq
  \tfrac{1}{\epsilon} \|M\|$ for all $x,y\in F$.  Thus for any Cauchy
  sequence $(M_n) \subset \mathcal C_F$, the corresponding sequence of
  positive operators $(\Gamma_n)$ has $(\Gamma_n(x,y))$ in a norm
  closed ball of $C_b(\Lambda)^*$ and so has a weak-$*$ convergent
  subsequence.  Applying this idea to each pair of points in $F$, we
  eventually end up with a subsequence $\Gamma_{\ell_n}$ such that for
  any $x,y\in F$, $\Gamma_{\ell_n}(x,y)$ converges weak-$*$ to
  $\Gamma(x,y)$.  It is not difficult to see that $\Gamma$ is
  positive, and so $(M_n)$ converges to some $M = \Gamma *
  (E^+E^{+*}-E^-E^{-*}) \in \mathcal C_F$; that is, $\mathcal C_F$ is
  closed.

  Next consider $\mathcal C_{F,\mathcal H}$.  Arguing as above, there
  exists $\epsilon > 0$ such that for any $M = \Gamma *
  (E^+E^{+*}-E^-E^{-*}) \in \mathcal C_{F,\mathcal H}$,
  $\|\Gamma(x,y)\| \leq \tfrac{1}{\epsilon} \|M\|$ for all $x,y\in F$.
  Suppose $(M_n)\subset \mathcal C_{F,\mathcal H}$ with $\sup_n
  \|M_n\| = C < \infty$ converging to $M$.  Note that the corresponding
  sequence $(\Gamma_n)$ is bounded by $C/\epsilon$.  For $h = (h_x)
  \in \mathcal H^{|F|}$ with $\|h\| = 1$, define $M_{h,n}$ by
  $M_{h,n}(x,y) = \ip{M_n(x,y) h_x}{h_y}$ and $\Gamma_{h,n}$ by
  $\Gamma_{h,n}(x,y)(f) = \ip{\Gamma_n(x,y) (f) h_x}{h_y}$.  Then
  $(M_{h,n}) \subset \mathcal C_F$ is a Cauchy sequence, and since
  $\mathcal C_F$ is closed, $\lim_n M_{h,n} = M_h = \Gamma_h *
  (E^+E^{+*}-E^-E^{-*})$, where $\Gamma_h \geq 0$ and $\|\Gamma_h\|
  \leq C/\epsilon$.  Thus $\Gamma$ defined via polarization from
  $\ip{\Gamma (f) h}{h} = \Gamma_h(f)$ is positive and bounded in norm
  by $C/\epsilon$, and $M = \Gamma * (E^+E^{+*}-E^-E^{-*})$.  Hence
  the cone $\mathcal C_{F,\mathcal H}$ is also closed.

  We next show that $\mathcal C_{F,\mathcal H}$ (and as a consequence,
  $\mathcal C_F$) has non-empty interior.  Let $P:X\times X \to
  \mathcal{L(H)}$ be a positive kernel with Kolmogorov decomposition
  $P(x,y) = Q(x)Q(y)^*$.  A straightforward argument as in the proof
  of Lemma~3.5 of~\cite{MR2389623} shows that the kernel
  $\Gamma_{P,\lambda}$ mapping $X\times X$ to
  $\mathcal{L}(C_b(\Lambda),\mathcal{L(H)})$ by
  \begin{equation*}
    \Gamma_{P,\lambda}(x,y)(f) =
    {\left((Q(x)\otimes\psi^+_\lambda(x))
        (Q(y)\otimes\psi^+_\lambda(y))^* -
    (Q(x)\otimes\psi^-_\lambda(x))
    (Q(y)\otimes\psi^-_\lambda(y))^*\right)}^{-1} 
    f(\lambda)
  \end{equation*}
  is positive.  Thus
  \begin{equation*}
    \left(\Gamma_{P,\lambda}(x,y)\left( E^+(x)E^+(y)^* - 
        E^-(x)E^-(y)^*\right)\right) = P(x,y),
  \end{equation*}
  and so $\mathcal C_{F,\mathcal H}$ has nonempty interior since it
  contains all elements of $(\mathcal{L(H)}\otimes M_{|F|}(\mathbb
  C))^+$.

  Finally, the kernel $\Gamma(f) := [1_{\mathcal{L(H)}}] f(\lambda)$
  is obviously positive, and
    \begin{equation*}
      \Gamma*(E^+E^{+*}-E^-E^{-*}) = 1_{\mathcal{L(H)}} \otimes
      (\psi^+\psi^{+*} - \psi^-\psi^{-*})
      = 1_{\mathcal{L(H)}} \otimes \prod_{\lambda_i\in\lambda}
      (1 - \psi_i \psi_i^*)^{\lambda_i},
    \end{equation*}
    so restricting to $F\times F$ we have the last statement.
\end{proof}

We now state and prove our first realization theorem.

\begin{theorem}[Realization theorem, I]
  \label{thm:realization_I}
  Let $\varphi:X\to \mathcal{L(H)}$.  The following are equivalent:
  \begin{enumerate}
  \item[$($SC$\,)$] $\varphi \in \HLHb$;
  \item[$($AD$\,)$] There is a positive kernel $\Gamma \in \mathbb
    K^+_X(C_b(\Lambda), \mathcal{L(H)})$ such that for all $x,y\in X$,
    \begin{equation*}
      1-\varphi(x)\varphi(y)^* = \Gamma(x,y) \left( E^+(x)E^+(y)^* - 
        E^-(x)E^-(y)^* \right).
    \end{equation*}
  \end{enumerate}
  In this situation, $\varphi$ has a transfer function representation.
\end{theorem}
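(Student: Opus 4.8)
The plan is to prove the chain (SC) $\Rightarrow$ (AD) $\Rightarrow$ transfer function representation $\Rightarrow$ (SC), using the localized cone machinery of Lemma~\ref{lem:cone_closed} for the first implication and a Cholesky/lurking-isometry argument for the second. For (SC) $\Rightarrow$ (AD): fix a finite set $F\subset X$. If $\varphi\in\HLHb$, then by definition $([1_{\mathcal{L(H)}}]-\varphi\varphi^*)*k\geq 0$ for every admissible kernel $k$; in particular, by Lemma~\ref{lem:cone_closed}, the matrix $([1_{\mathcal{L(H)}}]-\varphi\varphi^*)$ restricted to $F\times F$ pairs nonnegatively against the appropriate dual cone. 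More precisely, I would show that the restriction $M_F := (1-\varphi(x)\varphi(y)^*)_{x,y\in F}$ lies in the closed cone $\mathcal C_{F,\mathcal H}$. Suppose not; then since $\mathcal C_{F,\mathcal H}$ is closed with nonempty interior (Lemma~\ref{lem:cone_closed}), by Hahn--Banach separation there is a self-adjoint functional, i.e. a matrix $(L(x,y))\in M_{|F|}(\mathcal{L(H)})$, separating $M_F$ from $\mathcal C_{F,\mathcal H}$. The positivity of all the generators $\Gamma*(E^+E^{+*}-E^-E^{-*})$ forces $(L(x,y))$ (after a GNS-type adjustment) to be positive, and by \eqref{eq:5} such an $L$ is precisely an admissible kernel for $\HLH$ restricted to $F$; this contradicts $\varphi\in\HLHb$. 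Hence $M_F\in\mathcal C_{F,\mathcal H}$ for every finite $F$, so there is a positive $\Gamma_F\in\mathbb K^+_X(C_b(\Lambda),\mathcal{L(H)})$ with $1-\varphi(x)\varphi(y)^*=\Gamma_F(x,y)(E^+(x)E^+(y)^*-E^-(x)E^-(y)^*)$ on $F\times F$. A compactness/inverse-limit argument exactly as in the proof of Theorem~\ref{thm:ext-aux-test-fns-ample-case} (or Kurosh's theorem, \cite[p.~30]{MR1882259}) — using the uniform norm bound $\|\Gamma_F(x,y)\|\leq\epsilon^{-1}\|M_F\|$ from Lemma~\ref{lem:cone_closed} — patches these together into a single global positive kernel $\Gamma$ giving (AD).

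For (AD) $\Rightarrow$ transfer function representation: take the Kolmogorov decomposition of $\Gamma$ from Proposition~\ref{prop:factorization}, so $\Gamma(x,y)(fg^*)=\gamma(x)\rho(f)\rho(g)^*\gamma(y)^*$ for a Hilbert space $\mathcal E$ and unital $*$-representation $\rho:C_b(\Lambda)\to\mathcal{L(E)}$. Using the notation of Section~\ref{subsec:representations}, write $Z^\pm(x)=\rho(E^\pm(x))$, so (AD) becomes
\begin{equation*}
  1_{\mathcal{L(H)}}+\varphi(x)\gamma(y)Z^-(x)(\text{?})\ \ \longrightarrow\ \
  1-\varphi(x)\varphi(y)^* = \gamma(x)\bigl(Z^+(x)Z^+(y)^*-Z^-(x)Z^-(y)^*\bigr)\gamma(y)^*.
\end{equation*}
Rearranging, this reads
\begin{equation*}
  1_{\mathcal{L(H)}}+\gamma(x)Z^-(x)Z^-(y)^*\gamma(y)^*
  = \varphi(x)\varphi(y)^* + \gamma(x)Z^+(x)Z^+(y)^*\gamma(y)^*,
\end{equation*}
i.e. the map sending $\varphi(y)^*h \oplus Z^+(y)^*\gamma(y)^*h \mapsto h \oplus Z^-(y)^*\gamma(y)^*h$ is a well-defined isometry on the span of such vectors, and extends (enlarging $\mathcal E$ if needed, and invoking Lemma~\ref{lem:tensor-collig-sp-by-H} to tensor the colligation space by $\mathcal H$) to a unitary $U=\begin{pmatrix} A & B \\ C & D\end{pmatrix}$ on $\mathcal E\oplus\mathcal H$. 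Here the key point is that $Z^+(x)$ has the explicit right inverse $Y(x)$ of Lemma~\ref{lem:factoring_Z-}, so that $Z^-(x)=Z^+(x)S(x)$ with $S=S_{\Lambda,\rho}$ and $\|S(x)\|<1$; feeding the lurking-isometry relations through this factorization and solving the resulting system (the standard transfer-function algebra, using that $1-AS(x)$ is invertible since $\|A\|\le 1$, $\|S(x)\|<1$) yields exactly $\varphi(x)=D+CS(x)(1-AS(x))^{-1}B$. That $\|S(x)\|<1$ is guaranteed by Lemma~\ref{lem:factoring_Z-}(iii), and — in the ample case — one may instead use the genuine $H^\infty$ auxiliary test functions from Theorem~\ref{thm:ext-aux-test-fns-ample-case}.

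For the closing implication, transfer function representation $\Rightarrow$ (SC): given $\varphi=D+CS(1-AS)^{-1}B$ with $U$ unitary, expand $(1-AS(x))^{-1}=\sum_n (AS(x))^n$ and compute $1-\varphi(x)\varphi(y)^*$ directly, using $A^*A+C^*C=1$ and $B^*B+D^*D=1$ and the cross relations from unitarity of $U$; this is the routine telescoping identity that re-expresses $1-\varphi(x)\varphi(y)^*$ as $\tilde\gamma(x)(1-S(x)S(y)^*)\tilde\gamma(y)^*$ for a suitable $\tilde\gamma$, and then \eqref{eq:5} (which says Schur-multiplying $([1_n]-\sigma_\lambda\sigma_\lambda^*)$ by $\psi^+_\lambda\psi^{+*}_\lambda$ reproduces $\prod([1]-\psi_i\psi_i^*)^{\lambda_i}$) converts positivity against $1-S S^*$ into positivity of $([1_{\mathcal{L(H)}}]-\varphi\varphi^*)*k$ for every admissible $k$, i.e. $\varphi\in\HLHb$. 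This last step also shows the final sentence of the theorem is not circular: once (AD) is established, a transfer function representation genuinely exists.

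I expect the main obstacle to be the (SC) $\Rightarrow$ (AD) separation argument — specifically, verifying that the separating functional produced by Hahn--Banach really is (equivalent to) a \emph{positive} admissible kernel for $\HLH$ restricted to $F$, rather than merely a self-adjoint one. The resolution mirrors Lemma~3.4--3.5 of \cite{MR2389623}: one uses that $\mathcal C_{F,\mathcal H}$ contains the full positive cone $(\mathcal{L(H)}\otimes M_{|F|}(\mathbb C))^+$ (shown in Lemma~\ref{lem:cone_closed}) to force the functional to be positive, and that it contains each $1_{\mathcal{L(H)}}\otimes(\prod_{\lambda_i\in\lambda}(1-\psi_i\psi_i^*)^{\lambda_i})_{F\times F}$ to force the Schur-product-with-$\prod(1-\psi_i\psi_i^*)^{\lambda_i}$ positivity that characterizes admissibility via \eqref{eq:5}. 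A secondary technical point is the passage from the finite sets $F$ to a global kernel $\Gamma$, but this is handled by the now-familiar inverse limit of compact sets argument together with the uniform bound on $\|\Gamma_F\|$.
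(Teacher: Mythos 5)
Your first two steps are fine and essentially coincide with the paper's argument: the Hahn--Banach separation against the closed cone $\mathcal C_{F,\mathcal H}$ of Lemma~\ref{lem:cone_closed} (run in the contrapositive, with the separating functional converted via Riesz/GNS into an admissible kernel supported on $F$, then Kurosh to globalize) gives $($SC$)\Rightarrow($AD$)$, and the lurking-isometry argument with $Z^-=Z^+S$, $S=S_{\Lambda,\rho}$ and the right inverse $Y$ of Lemma~\ref{lem:factoring_Z-} gives the transfer function representation from $($AD$)$.

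The gap is your closing implication ``transfer function $\Rightarrow($SC$)$'', which is what you rely on to get $($AD$)\Rightarrow($SC$)$. The telescoping identity does give $1-\varphi(x)\varphi(y)^*=\tilde\gamma(x)(1-S(x)S(y)^*)\tilde\gamma(y)^*$, but to conclude $([1_{\mathcal{L(H)}}]-\varphi\varphi^*)*k\geq 0$ for an admissible $k$ you need $([1_n]-\sigma_\lambda\sigma_\lambda^*)*(k\otimes 1_n)\geq 0$ for each $\lambda$, and \eqref{eq:5} does \emph{not} deliver this: it only says that the \emph{compression} of $([1_n]-\sigma_\lambda\sigma_\lambda^*)*(k\otimes 1_n)$ by the rows $\psi^+_\lambda(x)$ equals $\prod(1-\psi_i\psi_i^*)^{\lambda_i}*k\geq 0$, and positivity of a compression cannot be reversed (you cannot ``divide'' by $\psi^+_\lambda\psi^{+*}_\lambda$ in the Schur-product sense). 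Indeed the paper stresses exactly this point: in the general preordering setting it is not known that a transfer function representation forces membership in $\HLHb$; that implication is only available in the ample and classical cases, where the auxiliary test functions can be replaced by genuine elements of $H_1^\infty(X,\mathcal K_{\Lambda,\mathbb C^n})$ (Theorem~\ref{thm:ext-aux-test-fns-ample-case}, used in Theorem~\ref{thm:realization_II}). So as written your chain does not establish $($AD$)\Rightarrow($SC$)$. The repair is to prove $($AD$)\Rightarrow($SC$)$ directly, without passing through $S$: insert the Kolmogorov decomposition $\Gamma=\gamma\rho(\cdot)\rho(\cdot)^*\gamma^*$ and the block form $\rho=\sum_\lambda P_\lambda\otimes(\cdot)$ into $(1-\varphi(x)\varphi(y)^*)k(x,y)$, which exhibits it as $\gamma(x)\bigl(\sum_\lambda P_\lambda\otimes(\psi^+_\lambda(x)k(x,y)\psi^+_\lambda(y)^*-\psi^-_\lambda(x)k(x,y)\psi^-_\lambda(y)^*)\bigr)\gamma(y)^*$; each summand is positive because $(\psi^+_\lambda\psi^{+*}_\lambda-\psi^-_\lambda\psi^{-*}_\lambda)*k=\prod_{\lambda_i\in\lambda}(1-\psi_i\psi_i^*)^{\lambda_i}*k\geq 0$ by admissibility of $k$. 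This keeps the $\psi^+_\lambda$-compression in place, which is precisely where your route loses the positivity.
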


\begin{proof}
  Assume that ($AD$) does not hold.  This is equivalent to
  the statement that for some finite set $F$, $[1_{\mathcal{L(H)}}] -
  \varphi\varphi^* \notin \mathcal C_{F,\mathcal H}$.  A Hahn-Banach
  separation argument gives linear functional $\nu :
  \mathcal{L}(\mathcal{H}^{|F|}) \to \mathbb C$ such that
  $\nu(\mathcal C_{F,\mathcal H}) \geq 0$, $\nu([1_{\mathcal{L(H)}}])
  = 1$ and $\nu([1_{\mathcal{L(H)}}] - \varphi\varphi^*) < 0$.  Note
  that $\nu \geq 0$ since $\mathcal C_{F,\mathcal H} \supseteq
  (\mathcal{L(H)}\otimes M_{|F|}(\mathbb C))^+$, and so in particular,
  $\nu$ is continuous.

  By the Riesz representation theorem, there exists $h = (h(x)) \in
  \mathcal H^{|F|}$ such that $\nu(M) = \ip{Mh}{h}$.  If $F' = \{x\in
  F : h(x) \neq 0\}$, then $\nu'(M) := \ip{Mh|_{F'}}{h|_{F'}}$ defines
  a linear functional separating $[1_{\mathcal{L(H)}}] -
  \varphi|_{F'}\varphi|_{F'}^*$ from $\mathcal C_{F',\mathcal H}$.  So
  without loss of generality we assume that for all $x\in F$, $h(x)
  \neq 0$.  We use this to define Hilbert spaces $\mathcal H_x$ as the
  quotient completion of $\mathcal{L(H)}$ under the inner product
  \begin{equation*}
    \ip{f(x)}{g(x)} := \tfrac{1}{\|h(x)\|^2}\ip{f(x)h(x)}{g(x)h(x)},
    \quad f(x),g(x) \in \mathcal{L(H)}.
  \end{equation*}
  Since $h(x) \neq 0$, $\mathcal H_x$ is isomorphic to $\mathcal H$.

  on $F$.  Write $1_\mathcal F$ for the function which equals
  $1_{\mathcal{L(H)}}$ at every $x\in F$.  If $p\in \mathcal F$, $p^*$
  stands for the element of $\mathcal F$ with $x$th entry $p(x)^*$.
  Also, let $\chi_x(p)$ denote the element of $\mathcal F$ with all
  entries $0$ except the $x$th, which equals $p(x)$.  In this way
  $\mathcal F$ is a unital algebra with addition and multiplication
  (written as $f\cdot g$) defined entry-wise, and unit $1_\mathcal F$.

  We can also view the (quotient completion of) $\mathcal F$ as a
  Hilbert space $\mathcal H_\mathcal F = \bigoplus_{j=1}^{|F|} \mathcal
  H_j$ with inner product
  \begin{equation*}
    \ip{f}{g} = \sum_{x\in F}
    \tfrac{1}{\|h(x)\|^2}\ip{f(x)h(x)}{g(x)h(x)}.
  \end{equation*}

  For each $x,y\in F$,
  \begin{equation*}
    \ip{k(x,y) f(x)}{g(y)} = \ip{g(y)^*f(x)h(x)}{h(y)} =
    \nu((g^*(y)f(x))_{x,y\in F})
  \end{equation*}
  defines a bounded linear operator $k(x,y) \in \mathcal L(\mathcal
  H_x,\mathcal H_y)$.  For each $x\in F$, identifying $\mathcal H_x$
  with $\mathcal H$, we have $k = (k(x,y))_{x,y\in F} \in \mathcal
  L(\mathcal H^{|F|})$ with
  \begin{equation*}
    \ip{kf}{g} = \ip{(g^*f)h}{h}.
  \end{equation*}
  Extend $k$ to a kernel from $X\times X$ to $\mathcal{L(H)}$ by
  setting $k(x,y) = 0$ if either $x$ or $y$ is not in $F$.
  
  Since $\nu \geq 0$ if follows that $k \geq 0$, and so has a
  Kolmogorov decomposition $k(x,y) = k_y^*k_x$, where $k_x: X \to
  \mathcal E$ for some Hilbert space $\mathcal E$.  We therefore can
  view $\mathcal E \otimes \mathcal H_{\mathcal F}$ as a Hilbert space
  with the inner product on elementary tensors given by
  \begin{equation*}
    \ip{k_x\otimes f}{k_y\otimes g} =
    \ip{k(x,y) f(x)}{g(y)}.
  \end{equation*}

For any $f\in \mathcal F$,
  \begin{equation*}
    \begin{split}
      0 & \leq \nu\left({\left(f(y)^*((1_{\mathcal{L(H)}}\otimes
        \psi^+_\lambda(x))(1_{\mathcal{L(H)}}\otimes
        \psi^+_\lambda(y)^*) - (1_{\mathcal{L(H)}}\otimes
        \psi^-_\lambda(x))(1_{\mathcal{L(H)}}\otimes
        \psi^-_\lambda(y)^*))f(x)\right)}_{x,y\in F}\right)\\ 
      & = \ip{{\left(f(y)^*((1_{\mathcal{L(H)}}\otimes
        \psi^+_\lambda(x))(1_{\mathcal{L(H)}}\otimes
        \psi^+_\lambda(y)^*) - (1_{\mathcal{L(H)}}\otimes
        \psi^-_\lambda(x))(1_{\mathcal{L(H)}}\otimes
        \psi^-_\lambda(y)^*))f(x)\right)}_{x,y\in F}h}{h} \\
      & = \sum_{x,y} \ip{((1_{\mathcal{L(H)}}\otimes
        \psi^+_\lambda(x))(1_{\mathcal{L(H)}}\otimes
        \psi^+_\lambda(y)^*) - (1_{\mathcal{L(H)}}\otimes
        \psi^-_\lambda(x)*)(1_{\mathcal{L(H)}}\otimes
        \psi^-_\lambda(y)^*))f(x)h(x)}{f(y)h(y)} \\
      & = \sum_{x,y} \ip{((1_{\mathcal{L(H)}}\otimes
        \psi^+_\lambda(x))k(x,y)(1_{\mathcal{L(H)}}\otimes
        \psi^+_\lambda(y)^*) - (1_{\mathcal{L(H)}}\otimes
        \psi^-_\lambda(x))k(x,y)(1_{\mathcal{L(H)}}\otimes
        \psi^-_\lambda(y)^*)f(x)}{f(y)} \\
      & = \ip{\left(\left((1_{\mathcal{L(H)}}\otimes
        \psi^+_\lambda)(1_{\mathcal{L(H)}}\otimes
        \psi^{+*}_\lambda) - (1_{\mathcal{L(H)}}\otimes
        \psi^-_\lambda)(1_{\mathcal{L(H)}}\otimes
        \psi^{-*}_\lambda)\right)*k\right) f}{f}.
    \end{split}
  \end{equation*}
  Since $k(x,y)$ is $0$ when $x$ or $y$ is not in $F$, this suffices
  to show that $k$ is an admissible kernel.

  The calculation
  \begin{equation}
    \label{eq:9}
    \begin{split}
      0 & \geq  \nu\left((1_{\mathcal{L(H)}}- \varphi(x)
        \varphi(y)^*)_{x,y\in F}\right)\\
      & = \ip{(1_{\mathcal{L(H)}}-\varphi(x)\varphi(y)^*)_{x,y\in
          F} h}{h} \\
      & = \sum_{x,y} \left[\ip{h(x)}{h(y)} - \ip{\varphi(x)\varphi(y)^*
        h(x)}{h(y)} \right] \\
      & = \sum_{x,y} \ip{(k(x,y) - \varphi(x) k(x,y) \varphi(y)^*)
        1_{\mathcal{L(H)}}}{1_{\mathcal{L(H)}}} \\
      & = \ip{\left(([1_{\mathcal{L(H)}}] - \varphi\varphi^*)*k\right)
        1_\mathcal F}{1_\mathcal F}, \\
    \end{split}
  \end{equation}
  then implies that $\varphi \notin \HLHb$.

  So far we have shown that $\varphi\in\HLHb$ implies the Agler
  decomposition holds when restricted to any finite set $F$.  A
  standard application of Kurosh's theorem (see, for
  example,~\cite{MR2389623}) then gives the existence of the Agler
  decomposition on the whole of $X$.

  Now suppose that $\varphi:X \to \mathcal{L(H)}$ and that
  ($AD$) holds; that is there is a positive kernel $\Gamma
  \in \mathbb K^+_X(C_b(\Lambda),\mathcal{L(H)})$ such that for all
  $x,y\in X$,
  \begin{equation*}
    1-\varphi(x)\varphi(y)^* = \Gamma(x,y) \left( E^+(x)E^+(y)^* - 
      E^-(x)E^-(y)^* \right).
  \end{equation*}
  Fix a finite set $F \subset X$ and an admissible kernel
  $k\in\mathcal K_{\Lambda,\mathcal H}$.  Then on $F\times F$,
  \begin{equation*}
    \begin{split}
        &(1-\varphi(x)^*\varphi(y))*(k(x,y)) \\
        =& \left((\Gamma(x,y))* \left( E^+(x)E^+(y)^* - E^-(x)E^-(y)^*
          \right)\right)*(k(x,y))) \\
        =& \left(\gamma(x)(Z^+(x)Z^+(y)^* - Z^-(x)Z^-(y)^*)
          \gamma(y)^*\right)*(k(x,y)) \\
        =& \left(\gamma(x)\sum_{\lambda\in\Lambda} P_\lambda \otimes
          \left(E^+(x)(\lambda)E^+(x)(\lambda)^* -
          E^-(x)(\lambda)E^-(x)(\lambda)^*\right)\gamma(y)^*
      \right)*(k(x,y)) \\
        =& \left(\gamma(x)\sum_{\lambda\in\Lambda} P_\lambda \otimes
          \left(\psi_\lambda^+(x)k(x,y)\psi_\lambda^+(x)^* -
          \psi_\lambda^-(x)k(x,y)\psi_\lambda^-(x)^*\right)\gamma(y)^*
      \right), \\
    \end{split}
  \end{equation*}
  which is positive, since for each $\lambda\in\Lambda$,
  \begin{equation*}
      \left(\psi_\lambda^+(x)k(x,y)\psi_\lambda^+(x)^* -
          \psi_\lambda^-(x)k(x,y)\psi_\lambda^-(x)^*\right) 
      = \left( \psi_\lambda^+\psi_\lambda^{+*} - 
        \psi_\lambda^-\psi_\lambda^{-*} \right)*k
      \geq  0.
  \end{equation*}
  Thus $\varphi \in\HLHb$, and so ($SC$) and ($AD$)
  are equivalent.

\goodbreak

  Assuming ($AD$), we show that $\varphi$ has a transfer
  function representation by employing a standard lurking isometry
  argument.  To begin with, we have a Kolmogorov decomposition $\Gamma
  = \gamma^*\gamma$, and by Proposition~\ref{prop:factorization}, for
  all $\lambda\in\Lambda$, the entries of $Z^\pm$ satisfy
  $Z^\pm_\lambda(x) = \rho(E^\pm_\lambda(x))$ entry-wise (that is, for
  all $\lambda$).  Hence
  \begin{equation}
    \label{eq:14}
    1-\varphi(x)\varphi(y)^* = \gamma(x)Z^+(x)Z^+(y)^*\gamma(y)^* -
    \gamma(x)Z^-(x)Z^-(y)^*\gamma(y)^*,    
  \end{equation}
  and so bringing negative terms over to opposite sides of the
  equation, we have by the usual arguments the existence of a unitary
  $U = \begin{pmatrix} A&B \\ C&D \end{pmatrix} \in \mathcal
  L(\mathcal E \oplus \mathcal H)$ such that
  \begin{equation*}
    \begin{pmatrix}
      Z^+(x)^* \gamma(x)^* \\ \varphi(x)^*
    \end{pmatrix}
    =
    \begin{pmatrix}
      A^* & C^* \\ B^* & D^*
    \end{pmatrix}
    \begin{pmatrix}
      Z^-(x)^* \gamma(x)^* \\ 1
    \end{pmatrix}
    = 
    \begin{pmatrix}
      A^* & C^* \\ B^* & D^*
    \end{pmatrix}
    \begin{pmatrix}
      S(x)^* Z^+(x)^* \gamma(x)^* \\ 1
    \end{pmatrix},
  \end{equation*}
  where $S(x) = Y(x)Z^-(x)$.  Hence $C = \gamma(x)Z^+(x)(1 - S(x)A)$,
  and so
  \begin{equation}
    \label{eq:13}
    \gamma(x) = C(1 - S(x)A)^{-1}Y(x).
  \end{equation}
  Plugging this into the second equation,
  \begin{equation*}
    \begin{split}
      \varphi(x) &= D + \gamma(x)Z^+(x)S(x)B \\
      &= D + C(1 - S(x)A)^{-1}Y(x)Z^+(x)S(x)B \\
      &= D + C(1 - S(x)A)^{-1}P(x)S(x)B \\
      &= D + C(1 - S(x)A)^{-1}S(x)B \\
      &= D + CS(x)(1 - AS(x))^{-1}B; \\
    \end{split}
  \end{equation*}
  that is, $\varphi$ has a transfer function representation.
\end{proof}

\subsection{Realizations for ample preorderings}
\label{subsec:real-ample-preord}

The realization theorem Theorem~\ref{thm:realization_I} has the
drawback that having a transfer function representation is not enough
to ensure that a function is in $\HLHb$.  There are circumstances in
which this difficulty can be circumvented.  For example, if $\Psi$
contains $d$ test functions over a set $X$ and $\Lambda =
\{e_j\}_{j=1}^d$ (which is the classical setting), we have the result
presented in Theorem~\ref{thm:classic-real-thm}.

The reason we get so much more with the classical realization theorems
is that the auxiliary test functions are the same as the test
functions and these are by construction in our algebra.  As it
happens, with ample preorderings, something similar occurs
(Theorem~\ref{thm:ext-aux-test-fns-ample-case}).  One consequence of
the next theorem is that in the setting of ample preorderings, $\HLH$
inherits its norm from the transfer function algebra
$\mathcal{T}(X,\Lambda, \mathcal H)$, and in fact the two are equal,
thus strengthening Lemma~\ref{lem:contr-aux-t-fns-are-B-reps} in this
context.

\goodbreak

\begin{theorem}[Realization theorem, II]
  \label{thm:realization_II}
  Suppose $\Psi = \{\psi_1,\ldots \psi_d\}$ is a collection of test
  functions and $\Lambda$ is an ample preordering.  The following are
  equivalent:
  \begin{enumerate}
  \item[$($SC$\,)$] $\varphi\in \HLHb$;
  \item[$($AD1$\,)$] There exist positive kernels
    $\Gamma_\lambda:X\times X\to \mathcal{L(H)}$ so that for all
    $x,y\in X$
    \begin{equation*}
      1_{\mathcal H} - \varphi(x)\varphi(y)^* = 
    \sum_{\lambda\in\Lambda}\Gamma_\lambda(x,y)
    \prod_{\ell_j\in\lambda} ([1] - \psi_{\ell_j}(x)\psi_{\ell_j}(y)^*)
    \end{equation*}
  \item[$($AD2$\,)$] There exist positive kernels
    ${\tilde\Gamma}_\lambda : X\times X\to \mathcal{L}({\mathbb
      C}^{|\lambda|},\mathcal H)$ so that for all $x,y\in X$
    \begin{equation*}
      1_{\mathcal H} - \varphi(x)\varphi(y)^*= 
      \sum_{\lambda\in\Lambda}{\tilde\Gamma}_\lambda(x,y)
      ([1_{\mathbb C^{|\lambda|}}] -
      \sigma_\lambda(x)\sigma_\lambda(y)^*)
    \end{equation*}
  \item[$($TF$\,)$] There is a colligation $\Sigma = (U,\rho,\mathcal
    E)$ so that $\varphi=W_\Sigma$;
  \item[$($vN-a$\,)$] For every representation of $\HLH$ which is
    strictly contractive on auxiliary test functions or which is
    contractive on auxiliary test functions and either strongly or
    weakly continuous, $\|\pi(\varphi)\| \leq 1$.
  \end{enumerate}
\end{theorem}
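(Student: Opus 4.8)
\emph{Plan of proof.} The plan is to derive all five equivalences from the abstract realization theorem, Theorem~\ref{thm:realization_I}, by feeding in the extra structure that Theorem~\ref{thm:ext-aux-test-fns-ample-case} supplies for ample preorderings. The equivalences $\textup{(SC)}\Leftrightarrow\textup{(AD1)}\Leftrightarrow\textup{(AD2)}$ and the implication $\textup{(SC)}\Rightarrow\textup{(TF)}$ should come essentially for free from Theorem~\ref{thm:realization_I}; the real work is $\textup{(TF)}\Rightarrow\textup{(SC)}$ together with the von~Neumann clause. For $\textup{(SC)}\Leftrightarrow\textup{(AD1)}$: Theorem~\ref{thm:realization_I} renders $\textup{(SC)}$ as the decomposition $1-\varphi(x)\varphi(y)^*=\Gamma(x,y)(E^+(x)E^+(y)^*-E^-(x)E^-(y)^*)$ with $\Gamma$ completely positive, and by Proposition~\ref{prop:factorization} together with the block form $\rho(f)=\sum_\lambda P_\lambda\otimes f(\lambda)$ of representations of $C_b(\Lambda)$ this becomes, via \eqref{eq:14}, $1-\varphi(x)\varphi(y)^*=\gamma(x)\left(\sum_\lambda P_\lambda\otimes D(x,y)(\lambda)\right)\gamma(y)^*$ with the scalars $D(x,y)(\lambda)=\prod_j(1-\psi_j(x)\psi_j(y)^*)^{\lambda_j}$; setting $\Gamma_\lambda(x,y)=(\gamma(x)P_\lambda)(\gamma(y)P_\lambda)^*$ yields $\textup{(AD1)}$. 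Conversely $\textup{(AD1)}\Rightarrow\textup{(SC)}$ is immediate from the definition of $\KLH$, since for each $\lambda\in\Lambda$ the product $\prod_{\lambda_i\in\lambda}([1]-\psi_i\psi_i^*)^{\lambda_i}$ is positive against every admissible kernel. The passage $\textup{(AD1)}\Leftrightarrow\textup{(AD2)}$ rests on the identity of \eqref{eq:7} combined with $\psi^+_\lambda\sigma_\lambda=\psi^-_\lambda$, namely $\prod_j(1-\psi_j(x)\psi_j(y)^*)^{\lambda_j}=\psi^+_\lambda(x)(1_{n_\lambda}-\sigma_\lambda(x)\sigma_\lambda(y)^*)\psi^+_\lambda(y)^*$ with $n_\lambda=2^{|\lambda|-1}$: absorbing the row $\psi^+_\lambda$ into a Kolmogorov factor of $\Gamma_\lambda$ converts $\textup{(AD1)}$ into $\textup{(AD2)}$, and since $\Lambda$ is ample we may by Theorem~\ref{thm:ext-aux-test-fns-ample-case} take $\sigma_\lambda\in H^\infty_1(X,{\mathcal K}_{\Lambda,\mathbb C^{n_\lambda}})$, so that $\textup{(AD2)}$ in turn yields $\textup{(SC)}$ directly.

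For $\textup{(SC)}\Rightarrow\textup{(TF)}$ I would again quote Theorem~\ref{thm:realization_I}: the decomposition $\textup{(AD)}$ drives the lurking-isometry construction there, which concludes with \eqref{eq:13} and the transfer function formula, producing a colligation $\Sigma=(U,\rho,\mathcal E)$ with $\varphi=W_\Sigma$. The substantive implication is $\textup{(TF)}\Rightarrow\textup{(SC)}$, and here ampleness is indispensable. Write $\varphi=W_\Sigma=D+CS(x)(1-AS(x))^{-1}B$ with $S=S_{\Lambda,\rho}=\sum_\lambda P_\lambda\otimes\sigma_\lambda$ as in Lemma~\ref{lem:factoring_Z-}. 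The usual transfer function computation from the unitarity of $U$ gives, needing nothing about $S$ beyond $\|S(x)\|<1$, an identity $1-\varphi(x)\varphi(y)^*=a(x)(1-S(x)S(y)^*)a(y)^*$ with $a(x)=C(1-AS(x))^{-1}$; hence $1-\varphi\varphi^*$ is a positive kernel Schur-multiplied against $1-S(x)S(y)^*=\sum_\lambda P_\lambda\otimes(1-\sigma_\lambda(x)\sigma_\lambda(y)^*)$. Since each $\sigma_\lambda\in H^\infty_1(X,{\mathcal K}_{\Lambda,\mathbb C^{n_\lambda}})$ for an ample preordering (Theorem~\ref{thm:ext-aux-test-fns-ample-case}), we have $(1-\sigma_\lambda\sigma_\lambda^*)*(1_{n_\lambda}\otimes k)\geq0$ for every $k\in\KLH$ --- equivalently, against the dominant admissible kernel, the Szeg\H{o}-type kernel of Lemma~\ref{lem:adm_kernels_for_ample_po} (or Theorem~\ref{thm:ext-aux-test-fns-ample-case} in the general ample case), to which all admissible kernels are subordinate --- and therefore $([1]-\varphi\varphi^*)*k\geq0$ for all $k\in\KLH$, i.e.\ $\varphi\in\HLHb$. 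This is precisely the step that fails for a general preordering, where $\sigma_\lambda$ need not be a multiplier of the Szeg\H{o} kernel and the lurking-isometry relation requires range consistency conditions not automatic from unitarity of $U$; I expect it to be the main obstacle.

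It remains to treat $\textup{(vN-a)}$. For $\textup{(TF)}\Rightarrow\textup{(vN-a)}$ --- which with $\textup{(SC)}\Rightarrow\textup{(TF)}$ also gives $\textup{(SC)}\Rightarrow\textup{(vN-a)}$ --- I would repeat the argument of Theorem~\ref{thm:vN-for-transfer-fns}: given $\varphi=W_\Sigma$ and a representation $\pi$ of $\HLH$ that is strictly contractive on auxiliary test functions, or contractive on them and strongly or weakly continuous, the restriction $\pi_0=\pi|_{\mathcal{L(H)}}$ is a completely contractive $*$-representation of the $C^*$-algebra $\mathcal{L(H)}$; applying $\pi$ entrywise to $U$ and to the scaled colligations $\Sigma_r$ ($r\in(0,1)$) exhibits $\pi(W_{\Sigma_r})$ as the transfer function of a contractive colligation, so $\|\pi(W_{\Sigma_r})\|\leq1$, and letting $r\uparrow1$ using the continuity hypothesis (as in Corollary~\ref{cor:weakly-ctns_B_reps_are_cc}) gives $\|\pi(\varphi)\|\leq1$. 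Finally, for $\textup{(vN-a)}\Rightarrow\textup{(SC)}$, apply $\textup{(vN-a)}$ to the multiplier representation $\pi_s$ of $\HLH$ on the reproducing kernel Hilbert space associated with the dominant admissible kernel of Lemma~\ref{lem:adm_kernels_for_ample_po}/Theorem~\ref{thm:ext-aux-test-fns-ample-case}, $\pi_s(\varphi)=M_\varphi$: this is an isometric unital representation (by that dominance), it is weakly continuous because multiplier representations on reproducing kernel spaces are, and it is contractive on auxiliary test functions since each $\sigma_\lambda$ is a contractive multiplier of this kernel (Theorem~\ref{thm:ext-aux-test-fns-ample-case}). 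Then $\textup{(vN-a)}$ forces $\|M_\varphi\|=\|\varphi\|_{\HLH}\leq1$, i.e.\ $\varphi\in\HLHb$, closing the cycle. All the weight of the theorem thereby falls on $\textup{(TF)}\Rightarrow\textup{(SC)}$, the rest being routine given Theorems~\ref{thm:realization_I}, \ref{thm:ext-aux-test-fns-ample-case} and~\ref{thm:vN-for-transfer-fns}.
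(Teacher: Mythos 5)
Your proposal is correct, and for every step except the last it follows the paper's own route: $(\textup{SC})\Leftrightarrow(\textup{AD1})$ from Theorem~\ref{thm:realization_I}, $(\textup{AD1})\Rightarrow(\textup{AD2})$ by the factorization $\prod_j(1-\psi_j(x)\psi_j(y)^*)^{\lambda_j}=\psi^+_\lambda(x)\bigl(1-\sigma_\lambda(x)\sigma_\lambda(y)^*\bigr)\psi^+_\lambda(y)^*$, $(\textup{AD2})\Rightarrow(\textup{TF})$ by the lurking isometry, $(\textup{TF})\Rightarrow(\textup{SC})$ by the unitarity computation plus Theorem~\ref{thm:ext-aux-test-fns-ample-case} (you correctly locate this as the place ampleness is used), and $(\textup{TF})\Rightarrow(\textup{vN-a})$ by the scaling and polynomial-approximation argument of Theorem~\ref{thm:vN-for-transfer-fns}; note only that in the strictly contractive case there is no continuity hypothesis with which to let $r\uparrow1$, and none is needed --- there one interprets $\pi(\varphi)$ directly through the formula with $\pi(S)$, as the paper does. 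Where you genuinely diverge is $(\textup{vN-a})\Rightarrow(\textup{SC})$: you apply the hypothesis once, to the multiplier representation on the reproducing kernel Hilbert space of the dominant kernel of Lemma~\ref{lem:adm_kernels_for_ample_po} (or its analogue from Theorem~\ref{thm:ext-aux-test-fns-ample-case}), which is indeed weakly continuous, contractive on the auxiliary test functions by Theorem~\ref{thm:ext-aux-test-fns-ample-case}, and isometric because in the ample setting every admissible kernel is subordinate to that kernel, so $\|M_\varphi\|\leq1$ is exactly $\varphi\in\HLHb$. This is valid and appreciably shorter than the paper's argument, which instead localizes to a finite $F\subset X$, inflates the test functions to $r\psi$ with $r>1$, and perturbs an arbitrary admissible kernel by $tk+(1-t)k_0$ so as to work only with \emph{strictly} contractive representations. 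The trade-off is that the paper thereby proves the finer statement that the strict form of $(\textup{vN-a})$ alone --- quantified over the smallest of the three classes of representations --- already forces $(\textup{SC})$, a refinement it reuses in Theorem~\ref{thm:realization_III} via Lemma~\ref{lem:contr-aux-t-fns-are-B-reps}; your argument consumes the weakly continuous clause instead, which suffices for the theorem as literally stated (and the cycle in Theorem~\ref{thm:realization_III} can still be closed through the weakly continuous Brehmer clause), so nothing breaks, but the extra strength of the paper's version is lost.
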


\begin{proof}
  The proof that (\textsl{SC})$\Leftrightarrow$(\textsl{AD1}) follows
  directly from Theorem~\ref{thm:realization_I}.  A straightforward
  factorization shows that (\textsl{AD1})$\Rightarrow$(\textsl{AD2}).
  The standard lurking isometry argument as in that theorem then gives
  (\textsl{AD2})$\Rightarrow$(\textsl{TF}).  That the weak form of
  (\textsl{vN-a}) implies the strong form which then implies the
  strict form is also immediate.

  By Theorem~\ref{thm:ext-aux-test-fns-ample-case}, the auxiliary test
  functions are in $H_1^\infty({\mathcal K}_{\Lambda,\mathbb C^n})$
  for appropriate $n$ and these functions generates the same
  collection of admissible kernels.  Using the fact that the operator
  in the colligation for $\varphi$ is unitary, the usual sort of
  calculation shows that if $\varphi$ has a transfer function
  representation, then for $G = C(1-SA)^{-1}$,
  \begin{equation*}
    \left((1-\varphi(x)\varphi(y)^*k(x,y)\right)
    = \left(G(x)\left((1-S(x)S(y)^*)k(x,y)\right)G(y)^*\right) \geq
    0,
  \end{equation*}
  and so (\textsl{TF})$\Rightarrow$(\textsl{SC}).

  If $\pi$ is a representation of $\HLH$ which is strictly contractive
  on auxiliary test functions, and if we interpret
  \begin{equation*}
    \pi(\varphi) = D\otimes 1 + C \otimes \pi(S) (1\otimes 1 - A\otimes
    \pi(S))^{-1} B\otimes 1,\qquad
    \pi(S) = \sum_{\lambda\in\Lambda} P_\lambda \otimes
    \pi(\sigma_\lambda),
  \end{equation*}
  then a nearly identical argument to that of the last paragraph shows
  that $1-\pi(\varphi)\pi(\varphi)^* \geq 0$; that is, $\pi$ is a
  contractive representation of $\HLH$.  Hence (\textsl{TF}) implies
  the strict form of (\textsl{VN-a}).  On the other hand, if $\pi$ is
  only assumed to be weakly continuous, then we argue as in
  \cite{MR2389623}, first scaling $A$ and $C$ to $rA$ and $rC$ for
  $r<1$ and calling the resulting functions $\varphi_r$, then
  approximating $\varphi_r$ by polynomials in $S$ as at the end of the
  proof Theorem~\ref{thm:trfr-fns-span-alg}.  The representation is
  easily seen to be contractive on these polynomials.  Since these can
  be chosen to converge pointwise to $\varphi_r$, the representation
  is contractive on $\varphi_r$ for all $r$.  Taking $r$ to $1$ we
  have pointwise convergence to $\varphi$, and so once again,
  $1-\pi(\varphi)\pi(\varphi)^* \geq 0$.

  Finally, suppose that the strict form of (\textsl{vN-a}) holds.  Fix
  $\varphi\in\HLH$.  We show that for $k\in \mathcal
  K_{\Lambda,\mathcal H}$, $([1_{\mathcal H}]- \varphi\varphi^*)*k
  \geq 0$, and so (\textsl{SC}) holds as well.  For this, it suffices
  to prove that for fixed $k\in \mathcal K_{\Lambda,\mathcal H}$,
  $([1_{\mathcal H}] - \varphi\varphi^*)*k \geq 0$ when we restrict to
  a finite subset $F\subset X$.  So fix a finite set $F\subset X$.  On
  $F$ replace the test functions $\Psi$ by $\Psi_r = \{\psi_r = r\psi
  : \psi\in\Psi\}$, where $r>1$ is sufficiently close to $1$ so that
  $\sup_{\psi_r\in\Psi_r} |\psi_r(x)| < 1$ for all $x\in F$ (this is
  possible since $F$ is finite).  Define in the same way as before,
  $\mathcal K^r_{\Lambda,\mathcal H}$ on $F$ with these test
  functions, as well as $H^\infty(F,\mathcal K^r_{\Lambda,\mathcal H})$.

  Since for $k_r\in \mathcal K^r_{\Lambda,\mathcal H}$ and
  $\lambda\in\Lambda$,
  \begin{equation}
    \label{eq:2}
    \prod_{\ell\in\lambda} ([1_{\mathcal H}]-\psi_\ell\psi_\ell^*)*k_r
    = \tfrac{1}{r^2} \left((r^2-1)[1_{\mathcal H}] + [1_{\mathcal H}]
      - \psi_{r,\ell}\psi_{r,\ell}^*\right)*k_r \geq 0,
  \end{equation}
  so it follows that $\mathcal K^r_{\Lambda,\mathcal H} \subseteq
  \mathcal K_{\Lambda,\mathcal H}$ on $F$.  Hence $H^\infty(F,\mathcal
  K_{\Lambda,\mathcal H}):= H^\infty(X,\mathcal K_{\Lambda,\mathcal
    H})|_F \subseteq H^\infty(F,\mathcal K^r_{\Lambda,\mathcal H})$.
  Hence, we view $H^\infty(F,\mathcal K_{\Lambda,\mathcal H})$ as a
  subalgebra of $H^\infty(F,\mathcal K^r_{\Lambda,\mathcal H})$
  endowed with the $H^\infty(F,\mathcal K^r_{\Lambda,\mathcal
    H})$-norm, which we write as $\|\cdot\|_r$.  For $k_r\in \mathcal
  K^r_{\Lambda,\mathcal H}$, the map $\pi$ taking $f\in
  H^\infty(F,\mathcal K_{\Lambda,\mathcal H})$ to $M_f$ on $H^2(k_r)$
  defines a strictly contractive representation of
  $H^\infty(F,\mathcal K_{\Lambda,\mathcal H})$, since \eqref{eq:2}
  implies that for $\psi\in\Psi$, $\|\psi\|_r \leq 1/r$.  It follows
  then that under the assumption that the strict form of
  (\textsl{vN-a}) holds for $\varphi$,
  \begin{equation}\label{eq:4}
    ([1_{\mathcal H}] - \varphi^*\varphi)*k_r \geq 0, \qquad k_r\in
    \mathcal K^r_{\Lambda,\mathcal H}.
  \end{equation}

  Fix $k\in\mathcal K_{\Lambda,\mathcal H}$.  For any $r>1$ as above,
  the kernel $k_0$ defined by
  \begin{equation*}
    k_0(x,y) =
    \begin{cases}
      k(x,x) & y=x;\\
      0 & \text{otherwise,}
    \end{cases}
  \end{equation*}
  defines a kernel in $\mathcal K^r_{\Lambda,\mathcal H}$.  Fix
  $t\in(0,1)$.  We show that $tk-(1-t)k_0 \in \mathcal
  K^r_{\Lambda,\mathcal H}$ for any $r>1$ and sufficiently close
  to~$1$.  It will follow then that for such $r$ and for all
  $\lambda\in\Lambda$,
  \begin{equation}
    \label{eq:3}
    \begin{split}
      & \prod_{\ell\in\lambda} ([1]-\psi_{r,\ell}\psi_{r,\ell}^*)*[tk
      - (1-t)k_0] \\
      = & \prod_{\ell\in\lambda} \left[t([1]-\psi_\ell^*\psi_\ell)*k +
        (1-t)([1]-\psi_{r,\ell}\psi_{r,\ell}^*)*k_0 - t(r^2-1)
        \psi_\ell\psi_\ell^* * k\right] \geq 0.
    \end{split}
  \end{equation}
  We do this by proving that in this case,
  \begin{equation*}
    (1-t)\prod_{\ell\in\lambda} ([1]-\psi_{r,\ell}\psi_{r,\ell}^*)*k_0
    \geq  t(r^2-1)\prod_{\ell\in\lambda} \psi_\ell\psi_\ell^* * k.
  \end{equation*}

  Assume that $k(x,x) > 0$ for all $x\in F$.  We can do this since if
  $k(x,x) = 0$ for some $x$, then $k(x,y)=k(y,x) = 0$ for all $y$.  So
  without loss of generality we restrict to $F'\subset F$ with the
  property that the diagonal entries of $k$ are strictly positive.
  Since $F$ is finite $(1-t)\prod_{\ell\in\lambda}
  ([1]-\psi_{r,\ell}^*\psi_{r,\ell})*k_0$ is a diagonal matrix with
  diagonal entries of the form $(1-t)\prod_{\ell\in\lambda}
  (1-r^2|\psi_\ell(x)|)k(x,x) \geq (1-t)\prod_{\ell\in\lambda}
  (1-r^2|\psi_\ell(x)|)\epsilon > 0$ for some $\epsilon > 0$.

  Hence it suffices to show that for $1_F$ the usual identity matrix
  over $\mathbb C^{|F|}$,
  \begin{equation*}
    \frac{\epsilon(1-t)}{t} \prod_{\ell\in\lambda}
    \frac{1-r^2|\psi(x)|}{r^2-1} 1_F\otimes 1_{\mathcal H} \geq
    \prod_{\ell\in\lambda} \psi_\ell\psi_\ell*k.
  \end{equation*}
  However, as $r\searrow 1$, $\dfrac{1-r^2|\psi(x)|}{r^2-1} \nearrow
  \infty$.  Thus for $r$ sufficiently close to $1$, \eqref{eq:3} is
  satisfied.

  Since \eqref{eq:4} holds for all sufficiently small $r > 1$ and
  $k_r\in \mathcal K^r_{\Lambda,\mathcal H}$, it follows that for all
  all such $r$ and $k \in \mathcal K^r_{\Lambda,\mathcal H}$ and
  $t\in(0,1)$,
  \begin{equation*}
    ([1_{\mathcal H}] - \varphi\varphi^*)*[tk + (1-t)k_0] \geq 0,
  \end{equation*}
  and so taking $t\nearrow 1$, we have $([1_{\mathcal H}] -
  \varphi\varphi^*)*k \geq 0$ on $F\times F$.  The set $F\subset X$
  was arbitrary, and so we conclude that $\varphi\in H^\infty(X,\mathcal
  K_{\Lambda,\mathcal H})$.
\end{proof}

\begin{corollary}
  \label{cor:HLH_equals_TLH_w_ample_po}
  With $\Lambda$ an ample preordering, $\HLH$ is isometrically
  isomorphic to\break $\mathcal{T}(X,\Lambda, \mathcal H)$ and $\ALH$
  is isometrically isomorphic to $\mathcal{T}^A(X,\Lambda, \mathcal
  H)$.
\end{corollary}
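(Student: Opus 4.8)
The plan is to read Corollary \ref{cor:HLH_equals_TLH_w_ample_po} off the equivalence $(\textsl{SC})\Leftrightarrow(\textsl{TF})$ of Theorem \ref{thm:realization_II}, keeping careful track of the norm and of the algebra structure. That equivalence says exactly that, as sets of $\mathcal{L(H)}$-valued functions on $X$, the Schur-Agler ball $\HLHb$ coincides with $\mathcal{T}_1(X,\Lambda,\mathcal H)$, the set of transfer functions of $C_b(\Lambda)$-unitary colligations. The isometric isomorphism asserted in the statement will simply be the identity map on functions; what remains is to verify that it respects the full vector-space, norm and algebra structures, and then to restrict it to the ``disk-algebra'' subalgebras.

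First I would pass from the unit balls to the whole spaces. By construction $\mathcal{T}(X,\Lambda,\mathcal H)=\bigcup_{c\ge 0}c\,\mathcal{T}_1(X,\Lambda,\mathcal H)$, while $\HLH=\bigcup_{c\ge 0}c\,\HLHb$ since $\HLHb$ is the norm-closed unit ball of the normed space $\HLH$; as $\mathcal{T}_1(X,\Lambda,\mathcal H)=\HLHb$, these two sets of functions agree. For the norms I would compare the two definitions directly: for $W\ne 0$ the norm \eqref{eq:12} is $\|W\|=\inf\{c>0:W/c\in\mathcal{T}_1(X,\Lambda,\mathcal H)\}$, i.e. the Minkowski gauge of $\mathcal{T}_1(X,\Lambda,\mathcal H)$, whereas the Schur-Agler norm is the Minkowski gauge of its closed unit ball $\HLHb$; since the two balls coincide, so do the two norms (and $\|0\|=0$ on both sides, $0$ arising from the colligation with $U=0$; the convexity of $\mathcal{T}_1(X,\Lambda,\mathcal H)$ from Theorem \ref{thm:trfr-fns-span-alg} makes this transparent). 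Finally, in both algebras addition and multiplication are pointwise and the unit is $1_X$ — for $\mathcal{T}(X,\Lambda,\mathcal H)$ this is part of Theorem \ref{thm:trfr-fns-span-alg} — so the identity map is a unital algebra homomorphism; being a norm-preserving bijection, it is the desired isometric isomorphism $\HLH\to\mathcal{T}(X,\Lambda,\mathcal H)$.

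For the second assertion I would simply restrict this isomorphism. Under the identification just established, $\ALH$ is the set of elements of $\HLH=\mathcal{T}(X,\Lambda,\mathcal H)$ that extend continuously to $\overline{X}$, equipped with the inherited norm, and this is by definition $\mathcal{T}^A(X,\Lambda,\mathcal H)$; since $\overline{X}$ is the same topological space on both sides, the identity map restricts to an isometric isomorphism $\ALH\to\mathcal{T}^A(X,\Lambda,\mathcal H)$.

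The corollary is thus almost immediate once Theorem \ref{thm:realization_II} is available; the only step demanding any attention is reconciling the definition \eqref{eq:12} of the transfer-function norm, an infimum over scalings $W=cW_\Sigma$, with the Schur-Agler norm, and this is precisely where the identification $\mathcal{T}_1(X,\Lambda,\mathcal H)=\HLHb$ (together with $0\in\mathcal{T}_1(X,\Lambda,\mathcal H)$ and its convexity) does the work.
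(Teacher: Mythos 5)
Your argument is correct and is exactly the route the paper intends: the corollary is stated without proof immediately after Theorem~\ref{thm:realization_II}, the point being that $(\textsl{SC})\Leftrightarrow(\textsl{TF})$ identifies $\HLHb$ with $\mathcal{T}_1(X,\Lambda,\mathcal H)$, so the two norms, being the Minkowski gauges of the same ball, coincide, and the identity map (restricted to functions extending continuously to $\overline{X}$ for the second claim) is the isometric isomorphism. Your care in reconciling the scaling definition \eqref{eq:12} with the Schur--Agler norm is exactly the only point needing attention, and you handle it correctly.
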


\begin{corollary}
  \label{cor:n-contr-gives-cc-for-ample}
  Let $\Lambda$ be an ample preordering over $d$ test functions.  Then
  any strictly contractive or weakly continuous contractive
  representation of $\HLH$ which is $2^{d-1}$-contractive is
  completely contractive.  Likewise, any $2^{d-1}$-contractive
  representation of $\ALH$ is completely contractive.
\end{corollary}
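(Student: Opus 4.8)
\emph{Plan.} I would deduce the corollary from the ample-case realization theorem (Theorem~\ref{thm:realization_II}), its identification of $\HLH$ and $\ALH$ with the transfer function algebras (Corollary~\ref{cor:HLH_equals_TLH_w_ample_po}), and the complete contractivity of representations that are contractive on auxiliary test functions (Corollary~\ref{cor:weakly-ctns_B_reps_are_cc}). The only genuinely new ingredient needed is the implication: a $2^{d-1}$-contractive representation is contractive on auxiliary test functions. Everything else is then assembly.

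To prove that implication, recall from Theorem~\ref{thm:ext-aux-test-fns-ample-case} that, since $\Lambda$ is ample, each auxiliary test function $\sigma_\lambda$ ($\lambda\in\Lambda$) may be chosen so that $\sigma_\lambda\in H_1^\infty({\mathcal K}_{\Lambda,\mathbb C^{n_\lambda}})$ with $n_\lambda = 2^{|\lambda|-1}$, and, under the standing assumption that the test functions lie in $\AL$, even $\sigma_\lambda\in A({\mathcal K}_{\Lambda,\mathbb C^{n_\lambda}})$; moreover $n_\lambda\le 2^{|\lambda^m|-1} = 2^{d-1}$, the maximal element of a standard ample preordering having $|\lambda^m|=d$. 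Under the natural matrix-valued reading of these algebras, $\sigma_\lambda\otimes 1_{\mathcal{L(H)}}$ lies in the unit ball of $M_{n_\lambda}(\HLH)$, respectively $M_{n_\lambda}(\ALH)$. Since an $N$-contractive representation is automatically $n$-contractive for every $n\le N$ (compress to an $n\times n$ corner of $M_N$), a $2^{d-1}$-contractive representation $\pi$ satisfies $\|\pi^{(n_\lambda)}(\sigma_\lambda\otimes 1_{\mathcal{L(H)}})\|\le 1$ for every $\lambda\in\Lambda$; that is, $\pi$ is contractive on auxiliary test functions in the sense of Definition~\ref{def:contr-on-aux-test-fns}.

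With this the $\ALH$ statement is immediate: by Corollary~\ref{cor:HLH_equals_TLH_w_ample_po}, $\ALH\cong\mathcal{T}^A(X,\Lambda,\mathcal H)$, and a representation of $\mathcal{T}^A(X,\Lambda,\mathcal H)$ that is contractive on auxiliary test functions is completely contractive by Corollary~\ref{cor:weakly-ctns_B_reps_are_cc}. For the $\HLH$ statement, Corollary~\ref{cor:HLH_equals_TLH_w_ample_po} identifies $\HLH$ with $\mathcal{T}(X,\Lambda,\mathcal H)$; if $\pi$ is weakly continuous, Corollary~\ref{cor:weakly-ctns_B_reps_are_cc} applies verbatim. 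If instead $\pi$ is strictly contractive (which for a unital representation I read as strictly contractive on the test functions), I would first check that $\pi$, and hence every ampliation $\pi^{(m)}$, is strictly contractive on auxiliary test functions --- equivalently a strict Brehmer representation, since $\pi(\psi^+_\lambda)\pi(\psi^+_\lambda)^*\ge 1$ forces $\|\pi(\sigma_\lambda)\|<1$ to be equivalent to strictness of the Brehmer positivity. Here $2^{d-1}$-contractivity already delivers the non-strict Brehmer positivity and contractivity on auxiliary test functions (via \eqref{eq:5} and Lemma~\ref{lem:contr-aux-t-fns-are-B-reps}), and the scaling device in the proof of Theorem~\ref{thm:realization_II}, applied on a fixed finite set $F$ (replacing the test functions by slightly dilated ones), upgrades these to strict inequalities. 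One then applies the $(\textsl{vN-a})\Rightarrow(\textsl{SC})$ direction of Theorem~\ref{thm:realization_II} to each ampliation $\pi^{(m)}$, which is still strictly contractive on auxiliary test functions, to conclude $\|\pi^{(m)}\|\le 1$ for all $m$; that is, $\pi$ is completely contractive.

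The step I expect to be the main obstacle is the bookkeeping inside the displayed implication: one must verify that $M_n$ of the Agler algebra really coincides with the $\mathbb C^n\otimes\mathcal H$-valued Agler algebra, so that $\sigma_\lambda\otimes 1_{\mathcal{L(H)}}$ genuinely belongs to the unit ball of $M_{n_\lambda}(\HLH)$ at precisely the ampliation appearing in Definition~\ref{def:contr-on-aux-test-fns}, and that the strictly-contractive / weakly-continuous hypotheses are inherited by all ampliations. The only other delicate point is the strictly-contractive case of $\HLH$, where $2^{d-1}$-contractivity has to be promoted to \emph{strict} contractivity on the auxiliary test functions; this is exactly the passage effected by the scaling argument in the proof of Theorem~\ref{thm:realization_II}, so it requires no new idea, but it is the one place the argument is not a pure formality.
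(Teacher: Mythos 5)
Your main line is exactly the paper's: the whole of the published proof is the observation that, by Theorem~\ref{thm:ext-aux-test-fns-ample-case}, each auxiliary test function $\sigma_\lambda$ lies in the closed unit ball of the $n_\lambda\times n_\lambda$ matrices over $\HLH$ (over $\ALH$ when the test functions are in $\AL$), with $n_\lambda=2^{|\lambda|-1}\le 2^{d-1}$, so a $2^{d-1}$-contractive representation is contractive on auxiliary test functions in the sense of Definition~\ref{def:contr-on-aux-test-fns}; one then finishes with Theorem~\ref{thm:realization_II}, Corollary~\ref{cor:HLH_equals_TLH_w_ample_po} and Corollary~\ref{cor:weakly-ctns_B_reps_are_cc} applied to all ampliations. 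Your treatment of the $\ALH$ case and of the weakly continuous $\HLH$ case is correct and coincides with this.

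The strictly contractive branch of your argument, however, has a genuine gap. First, the asserted equivalence between strictness of the Brehmer positivity and $\|\pi^{(n_\lambda)}(\sigma_\lambda\otimes 1_{\mathcal{L(H)}})\|<1$ conflates $\pi^{(n_\lambda)}(\sigma_\lambda)$ with the canonical factor $G_\lambda$ obtained from $\pi(\psi^+_\lambda)G_\lambda=\pi(\psi^-_\lambda)$; the paper explicitly warns (after Definition~\ref{def:contr-on-aux-test-fns}) that these need not agree, and only the implication ``strict on $\sigma_\lambda$ $\Rightarrow$ strict Brehmer'' is immediate. Second, strict contractivity on the test functions together with $2^{d-1}$-contractivity does not yield strict Brehmer positivity, so there is nothing to ``upgrade'': taking $T_1=T_2=2^{-1/2}\begin{pmatrix}0&1\\0&0\end{pmatrix}$ and $T_3=\cdots=T_d=0$ gives commuting strict contractions (a completely contractive, hence $2^{d-1}$-contractive, representation) whose hereditary product $\prod_j(1-T_jT_j^*)$ equals $\mathrm{diag}(0,1)$, positive but singular. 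Third, the scaling device in the proof of Theorem~\ref{thm:realization_II} is not a tool for promoting a given representation to strictness: there the test functions are dilated on a finite set in order to manufacture new, strictly contractive multiplication representations on $H^2(k_r)$, and this does nothing to the fixed representation $\pi$. The reading of ``strictly contractive'' consistent with the paper's one-line proof is strict contractivity on the auxiliary test functions (Definition~\ref{def:contr-on-aux-test-fns}), i.e.\ the hypothesis appearing in $($vN-a$\,)$ of Theorem~\ref{thm:realization_II}; under that reading no upgrading is needed, each ampliation inherits the property since $\pi^{(mn_\lambda)}(\sigma_\lambda\otimes 1)=\pi^{(n_\lambda)}(\sigma_\lambda\otimes 1)\otimes 1_m$, and the rest of your assembly goes through verbatim.
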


\begin{proof}
  This follows from the last two theorems since a representation which
  is $2^{d-1}$-contractive is contractive on auxiliary test functions.
\end{proof}

\subsection{Agler-Pick interpolation}
\label{subsec:agler-pick-interp}

It is now standard practice to apply the realization theorem to Pick
type interpolation problems.

\goodbreak

\begin{theorem}[Agler-Pick interpolation]
  \label{thm:agler-pick_interpolation}
  Let $\Lambda$ be an ample preordering, $X_0 \subseteq X$.  For each
  $x\in X_0$, let $a_x, b_x \in \mathcal{L(H)}$.  The following are
  equivalent:
  \begin{enumerate}
  \item There exists $\varphi\in \HLHb$ such that for all $x\in X_0$,
    $b_x = \varphi(x) a_x$;
  \item There exists a positive kernel $\,\Gamma:X_0\times X_0\to
    C_b(\Lambda,\mathcal{L(H)})$ so that for all $x,y\in X_0$
    \begin{equation*}
      \left(((a_x a_y^* - b_x b_y^*)\otimes 1_n)*k_\sigma(x,y)\right)
      \geq 0,
    \end{equation*}
    where $k_\sigma = 1_\mathcal{L(H)} \otimes (1 -
    \sigma_{\lambda^m}(x) \sigma_{\lambda^m}(y)^*)^{-1}$ and $n =
    2^{|\lambda|-1}$.
  \end{enumerate}
\end{theorem}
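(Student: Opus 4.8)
The plan is to prove this exactly as a ``relative'' version of the ample-case Realization Theorem, following the template of Theorem~\ref{thm:Agler-Pick-interp-polydisk}: produce $\varphi$ first on $X_0$ via an Agler decomposition and a lurking-isometry argument, then observe that the resulting transfer function is automatically defined on all of $X$. Since $\Lambda$ is ample, by Theorem~\ref{thm:ext-aux-test-fns-ample-case} a kernel is admissible if and only if it is subordinate to $k_\Lambda(x,y)=(1-\sigma_{\lambda^m}(x)\sigma_{\lambda^m}(y)^*)^{-1}$, so the displayed positivity in (2) against $k_\sigma=1_{\mathcal{L(H)}}\otimes k_\Lambda$ is equivalent to $(aa^*-bb^*)*k\ge 0$ for \emph{every} admissible $k$ restricted to $X_0$, and (via a Kolmogorov decomposition and Proposition~\ref{prop:factorization}) also equivalent to the existence of a positive $\Gamma\in\mathbb K^+_{X_0}(C_b(\Lambda),\mathcal{L(H)})$ with $a_xa_y^*-b_xb_y^* = \Gamma(x,y)(E^+(x)E^+(y)^*-E^-(x)E^-(y)^*)$; I will use whichever form is convenient and note that this settles the ambiguity in how (2) is phrased.

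For (1)$\Rightarrow$(2) the work is routine: given $\varphi\in\HLHb$ with $b_x=\varphi(x)a_x$, the last assertion of Theorem~\ref{thm:ext-aux-test-fns-ample-case} gives $([1_{n}\otimes 1_{\mathcal{L(H)}}]-(1_n\otimes\varphi)(1_n\otimes\varphi)^*)*(k_\Lambda\otimes 1_{\mathcal{L(H)}})\ge 0$; the kernel $(x,y)\mapsto a_xa_y^*$ on $X_0$ is positive, and conjugating (Schur-multiplying) the above inequality by it while using $b=\varphi a$ converts $[1_{\mathcal{L(H)}}]-\varphi\varphi^*$ into $aa^*-bb^*$, yielding the positivity against $k_\sigma$; factoring out the corresponding positive $\Gamma$ is then immediate. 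The substance is (2)$\Rightarrow$(1). Fix a finite $F\subseteq X_0$ and argue, \emph{verbatim as in the first half of the proof of Theorem~\ref{thm:realization_I} but with the pair $(1_X,\varphi)$ replaced by $(a,b)$}, that $aa^*-bb^*$ restricted to $F$ lies in the cone $\mathcal C_{F,\mathcal H}$: if not, a Hahn--Banach separation produces a functional $\nu$ with $\nu(\mathcal C_{F,\mathcal H})\ge 0$ and $\nu(aa^*-bb^*)<0$; $\nu\ge 0$ because $\mathcal C_{F,\mathcal H}$ contains the positive cone (Lemma~\ref{lem:cone_closed}), Riesz gives $h=(h(x))$ with $\nu(M)=\ip{Mh}{h}$, restricting to $F'=\{x\in F: a_x^*h(x)\neq 0\}$ removes degeneracy, and from $\nu$ one builds (as in Theorem~\ref{thm:realization_I}) an admissible kernel $k$ on $F'$ for which the identity \eqref{eq:9} — with $\varphi$ and $1_\mathcal{F}$ replaced by $b$ and $a$ — shows $\ip{((aa^*-bb^*)*k)a}{a}=\nu(aa^*-bb^*)<0$, contradicting (2). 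A Kurosh inverse-limit argument patches the resulting $\Gamma_F$ into a single positive $\Gamma$ on $X_0$ with the Agler decomposition above.

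Having the decomposition on $X_0$, Kolmogorov-decompose $\Gamma=\gamma^*\gamma$, introduce $Z^\pm$, $S$, $Y$, $P$ from Section~\ref{subsec:representations}, rearrange the decomposition into $a_xa_y^*+\gamma(x)Z^-(x)Z^-(y)^*\gamma(y)^* = b_xb_y^*+\gamma(x)Z^+(x)Z^+(y)^*\gamma(y)^*$, and run the lurking-isometry argument exactly as in Theorem~\ref{thm:realization_I} to obtain a unitary colligation $\Sigma=(U,\rho,\mathcal E)$, $U=\begin{pmatrix}A&B\\C&D\end{pmatrix}$, with $\gamma(x)Z^+(x)(1-S(x)A)=a_xC$; substituting back produces $\varphi:=W_\Sigma$ satisfying $b_x=\varphi(x)a_x$ on $X_0$. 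Crucially $S$, and therefore $W_\Sigma$, is defined on \emph{all} of $X$, so $\varphi\in\mathcal T(X,\Lambda,\mathcal H)$, and since $\Lambda$ is ample, Corollary~\ref{cor:HLH_equals_TLH_w_ample_po} (equivalently the (TF)$\Rightarrow$(SC) implication of Theorem~\ref{thm:realization_II}) gives $\varphi\in\HLHb$, completing (2)$\Rightarrow$(1). The main obstacle is the Hahn--Banach / kernel-reconstruction step: checking that the separating functional is positive and continuous, controlling the degeneracy of $a$ by passing to $F'$, and verifying that the kernel built from $\nu$ is genuinely admissible and detects the strict inequality — i.e.\ faithfully reproducing the computations of the first half of Theorem~\ref{thm:realization_I} with $a,b$ in place of $1_X,\varphi$. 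Everything downstream (Kurosh patching, the lurking isometry, the global extension off $X_0$, and the appeal to the ample-case realization theorem to land in $\HLHb$) is a direct transcription of arguments already in the paper.
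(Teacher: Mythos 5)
Your proposal is correct and follows essentially the same route as the paper, which proves the theorem exactly by reworking the first part of the realization theorem with the pair $(a,b)$ in place of $(1,\varphi)$ to obtain a transfer function $W$ with $b_x = W(x)a_x$ on $X_0$, observing that $W$ is defined on all of $X$, and then invoking the ample-case realization theorem to conclude $W\in\HLHb$. Your additional details (the cone separation with Kurosh patching, the lurking isometry, and the routine converse direction) are just the fleshed-out version of the same argument the paper cites.
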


\begin{proof}
  The proof follows the first part of the proof of the realization
  theorem, giving a transfer function $W$ such that $b_x = W(x) a_x$
  for all $x\in X_0$.  This transfer function is well defined for all
  $x \in X$, and hence $W$ extends to $\varphi\in \HLHb$.
\end{proof}

In the case of the bidisk, the function $\sigma$ will be a $2\times 2$
matrix valued inner function, and though heavily constrained, it will
not be uniquely determined.  For practical purposes it would be very
interesting to know a concrete example of a choice of $\sigma$.

Taking $b_x = \sqrt{\delta}$ for all $x\in X_0=X$ in
Theorem~\ref{thm:agler-pick_interpolation} gives the so-called
\textsl{Toeplitz-corona theorem}.  We need a special case of this,
stated in the following lemma.

\begin{lemma}
  \label{lem:Z+inv_in_Hinf}
  Let $\Lambda$ be an ample preordering.  For $\lambda\in\Lambda$,
  there exist $\omega_\lambda$ with entries in $\HL$ such that for all
  $x\in X$, $\psi^+_\lambda(x) \omega_\lambda(x) = 1$.  Consequently,
  given a unitary colligation $(U,\mathcal E,\rho)$, there exists $Y$
  with entries in $\HL$ such that for all $x$, $Z^+(x)Y(x) = 1$.
\end{lemma}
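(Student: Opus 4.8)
The plan is to prove the lemma essentially by inspection, because the first coordinate of $\psi^+_\lambda$ is already the constant function $1$. Recall from the construction of the auxiliary test functions that the entries of $\psi^+_\lambda$ are the sub-products $\psi^{\lambda'}$ of $\psi^\lambda$ with $|\lambda'|$ even, arranged in increasing $\leq_\ell$ order, and that the first entry (coming from $\lambda' = 0$) is $\psi^0 = 1_X$. So I would take $\omega_\lambda$ to be the constant column $e_1\in\mathbb C^{2^{|\lambda|-1}}$: its entries are the constants $1$ and $0$, all of which lie in $\HL$, and $\psi^+_\lambda(x)\omega_\lambda(x) = \psi^+_{\lambda,1}(x) = 1$ for every $x\in X$. (For $|\lambda|=1$ this is just the already-noted fact that $\psi^+_\lambda\equiv 1$, with $\omega_\lambda = 1$.)

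For completeness I would also record the conceptual route suggested by the surrounding discussion: the statement is a special case of the Toeplitz--corona theorem, i.e.\ of Theorem~\ref{thm:agler-pick_interpolation} with the target function identically $1$ and the interpolation datum built from the entries of $\psi^+_\lambda$. The only hypothesis to verify there is the positivity condition, which reduces to $\bigl(\psi^+_\lambda{\psi^+_\lambda}^* - [1]\bigr)*k \geq 0$ for every admissible $k$; and this is immediate, since
\begin{equation*}
  \psi^+_\lambda(x){\psi^+_\lambda(y)}^* - 1
  \;=\; \sum_{0\neq\lambda'\leq\lambda,\ |\lambda'|\text{ even}}
    \psi^{\lambda'}(x)\overline{\psi^{\lambda'}(y)}
\end{equation*}
is a finite sum of rank-one positive kernels, so its Schur product with the positive kernel $k$ is again positive. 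Either way one obtains a suitable $\omega_\lambda$; the elementary choice makes clear that ampleness of $\Lambda$ is not actually needed for the first assertion, while the corona route additionally supplies a norm bound on $\omega_\lambda$ if that is wanted downstream.

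The second assertion follows by assembling these fibrewise. Given a unitary colligation $(U,\mathcal E,\rho)$, write $Z^+(x) = \sum_\lambda P_\lambda\otimes\psi^+_\lambda(x)$ as in Lemma~\ref{lem:factoring_Z-}, with the $P_\lambda$ mutually orthogonal projections summing to the identity, and set $Y(x) := \sum_\lambda P_\lambda\otimes\omega_\lambda(x)$. Since $P_\lambda P_\mu = \delta_{\lambda\mu}P_\lambda$ and $\psi^+_\lambda(x)\omega_\lambda(x)=1$, we get $Z^+(x)Y(x) = \sum_\lambda P_\lambda\otimes 1 = 1$, and the entries of $Y$ are among the entries of the $\omega_\lambda$, hence in $\HL$; the only bookkeeping point is to match the target space $\bigoplus_\lambda\ran P_\lambda$ of $Z^+(x)$ with the source space of $Y(x)$, which is exactly the block structure already in place. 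There is no genuine obstacle here: the content of the lemma is simply the explicit normalisation built into $\psi^+_\lambda$.
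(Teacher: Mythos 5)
Your proposal is correct, and your primary argument is genuinely more elementary than the paper's. The paper proves the lemma by exactly the Toeplitz--corona route you sketch as your second option: it replaces the leading entry $1$ of $\psi^+_\lambda$ by $0$ to get ${\tilde\psi}^+_\lambda$, notes that $(\psi^+_\lambda\psi^{+*}_\lambda - [1])*k_\sigma = ({\tilde\psi}^+_\lambda{\tilde\psi}^{+*}_\lambda)*k_\sigma \geq 0$, and then applies Theorem~\ref{thm:agler-pick_interpolation} (after padding the non-square data with rows of zeros, a bookkeeping point you omit), obtaining $\omega_\lambda$ in the closed unit ball of $H^\infty(X,\mathcal K_{\Lambda,\mathbb C^{2^{|\lambda|-1}}})$. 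Your main argument instead just takes the constant column $e_1$, exploiting the normalisation that the first entry of $\psi^+_\lambda$ is identically $1$; this does satisfy the statement as written, and it is adequate for the lemma's downstream uses: Corollary~\ref{cor:gamma_limit_of_HLH_fns} and Theorem~\ref{thm:realization_III} use only that $Z^+(x)Y(x)=1$ and that the entries of $Y$ lie in $\HL$ with a uniform norm bound, and a constant contraction has Schur--Agler norm at most $1$ (the kernel $[1]-cc^*$ is a constant positive kernel), so your choice loses nothing on that score --- indeed your remark that the corona route "additionally supplies a norm bound" slightly undersells your own construction. You are also right that ampleness is not needed for the trivial solution, whereas the paper's corona argument does use the ample structure (through $k_\sigma$ and Theorem~\ref{thm:ext-aux-test-fns-ample-case}); what that route buys is the genuinely function-theoretic statement one would need if $\psi^+_\lambda$ did not happen to contain the constant entry, but for the lemma as stated your one-line choice is valid, and your block-diagonal assembly $Y(x)=\sum_\lambda P_\lambda\otimes\omega_\lambda(x)$ is precisely how the paper intends the second assertion to follow from the first.
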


\begin{proof}
  Recall from the definition in
  subsection~\ref{subsec:auxil-test-funct}, $\psi^+_\lambda : X \to
  \mathbb C^{2^{|\lambda|-1}}$ with entries which are products of test
  functions and hence in $\HLb$ and the first entry equal to $1$.  If
  we replace this $1$ by $0$ and call the resulting function
  ${\tilde\psi}^+_\lambda$, then we see that for any admissible kernel
  $k$,
  \begin{equation*}
    (\psi^+_\lambda\psi^{+*}_\lambda - [1])*k_\sigma =
    ({\tilde\psi}^+_\lambda{\tilde\psi}^{+*}_\lambda) * k_\sigma \geq
    0,
  \end{equation*}
  where $k_\sigma$ is as in the statement of
  Theorem~\ref{thm:agler-pick_interpolation}.

  Now apply Theorem~\ref{thm:agler-pick_interpolation} to get a
  function in $\omega_\lambda \in H^\infty_1(X,\mathcal K_{\Lambda,
    \mathbb C^{2^{|\lambda|-1}}})$.  It might be objected that the
  $\psi^\pm_\lambda$s are not square matrices, but this can be
  rectified by padding with rows of zeros.  The definition of $Y$ in
  terms of the $\omega_\lambda$s then yields the final statement of
  the theorem.
\end{proof}

\begin{corollary}
  \label{cor:gamma_limit_of_HLH_fns}
  Let $\Lambda$ be an ample preordering, $(U,\mathcal E,\rho)$ a
  unitary colligation.  Then for $S$ and $Y$ chosen as in
  Theorem~\ref{thm:ext-aux-test-fns-ample-case} and
  Lemma~\ref{lem:Z+inv_in_Hinf},
  \begin{equation*}
    \gamma(x):= C(1-S(x)A)^{-1}Y(x)
  \end{equation*}
  is the pointwise limit functions in $\HLH$.
\end{corollary}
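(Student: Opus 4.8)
The plan is to expand the resolvent $(1-S(x)A)^{-1}$ into a Neumann series and approximate $\gamma$ by its truncations. First I would record that, because $\Lambda$ is ample, Theorem~\ref{thm:ext-aux-test-fns-ample-case} allows the auxiliary test functions $\sigma_\lambda$ to be taken in $H_1^\infty(X,{\mathcal K}_{\Lambda,\mathbb C^{n_\lambda}})$ with $n_\lambda = 2^{|\lambda|-1}$; compressing the matrix inequality $([1_{n_\lambda}] - \sigma_\lambda\sigma_\lambda^*)*(k_s\otimes 1_{n_\lambda}) \ge 0$ against coordinate vectors of $\mathbb C^{n_\lambda}$ and discarding the remaining (Schur-positive) terms shows that every scalar entry of $\sigma_\lambda$ lies in $\HLb$. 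Similarly, by Lemma~\ref{lem:Z+inv_in_Hinf} the operator $Y(x) = \sum_\lambda P_\lambda\otimes\omega_\lambda(x)$ is assembled from functions $\omega_\lambda$ whose entries lie in $\HL$. Thus, in the evident $\mathcal{L(E)}$-valued version of $\HLH$, both $S = S_{\Lambda,\rho}$ and $Y$ belong to $\HLH$, being block-diagonal families of functions in the relevant scalar algebras.

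Next I would invoke Lemma~\ref{lem:factoring_Z-}, which gives $\|S(x)\| < 1$ for every $x\in X$, together with $\|A\| \le 1$ (since $U$ is unitary), to get $\|S(x)A\| \le \|S(x)\| < 1$ and hence the norm-convergent expansion $(1-S(x)A)^{-1} = \sum_{n\geq 0}(S(x)A)^n$ at each fixed $x$. Setting $\gamma_N(x) := \sum_{n=0}^N C(S(x)A)^n Y(x)$, the tail estimate $\|\gamma(x) - \gamma_N(x)\| \le \|C\|\,\|Y(x)\|\sum_{n>N}\|S(x)\|^n$ tends to $0$ as $N\to\infty$ for each $x$, so $\gamma_N \to \gamma$ pointwise in norm. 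Each $\gamma_N$ is a finite sum of products of the constant operators $C$ and $A$ with the functions $S$ and $Y$; since $\HLH$ is a Banach algebra that contains the constant $\mathcal{L}(\cdot)$-valued functions, $\gamma_N$ again lies in $\HLH$ (in the operator-valued sense). Combining, $\gamma = \lim_N \gamma_N$ is exhibited as a pointwise limit of functions in $\HLH$.

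The point that needs the most care is simply that $\gamma$ itself is typically \emph{not} in $\HLH$: the bound $(1-\|S(x)\|)^{-1}$ on $(1-S(x)A)^{-1}$ can blow up as $x$ runs out toward $\overline{X}$, so one genuinely obtains only a pointwise limit rather than membership. Everything else is routine once the entries of $S$ and $Y$ have been located in the scalar algebras via Theorem~\ref{thm:ext-aux-test-fns-ample-case} and Lemma~\ref{lem:Z+inv_in_Hinf}; the only mild bookkeeping is to phrase the $\mathcal{L(E)}$- and $\mathcal{L}(\mathcal E,\mathcal H)$-valued versions of the algebras, which are precisely the ``obvious modifications'' already used elsewhere in the paper.
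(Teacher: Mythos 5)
Your argument is correct and is essentially the paper's proof: both approximate $(1-S(x)A)^{-1}$ by polynomials in $S$, which, multiplied by $C$ and $Y$, lie in the (operator-valued version of) $\HLH$ because the entries of $S$ and $Y$ do, and then pass to a pointwise limit. The only difference is that you use the Neumann partial sums directly, whereas the paper first scales $A$ and $C$ to $rA$, $rC$, approximates, and then lets $r\to 1$; since only pointwise convergence is claimed, your one-step limit using $\|S(x)A\|\le\|S(x)\|<1$ at each fixed $x$ is perfectly adequate.
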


\begin{proof}
  By the now standard arguments whereby we scale $A$ and $C$ to $rA$
  and $rC$, $0<r<1$, approximate $rC(1-S(x)rA)^{-1}$ by polynomials in
  $S$ and then take limits, the result follows since the entries of
  $S$ and $Y$ are in $\HLH$.
\end{proof}

\subsection{Brehmer representations again}
\label{subsec:brehm-repr-again}

Using the last corollary, we can now include a statement concerning
Brehmer representations to the realization theorem for ample
preorderings.

\goodbreak

\begin{theorem}[Realization theorem, III]
  \label{thm:realization_III}
  Suppose $\Psi = \{\psi_1,\ldots \psi_d\}$ is a collection of test
  functions and $\Lambda$ is an ample preordering.  The following are
  equivalent:
  \begin{enumerate}
  \item[$($SC$\,)$] $\varphi\in \HLHb$;
  \item[$($AD1$\,)$] There exist positive kernels
    $\Gamma_\lambda:X\times X\to \mathcal{L(H)}$ so that for all
    $x,y\in X$
    \begin{equation*}
      1-\varphi(x)\varphi(y)^*= 
    \sum_{\lambda\in\Lambda}\Gamma_\lambda(x,y)
    \prod_{\ell_j\in\lambda} ([1] - \psi_{\ell_j}(x)\psi_{\ell_j}(y)^*)
    \end{equation*}
  \item[$($AD2$\,)$] There exist positive kernels
    ${\tilde\Gamma}_\lambda : X\times X\to \mathcal{L}({\mathbb
      C}^{|\lambda|},\mathcal H)$ so that for all $x,y\in X$
    \begin{equation*}
      1-\varphi(x)\varphi(y)^*= 
      \sum_{\lambda\in\Lambda}{\tilde\Gamma}_\lambda(x,y)
      ([1_{\mathbb C^{|\lambda|}}] -
      \sigma_\lambda(x)\sigma_\lambda(y)^*)
    \end{equation*}
  \item[$($TF$\,)$] There is a unitary colligation $\Sigma =
    (U,\rho,\mathcal E)$ so that $\varphi=W_\Sigma$;
  \item[$($vN-B$\,)$] For every strict / strongly continuous / weakly
    continuous Brehmer representation $\pi$ of\break $\HLH$,
    $\|\pi(\varphi)\| \leq 1$;
  \item[$($vN-a$\,)$] For every representation of $\HLH$ which is
    strictly contractive on auxiliary test functions or which is
    contractive on auxiliary test functions and either strongly or
    weakly continuous, $\|\pi(\varphi)\| \leq 1$.
  \end{enumerate}
\end{theorem}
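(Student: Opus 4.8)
The plan is to obtain Theorem~\ref{thm:realization_III} by splicing the new condition (vN-B) into the chain of equivalences already furnished by Theorem~\ref{thm:realization_II}. Since (SC)$\Leftrightarrow$(AD1)$\Leftrightarrow$(AD2)$\Leftrightarrow$(TF)$\Leftrightarrow$(vN-a) is precisely that theorem, it suffices to establish two implications: (vN-B)$\Rightarrow$(vN-a) and, say, (TF)$\Rightarrow$(vN-B). The first is immediate from Lemma~\ref{lem:contr-aux-t-fns-are-B-reps}: every representation that is contractive on auxiliary test functions is a Brehmer representation, and the strict / strongly continuous / weakly continuous versions correspond, so the hypothesis of (vN-B) quantifies over a strictly larger class of representations than that of (vN-a). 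Hence (vN-B) is formally stronger, and in particular implies (vN-a).

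For (TF)$\Rightarrow$(vN-B) I would argue as follows. Fix $\varphi = W_\Sigma \in \HLHb$ with $\Sigma = (U,\mathcal E,\rho)$ a unitary colligation, let $\lambda^m$ be the unique maximal element of the ample preordering $\Lambda$ and $n = 2^{|\lambda^m|-1}$, and let $\pi$ be a strict (respectively strongly / weakly continuous) Brehmer representation of $\HLH$. The Brehmer positivity $\prod_i\bigl(1-\pi(\psi_i\otimes 1_{\mathcal{L(H)}})\pi(\psi_i\otimes 1_{\mathcal{L(H)}})^*\bigr) \geq 0$ yields, exactly as in the discussion preceding Theorem~\ref{thm:vN-for-transfer-fns}, a contraction $G$ with $\pi(\psi^+_{\lambda^m})G = \pi(\psi^-_{\lambda^m})$; declaring $\pi(\sigma_{\lambda^m}\otimes 1_{\mathcal{L(H)}}) := G$ (on the appropriate ampliation) renders $\pi$ contractive on auxiliary test functions in the sense of Definition~\ref{def:contr-on-aux-test-fns}, since by Theorem~\ref{thm:ext-aux-test-fns-ample-case} the single auxiliary test function $\sigma_{\lambda^m}$ already generates all admissible kernels in the ample case. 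Viewing $\pi$ as a representation of the transfer function algebra $\mathcal{T}(X,\Lambda,\mathcal H)$ via Corollary~\ref{cor:HLH_equals_TLH_w_ample_po}, Theorem~\ref{thm:vN-for-transfer-fns} and Corollary~\ref{cor:weakly-ctns_B_reps_are_cc} then show that the representation so obtained is completely contractive, whence $\|\pi(W_\Sigma)\| \leq 1$. Adding (vN-B) to the list leaves all remaining implications among the stated conditions already available, completing the proof.

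The delicate point — which I expect to be the main obstacle — is that the value $G$ just assigned to $\sigma_{\lambda^m}$ must be reconciled with the value $\pi^{(n)}(\sigma_{\lambda^m})$ forced by $\pi$ being an algebra homomorphism, because by Theorem~\ref{thm:ext-aux-test-fns-ample-case} $\sigma_{\lambda^m}$ genuinely lies in $H^\infty(X,\mathcal{K}_{\Lambda,\mathbb C^n})$, on which $\pi^{(n)}$ is already defined; this is exactly the ``curious'' phenomenon flagged after Lemma~\ref{lem:contr-aux-t-fns-are-B-reps}. I would resolve it by a weak-density argument: by Theorem~\ref{thm:trfr-fns-span-alg} the operator polynomials in the test functions are weakly dense in $\HLH \cong \mathcal{T}(X,\Lambda,\mathcal H)$, and a strict / strongly continuous / weakly continuous Brehmer representation is in particular weakly continuous, so $\pi$ is determined by the values $T_i = \pi(\psi_i\otimes 1_{\mathcal{L(H)}})$. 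The completely contractive representation $\pi_G$ of $\mathcal{T}(X,\Lambda,\mathcal H)$ built from the $T_i$ and $G$ by the recipe above agrees with $\pi$ on test-function polynomials and is likewise weakly continuous, so $\pi = \pi_G$ on $\HLH$; in particular $\pi^{(n)}(\sigma_{\lambda^m}) = G$, no inconsistency arises, and $\pi = \pi_G$ is completely contractive. Here Lemma~\ref{lem:Z+inv_in_Hinf} and Corollary~\ref{cor:gamma_limit_of_HLH_fns} supply the $\HLH$-valued right inverses of $Z^+$ and the pointwise approximations needed to make the transfer-function calculus for $\pi_G$ legitimate, and Theorem~\ref{thm:Brehmers-theorem} is the classical prototype of the passage from Brehmer positivity to the contraction $G$.
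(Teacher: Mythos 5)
Your reduction to Theorem~\ref{thm:realization_II} and the implication (vN-B)$\Rightarrow$(vN-a) via Lemma~\ref{lem:contr-aux-t-fns-are-B-reps} are exactly right and match the paper. The gap is in your proof that a Brehmer representation is contractive on $\varphi$. Declaring $\pi(\sigma_{\lambda^m}\otimes 1_{\mathcal{L(H)}}):=G$ does not place you in the hypotheses of Theorem~\ref{thm:vN-for-transfer-fns} or Corollary~\ref{cor:weakly-ctns_B_reps_are_cc}: those results take as input an honest representation of $\mathcal{T}(X,\Lambda,\mathcal H)$ whose ampliation, applied to the genuine function $\sigma_\lambda$, is contractive, and in the ample case $\pi^{(n)}(\sigma_{\lambda^m})$ is already forced by $\pi$ being a homomorphism on $\HLH$. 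Your proposed repair hinges on a representation $\pi_G$ ``built from the $T_i$ and $G$,'' but no such construction is available: defining $\pi_G(W_\Sigma)$ by substituting $G$ into a colligation formula is not known to be independent of the colligation realizing the same function, nor multiplicative, nor weakly continuous --- establishing these amounts to building precisely the functional calculus the theorem is meant to deliver. Moreover, the assertion that ``$\pi$ is determined by the values $T_i$'' needs bounded (in the Schur--Agler norm) pointwise approximation of every element of $\HLHb$ by test-function polynomials; in the abstract ample setting Theorem~\ref{thm:trfr-fns-span-alg} only provides bounded approximation by polynomials in $S$, i.e.\ in the auxiliary test functions themselves, so invoking it here is circular. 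Indeed the paper stresses (after Lemma~\ref{lem:contr-aux-t-fns-are-B-reps} and in the introduction) that ``Brehmer $\Rightarrow$ contractive on auxiliary test functions'' is a \emph{consequence} of this realization theorem, not an ingredient of its proof.

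The paper's actual argument avoids evaluating $\pi$ on the auxiliary test functions altogether: it proves (AD1)$\Rightarrow$(vN-B). The Agler decomposition plus a lurking isometry give a unitary $U=\begin{pmatrix} A&B\\C&D\end{pmatrix}$ with $C=\gamma(x)(Z^+(x)-Z^-(x)A)$ and $\varphi(x)=D+\gamma(x)Z^-(x)B$, and Lemma~\ref{lem:Z+inv_in_Hinf} together with Corollary~\ref{cor:gamma_limit_of_HLH_fns} show that $\gamma=C(1-SA)^{-1}Y$ is a pointwise limit of functions with entries in $\HLH$. Since the entries of $Z^\pm$ are monomials in the test functions, $\pi$ applies entrywise to everything in the identity; the Brehmer inequality is precisely $\pi(Z^+)\pi(Z^+)^*-\pi(Z^-)\pi(Z^-)^*\geq 0$, and unitarity of $U$ yields $1-\pi(\varphi)\pi(\varphi)^*=\pi(\gamma)\bigl(\pi(Z^+)\pi(Z^+)^*-\pi(Z^-)\pi(Z^-)^*\bigr)\pi(\gamma)^*\geq 0$, with weak continuity handling the passage to the limit in $\gamma$. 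So the two lemmas you cite at the end are the right tools, but they should be used to express $\gamma$ through $\HLH$ elements in this way, not to legitimize a transfer-function calculus for a hypothetical $\pi_G$; as written, your argument for (TF)$\Rightarrow$(vN-B) does not go through.
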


\begin{proof}
  The equivalence of all parts except for (\textsl{vN-B}) are the
  content of Theorem~\ref{thm:realization_II}.  Since by
  Lemma~\ref{lem:contr-aux-t-fns-are-B-reps}, representations which
  are strictly contractive on auxiliary test functions are strictly
  contractive Brehmer representations, to finish the proof it suffices
  to prove that if (\textsl{AD1}) holds, then any weakly contractive
  Brehmer representation is contractive.

  We can rewrite the statement of (\textsl{AD1}) as being that that
  there exists a positive kernel $\Gamma$ with Kolmogorov
  decomposition $\Gamma = \gamma\gamma^*$ such that for all $x,y$,
  \begin{equation*}
    1-\varphi(x)\varphi(y)^*= \gamma(x)(Z^+(x)Z^{+*}(y) -
    Z^-(x)Z^{-*}(y))\gamma^*.
  \end{equation*}
  A lurking isometry argument then gives that there is a unitary
  operator $U = \begin{pmatrix} A & B \\ C & D \end{pmatrix}$, so that
  \begin{equation*}
    C = \gamma(x) (Z^+(x) - Z^-(x)A).
  \end{equation*}
  According to Lemma~\ref{lem:Z+inv_in_Hinf}, we can choose $S$ with
  entries in $\HL$ to be strictly contractive for all $x$, and so
  $\gamma(x)Z^+(x) = C(1-S(x)A)^{-1}$.  Applying
  Lemma~\ref{lem:Z+inv_in_Hinf}, we can choose $Y$ with entries in
  $\HL$ such that for all $x$, $Z^+(x) Y(x) = 1$, and hence
  \begin{equation*}
    \gamma(x) = C(1-S(x)A)^{-1}Y(x),
  \end{equation*}
  which by Corollary~\ref{cor:gamma_limit_of_HLH_fns} as a limit of
  functions in $\HLH$.  The lurking isometry argument also gives that
  \begin{equation*}
    \varphi(x) = D + \gamma(x) Z^-(x) B.
  \end{equation*}

  Let $\pi$ be a weakly continuous Brehmer representation.  If
  $\gamma$ has entries in $\HLH$, then
  \begin{equation*}
    \pi(\varphi) = 1\otimes D + (\pi(\gamma)\pi(Z^-))\otimes B,
  \end{equation*}
  where we are using the shorthand notation of ``$\pi(\gamma)$'' and
  ``$\pi(Z^-)$'' for the entrywise application of $\pi$ to these
  functions.  A straightforward calculation using the fact that $U$ in
  the colligation is unitary gives
  \begin{equation*}
    1-\pi(\varphi)\pi(\varphi)^* = \pi(\gamma)(\pi(Z^+)\pi(Z^+)^* -
    \pi(Z^-)\pi(Z^-)^*)\pi(\gamma)^* \geq 0.
  \end{equation*}
  More generally, we approximate $\gamma$ by function with entries in
  $\HLH$.  Taking limits, we still find that
  $1-\pi(\varphi)\pi(\varphi)^* \geq 0$; that is, $\pi$ is contractive.
\end{proof}

We close this section with an analogue of Brehmer's theorem.

\begin{proposition}
  \label{prop:dilating_Brehmer_reps}
  Let $\pi$ be a Brehmer representation of $\ALH$ or weakly continuous
  Brehmer representation of $\HLH \subset C_b(X,\mathcal{L(H)})$.
  Then $\pi$ dilates to a $($weakly continuous$)$ Brehmer
  $*$-representation $\tilde\pi$ of $C_b(X,\mathcal{L(H)})$.
\end{proposition}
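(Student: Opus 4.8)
The plan is to convert the Brehmer hypothesis into complete contractivity by means of the realization theorem, and then run a standard Arveson--Stinespring dilation. Throughout we are in the ample setting of this subsection, so that Theorem~\ref{thm:realization_III}, Theorem~\ref{thm:ext-aux-test-fns-ample-case} and Corollary~\ref{cor:HLH_equals_TLH_w_ample_po} are available.

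First I would argue that $\pi$ is completely contractive. The Brehmer conditions pass to ampliations, since $\pi^{(n)}(\psi_j\otimes 1)=\pi(\psi_j\otimes 1)\otimes 1_n$, so each $\pi^{(n)}$ is again a Brehmer representation (weakly continuous, when $\pi$ is) of $H^\infty(X,\mathcal K_{\Lambda,\mathcal H\otimes\mathbb C^n})$, respectively of $A(X,\mathcal K_{\Lambda,\mathcal H\otimes\mathbb C^n})$. Applying the implication (SC)$\Rightarrow$(vN-B) of Theorem~\ref{thm:realization_III} for $\HLH$ --- and, for $\ALH$, the analogous statement obtained from Theorem~\ref{thm:vN-for-transfer-fns} and Corollary~\ref{cor:HLH_equals_TLH_w_ample_po} once one records, as in the proof of Theorem~\ref{thm:realization_III}, that a Brehmer representation is contractive on the auxiliary test functions, which in the ample case lie in a matrix version of $\ALH$ by Theorem~\ref{thm:ext-aux-test-fns-ample-case} --- to each $\pi^{(n)}$ and to every element of the closed unit ball then shows each $\pi^{(n)}$ is contractive; that is, $\pi$ is completely contractive.

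Next, as already recorded in the text, a completely contractive representation of $\ALH$ (resp.\ $\HLH$) extends to a $*$-representation $\tilde\pi_0$ of the concrete $C^*$-algebra $C^*(\ALH)$ (resp.\ $C^*(\HLH)$) inside $B:=C_b(X,\mathcal{L(H)})$. Regarding $\tilde\pi_0$ as a unital completely positive map into $\mathcal{L}(\mathcal K_0)$, Arveson's extension theorem yields a unital completely positive $\Psi:B\to\mathcal{L}(\mathcal K_0)$ extending it, and Stinespring's theorem a $*$-representation $\tilde\pi:B\to\mathcal{L}(\tilde{\mathcal K})$ with $\Psi(\cdot)=V^*\tilde\pi(\cdot)V$ for some isometry $V$. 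Since $C^*(\HLH)$ is a $C^*$-algebra on which $\Psi$ equals the $*$-homomorphism $\tilde\pi_0$, it lies in the multiplicative domain of $\Psi$; hence $V\mathcal K_0$ is invariant for $\tilde\pi(C^*(\HLH))$, so $\tilde\pi$ dilates $\tilde\pi_0$ and therefore dilates $\pi$. That $\tilde\pi$ is a Brehmer representation of $B$ is automatic: being a unital $*$-homomorphism it is contractive, so $\|\tilde\pi(\psi_j\otimes 1_{\mathcal{L(H)}})\|\le\sup_x\|\psi_j(x)\|<1$; and for each $\lambda$ the element $\prod_{\lambda\ni\lambda_i\ne0}\bigl(1_B-(\psi_i\otimes 1_{\mathcal{L(H)}})(\psi_i\otimes 1_{\mathcal{L(H)}})^*\bigr)^{\lambda_i}$ is a product of commuting positive elements of the commutative $C^*$-subalgebra $C_b(X)\otimes 1_{\mathcal{L(H)}}$, hence positive, and $\tilde\pi$ carries it to $\prod_{\lambda\ni\lambda_i\ne0}\bigl(1-\tilde\pi(\psi_i\otimes 1)\tilde\pi(\psi_i\otimes 1)^*\bigr)^{\lambda_i}\ge 0$, which is exactly~\eqref{eq:1}. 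In the $\ALH$ case this finishes the proof, since by Theorem~\ref{thm:trfr-fns-span-alg} $\ALH$ is the uniform closure of polynomials in the test functions, on which $\tilde\pi$ is visibly determined.

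The remaining, and by far the most delicate, point is to make $\tilde\pi$ \emph{weakly continuous} on $\HLH$, as demanded in the $\HLH$ case: a $*$-representation of $C_b(X,\mathcal{L(H)})=C(\beta X,\mathcal{L(H)})$ need not be continuous for bounded pointwise-on-$X$ convergence --- point evaluations at $\beta X\setminus X$ are counterexamples --- so here one genuinely has to use that $\pi$ is assumed weakly continuous. The plan is to route this through the transfer function calculus: by Theorem~\ref{thm:realization_III} every $\varphi$ in the unit ball of $\HLH$ equals a transfer function $W_\Sigma$, and by the final assertion of Theorem~\ref{thm:trfr-fns-span-alg} it is the pointwise limit of the unit-ball polynomials $W_{r,M}$ in $S_{\Lambda,\rho}$; since the entries of $S_{\Lambda,\rho}$ and of a right inverse of $Z^+$ lie in $\HLH$ (Lemma~\ref{lem:Z+inv_in_Hinf}), the operators $\tilde\pi(W_{r,M})$ are fixed polynomial expressions in $\tilde\pi(\psi_i\otimes 1)$ and contractions and converge strongly. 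Combining this with the dilation relation $\pi(\cdot)=P_{\mathcal K}\tilde\pi(\cdot)|_{\mathcal K}$ on $\HLH$ and the minimality of the Stinespring space --- so that it suffices to test weak convergence against vectors of the form $\tilde\pi(b)V\xi$, after reinstating the factor $c^*\varphi b$ --- should reduce weak continuity of $\tilde\pi|_{\HLH}$ to the assumed weak continuity of $\pi$; alternatively, one arranges from the outset that the $\mathcal{L}(\tilde{\mathcal K})^+$-valued spectral measure of $\tilde\pi|_{C_b(X)\otimes 1}$ is supported only where $\pi$ can detect it. Carrying out this reduction rigorously is where the real work lies; the rest is bookkeeping.
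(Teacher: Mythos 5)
Your overall strategy is close to the paper's, which disposes of this proposition in one line by citing Corollary~\ref{cor:HLH_equals_TLH_w_ample_po} (in the ample setting $\HLH$ and $\ALH$ are isometrically the transfer function algebras $\mathcal{T}(X,\Lambda,\mathcal H)$ and $\mathcal{T}^A(X,\Lambda,\mathcal H)$) together with Theorem~\ref{thm:dilation-theorem}, whose content is exactly the ``complete contractivity plus $C^*$-dilation'' package you reconstruct (via Corollary~\ref{cor:weakly-ctns_B_reps_are_cc} and Theorem~1.1 of \cite{MR2132691}). Your first step --- Brehmer implies contractive on the auxiliary test functions, which in the ample case lie in matrix-valued $\ALH$ by Theorem~\ref{thm:ext-aux-test-fns-ample-case}, hence $\pi$ is completely contractive by Theorem~\ref{thm:realization_III} and its ampliations --- is the right reduction and matches the paper's route. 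Two later steps, however, do not hold up as written.

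First, a completely contractive representation of $\ALH$ or $\HLH$ does not ``extend to a $*$-representation $\tilde\pi_0$ of $C^*(\HLH)$'' on the same space: Arveson only provides a u.c.p.\ extension $\Psi$ of $\pi$ itself, and since $a^*a\notin\HLH$ for $a\in\HLH$, there is no reason for $C^*(\HLH)$ to lie in the multiplicative domain of $\Psi$; your claim that $V\mathcal K_0$ is \emph{invariant} for $\tilde\pi(C^*(\HLH))$ is therefore unjustified. This is repairable --- $\Psi(ab)=\Psi(a)\Psi(b)$ for $a,b\in\HLH$ together with the Stinespring picture gives semi-invariance of $V\mathcal K_0$ for $\tilde\pi(\HLH)$, which is all that ``dilates'' requires --- but the argument should be stated that way. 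Second, and more seriously, the parenthetical weak continuity of $\tilde\pi$ in the $\HLH$ case is not proved: you correctly observe that a $*$-representation of $C_b(X,\mathcal{L(H)})$ need not respect bounded pointwise convergence, but you then only sketch two candidate strategies and explicitly concede that carrying out the reduction ``is where the real work lies.'' As it stands, the proposal establishes the dilation to a Brehmer $*$-representation of $C_b(X,\mathcal{L(H)})$ (the automatic positivity argument for a $*$-homomorphism is fine), but not the weak continuity claim, so it is incomplete precisely at the one point that goes beyond the standard Arveson--Stinespring machinery. (To be fair, the paper's own two-line proof is also silent on this, since Theorem~\ref{thm:dilation-theorem} as stated does not assert weak continuity of the dilation; but a blind proof cannot lean on that silence.)
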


\begin{proof}
  This follows from Corollary~\ref{cor:HLH_equals_TLH_w_ample_po} and
  Theorem~\ref{thm:dilation-theorem}.
\end{proof}

\subsection{Algebras generated by two test functions}
\label{subsec:algebr-gener-two}

Brehmer's original theorem is a dilation theorem which works over the
polydisk, but requires a special class of representations.  On the
other hand, for the $\mathbb D^2$, And\^{o}'s theorem shows that such
dilation results hold for a broader class of representations.  We
first state a bidisk version of the realization theorem.  The
emphasis here is on the equivalence of the two versions of
von~Neumann's inequality, since by
Lemma~\ref{lem:contr-aux-t-fns-are-B-reps}, the collection of 
representations which are strictly contractive on auxiliary test
functions is the smallest set of representations we consider, while
the weakly continuous Brehmer representations form the largest set.

\begin{theorem}
  \label{thm:2-var-polydisk_real}
  Let $\varphi: \mathbb D^2 \to \mathcal{L(H)}$.  The following are
  equivalent:
  \begin{enumerate}
  \item[$($SC$\,)$] $([1]-\varphi\varphi^*)*k_s \geq 0$, $k_s(z,w) =
    (1-z_1w_1)^{-1}(1-z_2w_2)^{-1}$, or equivalently $\varphi \in
    H^\infty_1(\mathbb D^2,\mathcal{L(H)})$;
  \item[$($AD1$\,)$] For every admissible kernel $k\in \{k\geq 0:
    (1-Z_jZ_j^*)*k \geq 0,\ j=1,2\}$, we have $(1-\varphi\varphi^*)*k
    \geq 0$;
  \item[$($AD2$\,)$] There exist positive kernels $\Gamma_1$, $\Gamma_2$
    such that $[1]-\varphi\varphi^* = \Gamma_1*([1]-Z_1Z_1^*)+
    \Gamma_2*([1]-Z_2Z_2^*)$, where $Z_j(z) = z_j$;
  \item[$($vN1$\,)$] $\varphi \in H^\infty(\mathbb D^2,\mathcal{L(H)})$
    and for every weakly continuous Brehmer representation $\pi$
    $($so\break $1-\pi(z_1)\pi(z_1)^*-\pi(z_2)\pi(z_2)^*+
    \pi(z_1)\pi(z_2)\pi(z_1)^*\pi(z_2)^* \geq 0)$, we have
    $\|\pi(\varphi)\| \leq 1$;
  \item[$($vN2$\,)$] $\varphi \in H^\infty(\mathbb D^2,\mathcal{L(H)})$
    and for every strictly contractive representation $\pi$ $($so
    $\|\pi(z_i)\| < 1)$, we have $\|\pi(\varphi)\| \leq 1$.
  \end{enumerate}
\end{theorem}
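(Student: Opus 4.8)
The plan is to derive the theorem by assembling realization results already in hand, using the fact that in two variables And\^o's theorem collapses the relevant Schur--Agler norms. First I would pin down the identifications. By Corollary~\ref{cor:po_for_polydisk}, for the standard ample preordering $\Lambda^a$ over $\mathbb D^2$ (largest element $(1,1)$) we have $H^\infty(\mathbb D^2,{\mathcal K}_{\Lambda^a,\mathcal H}) = H^\infty(\mathbb D^2,\mathcal{L(H)})$, with membership in the unit ball detected by the single kernel $k_s$. Since for $d=2$ the classical preordering $\Lambda^o=\{e_1,e_2\}$ is exactly the (unique) standard nearly ample preordering under $(1,1)$, Theorem~\ref{thm:ample-near-ample-equiv-polydsk} shows $\Lambda^o$ and $\Lambda^a$ are equivalent, so $H^\infty(\mathbb D^2,{\mathcal K}_{\Lambda^o,\mathcal H})$ agrees with both (this is, in effect, And\^o's theorem, Theorem~\ref{thm:Andos-theorem-for-ALH}). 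In particular $(\mathrm{SC})$ and $(\mathrm{AD1})$ are two ways of saying $\varphi$ lies in the unit ball of $H^\infty(\mathbb D^2,{\mathcal K}_{\Lambda^o,\mathcal H})$, so $(\mathrm{SC})\Leftrightarrow(\mathrm{AD1})$.

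Next, with domain $\mathbb D^2$, test functions the coordinate functions, and $\Lambda^o=\{e_1,e_2\}$, I would quote Agler's realization theorem for the polydisk (Theorem~\ref{thm:Aglers-realization}) in the case $d=2$. Its equivalence $(\mathrm{SC})\Leftrightarrow(\mathrm{AD})$ is literally $(\mathrm{AD1})\Leftrightarrow(\mathrm{AD2})$, and its equivalence $(\mathrm{SC})\Leftrightarrow(\mathrm{vN})$ --- von~Neumann's inequality over $d$-tuples of commuting \emph{strict} contractions, with $\varphi(T)$ read off from the transfer function representation --- is exactly $(\mathrm{SC})\Leftrightarrow(\mathrm{vN2})$. (The forward direction here can also be seen directly: a strictly contractive representation of $H^\infty(\mathbb D^2,\mathcal{L(H)})$ is completely contractive by the $H^\infty$ form of Theorem~\ref{thm:Andos-theorem-for-ALH}, so it is contractive on $\varphi$ whenever $\varphi$ is in the unit ball.)

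It remains to place $(\mathrm{vN1})$ in the chain, and for this I would apply Realization Theorem~III (Theorem~\ref{thm:realization_III}) to the \emph{ample} preordering $\Lambda^a$; this is legitimate since $\Lambda^a$ is ample and its Agler algebra is $H^\infty(\mathbb D^2,\mathcal{L(H)})$. That theorem gives that $(\mathrm{SC})$ is equivalent to $\|\pi(\varphi)\|\le 1$ holding for every strict / strongly continuous / weakly continuous Brehmer representation $\pi$, and since the weakly continuous class is the largest of the three, it is equivalent to the statement restricted to weakly continuous Brehmer representations. Finally, the Brehmer positivity condition attached to $\Lambda^a$ (whose minimal preordering is $\{(1,1)\}$) is the hereditary product $\prod_{i=1,2}\bigl(1-\pi(z_i)\pi(z_i)^*\bigr)\ge 0$, i.e. $1-\pi(z_1)\pi(z_1)^*-\pi(z_2)\pi(z_2)^*+\pi(z_1)\pi(z_2)\pi(z_1)^*\pi(z_2)^*\ge 0$, which is precisely the condition displayed in $(\mathrm{vN1})$. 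Hence $(\mathrm{SC})\Leftrightarrow(\mathrm{vN1})$, closing the cycle.

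The argument is essentially bookkeeping, with no new estimate required; the one point that needs care is keeping the three relevant preorderings straight --- the classical $\Lambda^o$, the standard ample $\Lambda^a$, and the standard nearly ample one, which for $d=2$ coincides with $\Lambda^o$ --- together with their attached notions of admissible kernel, auxiliary test function and Brehmer representation, and verifying that And\^o's theorem does force all the corresponding norms to coincide. I would expect the most delicate step to be confirming that $(\mathrm{vN1})$ is exactly the weakly continuous instance of the von~Neumann condition in Realization Theorem~III for $\Lambda^a$ (in particular that the Brehmer condition there unwinds to the displayed operator inequality).
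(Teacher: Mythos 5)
Your proposal is correct and matches the paper's (implicit) argument: Theorem~\ref{thm:2-var-polydisk_real} is stated there without a separate proof precisely because it is the bidisk specialization of the machinery you invoke --- Corollary~\ref{cor:po_for_polydisk}, the ample/nearly-ample equivalence of Theorem~\ref{thm:ample-near-ample-equiv-polydsk} (i.e.\ And\^o plus Agler for $d=2$), Agler's realization theorem, and Realization Theorem~\ref{thm:realization_III} applied to the standard ample preordering, whose Brehmer condition unwinds to the displayed hereditary inequality in $($vN1$)$. Your bookkeeping of the three preorderings and the identification of $($vN1$)$ and $($vN2$)$ with the corresponding von~Neumann conditions is exactly the intended route.
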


We next show how to generalize this to algebras over general domains
generated by a pair of test functions.

Let us assume that $\Psi = \{\psi_1,\psi_2\}$ is a collection of test
functions on a set $X$ and $\Lambda$ be the standard ample preordering
with maximal element $(1,1)$, while $\Lambda_0 = \{(1,0),(0,1)\}$, the
nearly ample preordering used for the standard realization theorem.
Write $\mathcal K_0$ for the set of admissible kernels associated to
$\Lambda_0$; so $k\in\mathcal K_0$ means that $k\geq 0$ and
$([1]-\psi_j\psi_j^*)*k \geq 0$, $j=1,2$.

By assumption, for each $x\in X$, $\max\{|\psi_1(x)|,|\psi_2(x)|\} <
1$ and the elements of $\Psi$ separate the points of $X$.  Hence by
Lemma~\ref{lem:id_HLH_w_pdisk_subalg}, there is an injective mapping
$\xi$ of $X$ onto a subset $\Omega$ of $\mathbb D^2$ given by
$\xi:x\mapsto z = (z_1,z_2) = (\psi_1(x),\psi_2(x))$.

\begin{theorem}
  \label{thm:2-var-realization}
  Let $\Psi = \{\psi_1,\psi_2\}$ be a collection of test functions on
  a set $X$.  Let $\Lambda$ be the ample preordering with maximal
  element $(1,1)$, $\Lambda_0$ the preordering $\{(1,0),(0,1)\}$.  For
  $\varphi: \mathbb D^2 \to \mathcal{L(H)}$, the following are
  equivalent:
  \begin{enumerate}
  \item[$($SC1$\,)$] $\varphi \in H^\infty_1(X, \mathcal
    K_{\Lambda,\mathcal{L(H)}})$, the closed unit ball of
    $H^\infty(X,\mathcal K_{\Lambda,\mathcal{L(H)}})$;
  \item[$($SC2$\,)$] $\varphi \in H^\infty_1(X, \mathcal
    K_{\Lambda_0,\mathcal{L(H)}})$, the closed unit ball of
    $H^\infty(X, \mathcal K_{\Lambda_0,\mathcal{L(H)}})$;
  \item[$($vN1$\,)$] $\varphi \in H^\infty(X, \mathcal
    K_{\Lambda,\mathcal{L(H)}})$ and for every weakly continuous
    Brehmer representation $\pi$, we have $\|\pi(\varphi)\| \leq 1$;
  \item[$($vN2$\,)$] $\varphi \in H^\infty(X, \mathcal
    K_{\Lambda_0,\mathcal{L(H)}})$ and for every strictly contractive
    representation $\pi$ $($so $\|\pi(\psi_i)\| < 1)$, we have
    $\|\pi(\varphi)\| \leq 1$.
  \end{enumerate}
  In particular, $\Lambda$ and $\Lambda_0$ are equivalent
  preorderings; that is, $H^\infty(X, \mathcal
  K_{\Lambda_0,\mathcal{L(H)}}) = H^\infty(X, \mathcal
  K_{\Lambda,\mathcal{L(H)}})$.
\end{theorem}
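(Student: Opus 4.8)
The plan is to prove the four stated equivalences and then read off that $\Lambda$ and $\Lambda_0$ generate the same algebra with the same norm. By Lemma~\ref{lem:id_HLH_w_pdisk_subalg}, applied to both preorderings, we may assume throughout that $X$ is a subset $\Omega$ of $\mathbb D^2$ with $\psi_1,\psi_2$ the coordinate functions, identifying $H^\infty(X,\mathcal K_{\Lambda,\mathcal H})$ and $H^\infty(X,\mathcal K_{\Lambda_0,\mathcal H})$ with $H^\infty(\Omega,\mathcal K^{\mathrm{pd}}_{\Lambda,\mathcal H})$ and $H^\infty(\Omega,\mathcal K^{\mathrm{pd}}_{\Lambda_0,\mathcal H})$. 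The maximal preordering of $\Lambda_0=\{(1,0),(0,1)\}$ is $\{0,(1,0),(0,1)\}$, contained in the maximal preordering $\{0,(1,0),(0,1),(1,1)\}$ of $\Lambda$, so Lemma~\ref{lem:preorderings_order_adm_kers} gives $\mathcal K_{\Lambda,\mathcal H}\subseteq\mathcal K_{\Lambda_0,\mathcal H}$, hence (\textsl{SC2})$\Rightarrow$(\textsl{SC1}) with $\|\cdot\|_{\Lambda}\le\|\cdot\|_{\Lambda_0}$. The equivalences (\textsl{SC1})$\Leftrightarrow$(\textsl{vN1}) and (\textsl{SC2})$\Leftrightarrow$(\textsl{vN2}) are then immediate from realization theorems already available: the first because $\Lambda$ is ample and the weakly continuous Brehmer condition is the weakest case of (\textsl{vN-B}) in Theorem~\ref{thm:realization_III}, which is equivalent to (\textsl{SC}); the second by the classical realization theorem Theorem~\ref{thm:classic-real-thm} applied with preordering $\{e_1,e_2\}$ and test functions $\{\psi_1,\psi_2\}$, whose (\textsl{vN1}) is exactly (\textsl{vN2}) here. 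So it remains to prove (\textsl{SC1})$\Rightarrow$(\textsl{SC2}).

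Assume (\textsl{SC1}), i.e. $\varphi\in H^\infty_1(\Omega,\mathcal K^{\mathrm{pd}}_{\Lambda,\mathcal H})$; by Lemma~\ref{lem:adm_kernels_for_ample_po} this means $([1_{\mathcal{L(H)}}]-\varphi\varphi^*)*(k_s|_\Omega)\ge 0$ for the Szeg\H{o} kernel $k_s$ of $\mathbb D^2$. The heart of the argument is to extend $\varphi$ to a Schur-class function on the full bidisk. Apply the ample Agler--Pick interpolation theorem, Theorem~\ref{thm:agler-pick_interpolation}, with $X=\mathbb D^2$, coordinate functions, standard ample preordering (so $n=2$), $X_0=\Omega$, $a_x\equiv 1_{\mathcal{L(H)}}$ and $b_x=\varphi(x)$. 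Its Pick condition is $\bigl((a_xa_y^*-b_xb_y^*)\otimes 1_2\bigr)*k_\sigma(x,y)\ge 0$ with $k_\sigma=1_{\mathcal{L(H)}}\otimes(1-\sigma_{(1,1)}(x)\sigma_{(1,1)}(y)^*)^{-1}$, and by Corollary~\ref{cor:aux-test-fns-for-polydisk} the kernel $(1-\sigma_{(1,1)}\sigma_{(1,1)}^*)^{-1}$ is subordinate to $1_2\otimes k_s$; Schur-multiplying $([1]-\varphi\varphi^*)*(k_s|_\Omega)\ge 0$ by the positive kernel witnessing this subordinacy shows the Pick condition holds. Hence there is $\tilde\varphi\in H^\infty_1(\mathbb D^2,\mathcal K_{\Lambda,\mathcal H})=H^\infty_1(\mathbb D^2,\mathcal{L(H)})$ (Corollary~\ref{cor:po_for_polydisk}) with $\tilde\varphi|_\Omega=\varphi$.

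Since $\tilde\varphi$ is in the operator-valued Schur class of the bidisk, And\^o's theorem in the form of Agler's realization theorem for $d=2$ (Theorem~\ref{thm:Aglers-realization}, or Theorem~\ref{thm:2-var-polydisk_real}) yields positive kernels $\Gamma_1,\Gamma_2$ on $\mathbb D^2\times\mathbb D^2$ with
\[
 1_{\mathcal{L(H)}}-\tilde\varphi(z)\tilde\varphi(w)^* = \Gamma_1(z,w)(1-z_1w_1^*)+\Gamma_2(z,w)(1-z_2w_2^*).
\]
Restricting this to $\Omega$ and taking the Schur product with an arbitrary $k\in\mathcal K^{\mathrm{pd}}_{\Lambda_0,\mathcal H}$ (so $k\ge 0$ and $([1]-z_jw_j^*)*k\ge 0$ for $j=1,2$), we obtain on $\Omega\times\Omega$
\[
 ([1_{\mathcal{L(H)}}]-\varphi\varphi^*)*k = \Gamma_1|_\Omega*\bigl(([1]-z_1w_1^*)*k\bigr) + \Gamma_2|_\Omega*\bigl(([1]-z_2w_2^*)*k\bigr)\ge 0,
\]
each summand being a Schur product of positive kernels. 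Thus $\varphi\in H^\infty_1(\Omega,\mathcal K^{\mathrm{pd}}_{\Lambda_0,\mathcal H})$, which is (\textsl{SC2}). Together with (\textsl{SC2})$\Rightarrow$(\textsl{SC1}) this shows the two unit balls, and hence by homogeneity the two norms, coincide, so $H^\infty(X,\mathcal K_{\Lambda_0,\mathcal{L(H)}})=H^\infty(X,\mathcal K_{\Lambda,\mathcal{L(H)}})$ isometrically and $\Lambda,\Lambda_0$ are equivalent.

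I expect the extension step to be the main obstacle. One cannot simply invoke the realization theorem for the ample preordering and work on $X$ directly, since that only supplies the \emph{multiplicative} decomposition $1-\varphi\varphi^*=\Gamma*([1]-z_1w_1^*)*([1]-z_2w_2^*)$, and $([1]-z_1w_1^*)*([1]-z_2w_2^*)*k$ is \emph{not} in general positive for $k\in\mathcal K^{\mathrm{pd}}_{\Lambda_0,\mathcal H}$ (indeed $\mathcal K_{\Lambda_0,\mathcal H}\supsetneq\mathcal K_{\Lambda,\mathcal H}$ in general, so the preorderings are not equivalent in the sense of equal kernel sets, only in the weaker sense of generating the same algebra and norm). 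What rescues the argument is that passing to an honest bidisk Schur function buys the \emph{additive} Agler decomposition of And\^o's theorem, and additivity is precisely what survives Schur multiplication by a kernel satisfying the two separate inequalities; the only delicate point in making this precise is matching the Pick condition of Theorem~\ref{thm:agler-pick_interpolation} with the single-Szeg\H{o}-kernel criterion of Lemma~\ref{lem:adm_kernels_for_ample_po}, where the subordinacy of $(1-\sigma_{(1,1)}\sigma_{(1,1)}^*)^{-1}$ to $k_s$ (Theorem~\ref{thm:ext-aux-test-fns-ample-case}, Corollary~\ref{cor:aux-test-fns-for-polydisk}) does the real work.
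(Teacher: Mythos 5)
Your overall architecture is the same as the paper's: identify $X$ with $\Omega=\xi(X)\subset\mathbb D^2$ via Lemma~\ref{lem:id_HLH_w_pdisk_subalg}, verify the hypothesis of the Agler--Pick theorem (Theorem~\ref{thm:agler-pick_interpolation}) so that $\varphi$ extends to a function in the Schur class of the full bidisk, apply And\^o/Agler for $d=2$ to get the additive decomposition, and restrict back.  The peripheral equivalences are fine: (\textsl{SC2})$\Rightarrow$(\textsl{SC1}) via Lemma~\ref{lem:preorderings_order_adm_kers}, (\textsl{SC1})$\Leftrightarrow$(\textsl{vN1}) via Theorem~\ref{thm:realization_III}, and (\textsl{SC2})$\Leftrightarrow$(\textsl{vN2}) via Theorem~\ref{thm:classic-real-thm} (your citations here are if anything more precise than the paper's appeal to Theorem~\ref{thm:realization_II}, since $\Lambda_0$ is not ample and (\textsl{vN1}) is a Brehmer condition).

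The gap is in your verification of the Pick condition.  You assert, citing Corollary~\ref{cor:aux-test-fns-for-polydisk}, that the $M_2$-valued kernel $(1_2-\sigma_{(1,1)}(z)\sigma_{(1,1)}(w)^*)^{-1}$ is subordinate to the Szeg\H{o} kernel, i.e.\ equals $k_s\cdot F$ for some positive kernel $F$; Schur-multiplying $([1]-\varphi\varphi^*)*k_s\ge 0$ by $F$ would then give the hypothesis of Theorem~\ref{thm:agler-pick_interpolation} on $\Omega$.  But that corollary says something different: it identifies which $\mathcal L(\mathcal H)$-valued kernels $k$ satisfy $([1_2]-\sigma\sigma^*)*(k\otimes 1_2)\ge 0$ (namely, those subordinate to $k_s$); it says nothing about the matrix-valued kernel $(1_2-\sigma\sigma^*)^{-1}$ itself, and no result in the paper asserts such a factorization (Theorems~\ref{thm:ext-aux-test-fns-ample-case} and~\ref{thm:agler-pick_interpolation} only claim it is a positive kernel).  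Nor is the factorization a formal consequence of $\sigma_{(1,1)}$ lying in the Schur--Agler class: for a scalar Schur--Agler function $s$ on $\mathbb D^2$ one has $(1-s(z)s(w)^*)k_s(z,w)\ge 0$, yet $(1-s(z)s(w)^*)^{-1}k_s(z,w)^{-1}$ need not be positive --- already for $s=z_1$ this kernel equals $1-z_2w_2^*$, which is not positive semidefinite.  So the subordinacy you need is a special, unproved property of $\sigma_{(1,1)}$, and your reduction of the Pick condition to positivity against $k_s|_\Omega$ does not go through as written.  The paper's proof avoids this point: it restricts the bidisk auxiliary test function $\tilde\sigma$ to $\Omega$, pulls it back to $X$, checks directly that the pullback $\sigma$ satisfies $\psi^+\sigma=\psi^-$ and $([1_2]-\sigma\sigma^*)*(k_s\otimes 1_2)\ge 0$, hence is a legitimate auxiliary test function over $X$, and then invokes the final assertion of Theorem~\ref{thm:ext-aux-test-fns-ample-case} for that choice: $\varphi\in H^\infty_1(X,\mathcal K_{\Lambda,\mathcal H})$ if and only if $\bigl(1_2\otimes([1]-\varphi\varphi^*)\bigr)*(k_\sigma\otimes 1)\ge 0$.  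Translated to $\Omega$, this is exactly the Pick condition for the bidisk's $\tilde\sigma$, and Theorem~\ref{thm:agler-pick_interpolation} then applies.  Replacing your subordinacy step by this argument (and, in the final restriction step, running the Schur products through Kolmogorov decompositions as in the proof of Theorem~\ref{thm:realization_I} rather than multiplying operator values entrywise) repairs the proof and brings it in line with the paper's.
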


\begin{proof}
  The implication ($SC2$) implies ($SC1$) is trivial, while ($vN1$) is
  equivalent to ($SC1$) and ($vN2$) is equivalent to ($SC2$) by
  Theorem~\ref{thm:realization_II}.  We therefore only need to show
  that ($SC1$) implies ($SC2$).

  Assume $\varphi \in H^\infty_1(X, \mathcal K_{\Lambda,
    \mathcal{L(H)}})$.  Using the embedding of $X$ in the polydisk
  given in Lemma~\ref{lem:id_HLH_w_pdisk_subalg}, we let
  $\tilde\varphi = \varphi\circ\xi^{-1}$.  Fix $\lambda = (1,1)$.  We
  suppress the dependence on $\lambda$ in the following, writing
  $\psi^+ = \begin{pmatrix} 1 & \psi_1\psi_2 \end{pmatrix}$, $\psi^- =
  \begin{pmatrix} \psi_1 & \psi_2 \end{pmatrix}$, and $\sigma_0 =
  \psi^{+\,*} |\psi^+|^{-2} \psi^-$.  When composed with $\xi^{-1}$
  these become ${\tilde\psi}^+ = \begin{pmatrix} 1 & z_1z_2
  \end{pmatrix}$, ${\tilde\psi}^- = \begin{pmatrix} z_1 & z_2
  \end{pmatrix}$, and ${\tilde\sigma}_0 = {\tilde\psi}^{+\,*}
  |{\tilde\psi}^+|^{-2} {\tilde\psi}^-$ on $\Omega = \xi(X) \subset
  \mathbb D^2$.

  By Theorem~\ref{thm:ext-aux-test-fns-ample-case}, ${\tilde\sigma}_0$
  extends to an auxiliary test function $\tilde\sigma \in
  H^\infty(\mathbb D^2,M_2(\mathbb C))$ corresponding to $\lambda$.
  Set $\sigma = (\tilde\sigma | \Omega)\circ \xi$.  Easy calculations
  show that $\psi^+ \sigma = \psi^-$ and $([1_2] - \sigma\sigma^*) *
  (k_s\otimes 1_2) \geq 0$, where $k_s(x,y) =
  (1-\psi_1(x)\psi_1(y))^{-1} (1-\psi_2(x)\psi_2(y))^{-1}$.  In other
  words, $\sigma$ is an auxiliary test function corresponding to
  $\lambda = (1,1)$, this time over $X$.

  Also by Theorem~\ref{thm:ext-aux-test-fns-ample-case}, $\varphi \in
  H^\infty_1(X, \mathcal K_{\Lambda,\mathcal{L(H)}})$ if and only if
  $(1_2\otimes ([1_{\mathcal{L(H)}}] - \varphi\varphi^*)) * k_\sigma
  \otimes 1_{\mathcal{L(H)}} \geq 0$, where $k_\sigma(x,y) = (1_2 -
  \sigma(x)\sigma(y))^{-1}$.  Translating over to $\Omega$, this
  becomes
  \begin{equation*}
    (1_2\otimes ([1_{\mathcal{L(H)}}] -
    {\tilde\varphi}{\tilde\varphi}^*)) * k_{\tilde\sigma}
  \otimes 1_{\mathcal{L(H)}} \geq 0,
  \end{equation*}
  on $\Omega\times\Omega$, where $k_{\tilde\sigma}$ is defined
  analogously to $k_\sigma$.  By
  Theorem~\ref{thm:agler-pick_interpolation}, $\tilde\varphi$ extends
  to a function in $H^\infty(\mathbb D^2, \mathcal{L(H)})$, and so by
  Agler's realization theorem in the case of the bidisk
  (Theorem~\ref{thm:Aglers-realization}), $\tilde\varphi \in
  H^\infty_1(\mathbb D^2, \mathcal K_{\Lambda_0,\mathcal{L(H)}})$.
  Restricting to $\Omega$ and mapping back to $X$, it follows that
  $\varphi \in H^\infty_1(X, \mathcal K_{\Lambda_0,\mathcal{L(H)}})$.
\end{proof}

\subsection{Realizations with nearly ample preorderings}
\label{subsec:real-nearly-ample-preord}

It is now possible to extend the results of the previous section to
more than two test functions.  Following the template set there, we
first do this over the polydisk, thus obtaining a generalization of
the results in~\cite{MR2502431}, and then to general algebras obtained
with a finite collection of test functions.  We begin with a
$d$-variable version of Theorem~\ref{thm:2-var-polydisk_real}.

Throughout this section we assume that we have the standard ample
preordering $\Lambda^a = \{1\}$ over the collection of test functions
is $\Psi = \{\psi_1,\dots,\psi_d\}$, where here $1$ stands for the
$d$-tuple with all values $1$, and a standard nearly ample preordering
$\Lambda^{na} = \{\lambda_1, \lambda_2\}$, where $\lambda_i$ is a
$d$-tuple with the $j_i$th entry equal to $0$ and all others equal to
$1$, and $\lambda_1 \neq \lambda_2$.  In the first case, the
collection of kernels is particularly simple.  By
Lemma~\ref{lem:adm_kernels_for_ample_po} they are all subordinate to
the so-called Szeg\H{o} kernel, $k_s$.  In the nearly ample case the
set is more complex, since then
\begin{equation*}
  k\in \mathcal K_{\Lambda^{na}} :=
  \{k\geq 0: \prod_{j\neq j_1} (1-\psi_j \psi_j^*)*k \geq 0 \text{ and }
  \prod_{j\neq j_1} (1-Z_j Z_j^*)*k \geq 0,\ j=1,2\}.
\end{equation*}
Recall from Theorem~\ref{thm:ample-near-ample-equiv-polydsk} that over
the polydisk with test functions equal to the coordinate functions,
the algebras we get from these two collections of kernels are the
same, with equal norms.  Also, since by
Lemma~\ref{lem:preorderings_are_equivalent} $\{1, \lambda_1,
\lambda_2\}$ is also an ample preordering equivalent to $\Lambda^a$,
by Theorem~\ref{thm:ext-aux-test-fns-ample-case}, for any collection
of $d$ test functions over a set $X$, we have that the auxiliary test
functions $\sigma_1 \in H^\infty(X, {\mathcal K}_{\Lambda,\mathbb
  C^{2^{d-1}}})$ and $\sigma_{\lambda_1}, \sigma_{\lambda_2} \in
H^\infty(X, {\mathcal K}_{\Lambda,\mathbb C^{2^{d-2}}})$.  Thus we
have the following generalization of the main theorem
of~\cite{MR2502431}.

\begin{theorem}
  \label{thm:d-var-polydisk_real}
  Let $\varphi: \mathbb D^d \to \mathcal{L(H)}$.  The following are
  equivalent:
  \begin{enumerate}
  \item[$($SC1$\,)$] $([1]-\varphi\varphi^*)*k_s \geq 0$, or
    equivalently, $\varphi \in H^\infty_1(\mathbb
    D^d,\mathcal{L(H)})$;
  \item[$($SC2$\,)$] For every admissible kernel $k\in \mathcal
    K_{\Lambda^{na}}$, $([1]-\varphi\varphi^*)*k \geq 0$;
  \item[$($AD$\,)$] There exists a positive kernels $\Gamma$,
    $\Gamma_1$, $\Gamma_2$ such that
    \begin{equation*}
      [1]-\varphi\varphi^* = \Gamma*\prod_{j=1}^d ([1]-Z_jZ_j^*)
      = \Gamma_1*\prod_{j\neq j_1}
        ([1]-Z_jZ_j^*) + \Gamma_2*\prod_{j\neq j_2} ([1]-Z_jZ_j^*),
    \end{equation*}
    where $Z_j(z) = z_j$;
  \item[$($TF$\,)$] There are unitary colligations $\Sigma_a$ and
    $\Sigma_{na}$ in the ample and nearly ample setting respectively,
    such that $\varphi$ has transfer function representations
    \begin{equation*}
        \varphi = W_{\Sigma_a} = W_{\Sigma_{na}};
    \end{equation*}
  \item[$($vN-a$\,)$] $\varphi \in H^\infty(\mathbb D^d,\mathcal H) =
    H^\infty(\mathbb D^d, {\mathcal K}_{\Lambda^a,\mathcal H}) =
    H^\infty(\mathbb D^d, {\mathcal K}_{\Lambda^{na},\mathcal H})$ and
    for every representation $\pi$ which is strictly contractive on
    the auxiliary test function $\sigma_1$ $($respectively, the
    auxiliary test functions $\sigma_{\lambda_1},
    \sigma_{\lambda_2})$, or contractive on these and either strongly
    or weakly continuous, we have $\|\pi(\varphi)\| \leq 1$;
  \item[$($vN-B$\,)$] $\varphi \in H^\infty(\mathbb D^d,\mathcal H) =
    H^\infty(\mathbb D^d, {\mathcal K}_{\Lambda^a,\mathcal H}) =
    H^\infty(\mathbb D^d, {\mathcal K}_{\Lambda^{na},\mathcal H})$ and
    for every representation $\pi$ which is a strict / strongly
    continuous/ weakly continuous Brehmer representation with respect
    to $\Lambda^a$ $($respectively, $\Lambda^{na})$, we have
    $\|\pi(\varphi)\| \leq 1$.
  \end{enumerate}
\end{theorem}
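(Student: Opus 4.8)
The plan is to derive all six equivalences by combining the realization theorems already proved with the fact, established in Theorem~\ref{thm:ample-near-ample-equiv-polydsk} and Lemma~\ref{lem:preorderings_are_equivalent}, that over $\mathbb D^d$ with the coordinate test functions the three preorderings $\Lambda^a=\{1\}$, $\Lambda^{na}=\{\lambda_1,\lambda_2\}$, and $\Lambda':=\{1,\lambda_1,\lambda_2\}$ are mutually equivalent: $\Lambda^{na}$ is equivalent to $\Lambda^a$ by Theorem~\ref{thm:ample-near-ample-equiv-polydsk}, while $\Lambda'$ is equivalent to its minimal preordering $\Lambda^a$ by Lemma~\ref{lem:preorderings_are_equivalent}. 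By Corollary~\ref{cor:po_for_polydisk} each of these generates $H^\infty(\mathbb D^d,\mathcal{L(H)})$ with the same norm, and every admissible kernel for any of them is subordinate to the Szeg\H{o} kernel $k_s$; this already gives (SC1)$\Leftrightarrow$(SC2) and shows that the algebra clauses in (vN-a) and (vN-B) all assert the same thing, namely $\varphi\in H^\infty(\mathbb D^d,\mathcal{L(H)})$. A second ingredient is that, since $\Lambda'$ is \emph{ample} and contains each of $1,\lambda_1,\lambda_2$, Theorem~\ref{thm:ext-aux-test-fns-ample-case} lets us choose the auxiliary test functions $\sigma_1$ (for $1$) and $\sigma_{\lambda_1},\sigma_{\lambda_2}$ (for $\lambda_1,\lambda_2$) to lie in $H^\infty_1(\mathbb D^d,\mathcal K_{\Lambda',\mathbb C^n})$ for the appropriate $n$, i.e.\ in $H^\infty(\mathbb D^d,\mathcal{L}(\mathbb C^n))$ itself.

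For the ample half I would apply Theorem~\ref{thm:realization_III} with $\Lambda=\Lambda^a$. This yields (SC1) equivalent to the single-kernel decomposition $[1]-\varphi\varphi^*=\Gamma*\prod_{j=1}^d([1]-Z_jZ_j^*)$, to the existence of a unitary colligation $\Sigma_a$ with $\varphi=W_{\Sigma_a}$, to the $\sigma_1$ clause of (vN-a), and to the $\Lambda^a$ clause of (vN-B). Running the same theorem on the ample preordering $\Lambda'$ instead produces the three-term decomposition indexed by $\{1,\lambda_1,\lambda_2\}$; the $\lambda_i$-terms fold into the $1$-term by Schur-multiplying $\Gamma_{\lambda_i}*\prod_{j\ne j_i}([1]-Z_jZ_j^*)$ by the positive kernel $(1-Z_{j_i}Z_{j_i}^*)^{-1}$, exactly as in the proof of Lemma~\ref{lem:preorderings_are_equivalent}, so this contributes nothing beyond what $\Lambda^a$ gave.

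For the nearly ample half, Theorem~\ref{thm:realization_I} applied with $\Lambda=\Lambda^{na}$ gives (SC2) equivalent to the two-term decomposition $[1]-\varphi\varphi^*=\Gamma_1*\prod_{j\ne j_1}([1]-Z_jZ_j^*)+\Gamma_2*\prod_{j\ne j_2}([1]-Z_jZ_j^*)$ and, via the lurking-isometry step, furnishes the nearly ample transfer function $\varphi=W_{\Sigma_{na}}$, the colligation being assembled from $\sigma_{\lambda_1},\sigma_{\lambda_2}$; the reverse implication back to (SC1) is the one-line calculation $([1]-\varphi\varphi^*)*k_s=\Gamma_1*(1-Z_{j_1}Z_{j_1}^*)^{-1}+\Gamma_2*(1-Z_{j_2}Z_{j_2}^*)^{-1}\ge 0$. (Together with the ample half this shows (AD), which bundles both decompositions, is equivalent to (SC1).) The remaining implications $W_{\Sigma_{na}}\Rightarrow(\text{SC2})$ and the equivalences with the $\sigma_{\lambda_1},\sigma_{\lambda_2}$ clause of (vN-a) and the $\Lambda^{na}$ clause of (vN-B) I would get by transplanting the proofs of Theorems~\ref{thm:realization_II} and~\ref{thm:realization_III}: $W_{\Sigma_{na}}\Rightarrow(\text{SC2})$ since $S_{na}(x)=P_{\lambda_1}\otimes\sigma_{\lambda_1}(x)+P_{\lambda_2}\otimes\sigma_{\lambda_2}(x)$ satisfies $(1-S_{na}S_{na}^*)*k\ge 0$ for every $k\in\mathcal K_{\Lambda^{na},\mathcal H}=\mathcal K_{\Lambda',\mathcal H}$ (because $\sigma_{\lambda_i}\in H^\infty_1(\mathbb D^d,\mathcal K_{\Lambda',\mathbb C^n})$), so the unitary-colligation identity forces $[1]-\varphi\varphi^*$ positive against every admissible kernel; contractivity of $\pi$ on $\sigma_{\lambda_1},\sigma_{\lambda_2}$ makes $\pi(S_{na})$ a contraction and hence $\|\pi(\varphi)\|\le 1$ after the standard $r$-scaling and polynomial-in-$S$ approximation in the merely weakly continuous case; and for the converse directions one checks that the multiplier representation $f\mapsto M_f$ on $H^2(k_r)$ used in Theorem~\ref{thm:realization_II} is, with the $r$-scaled test functions on a finite set, weakly continuous and contractive on $\sigma_{\lambda_1},\sigma_{\lambda_2}$ (again since $k_r\in\mathcal K_{\Lambda',\mathcal H}$) and is a Brehmer representation for $\Lambda^{na}$, so the $tk+(1-t)k_0$ approximation of that proof recovers $([1]-\varphi\varphi^*)*k\ge 0$ for all admissible $k$.

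I expect the main work to be in this transplant. Lemma~\ref{lem:Z+inv_in_Hinf}, Corollary~\ref{cor:gamma_limit_of_HLH_fns}, Corollary~\ref{cor:HLH_equals_TLH_w_ample_po} and the (vN-a)$\Rightarrow$(SC) argument are all phrased for genuinely ample preorderings, and one must confirm that they exploit ampleness only through the properties isolated above — auxiliary test functions in the algebra, $Z^+$ right-invertible over the algebra, all admissible kernels subordinate to $k_s$. The safest route, and the one I would take to avoid any gap, is to run the entire argument over the ample preordering $\Lambda'$ (whose admissible kernels coincide with those of $\Lambda^{na}$) and then observe that a $\Lambda^{na}$-colligation is simply a $\Lambda'$-colligation with the projection onto the $1$-summand equal to zero, so the $\Lambda^{na}$ transfer-function and Brehmer statements are repackagings of the $\Lambda'$ ones rather than new claims; the only genuinely new point is that the positivity defining a Brehmer representation for $\Lambda^{na}$, namely $\prod_{j\ne j_i}(1-\pi(z_j)\pi(z_j)^*)\ge 0$ for $i=1,2$, suffices to make $\pi(Z^+)\pi(Z^+)^*-\pi(Z^-)\pi(Z^-)^*=\sum_i P_{\lambda_i}\otimes\bigl(\pi(\psi^+_{\lambda_i})\pi(\psi^+_{\lambda_i})^*-\pi(\psi^-_{\lambda_i})\pi(\psi^-_{\lambda_i})^*\bigr)$ positive, which is immediate.
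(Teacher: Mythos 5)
Your proposal is correct and follows essentially the same route as the paper: the paper's proof is precisely the observation that Theorem~\ref{thm:ample-near-ample-equiv-polydsk} makes $\Lambda^a$ and $\Lambda^{na}$ equivalent and that the realization theorems (Theorems~\ref{thm:realization_I}--\ref{thm:realization_III}) can then be applied to the two preorderings, with the section's preamble supplying exactly your key auxiliary point, namely that $\{1,\lambda_1,\lambda_2\}$ is ample and equivalent to $\Lambda^a$ so that Theorem~\ref{thm:ext-aux-test-fns-ample-case} places $\sigma_1,\sigma_{\lambda_1},\sigma_{\lambda_2}$ in the algebra. Your extra checks (the $\Lambda^{na}$-colligation as a $\Lambda'$-colligation with trivial $1$-summand, and that the two-term Brehmer positivity already yields $\pi(Z^+)\pi(Z^+)^*-\pi(Z^-)\pi(Z^-)^*\geq 0$) are just the details the paper leaves implicit.
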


There are also statements regarding transfer function representations
which we have not included here.

\begin{proof}[Proof of Theorem~\ref{thm:d-var-polydisk_real}]
This follows immediately from
Theorem~\ref{thm:ample-near-ample-equiv-polydsk} and an application of
the realization theorem to the two equivalent preorderings $\Lambda^a$
and $\Lambda^{na}$.
\end{proof}

We now state a $d$-variable version of
Theorem~\ref{thm:2-var-realization}.  As usual, $k_s$ stands for the
Szeg\H{o} kernel $\prod_1^d ([1]-\psi_j\psi_j^*)^{-1}$.

\goodbreak

\begin{theorem}[Realization theorem, IV]
  \label{thm:realization_IV}
  Suppose $\Psi = \{\psi_1,\ldots \psi_d\}$ is a collection of test
  functions over a set $X$, $\Lambda^{na}$ is a standard nearly ample
  preordering under the standard ample preordering $\Lambda^{a} =
  \{1\}$.  The following are equivalent:
  \begin{enumerate}
  \item[$($SC1$\,)$] $([1]-\varphi\varphi^*)*k_s \geq 0$, or
    equivalently, $\varphi \in H_1^\infty({\mathcal
      K}_{\Lambda^a,\mathcal H})$;
  \item[$($SC2$\,)$] For every admissible kernel $k\in \mathcal
    K_{\Lambda^{na}}$, $([1]-\varphi\varphi^*)*k \geq 0$, or
    equivalently, $\varphi \in H_1^\infty({\mathcal
      K}_{\Lambda^{na},\mathcal H})$;
  \item[$($AD$\,)$] There exists a positive kernels $\Gamma$,
    $\Gamma_1$, $\Gamma_2$ such that
    \begin{equation*}
      \begin{split}
        [1]-\varphi\varphi^* &= \Gamma*\prod_{j=1}^d ([1]-Z_jZ_j^*) \\
        & = \Gamma_1*\prod_{j\neq j_1}
        ([1]-Z_jZ_j^*) + \Gamma_2*\prod_{j\neq j_2} ([1]-Z_jZ_j^*),
      \end{split}
    \end{equation*}
    where $Z_j(x) = \psi_j(x)$;
  \item[$($TF$\,)$] There are unitary colligations $\Sigma_a$ and
    $\Sigma_{na}$ in the ample and nearly ample setting respectively,
    such that $\varphi$ has transfer function representations
    \begin{equation*}
        \varphi = W_{\Sigma_a} = W_{\Sigma_{na}};
    \end{equation*}
  \item[$($vN-a$\,)$] $\varphi \in H^\infty(X, {\mathcal
      K}_{\Lambda^a,\mathcal H}) = H^\infty(X, {\mathcal
      K}_{\Lambda^{na},\mathcal H})$ and for every representation
    $\pi$ which is strictly contractive on the auxiliary test function
    $\sigma_1$ $($respectively, the auxiliary test functions
    $\sigma_{\lambda_1}, \sigma_{\lambda_2})$, or contractive on these
    and either strongly or weakly continuous, we have
    $\|\pi(\varphi)\| \leq 1$;
  \item[$($vN-B$\,)$] $\varphi \in H^\infty(X, {\mathcal
      K}_{\Lambda^a,\mathcal H}) = H^\infty(X, {\mathcal
      K}_{\Lambda^{na},\mathcal H})$ and for every representation
    $\pi$ which is a strict / strongly continuous / weakly continuous
    Brehmer representation with respect to $\Lambda^a$\break
    $($respect\-ively, $\Lambda^{na})$, we have $\|\pi(\varphi)\| \leq
    1$.
  \end{enumerate}
\end{theorem}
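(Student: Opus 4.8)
The plan is to derive almost all of the listed equivalences from the realization theorems already proved for ample preorderings, after first establishing that $\Lambda^{a}$ and $\Lambda^{na}$ are equivalent preorderings over $X$. Observe that $\{1,\lambda_1,\lambda_2\}$ is an ample preordering (unique maximal element $1$), which by Lemma~\ref{lem:preorderings_are_equivalent} is equivalent to $\Lambda^{a}$; in particular Theorem~\ref{thm:ext-aux-test-fns-ample-case} applies to it, so the auxiliary test functions $\sigma_1$, $\sigma_{\lambda_1}$, $\sigma_{\lambda_2}$ may be taken with entries in $H^\infty(X,\mathcal K_{\Lambda^{a},\mathbb C^{n}})$ for the appropriate $n$. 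Since $\Lambda^{na}_{M}\subseteq\Lambda^{a}_{M}$, Lemma~\ref{lem:preorderings_order_adm_kers} gives $\mathcal K_{\Lambda^{a},\mathcal H}\subseteq\mathcal K_{\Lambda^{na},\mathcal H}$, hence the trivial implication (\textsl{SC2})$\Rightarrow$(\textsl{SC1}) together with $\|\varphi\|_{\Lambda^{na}}\ge\|\varphi\|_{\Lambda^{a}}$; everything hinges on the reverse inequality.

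For (\textsl{SC1})$\Rightarrow$(\textsl{SC2}) I would transport the problem to the polydisk. By Lemma~\ref{lem:id_HLH_w_pdisk_subalg} the map $\xi:X\to\Omega=\xi(X)\subseteq\mathbb D^{d}$ induces, for each of $\Lambda^{a}$ and $\Lambda^{na}$, an isometric isomorphism of $H^\infty(X,\mathcal K_{\Lambda,\mathcal H})$ onto $H^\infty(\Omega,{\mathcal K}^{\mathrm{pd}}_{\Lambda,\mathcal H})$ and a bijection of the two sets of admissible kernels, so it suffices to prove the equivalence over $\Omega$ with the coordinate functions as test functions. Given $\varphi$ with $([1]-\varphi\varphi^{*})*k_{s}\ge0$, write $\tilde\varphi=\varphi\circ\xi^{-1}$ on $\Omega$. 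Using Corollary~\ref{cor:aux-test-fns-for-polydisk} to recast this positivity as condition (2) of Theorem~\ref{thm:agler-pick_interpolation} (take $X_{0}=\Omega$, $a_{x}\equiv1_{\mathcal{L(H)}}$, $b_{x}=\tilde\varphi(x)$, and the polydisk auxiliary test function $\sigma_{1}$), the Agler--Pick interpolation theorem produces an extension $\varphi^{\mathrm{ext}}\in H_{1}^{\infty}(\mathbb D^{d},\mathcal{L(H)})=H_{1}^{\infty}(\mathbb D^{d},\mathcal K_{\Lambda^{a},\mathcal H})$ with $\varphi^{\mathrm{ext}}|_{\Omega}=\tilde\varphi$. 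Theorem~\ref{thm:ample-near-ample-equiv-polydsk} then says $\varphi^{\mathrm{ext}}\in H_{1}^{\infty}(\mathbb D^{d},\mathcal K_{\Lambda^{na},\mathcal H})$, so by Theorem~\ref{thm:realization_I} (applied to $\Lambda^{na}$ over $\mathbb D^{d}$) $\varphi^{\mathrm{ext}}$ has an Agler decomposition there; restricting the completely positive kernel in that decomposition to $\Omega\times\Omega$ and invoking Theorem~\ref{thm:realization_I} once more, this time over $\Omega$, yields $\tilde\varphi\in H_{1}^{\infty}(\Omega,{\mathcal K}^{\mathrm{pd}}_{\Lambda^{na},\mathcal H})$. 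Mapping back by $\xi$ gives (\textsl{SC2}) with $\|\varphi\|_{\Lambda^{na}}\le1$, and scaling shows $\Lambda^{a}$ and $\Lambda^{na}$ are equivalent.

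With $H^\infty(X,\mathcal K_{\Lambda^{a},\mathcal H})=H^\infty(X,\mathcal K_{\Lambda^{na},\mathcal H})$ in hand, the remaining equivalences split into an ``ample'' half and a ``nearly ample'' half. The equivalence of (\textsl{SC1}), the first displayed decomposition in (\textsl{AD}), the transfer function $\varphi=W_{\Sigma_{a}}$ in (\textsl{TF}), the $\sigma_{1}$ clause of (\textsl{vN-a}) and the $\Lambda^{a}$ clause of (\textsl{vN-B}) is precisely Theorem~\ref{thm:realization_III} for the ample preordering $\Lambda^{a}$. For the nearly ample clauses I would run Theorem~\ref{thm:realization_I} with the preordering $\Lambda^{na}$: this gives (\textsl{SC2})$\Leftrightarrow$(second displayed decomposition), after rewriting $\Gamma*(E^{+}E^{+*}-E^{-}E^{-*})$ as $\Gamma_{1}*\prod_{j\ne j_{1}}([1]-Z_{j}Z_{j}^{*})+\Gamma_{2}*\prod_{j\ne j_{2}}([1]-Z_{j}Z_{j}^{*})$ via Proposition~\ref{prop:factorization}, and under this decomposition a transfer function representation $\varphi=W_{\Sigma_{na}}$ with a $C_{b}(\Lambda^{na})$-unitary colligation. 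The converse implications---that $W_{\Sigma_{na}}\in H_{1}^{\infty}$, that the $\{\sigma_{\lambda_{1}},\sigma_{\lambda_{2}}\}$ clause of (\textsl{vN-a}) and the $\Lambda^{na}$-Brehmer clause of (\textsl{vN-B}) each force membership---then follow exactly as in Theorems~\ref{thm:realization_II} and~\ref{thm:realization_III}, the point being that since $\Lambda^{na}$ is equivalent to an ample preordering, $\sigma_{\lambda_{1}},\sigma_{\lambda_{2}}$ and the right inverses $\omega_{\lambda_{1}},\omega_{\lambda_{2}}$ of Lemma~\ref{lem:Z+inv_in_Hinf} have entries in $H^\infty(X,\mathcal K_{\Lambda^{na},\mathcal H})$, so that $S$ in the colligation for $\varphi$ has entries in the algebra and Corollary~\ref{cor:gamma_limit_of_HLH_fns} applies.

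I expect the main obstacle to be the reverse direction of the equivalence of $\Lambda^{a}$ and $\Lambda^{na}$: it is genuinely non-local, and the ``extend to $\mathbb D^{d}$, then restrict'' maneuver---leaning on Agler--Pick interpolation and on the polydisk case Theorem~\ref{thm:ample-near-ample-equiv-polydsk} (itself built on And\^o's theorem)---is what makes it work; once that is secured, the rest is bookkeeping with the earlier realization theorems.
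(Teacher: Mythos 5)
Your proposal is correct and follows essentially the same route as the paper: embed $X$ into the polydisk via Lemma~\ref{lem:id_HLH_w_pdisk_subalg}, extend $\varphi$ to $H_1^\infty(\mathbb D^d,\mathcal{L(H)})$ by Agler--Pick interpolation (Theorem~\ref{thm:agler-pick_interpolation}), invoke the polydisk result resting on Theorem~\ref{thm:ample-near-ample-equiv-polydsk} (i.e.\ Theorem~\ref{thm:d-var-polydisk_real}), and pull back to $X$. You simply unpack the bookkeeping (via Theorems~\ref{thm:realization_I}--\ref{thm:realization_III}) that the paper leaves implicit, so no substantive difference or gap.
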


In particular, the theorem implies that $\Lambda^a$ and $\Lambda^{na}$
are always equivalent preorderings.

\begin{proof}[Proof of Theorem~\ref{thm:realization_IV}]
  The idea is very much like that in the proof of
  Theorem~\ref{thm:2-var-realization}.  As we did there, we use the
  embedding $\xi$ of $X$ in the polydisk given in
  Lemma~\ref{lem:id_HLH_w_pdisk_subalg} and
  Theorem~\ref{thm:agler-pick_interpolation} to get a function
  $\tilde\varphi$ in $H^\infty_1(\mathbb D^d,\mathcal{L(H)})$, which
  when restricted to the image of $X$ under $\xi$ pulls back to
  $\varphi$.  Applying the polydisk realization theorem
  (Theorem~\ref{thm:d-var-polydisk_real}) to $\tilde\varphi$, we
  obtain the equivalence of the various statements in the theorem over
  the polydisk, and then pulling back to $X$ the result follows.
\end{proof}

The Hilbert space $\mathcal H$ is arbitrary in the last theorem, so we
get the following corollary, generalizing Brehmer's theorem and a
result in~\cite{MR2502431}, via its obvious specialization to
$H^\infty(\mathbb D^d,\mathcal{L(H)})$.  Compare with
Theorem~\ref{thm:dilation-theorem}.

\begin{corollary}
  \label{cor:generalized-Brehmers-theorem}
  Suppose $\Psi = \{\psi_1,\ldots \psi_d\}$ is a collection of test
  functions, $\Lambda^{na}$ is a standard nearly ample preordering
  with maximal elements $\lambda^m_1$ and $\lambda^m_2$.  Let $\pi:
  A({\mathcal K}_{\Lambda^{na},\mathcal{L(H)}}) \to \mathcal{L(K)}$ be
  a Brehmer representation.  Then $\pi$ is completely contractive and
  so dilates to a completely contractive representation $\tilde\pi$ of
  the $C^*$-envelope of $A({\mathcal K}_{\Lambda^{na},\mathcal{L(H)}})$ 
  with the property that it is the only completely positive agreeing
  with $\tilde\pi| A({\mathcal K}_{\Lambda^{na},\mathcal{L(H)}})$.
\end{corollary}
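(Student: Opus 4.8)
The plan is to reduce everything to the realization theorem just established, together with the dilation machinery already available for transfer function algebras (Theorem~\ref{thm:dilation-theorem} and Corollary~\ref{cor:weakly-ctns_B_reps_are_cc}). First I would invoke Theorem~\ref{thm:realization_IV}, specifically the equivalence of (vN-B) with the remaining conditions: since $\Lambda^a$ and $\Lambda^{na}$ are equivalent preorderings, a Brehmer representation $\pi$ of $A({\mathcal K}_{\Lambda^{na},\mathcal{L(H)}}) = A({\mathcal K}_{\Lambda^{a},\mathcal{L(H)}})$ satisfies $\|\pi(\varphi)\| \leq 1$ for every $\varphi$ in the unit ball. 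Because the Hilbert space $\mathcal H$ in that theorem is arbitrary, I can apply it with $\mathcal H$ replaced by $\mathcal H \otimes \mathbb C^n$ for each $n$, which shows that $\pi^{(n)}$ is contractive on the unit ball of $A({\mathcal K}_{\Lambda^{na}, \mathcal{L}(\mathcal H \otimes \mathbb C^n)}) = M_n(A({\mathcal K}_{\Lambda^{na},\mathcal{L(H)}}))$; that is, $\pi$ is completely contractive. Alternatively, and more cleanly, I would cite Corollary~\ref{cor:HLH_equals_TLH_w_ample_po}, which identifies $\ALH$ isometrically (and hence completely isometrically, since the matrix levels match up by the same argument) with $\mathcal{T}^A(X,\Lambda^a,\mathcal H)$, so a Brehmer representation — being, by Theorem~\ref{thm:realization_III}, contractive on auxiliary test functions after the lurking-isometry identification — becomes a representation of $\mathcal{T}^A(X,\Lambda^a,\mathcal H)$ which is contractive on auxiliary test functions, and then Corollary~\ref{cor:weakly-ctns_B_reps_are_cc} gives complete contractivity directly.

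Next I would obtain the dilation. Having established that $\pi$ is completely contractive on $A({\mathcal K}_{\Lambda^{na},\mathcal{L(H)}})$, transport this through the isometric isomorphism of Corollary~\ref{cor:HLH_equals_TLH_w_ample_po} so that $\pi$ is viewed as a completely contractive, and in fact (by Lemma~\ref{lem:contr-aux-t-fns-are-B-reps} together with Theorem~\ref{thm:realization_III}) contractive-on-auxiliary-test-functions, representation of $\mathcal{T}^A(X,\Lambda^{na},\mathcal H) \cong \mathcal{T}^A(X,\Lambda^a,\mathcal H)$. Now Theorem~\ref{thm:dilation-theorem} applies verbatim: $\pi$ dilates to a completely contractive representation $\tilde\pi$ of $C^*(\mathcal{T}^A(X,\Lambda,\mathcal H))$ with the stated uniqueness property — namely that $\tilde\pi$ is the only completely positive map agreeing with it on $\mathcal{T}^A(X,\Lambda,\mathcal H)$. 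Transporting back along the isometric isomorphism identifies $C^*(\mathcal{T}^A(X,\Lambda,\mathcal H))$ with the $C^*$-envelope of $A({\mathcal K}_{\Lambda^{na},\mathcal{L(H)}})$ (the $C^*$-envelope being intrinsic to the operator algebra structure, which is preserved), giving exactly the assertion of the corollary.

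The step I expect to require the most care is the bookkeeping around whether the identifications are \emph{completely} isometric, not merely isometric — Corollary~\ref{cor:HLH_equals_TLH_w_ample_po} as stated asserts isometric isomorphism, and one needs the matrix-level version. This is, however, immediate from the arbitrariness of $\mathcal H$ in all the relevant theorems (Theorem~\ref{thm:realization_II}, Theorem~\ref{thm:realization_IV}): replacing $\mathcal H$ by $\mathcal H \otimes M_n(\mathbb C)$ everywhere and using $H^\infty(X,{\mathcal K}_{\Lambda,\mathcal H \otimes M_n}) = M_n(H^\infty(X,{\mathcal K}_{\Lambda,\mathcal H}))$ (and likewise for $A$ and for $\mathcal T^A$, as in the proof of Corollary~\ref{cor:tr_fns_are_op_alg}) upgrades each isometric statement to a complete one. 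A secondary point is that one must check the Brehmer hypothesis on $\pi$ really does produce a representation of $\mathcal{T}^A$ contractive on auxiliary test functions: this is precisely the content of the implication (vN-B)$\Rightarrow$(SC) direction of Theorem~\ref{thm:realization_III} combined with Lemma~\ref{lem:contr-aux-t-fns-are-B-reps}, so no new work is needed. Thus:

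\begin{proof}
  By Corollary~\ref{cor:HLH_equals_TLH_w_ample_po}, $A({\mathcal
    K}_{\Lambda^{na},\mathcal{L(H)}})$ is isometrically isomorphic to
  $\mathcal{T}^A(X,\Lambda^{na}, \mathcal H)$, and since the Hilbert
  space in Theorem~\ref{thm:realization_IV} and in
  Corollary~\ref{cor:HLH_equals_TLH_w_ample_po} is arbitrary,
  replacing $\mathcal H$ by $\mathcal H\otimes M_n(\mathbb C)$ shows
  this isomorphism is completely isometric, and likewise identifies
  $C^*(A({\mathcal K}_{\Lambda^{na},\mathcal{L(H)}}))$, in particular
  its $C^*$-envelope, with $C^*(\mathcal{T}^A(X,\Lambda^{na}, \mathcal
  H))$.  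Under this identification, by Theorem~\ref{thm:realization_IV}
  (the equivalence of (SC2) and (vN-B)) together with
  Lemma~\ref{lem:contr-aux-t-fns-are-B-reps}, the Brehmer
  representation $\pi$ becomes a representation of
  $\mathcal{T}^A(X,\Lambda^{na}, \mathcal H)$ which is contractive on
  auxiliary test functions.  By
  Corollary~\ref{cor:weakly-ctns_B_reps_are_cc}, $\pi$ is completely
  contractive.  Finally, Theorem~\ref{thm:dilation-theorem} provides a
  completely contractive dilation $\tilde\pi$ of $\pi$ to
  $C^*(\mathcal{T}^A(X,\Lambda^{na}, \mathcal H))$ with the property
  that the only completely positive map agreeing with $\tilde\pi$ on
  $\mathcal{T}^A(X,\Lambda^{na}, \mathcal H)$ is $\tilde\pi$ itself.
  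Transporting back along the completely isometric isomorphism yields
  the stated conclusion.
\end{proof}
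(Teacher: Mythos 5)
Your proposal is correct and follows essentially the same route as the paper, which obtains the corollary directly from Theorem~\ref{thm:realization_IV} (with $\mathcal H$ arbitrary, hence applicable at every matrix level since ampliations of Brehmer representations are again Brehmer, giving complete contractivity) together with the dilation machinery behind Theorem~\ref{thm:dilation-theorem} and the identification of $\ALH$ with $\mathcal{T}^A(X,\Lambda,\mathcal H)$ from Corollary~\ref{cor:HLH_equals_TLH_w_ample_po}. One small repair: Lemma~\ref{lem:contr-aux-t-fns-are-B-reps} gives the \emph{converse} implication (contractive on auxiliary test functions $\Rightarrow$ Brehmer), so the step where you pass from the Brehmer hypothesis to contractivity on the auxiliary test functions should instead be justified by applying (vN-B) of Theorem~\ref{thm:realization_IV} at the appropriate matrix level to the auxiliary test functions themselves, which lie in the matrix-valued unit ball by Theorem~\ref{thm:ext-aux-test-fns-ample-case} --- or simply by noting that the complete contractivity you have already established makes $\pi$ contractive on them automatically.
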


\goodbreak

\section{Some applications}
\label{sec:some-applications}

We give some more or less immediate applications of the material
presented.  For example, the following, which is the main result
of~\cite{MR2502431}, is the last corollary applied to the polydisk.

\begin{corollary}
  \label{cor:generalized-Brehmers-theorem-for-polydisk}
  Let $\Psi = \{z_1,\ldots z_d\}$ be the coordinate functions on
  $\mathbb D^d$, $\Lambda^{na}$ a standard nearly ample preordering
  with maximal elements $\lambda^m_1$ and $\lambda^m_2$.  Let $\pi:
  A(\mathbb D^d,\mathcal{L(H)}) \to \mathcal{L(K)}$ be
  a Brehmer representation.  Then the contractions $\pi(z_j)$ dilate
  to a commuting unitary operators.
\end{corollary}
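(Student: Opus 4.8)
The strategy is to recognize this as Corollary~\ref{cor:generalized-Brehmers-theorem} specialized to $X = \mathbb{D}^d$, and then to translate the $C^*$-envelope conclusion there into the classical statement about commuting unitary dilations. First I would set up the identification of algebras: with $\Psi = \{z_1,\dots,z_d\}$ the coordinate functions, $\Lambda^a = \{1\}$ the standard ample preordering and $\Lambda^{na}$ a standard nearly ample preordering under it, Corollary~\ref{cor:po_for_polydisk} together with Theorem~\ref{thm:ample-near-ample-equiv-polydsk} gives $H^\infty(\mathbb{D}^d,{\mathcal K}_{\Lambda^{na},\mathcal{H}}) = H^\infty(\mathbb{D}^d,\mathcal{H})$ isometrically for every Hilbert space $\mathcal{H}$. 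Passing to the subalgebras of functions that extend continuously to $\overline{\mathbb{D}}^d$ yields $A(\mathbb{D}^d,{\mathcal K}_{\Lambda^{na},\mathcal{L(H)}}) = A(\mathbb{D}^d,\mathcal{L(H)})$, again isometrically; and applying the same identity with $\mathcal{H}$ replaced by $\mathcal{H}\otimes M_n(\mathbb{C})$ shows that the operator algebra structure on $A(\mathbb{D}^d,\mathcal{L(H)})$ furnished by Corollary~\ref{cor:tr_fns_are_op_alg} agrees, level by level, with the ordinary supremum operator space structure of $A(\mathbb{D}^d,\mathcal{L(H)}) \subset C(\overline{\mathbb{D}}^d,\mathcal{L(H)})$. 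Consequently the hypothesis simply says that $\pi$ is a Brehmer representation of $A(\mathbb{D}^d,{\mathcal K}_{\Lambda^{na},\mathcal{L(H)}})$.

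Corollary~\ref{cor:generalized-Brehmers-theorem} then tells us that $\pi$ is completely contractive. I would next restrict $\pi$ to the scalar polydisk algebra, embedded as $A(\mathbb{D}^d) \hookrightarrow A(\mathbb{D}^d,\mathcal{L(H)})$ via $f\mapsto f\otimes 1_{\mathcal{L(H)}}$; by the matrix-norm identification of the previous paragraph this embedding is completely isometric for the usual operator space structure on $A(\mathbb{D}^d)$, so $\pi|_{A(\mathbb{D}^d)}$ is a completely contractive unital representation. Since polynomials in $z_1,\dots,z_d$ are norm dense in $A(\mathbb{D}^d)$ (which also follows from the transfer function representation, cf.~Theorem~\ref{thm:trfr-fns-span-alg} and Corollary~\ref{cor:HLH_equals_TLH_w_ample_po}), the commuting contractions $T_j := \pi(z_j\otimes 1_{\mathcal{L(H)}})$ --- contractions because a Brehmer representation is contractive on the test functions --- induce a completely contractive representation of the polynomial algebra $\mathcal{P}$ over $\mathbb{C}^d$ at every matrix level.

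At this point Arveson's theorem (\cite{MR1668582}, as recalled in Section~\ref{subsec:brehm-repr}) applies verbatim: a commuting tuple of contractions whose induced representation of $\mathcal{P}$ is completely contractive admits a commuting unitary dilation $U = (U_1,\dots,U_d)$, meaning there is a Hilbert space containing $\mathcal{K}$ on which the $U_j$ act as commuting unitaries with $p(T) = P_{\mathcal{K}}\,p(U)|_{\mathcal{K}}$ for every polynomial $p$; this is exactly the claimed unitary dilation of the $\pi(z_j)$. The one step I would actually write out in detail --- and the only nontrivial point --- is the first paragraph's claim that the Agler/transfer-function operator space structure restricts on the scalar disk algebra to the honest supremum-norm structure, without which the complete contractivity of $\pi$ would not transfer to the polynomial algebra; everything else is bookkeeping. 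Alternatively, one can bypass Arveson's theorem by feeding the complete contractivity of $\pi$ straight into the dilation $\tilde\pi$ produced by Corollary~\ref{cor:generalized-Brehmers-theorem}: $\tilde\pi$ is a $*$-representation of the $C^*$-envelope of $A(\mathbb{D}^d,\mathcal{L(H)})$, in which each coordinate function is unitary, so the $\tilde\pi(z_j\otimes 1_{\mathcal{L(H)}})$ are commuting unitaries compressing to the $\pi(z_j)$ on the semi-invariant subspace $\mathcal{K}$.
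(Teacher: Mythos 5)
Your proposal is correct and follows essentially the paper's own route: the corollary is precisely Corollary~\ref{cor:generalized-Brehmers-theorem} applied to the polydisk, where Theorem~\ref{thm:ample-near-ample-equiv-polydsk} and Corollary~\ref{cor:po_for_polydisk} (at every matrix level) identify the Schur--Agler matrix norm structure with the usual supremum-norm structure, so complete contractivity transfers to the polynomial algebra and Arveson's theorem yields the commuting unitary dilation. The only point to treat with care is your alternative ending: the claim that the coordinate functions become unitary in the $C^*$-envelope of $A(\mathbb D^d,\mathcal{L(H)})$ is not established in the paper and would need a separate argument, so the Arveson route you wrote out first is the one to keep.
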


Another interesting corollary of the realization theorem is a sort of
weak form of the rational dilation property.

\begin{corollary}
  \label{cor:n-contr-implies-cc}
  Let $\Psi = \{\psi_1,\ldots \psi_d\}$, $d\geq 2$, is a collection of
  test functions, $\Lambda$ a standard ample preordering.  Then any
  $2^{d-2}$-contractive representation of $\ALH$ or weakly continuous
  $2^{d-2}$-contractive representation of $\HLH$ is completely
  contractive.
\end{corollary}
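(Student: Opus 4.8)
The plan is to reduce the statement, via the equivalence of a standard ample preordering with a standard nearly ample one, to the assertion already built into the transfer-function machinery that a representation contractive on auxiliary test functions is completely contractive, and then to notice that $2^{d-2}$-contractivity is exactly what is needed to control the auxiliary test functions of the \emph{nearly} ample preordering.

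First I would fix a standard nearly ample preordering $\Lambda^{na}=\{\lambda_1,\lambda_2\}$ sitting under the maximal element $1$ of $\Lambda$, so that each $\lambda_i$ has one coordinate equal to $0$ and the other $d-1$ coordinates equal to $1$; thus $|\lambda_i|=d-1$ and the associated matrix size is $2^{|\lambda_i|-1}=2^{d-2}$. Since $\{1,\lambda_1,\lambda_2\}$ is ample and equivalent to $\Lambda$ (Lemma~\ref{lem:preorderings_are_equivalent}), Theorem~\ref{thm:ext-aux-test-fns-ample-case} lets us take the auxiliary test functions $\sigma_{\lambda_1},\sigma_{\lambda_2}$ to lie in $H_1^\infty(X,\mathcal K_{\Lambda,\mathbb C^{2^{d-2}}})$, and, since the test functions are assumed to be in $\ALH$, in $A_1(X,\mathcal K_{\Lambda,\mathbb C^{2^{d-2}}})$. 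By Theorem~\ref{thm:realization_IV} (or, over the polydisk, Theorem~\ref{thm:ample-near-ample-equiv-polydsk}), $\Lambda$ and $\Lambda^{na}$ are equivalent, so $\HLH=H^\infty(X,\mathcal K_{\Lambda^{na},\mathcal H})$ and $\ALH=A(X,\mathcal K_{\Lambda^{na},\mathcal H})$ with the same norm; applying those realization theorems with $\mathcal H$ replaced by $\mathcal H\otimes\mathbb C^n$ shows the identification is isometric at every matrix level, so that $\ALH$ is completely isometrically the transfer-function algebra $\mathcal{T}^A(X,\Lambda^{na},\mathcal H)$ and $\HLH$ is $\mathcal{T}(X,\Lambda^{na},\mathcal H)$.

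Next I would feed in the hypothesis. If $\pi$ is a $2^{d-2}$-contractive representation of $\ALH$, then its ampliation $\pi^{(2^{d-2})}$ is a contractive representation of $A(X,\mathcal K_{\Lambda,\mathcal H\otimes\mathbb C^{2^{d-2}}})$; since $\sigma_{\lambda_i}\otimes 1_{\mathcal{L(H)}}$ lies in the closed unit ball of this algebra, $\|\pi^{(2^{d-2})}(\sigma_{\lambda_i}\otimes 1_{\mathcal{L(H)}})\|\le 1$ for $i=1,2$ — which is precisely the statement that $\pi$ is contractive on the auxiliary test functions of $\Lambda^{na}$ in the sense of Definition~\ref{def:contr-on-aux-test-fns} (this is the nearly ample analogue of the remark used in the proof of Corollary~\ref{cor:n-contr-gives-cc-for-ample}, but now only matrices of size $2^{d-2}$, rather than $2^{d-1}$, appear). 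Viewing $\pi$ as a representation of $\mathcal{T}^A(X,\Lambda^{na},\mathcal H)$ via the identification above, Corollary~\ref{cor:weakly-ctns_B_reps_are_cc} then gives that $\pi$ is completely contractive. The case of a weakly continuous $2^{d-2}$-contractive representation $\pi$ of $\HLH$ is identical: $\sigma_{\lambda_i}\in H_1^\infty(X,\mathcal K_{\Lambda,\mathbb C^{2^{d-2}}})$ gives contractivity on the auxiliary test functions, and one applies the weakly continuous branch of Theorem~\ref{thm:vN-for-transfer-fns} and Corollary~\ref{cor:weakly-ctns_B_reps_are_cc} to $\pi$ viewed as a representation of $\mathcal{T}(X,\Lambda^{na},\mathcal H)$.

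I expect the only point requiring care — and it is logical bookkeeping rather than new mathematics — to be the reduction from $\Lambda$ to $\Lambda^{na}$: one must use that the equivalence of the ample and nearly ample preorderings from Theorem~\ref{thm:realization_IV} is \emph{complete}, i.e.\ holds at every matrix level, so that $\ALH$ and $\HLH$ are genuinely the transfer-function algebras over the nearly ample preordering, and hence that controlling only the two auxiliary test functions $\sigma_{\lambda_1},\sigma_{\lambda_2}$ of matrix size $2^{d-2}$ — rather than the single $\sigma_1$ of size $2^{d-1}$ used in Corollary~\ref{cor:n-contr-gives-cc-for-ample} — already forces complete contractivity. Everything else is a direct appeal to results already established.
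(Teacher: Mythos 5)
Your argument is correct and is essentially the paper's own proof: the decisive point in both is that the auxiliary test functions attached to a standard nearly ample preordering have matrix size $2^{|\lambda_i|-1}=2^{d-2}$ and, by Theorem~\ref{thm:ext-aux-test-fns-ample-case}, lie in the unit ball of the (matrix-valued) algebra, so a $2^{d-2}$-contractive representation is contractive on them, and the ample/nearly-ample equivalence of Theorem~\ref{thm:realization_IV} together with the ``contractive on auxiliary test functions implies completely contractive'' machinery then finishes the proof. The only cosmetic difference is the final citation: the paper routes that last step through Lemma~\ref{lem:contr-aux-t-fns-are-B-reps} and Corollary~\ref{cor:generalized-Brehmers-theorem} (the Brehmer-representation form), whereas you invoke Theorem~\ref{thm:vN-for-transfer-fns} and Corollary~\ref{cor:weakly-ctns_B_reps_are_cc} directly on the transfer-function algebra over $\Lambda^{na}$ --- the same underlying machinery.
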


\begin{proof}
  This follows from Lemma~\ref{lem:contr-aux-t-fns-are-B-reps}, the
  last corollary and the fact that the auxiliary test functions with a
  standard nearly ample preordering are in $H^\infty(X, {\mathcal
    K}_{\Lambda,\mathbb C^{2^n}})$, $n\leq d-2$.
\end{proof}

On the polydisk, we then get the following.

\begin{corollary}
  \label{cor:n-contr-implies-cc-improved}
  For the polydisk $\mathbb D^d$, $d\geq 2$, any $2^{d-2}$-contractive
  representation of $A(\mathbb D^d,\mathcal{L(H)})$ or weakly
  continuous $2^{d-2}$-contractive representation of $H^\infty(\mathbb
  D^d,\mathcal{L(H)})$ is complete\-ly contractive.
\end{corollary}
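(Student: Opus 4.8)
The plan is to obtain this simply as the polydisk specialization of Corollary~\ref{cor:n-contr-implies-cc}. First I would take $X = \mathbb D^d$, let $\Psi = \{z_1,\ldots,z_d\}$ be the coordinate functions, and let $\Lambda$ be the standard ample preordering (largest element $1 = (1,\ldots,1)$). By Corollary~\ref{cor:po_for_polydisk} we then have $\HLH = H^\infty(\mathbb D^d,\mathcal{L(H)})$, with every admissible kernel subordinate to $k_s\otimes 1_{\mathcal{L(H)}}$ for $k_s$ the Szeg\H{o} kernel. Since the coordinate functions extend continuously to ${\overline{\mathbb D}}^d$ and separate its points, the topology of Subsection~\ref{subsec:topologising-x} identifies $\overline{X}$ with ${\overline{\mathbb D}}^d$, so the algebra $\ALH$ of elements of $\HLH$ extending continuously to $\overline{X}$ is exactly $A(\mathbb D^d,\mathcal{L(H)})$, the multivariable disk algebra.

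With these identifications in place, I would invoke Corollary~\ref{cor:n-contr-implies-cc} directly: for $d\geq 2$, any $2^{d-2}$-contractive representation of $\ALH = A(\mathbb D^d,\mathcal{L(H)})$, and any weakly continuous $2^{d-2}$-contractive representation of $\HLH = H^\infty(\mathbb D^d,\mathcal{L(H)})$, is completely contractive, which is precisely the assertion to be proved. The substantive content sits upstream — in Lemma~\ref{lem:contr-aux-t-fns-are-B-reps}, Corollary~\ref{cor:generalized-Brehmers-theorem}, and the fact from Theorem~\ref{thm:ext-aux-test-fns-ample-case} that for a standard nearly ample preordering the auxiliary test functions $\sigma_{\lambda_1},\sigma_{\lambda_2}$ live in $H^\infty(X,{\mathcal K}_{\Lambda,\mathbb C^{2^n}})$ with $n\le d-2$ — so no new argument is needed here.

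I do not expect a genuine obstacle; the only points requiring care are the two bookkeeping steps above, namely matching the abstract algebras $\HLH$ and $\ALH$ with their concrete polydisk counterparts, and observing that a $2^{d-2}$-contractive representation is automatically contractive on each $2^{d-2}\times 2^{d-2}$ matrix-valued auxiliary test function $\sigma_{\lambda_1},\sigma_{\lambda_2}$ (each having supremum norm at most $1$), since an $m$-contractive representation is, a fortiori, contractive on any norm-one element of $M_m(\ALH)$ or $M_m(\HLH)$. Granting this, the conclusion is immediate from Corollary~\ref{cor:n-contr-implies-cc}.
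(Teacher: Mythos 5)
Your proposal is correct and is exactly the paper's route: the paper states this corollary as the immediate polydisk specialization of Corollary~\ref{cor:n-contr-implies-cc}, using (as you do) the identifications $\HLH = H^\infty(\mathbb D^d,\mathcal{L(H)})$ and $\ALH = A(\mathbb D^d,\mathcal{L(H)})$ for the coordinate test functions with the standard ample preordering (Corollary~\ref{cor:po_for_polydisk}). Your bookkeeping remarks, including that a $2^{d-2}$-contractive representation is automatically contractive on the $2^{d-2}\times 2^{d-2}$ auxiliary test functions of a standard nearly ample preordering, match the upstream argument the paper relies on.
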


This implies that $2$-contractive representations of the tridisk
algebra are completely contractive.  In particular, examples like that
due to Parrott of contractive representations of this algebra which
are not completely contractive can only fail to be so by failing to be
$2$-contractive.

\begin{corollary}
  \label{cor:3disk-not-cc=not-2-contr}
  Any representation of $A(\mathbb D^3,\mathcal{L(H)})$ or weakly
  continuous representation of\break $H^\infty(\mathbb
  D^3,\mathcal{L(H)})$ which is $2$-contractive is complete\-ly
  contractive.  Equivalently, any such representation which is not
  completely contractive must fail to be $2$-contractive.
\end{corollary}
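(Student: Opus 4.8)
The plan is to read this off as the case $d=3$ of Corollary~\ref{cor:n-contr-implies-cc-improved}, for which $2^{d-2}=2^{1}=2$; essentially all the substantive work has already been carried out by the time we reach Theorem~\ref{thm:realization_IV} and Corollary~\ref{cor:n-contr-implies-cc}, so what remains is to unwind the definitions. First I would fix $X=\mathbb D^{3}$, $\Psi=\{z_{1},z_{2},z_{3}\}$ the coordinate functions, the standard ample preordering $\Lambda^{a}=\{1\}$ with $1=(1,1,1)$, and a standard nearly ample preordering $\Lambda^{na}=\{\lambda_{1},\lambda_{2}\}$ under $1$, say $\lambda_{1}=(1,1,0)$ and $\lambda_{2}=(1,0,1)$. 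Since $|\lambda_{i}|=2$, the relevant ampliation size is $n_{i}=2^{|\lambda_{i}|-1}=2$, and because $\{1,\lambda_{1},\lambda_{2}\}$ is an ample preordering equivalent to $\Lambda^{a}$ (Lemma~\ref{lem:preorderings_are_equivalent}), Theorem~\ref{thm:ext-aux-test-fns-ample-case} supplies auxiliary test functions $\sigma_{\lambda_{1}},\sigma_{\lambda_{2}}$ lying in the matrix-valued Agler algebra $H^{\infty}(\mathbb D^{3},M_{2}(\mathbb C))$; by Corollary~\ref{cor:po_for_polydisk} this Schur--Agler matrix norm structure is exactly the one coming from the supremum norm on $H^{\infty}(\mathbb D^{3},\mathcal{L(H)})$ and $A(\mathbb D^{3},\mathcal{L(H)})$, so ``$2$-contractive'' carries its usual meaning.

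Next, let $\pi$ be a $2$-contractive representation of $A(\mathbb D^{3},\mathcal{L(H)})$, or a weakly continuous $2$-contractive representation of $H^{\infty}(\mathbb D^{3},\mathcal{L(H)})$. Each $\sigma_{\lambda_{i}}\otimes 1_{\mathcal{L(H)}}$ is a $2\times 2$ matrix over the algebra whose Schur--Agler norm is at most $1$, so $2$-contractivity forces $\|\pi(\sigma_{\lambda_{i}}\otimes 1_{\mathcal{L(H)}})\|\le 1$; that is, $\pi$ is contractive on the auxiliary test functions in the sense of Definition~\ref{def:contr-on-aux-test-fns}. Applying Corollary~\ref{cor:n-contr-implies-cc} with $d=3$ (which in turn rests on Theorem~\ref{thm:realization_IV} and Corollary~\ref{cor:weakly-ctns_B_reps_are_cc}) we conclude that $\pi$ is completely contractive. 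The ``equivalently'' clause is then just the contrapositive: a representation that is not completely contractive cannot be $2$-contractive.

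The only genuine work is upstream and may be taken as given here: Theorem~\ref{thm:ample-near-ample-equiv-polydsk}, where And\^o's theorem (Theorem~\ref{thm:Andos-theorem-for-ALH}), Agler--Pick interpolation (Theorem~\ref{thm:agler-pick_interpolation}) and Hartog's theorem combine to show that over the polydisk the ample and nearly ample preorderings are equivalent, is what effectively drops one variable and replaces $2^{d-1}$ by $2^{d-2}$; and Theorem~\ref{thm:ext-aux-test-fns-ample-case}, which manufactures bona fide $H^{\infty}$ auxiliary test functions out of the \emph{a priori} only formal $\sigma_{\lambda}$. Granting those, the present corollary is immediate, and the one thing to double-check while writing it out is simply that the hypothesis ``$2$-contractive'' is interpreted against the correct ($=$ supremum) matrix norm structure on the tridisk algebras, which Corollary~\ref{cor:po_for_polydisk} guarantees, so there is no gap.
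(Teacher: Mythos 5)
Your proposal is correct and follows essentially the same route as the paper, which states this corollary without separate proof as the $d=3$ case of Corollary~\ref{cor:n-contr-implies-cc-improved}, itself resting on the chain Lemma~\ref{lem:contr-aux-t-fns-are-B-reps}, Theorem~\ref{thm:ample-near-ample-equiv-polydsk}, Theorem~\ref{thm:ext-aux-test-fns-ample-case} and Corollary~\ref{cor:n-contr-implies-cc}. Your unwinding of the definitions, including the check via Corollary~\ref{cor:po_for_polydisk} that ``$2$-contractive'' refers to the supremum matrix norm structure and that the nearly ample auxiliary test functions are $2\times 2$ matrix valued elements of the unit ball, matches the intended argument.
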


Here are a couple of other examples involving two test functions.  Let
$X$ be an annulus $\mathbb A$ with out boundary the unit circle
$\mathbb T$ and inner boundary $r\mathbb T$ for some $0< r < 1$.
Choose for test functions the set $\Psi = \{\psi_1(z) = z, \psi_2(z) =
r/z\}$.  By what we have shown (see also~\cite{MR2595740}),
contractive representations of this algebra are automatically
completely contractive, and so the rational dilation property holds.
The rational dilation problem for the annulus was originally solved by
Agler in~\cite{MR794373} (see~\cite{2013arXiv1305.4272D} for an
alternate proof).  It can be shown that although $\AL$ and $A(\mathbb
A)$ have different norms, they are the same algebra, and in fact as
operator algebras they are completely boundedly
equivalent~\cite{MR862189} (see also~\cite{MR2449098}).  One might
naively expect that this would give yet another approach to solving
this problem, but unfortunately it does not.  Indeed, the same
phenomenon occurs for multiply connected planar
domains~\cite{MR862189}.

To perhaps better illustrate what might happen, consider instead the
disk $\mathbb D$ with test functions $\Psi = \{\psi_1(z) = z^2,
\psi_2(z) = z^3\}$.  This is an example of a constrained algebra,
since $\AL$ consists of functions with first derivative equal to $0$
at the origin.  This algebra differs from the subalgebra of the disk
algebra of functions with derivative $0$ at the origin (that is,
$\mathbb C + z^2 A(\mathbb D)$).  For the latter, one can find
examples of contractive representations which are not completely
contractive (ie, rational dilation fails)~\cite{2013arXiv1305.4272D},
while for $\AL$, by what we have shown, it holds.  Indeed, for $\AL$,
a representation which maps the two test functions to contractions
(satisfying the obvious constraint that $\pi(\psi_1)^3 =
\pi(\psi_2)^2$) is completely contractive by
Theorem~\ref{thm:2-var-realization} and
Corollary~\ref{cor:n-contr-implies-cc-improved}, while there are
examples of such representations of the constrained subalgebra of the
disk algebra which are not even contractive (much as in the
Kaijser-Varopoulos example)~\cite{2013arXiv1305.4272D}.

\section{Conclusion}
\label{sec:conclusion}

When we have more than two test functions over some set $X$, there
will be preorderings with their associated algebras for which we
cannot say much beyond what is in our initial realization theorem,
Theorem~\ref{thm:realization_I}.  In particular, we do not know if the
auxiliary test functions can be extended to matrix valued functions in
our algebra, as we have in either the classical case or the cases of
ample and nearly ample preorderings.  We also wonder if there are
other types of preorderings other than the ample and nearly ample ones
which are equivalent.

It would be nice to know more concretely what the auxiliary test
functions are, particularly over polydisks.  The knowledge of this
could provide a key tool in resolving a number of questions regarding
the connection between Schur-Agler class in the classical sense and
$H^\infty$ over these domains, and hence resolving some of the
mysteries surrounding these algebras.  One immediate question is
whether for $d>3$ there are $(2^{d-2}-1)$-contractive representations
which are not completely contractive (that is, are the bounds in
Corollary~\ref{cor:n-contr-implies-cc-improved} sharp?).  Another is
whether contractive representations of the polydisk algebras
$A(\mathbb D^d)$ with $d \geq 3$ are necessarily completely bounded.

It would also be useful to know a norming set of boundary
representations for the Agler algebra in the classical setting.  Over
the tridisk any representation sending the coordinate functions to a
commuting tuples of unitaries are included (if we ignore
irreducibility), but as we also saw in
\S\ref{subsec:some-bound-repr-Agler-alg}, other sorts of
representations are also there.  In the concrete examples given, these
all send the coordinate functions to nilpotent partial isometries.  As
was mentioned, there will be examples of boundary representations
which come from neither tuples of commuting unitaries, nor commuting
tuples of nilpotent operators, but these in between cases are somewhat
mysterious.  Perhaps, up to unitary conjugation, they always send the
coordinate functions to tuples of commuting partial isometries?
However, from Kaijser-Varopoulos example, one sees that this alone is
not enough.  Note that the boundary representations coming from
commuting unitaries are $1$ dimensional, and our examples of nilpotent
boundary representations are all finite dimensional.  Is it the case
that there is an upper bound to the dimension of all boundary
representations?  Though this seems unlikely, there is no obvious
proof.  There will be Schur-Agler class functions which peak on the
boundary representations.  Are they related to the polynomials from
which these examples are initially drawn?  In any case, for the
nilpotent representations, these will presumably be polynomials of the
same degree as the order of nilpotency.

What happens with the unit ball in $\mathbb C^d$, $d>1$?  It is well
known that the unit ball of the Drury-Arveson algebra does not
coincide with the unit ball of $H^\infty$ of this
space~\cite{MR1866874}.  While one must be careful applying the
results here in this setting since the test function is vector valued,
it is still intriguing to speculate what algebras one might obtain
with powers of the Drury-Arveson kernel.

Finally, the resemblance of results from real algebra to those
presented here is striking.  Are there some deeper connections?  For
example, could one use the techniques here to find, at least in some
circumstances, a proof of such results as Schm\"udgen's theorem?

\end{document}